\newtheorem{theorem}{Theorem}
\newtheorem{lemma}[theorem]{Lemma}
\newtheorem{proposition}[theorem]{Proposition}
\newtheorem{corollary}[theorem]{Corollary}
\newtheorem{claim}[theorem]{Claim}
\newtheorem{conjecture}[theorem]{Conjecture}
\theoremstyle{definition}
\newtheorem{definition}[theorem]{Definition}
\theoremstyle{remark}
\newcommand{\oldqed}{}
\def\endofClaim{\hfill\scalebox{.6}{$\Box$}}
\newenvironment{claimproof}[1][Proof]{
  \renewcommand{\oldqed}{\qedsymbol}
  \renewcommand{\qedsymbol}{\endofClaim}
  \begin{proof}[#1]
}{
  \end{proof}
  \renewcommand{\qedsymbol}{\oldqed}
}
\def\subset{\subseteq}
\newcommand{\Ex}{\mathbb{E}}
\newcommand{\tX}{\widetilde{X}}
\newcommand{\tW}{\widetilde{W}}
\newcommand{\tcW}{\widetilde{\mathcal{W}}}
\newcommand{\tV}{\widetilde{V}}
\newcommand{\eps}{\varepsilon}
\renewcommand{\epsilon}{\varepsilon}
\newcommand{\NGa}{N_{\Gamma}}
\newcommand{\Vij}{V_{i,j}}
\newcommand{\epsa}{\eps^{\ast}}
\newcommand{\epsaa}{\eps^{\ast\ast}}
\newcommand{\Ca}{C^{\ast}}
\newcommand{\symd}{\triangle}
\renewcommand{\Pr}{\mathbb{P}}
\newcommand{\scalefactor}{0.5}
\newcommand{\ao}{\alpha_{\scalebox{\scalefactor}{$\mathrm{OSRIL}$}}}
\newcommand{\at}{\alpha_{\scalebox{\scalefactor}{$\mathrm{TSRIL}$}}}
\newcommand{\eo}{\eps_{\scalebox{\scalefactor}{$\mathrm{OSRIL}$}}}
\newcommand{\et}{\eps_{\scalebox{\scalefactor}{$\mathrm{TSRIL}$}}}
\newcommand{\eBL}{\eps_{\scalebox{\scalefactor}{$\mathrm{BL}$}}}
\newcommand{\CBL}{C_{\scalebox{\scalefactor}{$\mathrm{BL}$}}}
\newcommand{\im}{\mathrm{im}}
\newcommand{\dom}{\mathrm{dom}}
\newcommand{\dist}{\mathrm{dist}}
\newcommand{\Bs}{B^{\mathrm{sw}}}
\def\itm#1{\rm ({#1})} 
\def\itmit#1{\itm{\it #1\,}} 
\def\rom{\itmit{\roman{*}}}
\def\itmarab#1{\mbox{\itm{{\it #1\,}\arabic{*}\hspace{.05em}}}}
\def\itmarabp#1#2{\mbox{\itm{{\it #1\,}\arabic{*}#2\hspace{.05em}}}}
\def\itmsol#1#2{\mbox{\itm{{\it #1\,}#2\hspace{.05em}}}}
\newcommand{\By}[2]{\overset{\mbox{\tiny{#1}}}{#2}} 
\newcommand{\ByRef}[2]{   \By{\eqref{#1}}{#2} }
\newcommand{\leBy}[1]{    \By{#1}{\le} } 
\newcommand{\geBy}[1]{    \By{#1}{\ge} }
\newcommand{\leByRef}[1]{ \ByRef{#1}{\le} }
\newcommand{\cE}{\mathcal{E}}
\newcommand{\cH}{\mathcal{H}}
\newcommand{\cI}{\mathcal I}
\newcommand{\cJ}{\mathcal J}
\newcommand{\cL}{\mathcal L}
\newcommand{\cP}{\mathcal{P}}
\newcommand{\cV}{\mathcal V}
\newcommand{\cU}{\mathcal U}
\newcommand{\cW}{\mathcal W}
\newcommand{\cX}{\mathcal X}
\newcommand{\tcX}{\widetilde{\mathcal{X}}}
\newcommand{\tcV}{\widetilde{\mathcal{V}}}
\newcommand{\pitau}{\pi^\tau}
\newcommand{\tD}{\tilde{D}}
\newcommand{\dcup}{\mathbin{\text{\mbox{\makebox[0mm][c]{\hphantom{$\cup$}$\cdot$}$\cup$}}}}
\newcommand\restr[2]{{% we make the whole thing an ordinary symbol
  \left.\kern-\nulldelimiterspace % automatically resize the bar with \right
  #1 % the function
  \vphantom{\big|} % pretend it's a little taller at normal size
  \right|_{#2} % this is the delimiter
  }}
\begin{document}

%%%%% FRONTMATTER %%%%%%%%%%%%%%%%%%%%%%%%%%%%%%%%%%%%%%%%%%

\begin{frontmatter}[classification=text]
%% EDITOR: this will force the keywords to appear right after the Abstract.
%%   If the abstract is too long and would force the keywords off the
%%   front page, please comment out % [classification=text] above
%%   This way the keywords will be floated on the bottom of the first page
%%   even though the Abstract spills over to the next page.

%%% AUTHOR: Title goes here.  This line is optional.  You must use it
%%   if title has footnote attached or requires nontrivial typesetting,
%%   e.g., inclusion of linebreaks to force nice layout.
\title{The Bandwidth Theorem in Sparse Graphs} %% please capitalize all significant words

%%% AUTHOR:
%%% List all authors. If you wish, place grant acknowledgements in \thanks.
%%% In brackets include a short tag for each author, e.g.:
%   \author[pgom]{Paul Erd\H{o}s\thanks{Supported by...}}

\author[PA]{Peter Allen}
\author[JB]{Julia B\"ottcher}
\author[JE]{Julia Ehrenm\"uller}
\author[AT]{Anusch Taraz}

%%% AUTHOR: Abstract goes here
\begin{abstract}
The bandwidth theorem [Mathematische Annalen, 343(1):175--205, 2009] states
that any $n$-vertex graph~$G$ with minimum degree $\big(\tfrac{k-1}{k}+o(1)\big)n$ contains all $n$-vertex $k$-colourable graphs~$H$ with bounded maximum degree and bandwidth $o(n)$.
We provide sparse analogues of this statement in random graphs as well as pseudorandom graphs.

More precisely, we show that for $p\gg \big(\tfrac{\log n}{n}\big)^{1/\Delta}$ asymptotically almost surely each spanning subgraph~$G$ of
$G(n,p)$ with minimum degree $\big(\tfrac{k-1}{k}+o(1)\big)pn$ contains all
$n$-vertex $k$-colourable graphs~$H$ with maximum degree $\Delta$, bandwidth
$o(n)$, and at least $C p^{-2}$ vertices not contained in any triangle. A
similar result is shown for sufficiently bijumbled graphs, which, to the best of
our knowledge, is the first resilience result in pseudorandom graphs for a rich
class of \emph{spanning }subgraphs.
Finally, we provide improved results for~$H$ with small degeneracy, which in particular imply a resilience result in $G(n,p)$ with respect to the containment of spanning bounded degree trees for $p\gg \big(\tfrac{\log n}{n}\big)^{1/3}$.
\end{abstract}

\end{frontmatter}

%%%%% PAPER BODY %%%%%%%%%%%%%%%%%%%%%%%%%%%%%%%%%%%%%%%%%%

%%% AUTHOR: body of paper starts here

%%%%%%%%%%%%%%%%%%%%%%%%%%%%%%%%%%%%%%%%%%%%%%%%%%%%%%%%%

% Introduction

%%%%%%%%%%%%%%%%%%%%%%%%%%%%%%%%%%%%%%%%%%%%%%%%%%%%%%%%%

\section{Introduction}

A central topic in extremal graph theory is to determine minimum degree
conditions which force a graph $G$ to contain a copy of some large or even
spanning subgraph $H$. The prototypical example of such a theorem is Dirac's
theorem~\cite{dirac1952}, which states that if $\delta(G)\ge\tfrac12 v(G)$ and
$v(G)\ge 3$, then $G$ is Hamiltonian. 
Analogous results were established for a wide range of spanning subgraphs~$H$ with bounded maximum degree such as powers of Hamilton cycles, trees, or $F$-factors for any fixed graph~$F$ (see e.g.~\cite{kuhnsurvey} for a survey). One feature that all these subgraphs~$H$ have in common is that their \emph{bandwidth} is small. The bandwidth of a graph~$H$ is the minimum $b$ such that there is a labelling of the vertex set of~$H$ by integers $1, \ldots, n$ with $|i-j| \leq b$ for every edge $ij$ of~$H$. And indeed, it was shown in~\cite{bottcher2009proof} that a more general result holds, which provides a minimum degree condition forcing any spanning bounded degree subgraphs of small bandwidth.

\begin{theorem}[Bandwidth Theorem~\cite{bottcher2009proof}]
\label{thm:bandwidth}
For every $\gamma >0$, $\Delta \geq 2$, and $k\geq 1$, there exist $\beta>0$ and $n_0 \geq 1$ such that for every $n\geq n_0$ the following holds. If $G$ is a graph on $n$ vertices with minimum degree $\delta(G) \geq \left(\frac{k-1}{k}+\gamma\right)n$ and if $H$ is a $k$-colourable graph on $n$ vertices with maximum degree $\Delta(H) \leq \Delta$ and bandwidth at most $\beta n$, then $G$ contains a copy of $H$.
\end{theorem}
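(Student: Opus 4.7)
\medskip
\noindent\emph{Proof proposal.}
The plan is to combine Szemerédi's Regularity Lemma with the Blow-up Lemma of Komlós, Sárközy and Szemerédi via a standard lemma-for-$G$/lemma-for-$H$ split. First I would apply the Regularity Lemma to $G$ with a regularity parameter $\eps$ chosen much smaller than $\gamma$, $1/k$, and $1/\Delta$, obtaining an equitable partition $V_0\cup V_1\cup\dots\cup V_r$ whose reduced graph $R$ on $[r]$ inherits the minimum degree condition $\delta(R)\ge\bigl(\tfrac{k-1}{k}+\tfrac{\gamma}{2}\bigr)r$.

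Next I would find a convenient spanning structure in $R$. Since $\delta(R)$ comfortably exceeds $\tfrac{k-1}{k}|R|$, the Hajnal--Szemerédi theorem produces a spanning $K_k$-factor in $R$, and a standard strengthening that uses the remaining minimum-degree slack lets me arrange these cliques into a ``spanning backbone of $K_k$'s'' in which consecutive cliques share $k-1$ vertices (together with an extra connecting edge if one wants a cycle structure). This overlap gives the flexibility to transfer colour classes between successive cliques when embedding $H$.

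In parallel I would exploit $\bw(H)\le\beta n$ together with a proper $k$-colouring of $H$ to partition $V(H)$ into consecutive intervals along the bandwidth labelling, grouped into blocks of size roughly $k|V_1|$. Because every edge of $H$ connects vertices at distance at most $\beta n$ in the labelling, one can $k$-colour each block compatibly with its neighbours, and assign the vertices of $H$ to the clusters along the backbone so that every edge of $H$ is mapped either inside a clique of the backbone or across its $k-1$-vertex overlap with the next clique. A short balancing step, swapping a few vertices with the right colour pattern between adjacent blocks, then adjusts sub-block sizes to match the cluster sizes exactly.

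Finally I would embed $H$ clique by clique via the Blow-up Lemma, using the $k-1$ clusters shared by two consecutive cliques of the backbone as ``interface'' vertices whose images are restricted to remain consistent with the embedding into the previous clique. The main technical obstacle is precisely this globalisation of a sequence of local blow-up applications: one must maintain image restrictions on interface vertices that preserve super-regularity of the next pair and still allow the Blow-up Lemma to succeed, while simultaneously balancing the partition of $H$ against the exact cluster sizes. The bounded-bandwidth assumption controls how far any edge of $H$ can reach across the backbone, and the bounded-degree assumption keeps the image restrictions from accumulating uncontrollably; both hypotheses enter critically at this step.
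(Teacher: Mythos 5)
The statement you are asked to prove is cited in this paper from~\cite{bottcher2009proof} and is not reproved here; indeed the paper \emph{uses} it as a black box (e.g.\ in the proof of Lemma~\ref{lem:G}, Theorem~\ref{thm:bandwidth} is invoked to locate a copy of $B^k_r$ inside the reduced graph). Your overall plan --- regularity lemma, a backbone structure in the reduced graph, a bandwidth-driven partition of $H$ matched by a balancing step, and then the Blow-up Lemma --- is the right plan and is essentially the one used in~\cite{bottcher2009proof} and mimicked in the sparse setting here. However, there is a concrete error in the structure you ask for in the reduced graph, and it would derail the argument.

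You want a ``spanning backbone of $K_k$'s in which consecutive cliques share $k-1$ vertices.'' That object is the $k$-th power of a Hamilton path, and by the theorem of Koml\'os, S\'ark\"ozy and Szemer\'edi on powers of Hamilton cycles its existence requires minimum degree about $\tfrac{k}{k+1}|V(R)|$ in $R$. Your reduced graph only satisfies $\delta(R)\ge\big(\tfrac{k-1}{k}+\tfrac{\gamma}{2}\big)|V(R)|$, which for small $\gamma$ is strictly below that threshold, so the structure you describe is simply not guaranteed. Moreover, a Hajnal--Szemer\'edi $K_k$-factor has pairwise \emph{disjoint} cliques; there is no way to ``arrange'' them to overlap in $k-1$ vertices. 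The correct spanning structure, both in~\cite{bottcher2009proof} and in the present paper's $B^k_r$ (Section~\ref{sec:mainlemmas}), is a chain of \emph{vertex-disjoint} copies of $K_k$ in which consecutive copies are joined by a complete bipartite graph minus a perfect matching. This graph is $k$-colourable with bandwidth $O(k)$, so its existence in $R$ follows from the min-degree condition at hand (in this paper literally by applying Theorem~\ref{thm:bandwidth} itself to the dense graph $R$). The distinction matters also for your embedding step: with disjoint cliques, image restrictions only affect the $O(\beta n)$ vertices of $H$ near interval boundaries, so a single application of the Blow-up Lemma (with super-regularity on the cliques and mere regularity on the connecting bipartite pairs) suffices. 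With overlapping cliques, $k-1$ \emph{entire} clusters would be image-restricted at each interface, far exceeding the $\rho$-fraction allowed by the restriction-pair hypothesis of the Blow-up Lemma, so the ``clique by clique with interface restrictions'' strategy would break down even if the structure existed.
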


We remark that in contrast to the above mentioned earlier results for specific bounded degree spanning subgraphs the minimum degree condition in this theorem has an error term~$\gamma n$, and it is known that this cannot completely be omitted in this general statement. In that sense the minimum degree condition in Theorem~\ref{thm:bandwidth} is best-possible. It is also known that the bandwidth condition cannot be dropped completely (see~\cite{bottcher2009proof} for further explanations). Moreover, this condition does not limit the class of graphs under consideration unreasonably, because many interesting classes of graphs have sublinear bandwidth. Indeed, it was shown in \cite{bottcher2010bandwidth} that for bounded degree $n$-vertex graphs, restricting the bandwidth to $o(n)$ is equivalent to restricting the treewidth to $o(n)$ or forbidding linear sized expanding subgraphs, which implies that bounded degree planar graphs, or more generally classes of bounded degree graphs defined by forbidding some fixed minor have bandwidth $o(n)$. Generalisations of Theorem~\ref{thm:bandwidth} were obtained in~\cite{BoeHeiTar,BoeTarWue,KnoTre,Lee:degenerateBandwidth}.

In this paper we are interested in the transference of
Theorem~\ref{thm:bandwidth} to sparse graphs.  Such transference results
recently received much attention, including for example the breakthrough result
on the transference of Tur\'an's theorem to random graphs by Conlon and
Gowers~\cite{ConGow} and Schacht~\cite{Schacht}. The random graph model we shall
consider here is the binomial random graph $G(n,p)$, which has $n$ vertices, and
each pair of vertices forms an edge independently with probability $p$. We shall
study the asymptotic appearance of spanning subgraphs~$H_n$ in~$G(n,p)$ under
adversarial edge deletions. We denote the sequence of graphs we consider by
$H=(H_n)$, and, abusing notation slightly also often write~$H$ for the
graph~$H_n$, when it is clear from the context what~$n$ is.

The appearance of large or spanning subgraphs of $G(n,p)$ was studied since the
early days of probabilistic combinatorics and by now many important results were
obtained. Gems include the theorem of Riordan~\cite{Riordan} which gives a very
good, and in many cases tight, upper bound on the threshold for $G(n,p)$ to
contain a general sequence of spanning graphs~$H=(H_n)$, and the
Johansson--Kahn--Vu theorem~\cite{JohKahVu} on $F$-factors, which we state for
the case $F=K_k$.

\begin{theorem}[Johansson, Kahn and Vu~\cite{JohKahVu}]\label{thm:JKV}
 For each $k\ge 3$ there exists $C>0$ such that the following holds for $p=p(n)\ge C n^{-2/k}(\log n)^{1/\binom{k}{2}}$. Asymptotically almost surely, $G(n,p)$ contains a $K_k$-factor, that is, a collection of $\big\lfloor\tfrac{n}{k}\big\rfloor$ pairwise vertex-disjoint copies of $K_k$.
\end{theorem} 

For a sequence of spanning graphs $H=(H_n)$ with maximum degree $\Delta(H)\le\Delta$, Riordan's theorem implies that $G(n,p)$ \emph{asymptotically almost surely} (a.a.s.), that is, with probability tending to~$1$ as~$n$ tends to infinity, contains~$H$ as a subgraph if $p\cdot n^{\frac{2}{\Delta+1}-\frac{2}{\Delta(\Delta+1)}}\to\infty$. This is not believed to be best possible. Indeed, Theorem~\ref{thm:JKV} states that the threshold for $G(n,p)$ to contain a $K_{\Delta+1}$-factor is $n^{-2/(\Delta+1)}(\log n)^{1/\binom{\Delta+1}{2}}$, and it is conjectured in~\cite{FerLuhNgu} that above this probability
we also get any other sequence of spanning graphs $H=(H_n)$ with $\Delta(H)\le\Delta$. This was proved, using Theorem~\ref{thm:JKV}, to be true for almost spanning graphs by Ferber, Luh, and Nguyen~\cite{FerLuhNgu}.

\begin{theorem}[Ferber, Luh, and Nguyen~\cite{FerLuhNgu}]
  \label{thm:FerLuhNgu}
  For every $\eps>0$ and $\Delta\ge 1$, and every sequence $H=(H_n)$ of graphs with $v(H)\le(1-\eps)n$ and $\Delta(H)\le\Delta$, the random graph $G(n,p)$ a.a.s.\ contains~$H$ if
  $p\cdot n^{2/(\Delta+1)}/(\log n)^{1/\binom{\Delta+1}{2}} \to\infty$.
\end{theorem}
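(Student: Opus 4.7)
The plan is a two-round exposure that reduces the embedding problem to the Johansson--Kahn--Vu (JKV) theorem on $K_{\Delta+1}$-factors in $G(n,p)$. First, greedily $(\Delta+1)$-colour $H=H_n$ (possible since $\Delta(H)\le\Delta$) to obtain classes $V_1,\ldots,V_{\Delta+1}$, and split $G\sim G(n,p)$ into two independent layers $G=G_1\cup G_2$ of density $\Theta(p)$ each, both still above the JKV threshold. Using the slack $v(H)\le(1-\eps)n$, fix a random partition $V(G)=T\dcup R$ with $|R|=\Theta(\eps n)$ (the reservoir) and $|T|$ divisible by $\Delta+1$.

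Next, apply JKV inside $G_1[T]$ to obtain a perfect $K_{\Delta+1}$-factor $\mathcal{F}$ of $T$, and arbitrarily order each clique of $\mathcal{F}$ to induce a partition $T=T_1\cup\cdots\cup T_{\Delta+1}$ with $|T_i|=|T|/(\Delta+1)$. After a mild rebalancing of a random slice $R=R_1\cup\cdots\cup R_{\Delta+1}$, one has $|T_i|+|R_i|\ge|V_i|$ for all $i$. The aim is an injection $\phi\colon V(H)\hookrightarrow V(G)$ with $\phi(V_i)\subseteq T_i\cup R_i$ sending every edge of $H$ to an edge of $G$. Edges of $H$ whose two endpoints fall inside a common clique of $\mathcal{F}$ are then automatically realised in $G_1$, so only the \emph{cross-clique} edges of $H$ need to be covered by $G_2$. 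I would set up candidate lists $L_v\subseteq T_{c(v)}\cup R_{c(v)}$ recording, for each $v\in V(H)$, which slots preserve all its already-embedded neighbours' cross-clique edges in $G_2$, and then produce the embedding by a Lov\'asz Local Lemma and Hall's theorem argument once the lists are shown to be typically large.

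The main obstacle is that $G_2$ sits just above the JKV threshold, so the expected number of edges of $G_2$ between two $(\Delta+1)$-tuples of $\mathcal{F}$ is $O(p^2)=o(1)$, and standard blow-up or embedding lemmas do not apply. The crux of the argument is to exploit the reservoir: any vertex $v$ whose placement in $T_{c(v)}$ would destroy too many cross-clique edges is rerouted into $R_{c(v)}$, whose random edges to the already-embedded part of $T$ are well-distributed enough that $v$ has $\Theta(|R_i|\cdot p^\Delta)$ valid targets. One must prove (i) that only an $o(1)$-fraction of $V(H)$ is ever rerouted, by a first-moment computation on the independent randomness of $G_2$ conditioned on $\mathcal{F}$; and (ii) that the bipartite ``rerouting graph'' between rerouted vertices and $R_i$ admits a perfect matching, for which the $(1-\eps)$-slack in $|R|$ is essential. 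Balancing the four parameters -- the split $p_1/p_2$, the reservoir size, the rerouting threshold, and the candidate-list cut-off -- against the JKV threshold is the main technical work.
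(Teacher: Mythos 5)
The paper does not prove Theorem~\ref{thm:FerLuhNgu}; it is stated as a black box and attributed to Ferber, Luh, and Nguyen~\cite{FerLuhNgu}, so there is no ``paper's own proof'' to compare against. The only guidance the paper gives is the remark that Ferber, Luh, and Nguyen deduce the result from the Johansson--Kahn--Vu theorem, which is consistent with the high-level shape of your argument. The difficulty is that the concrete mechanism you propose for handling cross-clique edges cannot work at the stated edge probability.

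At the threshold in question we have $p\cdot n^{2/(\Delta+1)}/(\log n)^{1/\binom{\Delta+1}{2}}\to\infty$, and it suffices to consider $p$ only barely above this bound. Then $np^\Delta=(\log n)^{2/(\Delta+1)}\cdot n^{(1-\Delta)/(\Delta+1)}\cdot o(1)^{\Delta}$ (up to the slowly growing factor), which tends to $0$ for every $\Delta\ge 2$. Your plan assigns a rerouted vertex $v$ with $\Delta$ already-embedded neighbours a candidate list of expected size $\Theta(|R_{c(v)}|\,p^{\Delta})=\Theta(\eps n p^{\Delta})=o(1)$: the reservoir has the same $p^{\Delta}$ attenuation as $T_{c(v)}$, so it offers no escape. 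Most rerouted vertices would therefore have \emph{no} valid target at all, and the Hall/LLL step cannot even start. Put differently, the reservoir trick is useful when the bottleneck is a \emph{matching} obstruction (a few vertices collide on the same slots), but here the obstruction is that individual candidate lists are empty, which $|R|$ being a constant fraction of $n$ does nothing to repair. A related issue afflicts the ``automatic'' edges: for a general bounded-degree $H$ there is no reason the chosen block structure should align with the edges of $H$, so you cannot assume any nontrivial fraction of $E(H)$ falls inside cliques of $\mathcal{F}$; unless the components of $H$ fit inside blocks of size $\Delta+1$, essentially all edges are cross-clique and face the $np^{\Delta}\to 0$ barrier. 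The minor point that the expected number of $G_2$-edges between two cliques of $\mathcal{F}$ is $\Theta(p)$ rather than $O(p^2)$ does not affect your conclusion that it is $o(1)$, but the overall approach needs a genuinely different idea --- one that feeds the structure of $H$ into a \emph{single} Johansson--Kahn--Vu-type hypergraph matching rather than trying to repair a vertex-by-vertex embedding with a reservoir.
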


Better bounds are available if we further know that the degeneracy of~$H$ is bounded by a constant much smaller than $\Delta(H)$. The \emph{degeneracy} of~$H$ is the smallest integer~$D$ such that any subgraph of~$H$ has a vertex of degree at most~$D$. Surprisingly, for this class of graphs~$H$ already Riordan's theorem implies an essentially optimal bound.

\begin{corollary}[of Riordan's theorem~\cite{Riordan}]
  \label{cor:Riordan}
  For every $\Delta\ge 1$ and $D\ge 3$, and every sequence $H=(H_n)$ of graphs with $v(H)\le n$ and $\Delta(H)\le\Delta$ and degeneracy at most~$D$, the random graph 
  $G(n,p)$ a.a.s.\ contains~$H$ if $p\cdot n^{1/D} \to\infty$.
\end{corollary}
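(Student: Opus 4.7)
The plan is to derive Corollary~\ref{cor:Riordan} directly from Riordan's theorem~\cite{Riordan}. As a preliminary reduction, we augment $H$ with isolated vertices so that $v(H) = n$; this affects neither $\Delta(H)$ nor the degeneracy, so we may assume $H$ is spanning.

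Riordan's theorem guarantees $G(n,p) \supseteq H$ a.a.s.\ provided that, for every subgraph $H' \subseteq H$ on $v \geq 3$ vertices, the expected count $n^{v-2} p^{e(H')}$ of labelled copies of $H'$ grows at a polynomial rate (together with a mild maximum-degree condition). So the main task is to verify this density condition for every $H' \subseteq H$, which we do using the structural bound coming from degeneracy.

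Since subgraphs of $D$-degenerate graphs are themselves $D$-degenerate, any $H'$ on $v$ vertices has at most $\binom{v}{2}$ edges when $v \leq D$ and at most $Dv - \binom{D+1}{2}$ edges when $v \geq D+1$. In the second regime we write
\[
n^{v-2} p^{e(H')} \;\geq\; (np^D)^{v-2} \cdot p^{2D - \binom{D+1}{2}} \;=\; (np^D)^{v-2} \cdot p^{D(3-D)/2},
\]
where the second factor is at least $1$ since $D(3-D)/2 \leq 0$ for $D \geq 3$, while the first factor tends to infinity because $np^D = (p \cdot n^{1/D})^D \to \infty$. In the first regime $3 \leq v \leq D$, the desired bound $n^{v-2} p^{v(v-1)/2} \to \infty$ reduces via $np^D \to \infty$ to the elementary inequality $v(v-1) \leq 2D(v-2)$, which holds throughout $3 \leq v \leq D$ precisely because $D \geq 3$; this is essentially where the degeneracy hypothesis $D \geq 3$ enters (for $D = 2$ the triangle $K_3$ violates this inequality, producing a strictly larger Riordan threshold than $n^{-1/D}$).

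The only real obstacle is translating our hypothesis $p \cdot n^{1/D} \to \infty$ into the polynomial lower bound on $p \cdot n^{1/D}$ required by the precise formulation of Riordan's theorem. We handle this in the standard way: pass to a slightly smaller $p' \leq p$ for which $p' \cdot n^{1/D}$ grows at any polynomial rate we like, apply Riordan's theorem at $p'$, and then invoke monotonicity of subgraph containment in $p$. No further ideas are required.
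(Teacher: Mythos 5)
The paper itself does not include a proof of Corollary~\ref{cor:Riordan}; it is stated as a direct consequence of Riordan's theorem, so your argument is precisely the content that has been left to the reader. Your proof is correct and, in essence, is the intended one: one verifies that the Riordan exponent
\[
\gamma(H)\;=\;\max_{3\le v\le n}\ \max_{\substack{H'\subseteq H\\ v(H')=v}}\frac{e(H')}{v-2}
\]
satisfies $\gamma(H)\le D$, and the case split you use ($v\le D$ via the trivial bound $e(H')\le\binom{v}{2}$, $v\ge D+1$ via the standard degeneracy bound $e(H')\le Dv-\binom{D+1}{2}$) is the natural way to do this, with the hypothesis $D\ge 3$ entering in both ranges exactly where you say it does.

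Two small points worth flagging for a final write-up. First, your phrase \enquote{expected count $n^{v-2}p^{e(H')}$ of labelled copies of $H'$} is a slight abuse: the expected number of labelled copies of $H'$ on $v$ vertices is of order $n^{v}p^{e(H')}$, and the $n^{v-2}$ normalisation comes out of the second-moment computation in Riordan's proof, not from an expected-count heuristic. This does not affect the argument, only the heuristic framing. Second, the published statement of Riordan's theorem is for spanning $H$ with $e(H)\ge n$; your reduction adds isolated vertices to make $H$ spanning (fine, as this leaves $\gamma$ unchanged), but if $e(H)<n$ one should additionally either invoke a formulation of Riordan's theorem without this hypothesis, or note that one may pad $H$ with a linear number of disjoint edges on the added vertices without increasing $\gamma$ (since an edge contributes $1/(v-2)$ only once it is part of a larger connected subgraph). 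Either remark closes the gap, and neither changes the substance of your argument.
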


This is best possible because a simple first moment calculation shows that if $p\cdot n^{1/D}\to 0$ then $G(n,p)$ a.a.s.\ does not contain the $D$-th power of a Hamilton path, which is a $D$-degenerate graph with maximum degree $2D$.

Two features that both Riordan's theorem and Theorem~\ref{thm:JKV} (and consequently all results which rely on them, such as Theorem~\ref{thm:FerLuhNgu} and Corollary~\ref{cor:Riordan}) have in common is that their proofs are non-constructive, and the proof techniques do not allow for so-called universality results.  A graph~$G$ is said to be \emph{universal} for a family~$\cH$ of graphs if~$G$ contains copies of all graphs in~$\cH$ simultaneously.  The random graph~$G(n,p)$ is known to be universal for various families of graphs, but in almost all cases we only know an upper bound on the threshold for universality, which we do not believe is the correct answer.

The reason why probabilistic existence results such as Corollary~\ref{cor:Riordan} do not imply universality is that in $G(n,p)$ the failure probability for containing any given spanning graph $H$ without isolated vertices is at least $(1-p)^{n-1}$, the probability that a fixed vertex of $G(n,p)$ is isolated. This probability is too large to apply a union bound. Thus, to prove universality results, one needs to show that any graph $G$ with some collection of properties that $G(n,p)$ a.a.s.\ possesses must contain any given $H\in\cH$. Using this approach, and improving on a series of earlier results, Dellamonica, Kohayakawa, R\"odl and Ruci\'nski~\cite{dellamonica2014} obtained the following universality result for the family $\cH(n,\Delta)$ of $n$-vertex graphs with maximum degree $\Delta$.

\begin{theorem}[Dellamonica, Kohayakawa, R\"odl and Ruci\'nski~\cite{dellamonica2014}]
\label{thm:universal}
  For all $\Delta\ge3$ there is~$C$ such that
  if $p\ge C\big(\frac{\log n}{n}\big)^{1/\Delta}$ then $G(n,p)$ is a.a.s.\ universal for $\cH(n,\Delta)$.
\end{theorem}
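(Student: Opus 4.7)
The plan is to isolate a short list of deterministic pseudorandom properties that $G = G(n,p)$ a.a.s.\ satisfies when $p \ge C(\log n/n)^{1/\Delta}$, and then to show that any graph with these properties contains every $H \in \cH(n,\Delta)$ as a subgraph. This deterministic reformulation is what circumvents the failure of a union bound over~$H$: one random $G$ must suffice for all targets simultaneously.

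First, using Chernoff bounds and a union bound over small vertex sets, I would establish that a.a.s.\ every vertex of $G$ has degree $(1\pm o(1))pn$, and more substantially that for every $S \subset V(G)$ with $|S| \le \Delta$, the common neighbourhood satisfies $|N_G(S)| = (1\pm o(1))p^{|S|}n$, with analogous estimates inside any sufficiently large subset of $V(G)$. The case $|S| = \Delta$ requires precisely $p^\Delta n \gg \log n$, which is exactly the hypothesis, so this is the correct threshold for such common-neighbourhood estimates.

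Second, given any $H \in \cH(n,\Delta)$, I would apply the Hajnal--Szemer\'edi theorem to obtain an equitable proper $(\Delta+1)$-colouring $c : V(H) \to [\Delta+1]$ and randomly partition $V(G) = U_1 \dcup \cdots \dcup U_{\Delta+1}$ into blocks of roughly equal size, with $U_i$ hosting the colour-$i$ vertices; the pseudorandom estimates pass, with slightly worse constants, to this partitioned structure. Now embed the colour classes in order $1, 2, \ldots, \Delta+1$: for each still-unembedded $v \in c^{-1}(i)$ maintain a candidate set $\mathrm{Cand}(v) \subseteq U_i$ of vertices adjacent in $G$ to all already-embedded neighbours of~$v$, which by the common-neighbourhood estimate starts with size $\Theta(p^{d^-(v)}|U_i|) = \Omega(\log n)$, where $d^-(v)$ counts the neighbours of $v$ in earlier colour classes. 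The bulk of each colour class can then be embedded greedily; to finish the class, set up an auxiliary bipartite graph between the remaining vertices of $c^{-1}(i)$ and their candidates in $U_i$ and invoke a Hall-type matching argument, verified via the codegree estimates.

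The main obstacle is the tightness of the last colour class, where candidate sets have shrunk to size $\Theta(\log n)$ and verifying Hall's condition --- while simultaneously controlling the vertices whose candidates have been depleted by earlier embedding choices --- is delicate. A careful inductive scheme that reserves a small ``flexible'' portion of each $U_i$ until the end (in the spirit of the absorbing method) is needed to preserve enough slack throughout the embedding. It is in handling this critical regime that the hypothesis on $p$ is used tightly, and this is where the bulk of the technical work lies.
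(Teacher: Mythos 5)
Note first that the paper itself does not prove this theorem; it states it as a black-box citation to Dellamonica, Kohayakawa, R\"odl and Ruci\'nski, so there is no internal proof to compare against; I assess your sketch against the strategy of the cited reference.

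Your architecture --- pseudorandom codegree estimates for $G$, a Hajnal--Szemer\'edi equitable $(\Delta+1)$-colouring of $H$, a random partition of $V(G)$ into hosting blocks, and a class-by-class embedding finished by a matching --- does match what the cited authors do at a high level. But there is a genuine gap at the step you flag as ``delicate'', and it is more than deferred technical work: Hall's condition for the final matching depends not only on $G$ but on \emph{how} classes $1,\dots,\Delta$ were embedded, and an arbitrary valid greedy choice for those classes can make Hall fail outright. Concretely, fix any $T\subseteq U_{\Delta+1}$ of size roughly $1/p$. When $\phi(u)$ is placed for a vertex $u$ that is the last-embedded $H$-neighbour of some $v$ in the final class, the set of targets that would make $\mathrm{Cand}(v)$ meet $T$ has size at most about $p^{\Delta}|T|n$, a vanishing fraction of $|\mathrm{Cand}(u)|\gtrsim p^{\Delta-1}n$; there is thus ample room to avoid it, and one can propagate this avoidance through the whole early embedding, which remains easy as a matching problem because candidate sets for classes $\le\Delta$ are polynomially larger than $\log n$. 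Doing so makes every last-class candidate set disjoint from $T$, so no vertex of the last class can be mapped into $T$ and the auxiliary bipartite graph has no perfect matching. The codegree estimates you invoke are properties of $G$ alone and cannot rule this out, and reserving a ``flexible'' portion of $U_{\Delta+1}$ does not help either, since the failure mode is that many candidate sets are steered away from a region, not that a few vertices have impoverished candidates. What is missing is a specific, non-arbitrary scheme for the early embedding together with an argument that under it the last-class candidate sets are well-spread over $U_{\Delta+1}$; that is the substantive content of the cited proof, and your sketch leaves it entirely open.
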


However, it is conjectured that universality and the appearance of a $K_{\Delta+1}$-factor occur together, at the threshold given in Theorem~\ref{thm:FerLuhNgu}. A probability bound which is better, but still far from the conjectured truth, was so far only established for almost spanning graphs by Conlon, Ferber, Nenadov and \v{S}kori\'c~\cite{CFNS}, who showed that for $\Delta\ge 3$, if $p\gg n^{-1/(\Delta-1)}\log^5 n$ then $G(n,p)$ is a.a.s.\ universal for $\cH\big((1-o(1))n,\Delta)\big)$. For graphs with small degeneracy, again, the following better bound exists, but this also is far away from the threshold in Corollary~\ref{cor:Riordan}, which is a plausible candidate for the correct answer.

\begin{theorem}[Allen, B\"ottcher, H\`an, Kohayakawa, Person~\cite{blowup}]
  \label{thm:Duniversal}
  For all $\Delta, D\ge 1$ there is~$C$ such that if $p\ge C\big(\frac{\log n}{n}\big)^{1/(2D+1)}$ then $G(n,p)$ is a.a.s.\ universal for all graphs in~$\cH(n,\Delta)$ with degeneracy at most~$D$.
\end{theorem}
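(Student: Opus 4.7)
The plan is to combine a sparse blow-up style embedding with a degeneracy-based greedy procedure, in order to leverage $D\le\Delta$ in the pseudorandomness we need to demand of $G(n,p)$ (compared to Theorem~\ref{thm:universal}). First I would show that $G(n,p)$ a.a.s.\ satisfies a joint-neighbourhood property: for every $s\le D$ vertices $v_1,\dots,v_s$ and every sufficiently large set $U\subseteq V(G(n,p))$ specified without reference to the $v_i$'s, one has $|N(v_1)\cap\dots\cap N(v_s)\cap U|\approx p^s|U|$. A Chernoff-plus-union-bound calculation shows this holds a.a.s.\ provided $p^{2s+1}n\gg\log n$; the worst case $s=D$ yields exactly $p\gg (\log n/n)^{1/(2D+1)}$, matching the hypothesis of the theorem.

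Second, I would fix an arbitrary $H\in\cH(n,\Delta)$ of degeneracy at most $D$ and order its vertices $v_1,\dots,v_n$ so that each $v_i$ has at most $D$ back-neighbours, and embed them greedily in this order. At step~$i$ the natural candidate set is $W_i = \bigcap_{j<i,\,v_jv_i\in E(H)} N_G(\phi(v_j))$ with the already-used vertices removed. The joint-neighbourhood property lower-bounds $|W_i|$ by roughly $\tfrac12 p^{D}n$. To keep the procedure from getting stuck later, I would work inside a sparse regular partition of $V(G(n,p))$, assign the $v_i$ to parts by an equitable colouring argument respecting $\Delta(H)\le\Delta$, and carry along for each unembedded $v_j$ a \emph{candidate set} $C_j\subseteq V(G(n,p))$ that contracts by a factor close to $p$ each time a neighbour of $v_j$ is embedded.

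The main obstacle is making each greedy choice robust: one must pick $\phi(v_i)\in W_i$ so that every future $C_j$ with $v_iv_j\in E(H)$ shrinks only by the expected factor $p$, rather than by much less. This calls for a good-vertex / bad-vertex argument, in which one shows via the joint-neighbourhood property that all but a $o(1)$-fraction of $W_i$ are ``good'' in this sense, so that a good image can always be selected. The reason the exponent $2D+1$ is exactly right (rather than, say, $2\Delta+1$) is that the hard pseudorandom input is only ever invoked on collections of size at most $D$: all further neighbours of $v_j$ beyond the $D$ back-neighbours are controlled \emph{after} $v_j$ is embedded by the same mechanism, exploiting the degeneracy hypothesis. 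Combining the a.a.s.\ pseudorandom properties of $G(n,p)$ with this deterministic embedding procedure yields the desired universality, since the embedding is run separately on each fixed $H$ with the same $G(n,p)$.
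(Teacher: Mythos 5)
Your high-level strategy (degeneracy ordering, candidate sets, a regular partition, a good/bad-vertex argument) is in the right spirit, but as written the proposal has two genuine gaps, and neither is merely cosmetic.

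First, a sequential greedy embedding cannot produce a \emph{spanning} embedding, even inside a regular partition. By the time you reach the last $o(n)$ vertices of a cluster, the candidate set $W_i$ is not ``roughly $\tfrac12 p^D n$'': it is whatever is left after removing the entire already-used part of the cluster, which is almost everything. Making the greedy phase stop short and then finishing off by a Hall-type matching argument is exactly the mechanism in the sparse blow-up lemma of \cite{blowup} (Lemma~\ref{thm:blowup}/\ref{thm:dblow} in this paper): one runs a random greedy algorithm with a queue to embed all but a set of ``buffer'' vertices, then embeds the buffer by verifying Hall's condition. This final matching step is where most of the technical work lies, and your proposal does not account for it.

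Second, the calculation offered for the exponent does not produce $2D+1$. If the joint-neighbourhood property is invoked for $s\le D$ embedded vertices against a set $U$ of linear size, Chernoff plus a union bound over $O(n^s)$ tuples requires $p^s n\gg\log n$, i.e.\ $p\gg(\log n/n)^{1/D}$ in the worst case --- not $(\log n/n)^{1/(2D+1)}$. Your claim ``$p^{2s+1}n\gg\log n$'' presupposes that the target set $U$ has already been restricted to size $\Theta(p^{s+1}n)$, but you haven't explained where a set of that size arises, nor why the exponent should be $s+1$ rather than $s$. In the actual argument the factor of (roughly) $2$ does not come from the back-degree in the degeneracy order but from the buffer vertices used in the matching phase: a $D$-degenerate graph has a constant proportion of vertices of degree at most $2D$, these are taken as the buffer (see the $(\vartheta,R')$-buffer chosen in the proof of Theorem~\ref{thm:degen}, which consists of vertices of degree $\le 2D$), and it is the codegree control needed for those buffer vertices, plus one more restriction level, that forces $\tD=2D+1$ in the $(\tD,p,m)$-bounded order of Definition~\ref{def:Dpm_bdd_order}. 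Your statement that ``the hard pseudorandom input is only ever invoked on collections of size at most $D$'' is therefore not accurate: collections of size close to $2D$ unavoidably appear when the buffer is embedded.

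In short, the proposal correctly identifies the role of the degeneracy ordering and candidate sets, but it omits the matching/absorption mechanism that is essential for a spanning embedding, and the stated justification of the exponent $2D+1$ does not hold up; both are resolved in \cite{blowup} by the degenerate sparse blow-up lemma, which is what the paper actually invokes.
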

Very recently Conlon and Nenadov~\cite{ConNen} established an essentially best possible bound on~$p$ for an almost spanning analogue: They 
showed that $G(n, p)$ is a.a.s.\ universal for all graphs in $\cH\big((1-\eps)n,\Delta\big)$ with degeneracy at most~$D$ for $p\ge\big(\frac{C\log^2 n}{n\log\log n}\big)^{1/D}$.

\smallskip

Furthermore, one may ask how robustly $G(n,p)$ contains certain large subgraphs~$H$. Questions of this type were considered by Alon, Capalbo, Kohayakawa, R\"odl, Ruci\'nski and Szemer\'edi~\cite{ACKRRS}, and further popularised by Sudakov and Vu~\cite{SudVu}, who introduced the term local resilience.  Let $\cP$ be a monotone increasing graph property and let $G$ be a graph in $\cP$. The \emph{local resilience} of $G$ with respect to $\cP$ is defined to be the minimum $r \in \mathbb R$ such that by deleting at each vertex $v\in V(G)$ at most $r\deg(v)$ edges one can obtain a graph not in $\cP$.  In this language, for example, Theorem~\ref{thm:bandwidth} says that the local resilience of $K_n=G(n,1)$ with respect to being universal for all $k$-colourable graphs in $\cH(n,\Delta)$ with sublinear bandwidth is $\frac1k-o(1)$, and a sparse analogue asks for a similar local resilience result to hold a.a.s.\ for $G(n,p)$.

Lee and Sudakov~\cite{lee2012} obtained a very strong local resilience result for Hamilton cycles.  Improving on~\cite{SudVu}, they showed that the local resilience of $G(n,p)$ with respect to Hamiltonicity is a.a.s.\ at least $\tfrac12-o(1)$ when $p=\Omega\big(\tfrac{\log n}{n}\big)$. This is optimal up to the constant factor, as below this probability $G(n,p)$ is itself a.a.s.\ not Hamiltonian. Triangle factors were investigated by Balogh, Lee and Samotij~\cite{balogh2012corradi}, who proved that the local resilience of $G(n,p)$ with respect to the containment of a triangle factor on $n-O\big(p^{-2}\big)$ vertices is a.a.s.\ $\frac13-o(1)$ if $p\gg (\frac{\log n}{n})^{1/2}$. It was observed by Huang, Lee, and Sudakov~\cite{huang2012} that we cannot hope to cover all vertices in the host graph with triangles: Already for constant~$p$ it is easy to delete all edges in the neighbourhood of $\Theta(p^{-2})$ vertices in~$G(n,p)$ without violating the resilience condition.  Very recently Noever and Steger~\cite{NoeSte} showed that the local resilience of $G(n,p)$ with respect to containing a $(1-o(1))n$-vertex squared cycle (a cycle with all edges between vertices at distance $2$ added) is a.a.s.\ $\tfrac13-o(1)$ provided $p\gg n^{-1/2+o(1)}$. Even more recently, Nenadov and \v{S}kori\'{c}~\cite{NenSko} removed the $\log$-factor from the probability bound of~\cite{balogh2012corradi}. These results are notable in that the bounds on~$p$ are close to optimal: for $p\ll n^{-1/2}$, a.a.s.\ most edges of~$G(n,p)$ are not in triangles and one can obtain a triangle-free graph by deleting only a tiny fraction of edges at each vertex, so that the local resilience of $G(n,p)$ with respect to containing triangles is $o(1)$. Furthermore, for $p\ll n^{-1/2}$ the random graph $G(n,p)$ itself does not contain any $(1-o(1))n$-vertex squared cycle.

More general subgraphs~$H$ were considered by Huang, Lee and
Sudakov~\cite{huang2012}, who proved an analogue of the bandwidth theorem
(Theorem~\ref{thm:bandwidth}) in $G(n,p)$ with $0<p<1$ constant (for
subgraphs~$H$ with certain vertices not in triangles). A version which works for
much smaller probabilities $p\gg(\frac{\log n}{n})^{1/\Delta}$ in the special
case of bipartite graphs~$H$ on $(1-o(1))n$ vertices (with maximum
degree~$\Delta$ and sublinear bandwidth) was established
in~\cite{bottcher2013almost}. In~\cite[Theorem~1.9]{blowup} the so-called sparse
blow-up lemma is used to prove a similar result for graphs~$H$ which are not
necessarily bipartite.
An even better bound on~$p$ was obtained when we
restrict the problem to the class of almost spanning trees~$H$: Balogh, Csaba
and Samotij~\cite{balogh2011} proved that the local resilience of $G(n,p)$ with
respect to containing copies of all trees $T$ on $(1-o(1))n$ vertices with
$\Delta(T) \leq \Delta$ is asymptotically almost surely at least~$1/2 - o(1)$ if
$p \gg 1/n$, which is optimal. Finally, returning to $H$-factors, Conlon,
Gowers, Samotij and Schacht~\cite{CGSS} gave resilience results for
almost-spanning $H$-factors which work down to the optimal probability
threshold, but leave a linear number of vertices uncovered; Nenadov and
\v{S}kori\'c~\cite{NenSko} substantially improved this, but (for most graphs)
the number of vertices left uncovered in their result is still not the correct
order of magnitude.

\subsection*{Our results.}

We prove several sparse analogues of the bandwidth theorem
(Theorem~\ref{thm:bandwidth}). Our first result is a version for sparse random
graphs, extending the resilience results of Huang, Lee and
Sudakov~\cite{huang2012}, \cite{bottcher2013almost}, and~\cite{blowup}.

\begin{theorem}
\label{thm:main}
For each $\gamma >0$, $\Delta \geq 2$, and $k \geq 1$, there exist constants $\beta^\ast >0$ and $\Ca >0$ such that the following holds asymptotically almost surely for $\Gamma = G(n,p)$ if $p \geq \Ca\big(\frac{\log n}{n}\big)^{1/\Delta}$. Let $G$ be a spanning subgraph of $\Gamma$ with $\delta(G) \geq\left(\frac{k-1}{k}+ \gamma\right)pn$, and let $H$ be a $k$-colourable graph on $n$ vertices with $\Delta(H) \leq \Delta$, bandwidth at most $\beta^\ast n$, and with at least $\Ca p^{-2}$ vertices which are not contained in any triangles of $H$. Then $G$ contains a copy of $H$.  
\end{theorem}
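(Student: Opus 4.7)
The plan is to follow the three-step strategy used to prove the original bandwidth theorem (Theorem~\ref{thm:bandwidth}), carried out in the sparse setting by invoking a sparse regularity lemma for subgraphs of $\Gamma=G(n,p)$ and the sparse blow-up lemma of~\cite{blowup}. Roughly, one produces a ``skeleton'' in a regularity partition of $G$ onto which one maps an appropriate decomposition of~$H$, and then embeds~$H$ cluster by cluster via the blow-up lemma.

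First, I would apply a sparse regularity lemma to $G$, obtaining an $(\eps,d,p)$-regular partition $\{V_0,V_1,\dots,V_t\}$ with exceptional set $|V_0|\le\eps n$, and form the reduced graph $R$ whose edges correspond to the regular pairs of density at least $dp$. The minimum degree hypothesis $\delta(G)\ge\big(\tfrac{k-1}{k}+\gamma\big)pn$ transfers to $R$ as $\delta(R)\ge\big(\tfrac{k-1}{k}+\gamma/2\big)t$. In a second step I would apply a ``lemma for $G$'' of the type used in~\cite{bottcher2009proof}: in $R$ one finds a spanning backbone consisting of a cycle of copies of $K_k$ in which consecutive cliques share a common $K_{k-1}$. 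This backbone is the skeleton onto which the chromatic structure of $H$ will be mapped.

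The third ingredient is a ``lemma for $H$'' that exploits the bandwidth condition. I would cut $H$ into short consecutive intervals (each much smaller than $\min_i|V_i|$) according to its bandwidth labelling, and, using a proper $k$-colouring of $H$, assign each interval to a $K_k$ of the backbone so that adjacent intervals are sent to $K_k$'s sharing a $K_{k-1}$; this yields a homomorphism from~$H$ into the backbone which balances the cluster sizes. Before invoking the blow-up lemma I would reserve in each cluster a small family of ``buffer'' vertices, which must be chosen among the prescribed set of at least $\Ca p^{-2}$ vertices of $H$ that are not contained in any triangle. The reason is that the completion step of the sparse blow-up lemma requires finding, for each buffer vertex, a common $G$-neighbourhood of its already embedded neighbours; when the buffer vertex lies in no triangle of~$H$, those neighbours form an independent set in~$H$, so the required common neighbourhoods only need to be controlled along non-edges of~$H$, and the order $p^{-2}$ bound on the number of triangle-free vertices is precisely what a Hall-type matching argument can absorb at this stage.

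Finally, I would apply the sparse blow-up lemma of~\cite{blowup} to embed $H$ into $G$ according to this assignment. The main technical obstacle will be verifying the regularity-inheritance hypotheses of this blow-up lemma: for $\Gamma=G(n,p)$ one needs that with high probability, for most vertices $v$ and appropriate regular pairs $(V_i,V_j)$ in~$G$, the pairs $\bigl(N_\Gamma(v)\cap V_i,\,V_j\bigr)$ and, for pairs of vertices $u,v$, also $\bigl(N_\Gamma(u)\cap V_i,\,N_\Gamma(v)\cap V_j\bigr)$ remain $\eps$-regular of essentially the correct density. Such inheritance statements hold a.a.s.\ for $p\ge \Ca(\log n/n)^{1/\Delta}$ but not below, and this is precisely what dictates the probability threshold in the theorem. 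Absorbing the exceptional set $V_0$ into the backbone structure and carefully orchestrating the interplay between the triangle-free buffer vertices and the completion step of the blow-up lemma will be the remaining delicate ingredients of the proof.
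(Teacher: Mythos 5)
Your high-level three-step outline --- sparse regularity lemma plus backbone in the reduced graph (Lemma for $G$), a bandwidth-driven assignment of intervals of $H$ onto the backbone (Lemma for $H$), and a final application of the sparse blow-up lemma with the correct regularity-inheritance hypotheses dictating the threshold $p\gtrsim(\log n/n)^{1/\Delta}$ --- is the right skeleton. But there is a genuine gap: you have misidentified what the $\Ca p^{-2}$ triangle-free vertices of $H$ are for, and in doing so you have omitted the single most important new ingredient of the proof, the \emph{pre-embedding} step.

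In the paper, the triangle-free vertices are \emph{not} used as the buffer sets $\tcW$ of the blow-up lemma. The buffers $\widetilde W_{i,j}$ are simply the vertices of $W_{i,j}$ far from boundary, switching-block and colour-zero vertices, with no triangle-freeness requirement. The triangle-free vertices are needed to handle the exceptional set $V_0$ of the regular partition. Unlike the dense case, $V_0$ cannot be distributed among the clusters: its vertices may fail the neighbourhood-size and regularity-inheritance properties that the blow-up lemma demands of every vertex of $G$, so they cannot be placed inside the blow-up machinery at all. Instead, one \emph{pre-embeds} onto each $v\in V_0$ a vertex $x\in V(H)$ chosen from among those not in any triangle, and embeds $N_H(x)$ onto carefully chosen vertices of $N_G(v)$ found via a Common Neighbourhood Lemma (Lemma~\ref{lem:common}). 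Triangle-freeness of $x$ is essential here for a different reason than you give: it makes $N_H(x)$ an independent set, so the images $w_1,\dots,w_\ell$ of $N_H(x)$ need not span any edges of $G$, which is what makes this step feasible. The common neighbourhoods then generate image restrictions for the second neighbourhood of $x$, feeding into the restriction-pair hypotheses~\ref{itm:restrict:numres}--\ref{itm:restrict:Ireg} of the blow-up lemma. The quantity $\Ca p^{-2}$ is the bound $|V_0|\le\Ca p^{-2}$ coming from the two-sided regularity inheritance lemma (Lemma~\ref{lem:TSRIL}), so one needs at least that many triangle-free vertices of $H$ to have enough targets for the pre-embedding; this is not a Hall-type matching argument inside the blow-up lemma's completion. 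Finally, after pre-embedding one also needs a Balancing Lemma (Lemma~\ref{lem:balancing}) to repair the now slightly-mismatched cluster sizes while preserving regularity, super-regularity and inheritance; your proposal does not address this rebalancing at all, and your suggestion to ``absorb $V_0$ into the backbone structure'' would not work because $V_0$ vertices are precisely those whose $\Gamma$-degrees and neighbourhood-regularity-inheritance are out of control.
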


Observe that the bound on~$p$ achieved in this result matches the bound in the universality result in Theorem~\ref{thm:universal}. Hence, though we do not believe it to be optimal, improving it will most likely be hard.  Moreover, as explained in conjunction with Theorem~\ref{thm:bandwidth}, the minimum degree of $G$ cannot be decreased, nor can the bandwidth restriction be removed.  As indicated above, it is also necessary that $\Theta(p^{-2})$ vertices of~$H$ are not in triangles.

If in addition the subgraph~$H$ is also $D$-degenerate, we can prove a variant of Theorem~\ref{thm:main} for $p\gg (\log n/n)^{1/(2D+1)}$. 
Again, this probability bound matches the one in the currently best universality result for $D$-degenerate graphs given in Theorem~\ref{thm:Duniversal}.
As before we require a certain number of vertices which are not in triangles of~$H$. But, due to technicalities of our proof method, in addition these vertices are now also required not to be in four-cycles.

\begin{theorem}\label{thm:degenerate}
For each $\gamma >0$, $\Delta \geq 2$, and $D, k \geq 1$, there exist constants $\beta^\ast >0$ and $\Ca >0$ such that the following holds asymptotically almost surely for $\Gamma = G(n,p)$ if $p \geq \Ca\big(\frac{\log n}{n}\big)^{1/(2D+1)}$. Let $G$ be a spanning subgraph of $\Gamma$ with $\delta(G) \geq\left(\frac{k-1}{k}+ \gamma\right)pn$ and let $H$ be a $D$-degenerate, $k$-colourable graph on $n$ vertices with $\Delta(H) \leq \Delta$, bandwidth at most $\beta^\ast n$ and with at least $\Ca p^{-2}$ vertices which are not contained in any triangles or four-cycles of $H$. Then $G$ contains a copy of $H$.
\end{theorem}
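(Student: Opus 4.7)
The plan is to follow the same overall strategy as for Theorem~\ref{thm:main}, but to replace the final embedding step by one tailored to $D$-degenerate graphs. Concretely, I would first apply the sparse regularity lemma to $G$ to obtain a reduced graph $R$ on constantly many clusters; the minimum degree condition $\delta(G) \ge \bigl(\tfrac{k-1}{k}+\gamma\bigr)pn$ transfers to an analogous minimum degree condition on $R$, which, via the standard tiling arguments used for the dense bandwidth theorem, produces inside $R$ a spanning ``backbone'' structure (a $K_k$-factor joined by a closed spanning walk) flexible enough to host any $k$-colourable $n$-vertex graph of sublinear bandwidth and bounded maximum degree. Using the bandwidth of $H$ together with its $k$-colouring, partition $V(H)$ into consecutive chunks matching the clusters of this backbone, and reserve the $\Ca p^{-2}$ special vertices of $H$ (those in no triangle or four-cycle) for a final balancing step that equalises cluster sizes before the blow-up is carried out.

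The novel ingredient is the embedding step, which must now succeed at the probability threshold $p \gg (\log n/n)^{1/(2D+1)}$ rather than $(\log n/n)^{1/\Delta}$. For this I would adapt the sparse blow-up machinery that underlies Theorem~\ref{thm:Duniversal}: fix a degeneracy ordering of $H$ in which every vertex has at most $D$ earlier neighbours, and embed the vertices greedily in this order, maintaining for each still-unembedded vertex $v$ a candidate set defined as the intersection of $G$-neighbourhoods of its already-embedded neighbours. Concentration for such intersections relies on the relevant joint $\Gamma$-neighbourhoods being of order $np^{D}$, together with a regularity-inheritance statement that is robust under iteration; these two requirements, combined via a union bound over the at most $2D+1$ vertices that must behave typically simultaneously (the $D$ back-neighbours plus the partial forward-neighbourhoods consulted during the inheritance step), produce precisely the threshold $p \gg (\log n/n)^{1/(2D+1)}$. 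The extra four-cycle-free assumption on the special vertices is what lets the balancing step succeed: it guarantees that for each such vertex $v$, the neighbours of $v$ in $H$ form an independent set with no common $H$-neighbour, so their candidate sets can be controlled by counting joint $\Gamma$-neighbourhoods via a direct Chernoff-type argument in $G(n,p)$ rather than relying on higher-order regularity properties of $G$ that are unavailable at this sparsity.

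The main obstacle will be merging the $D$-degenerate blow-up lemma with the bandwidth-based chunking and the balancing step into a single coherent embedding scheme. In particular, one needs a regularity-inheritance statement strong enough to be iterated once per vertex of $H$ at the low probability $p \gg (\log n/n)^{1/(2D+1)}$, and one needs to verify that the buffer of special vertices used for size balancing can be embedded after, or interleaved with, the main degeneracy-ordered embedding without violating either the bandwidth partition or the degeneracy ordering. This last point is precisely why the hypothesis here is strictly stronger than in Theorem~\ref{thm:main}: at probability $(\log n/n)^{1/\Delta}$ the full sparse blow-up lemma absorbs local irregularities using properties of $G$ itself, whereas at $(\log n/n)^{1/(2D+1)}$ only properties of $\Gamma$ are reliably available on the last-embedded vertices, which forces their $H$-neighbourhoods to be both triangle-free (so the neighbours are pairwise non-adjacent) and four-cycle-free (so the joint $\Gamma$-neighbourhood is not inflated by common neighbours); without both conditions the final Chernoff step fails to deliver a non-empty candidate set.
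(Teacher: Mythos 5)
Your high-level plan (degeneracy ordering, a degeneracy-tailored blow-up lemma reaching $p\gg(\log n/n)^{1/(2D+1)}$, and treating the triangle/$C_4$-free vertices as "special") is directionally right, but you misplace the role of the special vertices and consequently the purpose of the four-cycle hypothesis. In the paper's proof (Theorem~\ref{thm:degen}), those vertices are \emph{not} a reservoir for a final balancing/fill-in pass; they are consumed at the very start, in the pre-embedding that covers the exceptional set $V_0$ left over from Lemma~\ref{lem:G}. The balancing step (Lemma~\ref{lem:balancing}) moves vertices of $G$ around among clusters and involves no vertices of $H$ at all. Reserving special vertices of $H$ to embed last, interleaved with or after the main blow-up, is a genuinely different scheme and would founder: after the blow-up lemma has used up almost all of $G$ there is no control on where the leftover hosts are, and in any case $V_0$ must be covered before the blow-up lemma can be invoked, not after.

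The actual reason the four-cycle condition is needed is sharper than "candidate sets can be controlled by a direct Chernoff argument." If $x$ is pre-embedded and not in any $C_4$ of $H$, then every second neighbour $z$ of $x$ is adjacent to at most one of $x$'s already-embedded neighbours, hence $|J_z|\le 1$. That bound is what lets the pre-embedding replace Lemma~\ref{lem:common} (which iterates the regularity-inheritance lemmas up to $\Delta$ times, forcing $p\gg(\log n/n)^{1/\Delta}$) by just one application each of Lemmas~\ref{lem:OSRIL} and~\ref{lem:TSRIL} per chosen host, which works already at $p\gg(\log n/n)^{1/2}$. The triangle-free condition is what makes $N_H(x)$ independent so its vertices can be sent to $N_G(v)$ without needing edges among the hosts. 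Once the pre-embedding is set up with $|J_x|\le 1$ throughout, the paper simply cites the degenerate sparse blow-up lemma (\cite[Lemma~1.23]{blowup}) with $\tD=2D+1$, a degeneracy order $\tau$, exceptional set $W^e$ consisting of the image-restricted vertices, and a buffer of vertices of degree at most $2D$ (supplied by the strengthened Lemma for $H$, property~\ref{lemH:H6}); it does not re-derive a greedy degenerate embedding as your sketch proposes. You would need to articulate the $(\tD,p,m)$-bounded-order conditions and verify them, rather than appealing informally to "$D$ back-neighbours plus partial forward-neighbourhoods."
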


Since trees are $1$-degenerate this implies a resilience result for trees when $p\gg (\frac{\log n}{n})^{1/3}$. This probability bound is much worse than that obtained by Balogh, Csaba, and Samotij~\cite{balogh2011} for almost-spanning trees, and unlikely to be optimal, but it is the first resilience result for bounded degree
\emph{spanning} trees in $G(n,p)$.

Finally, we also establish a sparse analogue of Theorem~\ref{thm:bandwidth} in bijumbled graphs, one of the most widely studied classes of pseudorandom graphs.
A graph $\Gamma$ is \emph{$(p,\nu)$-bijumbled} if for all disjoint sets $X,Y\subset V(\Gamma)$ we have
\[\big|e(X,Y)-p|X||Y|\big|\le\nu\sqrt{|X||Y|}\,.\]
This definition goes back to an equivalent notion introduced by Thomason~\cite{Tho87} who initiated the study of pseudorandom graphs. It is also related to the well investigated class of $(n,d,\lambda)$-graphs in that an $(n,d,\lambda)$-graph is $\big(\tfrac{d}{n},\lambda\big)$-bijumbled.  

Only very recently a universality result similar to Theorem~\ref{thm:universal} was established for bijumbled graphs in~\cite{blowup}, where it was shown that $(p,\nu)$-bijumbled graphs~$G$ with $\delta(G)\ge\frac12pn$ and $\nu\ll p^{\max(4,(3\Delta+1)/2)}n$ are universal for $\cH(n,\Delta)$. Our resilience result works for the same bijumbledness condition, though we do not believe it to be optimal.
Local resilience results in bijumbled graphs were so far only obtained for special subgraphs~$H$: Dellamonica, Kohayakawa, Marciniszyn, and Steger~\cite{dellamonica2008} considered cycles~$H$
of length $(1-o(1))n$, the results of Conlon, Fox and Zhao~\cite{CFZ} imply resilience for $F$-factors covering $(1-o(1))n$ vertices, and Krivelevich, Lee and Sudakov~\cite{KriLeeSud}
established a resilience result for pancyclicity. Hence, previous to this work only little was known about the resilience of bijumbled (or indeed any other common notion of pseudorandom) graphs.

\begin{theorem}
\label{thm:jumbled}
 For each $\gamma >0$, $\Delta \geq 2$, and $k \geq 1$, there exists a constant $c >0$ such that the following holds for any $p>0$. Given $\nu\le cp^{\max(4,(3\Delta+1)/2)}n$, suppose $\Gamma$ is a $\big(p,\nu\big)$-bijumbled graph, $G$ is a spanning subgraph of $\Gamma$ with $\delta(G) \geq\big(\tfrac{k-1}{k}+\gamma\big)pn$, and $H$ is a $k$-colourable graph on $n$ vertices with $\Delta(H) \leq \Delta$ and bandwidth at most $c n$. Suppose further that there are at least $c^{-1}p^{-6} \nu^2n^{-1}$ vertices in $V(H)$ that are not contained in any triangles of $H$. Then $G$ contains a copy of $H$.
\end{theorem}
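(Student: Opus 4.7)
The plan is to adapt the regularity--blow-up strategy of the dense bandwidth theorem (Theorem~\ref{thm:bandwidth}) and its constant-$p$ analogue of Huang, Lee and Sudakov~\cite{huang2012}, replacing each dense tool by a bijumbled analogue. First, I apply a sparse regularity lemma to $G$ viewed as a subgraph of $\Gamma$; since $\Gamma$ is $(p,\nu)$-bijumbled with $\nu\ll pn$, this yields an equitable partition $V_0\cup V_1\cup\dots\cup V_t$ such that almost every pair $(V_i,V_j)$ is $\eps$-regular relative to $\Gamma$. The bijumbledness discrepancy estimate then transfers the minimum degree of $G$ to the reduced graph $R$ on $[t]$: after discarding $o(t)$ exceptional clusters one obtains $\delta(R)\ge\big(\tfrac{k-1}{k}+\tfrac{\gamma}{2}\big)t$.

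Second, I use this minimum degree condition in $R$ to produce, as in the dense proof, a spanning ``backbone'' supporting the embedding of $H$: a $K_k$-factor whose cliques are chained into a ladder with enough linking structure. Then the $k$-colouring and $\bw(H)\le cn$ are exploited to slice $V(H)$ into short consecutive intervals along a bandwidth ordering and to assign each interval into one $K_k$-clique of the ladder, yielding a map $\phi\colon V(H)\to[t]$ such that $xy\in E(H)$ implies $\phi(x)\phi(y)\in E(R)\cup\{\{i,i\}:i\in[t]\}$. Next I rebalance the preimage sizes $|\phi^{-1}(i)|$ to match $|V_i|$ by reassigning triangle-free vertices of $H$ between clusters of their $K_k$-clique: a triangle-free vertex has at most one neighbour in each colour class, so moving it creates no $H$-edge across a non-adjacent pair of clusters. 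The $\ge c^{-1}p^{-6}\nu^2/n$ triangle-free reservoir is exactly what is needed, quantitatively, to absorb the discrepancies inherent to the bijumbled regularity partition.

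Finally, I clean the selected pairs to be super-regular in the bijumbled sense of~\cite{blowup} (removing $o(|V_i|)$ vertices per cluster and rebalancing the assignment accordingly), and invoke the bijumbled blow-up lemma of~\cite{blowup}; its hypothesis $\nu\le cp^{\max(4,(3\Delta+1)/2)}n$ is precisely the source of the bijumbledness assumption in the theorem, and it extends $\phi$ to an embedding $H\hookrightarrow G$. I expect the main obstacle to lie in the balancing step: in the bijumbled setting each vertex reassignment costs quantitative ``budget'' controlled by $p$ and $\nu$, and one must track carefully how many triangle-free vertices are required, why exactly the $p^{-6}\nu^2/n$ bound arises, and how the cleaning step interacts with the final cluster-size parity, all without access to the concentration arguments available for $G(n,p)$. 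A further subtlety is that supporting lemmas from the dense and random-graph arguments (notably those guaranteeing rich common neighbourhoods) have to be re-derived directly from the discrepancy inequality.
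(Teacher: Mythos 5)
Your proposal misidentifies the role of the triangle-free vertices and, as a consequence, omits the step that is the crux of the sparse argument. When a regularity lemma is applied to $G\subset\Gamma$, it returns an exceptional set $V_0$ which cannot satisfy the super-regularity and regularity-inheritance hypotheses of the bijumbled blow-up lemma; since $H$ is spanning, some vertices of $H$ must nevertheless be mapped onto $V_0$. In Lemma~\ref{lem:pseudG} the bound $|V_0|\le\Ca p^{-6}\nu^2 n^{-1}$ comes directly from the error term of the two-sided regularity-inheritance lemma for bijumbled graphs (Lemma~\ref{lem:pseudTSRIL}), and the vertices of $H$ not in triangles are \emph{pre-embedded} onto $V_0$ one at a time: for each $v\in V_0$ one picks a triangle-free $x\in V(H)$ near the start of the bandwidth order, maps $x\to v$, and maps $N_H(x)$ to carefully chosen $G$-neighbours of $v$ via the Common Neighbourhood Lemma (Lemma~\ref{lem:pseudcommon}). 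Here the independence of $N_H(x)$ is used precisely so that these images need not be pairwise $G$-adjacent, only to induce good image restrictions on the second neighbourhood of $x$; this is why the supply of such vertices must be at least $\Omega(p^{-6}\nu^2 n^{-1})$, i.e.\ at least $|V_0|$.

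Your proposal instead spends the triangle-free reservoir on rebalancing $|\phi^{-1}(i)|$, asserting that ``a triangle-free vertex has at most one neighbour in each colour class, so moving it creates no $H$-edge across a non-adjacent pair of clusters.'' Neither half holds: a vertex $x$ with $N_H(x)$ independent may have all of its neighbours in one colour class, and moving $x$ from $(i,j)$ to $(i,j')$ is blocked the moment some $y\in N_H(x)$ is assigned to $(i,j')$ (then $xy$ would lie inside a single cluster, breaking the homomorphism into $R^k_r$). Quantitatively, the preimage discrepancies produced by a bandwidth slicing are of order $\xi n$, far exceeding $p^{-6}\nu^2 n^{-1}$, so this reservoir could not absorb them anyway. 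In the paper the rebalancing acts on the $G$ side (Lemma~\ref{lem:balancing} moves $G$-vertices between clusters, exploiting $\delta(R^k_r)$ and the backbone $B^k_r$), while the $H$-side sizes are controlled probabilistically inside the Lemma for $H$. As written, your sketch leaves $V_0$ uncovered and uses the triangle-free vertices for a step that does not go through.
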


We remark that the requirement of $\Ca p^{-6}\nu^2 n^{-1}$ vertices of $H$ not
being in triangles comes from our use of a so-called regularity inheritance
lemma proved in~\cite{ABSS}; this bound is not believed to be optimal (see
Section~\ref{sec:remarks} for further details).

The proofs of our results rely on sparse versions of the so-called blow-up
lemma. The blow-up lemma is an important tool in extremal graph theory, proved
by Koml\'os, S\'ark\"ozy and Szemer\'edi~\cite{komlos1997blow} and was for
example instrumental in the proof of the bandwidth theorem and its analogue in
$G(n,p)$ for constant~$p$ by Huang, Lee and Sudakov~\cite{huang2012}. However it
applies only to dense graphs. Several of the earlier resilience results in
sparse random graphs developed sparse blow-up type results handling special
classes of graphs: Balogh, Lee and Samotij~\cite{balogh2012corradi} proved a
sparse blow-up lemma for embedding triangle factors, and
in~\cite{bottcher2013almost} a blow-up lemma for embedding almost spanning
bipartite graphs in sparse graphs was used. Full versions of the blow-up lemma
in sparse random graphs and pseudorandom graphs were established only very
recently in~\cite{blowup}. We will use these here.
We remark that a simple use of these blow-up lemmas gives almost
spanning versions of our main results (as already noted in~\cite{blowup}), and
the main work here is to extend this to spanning embedding results, which turns
out to be much harder.

Further, we note that we actually prove somewhat stronger statements than Theorem~\ref{thm:main}, Theorem~\ref{thm:degenerate}, and Theorem~\ref{thm:jumbled} in the same sense in that a stronger statement than Theorem~\ref{thm:bandwidth} was proven in~\cite{bottcher2009proof}: 
we allow~$H$ in fact to be $(k+1)$-colourable, where the additional colour may only be assigned to very few well distributed vertices (for details see, e.g., Theorem~\ref{thm:maink} below). Thus, for instance, even though Theorem~\ref{thm:main} only implies that the local resilience of $G(n,p)$ with respect to Hamiltonicity is a.a.s.\ $\tfrac12-o(1)$ when $n$ is even, Theorem~\ref{thm:maink} implies it also for $n$ odd.

\subsection*{Organisation.}

The remainder of this paper is organised as follows. In Section~\ref{sec:preliminaries} we introduce necessary definitions and collect some known results which we need in our proofs. Next, in Section~\ref{sec:mainlemmas}, we outline the proof of the bandwidth theorem in sparse random graphs, Theorem~\ref{thm:main}, and state the four technical lemmas we require. Their proofs are given in Sections~\ref{sec:prooflemG}--\ref{sec:prooflembalancing}, and the proof of Theorem~\ref{thm:main} is presented in Section~\ref{sec:proofmain}. We provide the modifications required to obtain Theorem~\ref{thm:degenerate} in Section~\ref{sec:proofdegen}, and those required for Theorem~\ref{thm:jumbled} in Section~\ref{sec:proofjumbled}. Finally, Section~\ref{sec:remarks} contains some concluding remarks, and Appendix~\ref{app:tools} contains proofs of a few results which are more or less standard but which we could not find in the form we need in the literature.

%%%%%%%%%%%%%%%%%%%%%%%%%%%%%%%%%%%%%%%%%%%%%%%%%%%%%%%%%

% Preliminaries

%%%%%%%%%%%%%%%%%%%%%%%%%%%%%%%%%%%%%%%%%%%%%%%%%%%%%%%%%
\section{Preliminaries}
\label{sec:preliminaries}

Throughout the paper $\log$ denotes the natural logarithm.
We assume that the order $n$ of all graphs tends to infinity and therefore is sufficiently large whenever necessary. 
For reals $a, b >0$ and integer $k\in \mathbb N$, we use the notation $(a\pm b) = [a-b, a+b]$ and $[k] = \{1, \ldots, k\}$.  
Our graph-theoretic notation is standard and follows \cite{bollobas1998modern}. In particular, given a graph $G$ its vertex set 
is denoted by $V(G)$ and its edge set by $E(G)$. Let $A,B\subseteq V$ be disjoint vertex sets. We denote the number of edges between $A$ and $B$ by $e(A,B)$. 
For a vertex $v \in V(G)$ we write $N_G(v)$ for the neighbourhood of $v$ in $G$ and $N_G(v,A):= N_G(v) \cap A$ for the neighbourhood of $v$ restricted to $A$ in $G$. 
Given vertices $v_1, \ldots, v_k \in V(G)$ we denote the joint neighbourhood of $v_1, \ldots, v_k$ restricted to a set $A$ by $N_G(v_1, \ldots, v_k; A) = \bigcap_{i\in[k]} N_G(v_i, A)$. 
Finally, we use the notation $\deg_G(v) := |N_G(v)|$ and $\deg_G(v, A) := |N_G(v,A)|$, as well as $\deg_G(v_1, \ldots, v_k; A) := |N_G(v_1, \ldots, v_k;A)|$ for the degree of $v$ in $G$, the degree of $v$ restricted to $A$ in $G$ and the size of the joint neighbourhood of $v_1, \ldots, v_k$ restricted to $A$ in $G$. 
Finally, let $\deg_G(v) := |N_G(v)|$ be the degree of $v$ in $G$. 
For the sake of readability, we do not intend to optimise the constants in our theorems and proofs.

Now we introduce some definitions and results of the regularity method as well as related tools that are essential in our proofs. In particular, we state a minimum degree version of the sparse regularity lemma (Lemma~\ref{lem:regularitylemma}) and the sparse blow up lemma (Lemma~\ref{thm:blowup}). Both lemmas use the concept of regular pairs. Let $G= (V,E)$ be a graph, $\eps, d >0$, and $p \in (0,1]$. Moreover, let $X,Y \subseteq V$ be two disjoint nonempty sets. The \emph{$p$-density} of the pair $(X,Y)$ is defined as \[d_{G,p}(X,Y) := \frac{e_G(X,Y)}{p|X||Y|}.\]
For most of this paper, when we work with random graphs, we will be interested in the regularity concept called \emph{lower-regularity}. When we work with bijumbled graphs, on the other hand, we will need the stronger concept \emph{regularity}. The difference is that in the former we impose only lower bounds on $p$-densities, whereas in the latter we impose in addition upper bounds. The main reason for this difference is that our `regularity inheritance lemmas' below have different requirements in random and in bijumbled graphs; we do not otherwise make use of the extra strength of `regular' as opposed to `lower-regular'. 

We also need to define super-regularity, for which we require $G$ to be a subgraph of a graph $\Gamma$, which will be the random or bijumbled graph whose resilience properties we are establishing.

\begin{definition}[$(\eps,d,p)$-(super-)(lower-)regular pairs]
  \label{def:regular}
  Let~$G$ and~$\Gamma$ be graphs with $G\subset\Gamma$.
The pair $(X,Y)$ is called \emph{$(\eps,d,p)_G$-lower-regular} if for every $X'\subseteq X$ and $Y'\subseteq Y$ with $|X'|\geq \eps|X|$ and $|Y'|\geq \eps |Y|$ we have  $d_{G,p}(X',Y') \geq d- \eps$.

It is called \emph{$(\eps,d,p)_G$-regular} if there exists $d'\ge d$ such that for every $X'\subseteq X$ and $Y'\subseteq Y$ with $|X'|\geq \eps|X|$ and $|Y'|\geq \eps |Y|$ we have  $d_{G,p}(X',Y') = d'\pm \eps$.

If $(X,Y)$ is either $(\eps,d,p)_G$-lower-regular or $(\eps,d,p)_G$-regular, and in addition we have
\begin{align*}
 |N_G(x,Y)| &\geq (d-\eps)\max\big(p|Y|,\deg_\Gamma(x,Y)/2\big)\quad\text{and}\\
 |N_G(y,X)| &\geq (d-\eps)\max\big(p|X|,\deg_\Gamma(y,X)/2\big)
\end{align*}
for every $x \in X$ and $y \in Y$, then the pair $(X,Y)$ is called \emph{$(\eps,d,p)_G$-super-regular}. When we use super-regularity it will be clear from the context whether $(X,Y)$ is lower-regular or regular.
\end{definition}

Note that a regular pair is by definition lower-regular, though the converse does not hold. Furthermore, although the definition of super-regularity of $G$ contains a reference to $\Gamma$, at each place in this paper where we use super-regularity, we will see that the first term in the maximum is larger than the second. When it is clear from the context, we may omit the subscript $G$ in $(\eps,d,p)_G$-\mbox{(super-)}regular which is used to indicate with respect to which graph a pair is \mbox{(super-)}regular. A direct consequence of the definition of $(\eps,d,p)$-lower-regular pairs is the following proposition about the sizes of neighbourhoods in lower-regular pairs. 

\begin{proposition}
\label{prop:neighbourhood}
Let $(X,Y)$ be $(\eps, d,p)$-lower-regular. Then there are less than $\eps |X|$ vertices $x\in X$ with $|N(x,Y)| < (d-\eps)p|Y|$. \qed
\end{proposition}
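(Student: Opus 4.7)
The plan is a short argument by contradiction. Let $B := \{x \in X : |N(x,Y)| < (d-\eps)p|Y|\}$ be the set of vertices in $X$ violating the claimed degree bound, and suppose for contradiction that $|B| \ge \eps|X|$. Then, choosing $X' = B$ and $Y' = Y$, both sets satisfy the size requirement in the definition of $(\eps, d, p)$-lower-regularity (using $|Y| \ge \eps|Y|$ trivially), so lower-regularity of $(X,Y)$ gives $d_{G,p}(B, Y) \ge d - \eps$, i.e.\ $e(B, Y) \ge (d - \eps) p |B| |Y|$.

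On the other hand, double-counting the edges between $B$ and $Y$ by summing vertex-degrees, together with the defining property of $B$, gives
\[
e(B, Y) \;=\; \sum_{x \in B} |N(x, Y)| \;<\; |B|\,(d-\eps)\,p\,|Y|,
\]
which contradicts the previous inequality. Hence $|B| < \eps|X|$, as required. I do not foresee any real obstacle: the statement is essentially an immediate unpacking of the lower-regularity condition applied to the subset of low-degree vertices, and the only ingredients needed are the size threshold in Definition~\ref{def:regular} and the elementary identity $e(B,Y) = \sum_{x\in B}|N(x,Y)|$.
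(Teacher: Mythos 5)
Your argument is correct and is precisely the standard unpacking of the lower-regularity definition: the paper states this proposition with an immediate \qed, treating it as trivial, and the contradiction via the set $B$ of low-degree vertices and the double count $e(B,Y)=\sum_{x\in B}|N(x,Y)|$ is exactly the implicit proof. Nothing to add.
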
 

The next proposition asserts that altering the vertex sets in an
$(\eps,d,p)$-(lower-)regular pair slightly does not destroy (lower-)regularity.   
\begin{proposition}
\label{prop:subpairs3}
Let $(X,Y)$ be an $(\eps,d,p)$-lower-regular pair in a graph $G$ and let $\hat{X}$ and $\hat Y$ be two subsets of $V(G)$ such that $|X\symd \hat{X}| \leq \mu |X|$ and $|Y \symd \hat Y| \leq \nu |Y|$ for some $0 \leq \mu, \nu \leq 1$. Then $(\hat X, \hat Y)$ is $(\hat \eps, d, p)$-lower-regular, where $\hat \eps := \eps + 2\sqrt{\mu} + 2 \sqrt{\nu}$. Furthermore, if for any disjoint $A,A'\subset V(G)$ with $|A|\ge\mu|X|$ and $|A'|\ge\nu|Y|$ we have $e(A,A')\le (1+\mu+\nu)p|A||A'|$, and $(X,Y)$ is $(\eps,d,p)$-regular, then $(\hat X, \hat Y)$ is $(\hat \eps, d, p)$-regular.
\end{proposition}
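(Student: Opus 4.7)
The plan is a straightforward approximation argument: given test sets $A\subseteq\hat X$, $B\subseteq\hat Y$ with $|A|\ge\hat\eps|\hat X|$ and $|B|\ge\hat\eps|\hat Y|$, I pass to $A_0:=A\cap X$ and $B_0:=B\cap Y$, apply the hypothesised regularity of $(X,Y)$ to $(A_0,B_0)$, and then translate the resulting density bound back to $(A,B)$ using that $A_0$, $B_0$ differ from $A$, $B$ only by small amounts. (The case $\hat\eps\ge 1$ is vacuous and can be discarded at the start.)

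First I would verify that $(A_0,B_0)$ is an admissible test pair for the lower-regularity of $(X,Y)$. From $|\hat X|\ge(1-\mu)|X|$ we get $|A|\ge\hat\eps(1-\mu)|X|$, and since $|A\setminus A_0|\subseteq\hat X\setminus X$ has size at most $\mu|X|$, we obtain $|A_0|\ge(\hat\eps(1-\mu)-\mu)|X|$. The inequality $\hat\eps-\eps=2\sqrt{\mu}+2\sqrt{\nu}\ge\mu(1+\hat\eps)$ (which holds because $\sqrt{\mu}\ge\mu$ for $\mu\le 1$) then gives $|A_0|\ge\eps|X|$, and symmetrically $|B_0|\ge\eps|Y|$. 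Lower-regularity now yields $e(A,B)\ge e(A_0,B_0)\ge(d-\eps)p|A_0||B_0|$. A similar bookkeeping argument, using $|A|\ge\hat\eps(1-\mu)|X|$ to control the ratio $\mu|X|/|A|\le\sqrt{\mu}$ (and analogously on the $Y$-side), shows $|A_0||B_0|\ge(1-\sqrt{\mu}-\sqrt{\nu})|A||B|$. Combining and using $d-\eps\le 2$ gives the desired bound $d_{G,p}(A,B)\ge d-\hat\eps$.

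For the regular case I would additionally prove an upper bound on $e(A,B)$. Splitting disjointly as
\[
e(A,B)=e(A_0,B_0)+e(A_0,B\setminus B_0)+e(A\setminus A_0,B),
\]
the regularity of $(X,Y)$ gives $e(A_0,B_0)\le(d'+\eps)p|A_0||B_0|\le(d'+\eps)p|A||B|$. The two error terms are handled by the extra hypothesis: enlarging $B\setminus B_0$ to a set $B''$ of size exactly $\nu|Y|$ disjoint from $A_0$ (possible since $|B\setminus B_0|\le\nu|Y|$) gives $e(A_0,B\setminus B_0)\le e(A_0,B'')\le(1+\mu+\nu)p|A_0|\cdot\nu|Y|$, and $\nu|Y|/|B|\le\sqrt{\nu}$ by the same estimate as before. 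The term $e(A\setminus A_0,B)$ is bounded analogously. Summing and using $1+\mu+\nu\le 2$ yields $e(A,B)\le(d'+\eps+2\sqrt{\mu}+2\sqrt{\nu})p|A||B|=(d'+\hat\eps)p|A||B|$, which together with the lower bound witnesses $(\hat\eps,d,p)$-regularity.

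The step I expect to require the most care is not the main inequality but the accounting around the edge cases: choosing what to do when $\hat\eps\ge 1$ (trivial), and verifying that the extended set $B''$ in the regular case is still disjoint from $A_0$ and of the right size to invoke the hypothesised upper bound $e(A,A')\le(1+\mu+\nu)p|A||A'|$. Everything else reduces to elementary inequalities between $\eps$, $\hat\eps$, $\sqrt{\mu}$, $\sqrt{\nu}$.
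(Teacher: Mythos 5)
Your proposal is correct and follows essentially the same approach as the paper's own proof: pass to $A_0=A\cap X$, $B_0=B\cap Y$, verify these are admissible test sets, and convert the density bound back via elementary estimates on $\sqrt\mu$, $\sqrt\nu$; and for the regular case, bound the two "error" edge counts via the supplied upper-density hypothesis. If anything, you are a little more careful than the paper's write-up about enlarging $B\setminus B_0$ to a set of size exactly $\nu|Y|$ before invoking the hypothesis (and one should similarly note $|A_0|\ge\mu|X|$, which follows from $\hat\eps\le 1$ forcing $\mu\le 1/4$), whereas the paper's displayed inequality is slightly loose on this point.
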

We defer the proof of this to Appendix~\ref{app:tools}.

In order to state the sparse regularity lemma, we need some more definitions. A partition $\cV = \{V_i\}_{i\in\{0,\ldots,r\}}$ of the vertex set of $G$ is called an \emph{$(\eps,p)_G$-regular partition} of $V(G)$ if $|V_0|\leq \eps |V(G)|$ and $(V_i,V_{i'})$ forms an $(\eps,0,p)_G$-regular pair for all but at most $\eps\binom{r}{2}$ pairs $\{i,i'\}\in \binom{[r]}{2}$. It is called an \emph{equipartition} if $|V_i| = |V_{i'}|$ for every $i,i'\in[r]$.
The partition $\cV$ is called \emph{$(\eps,d,p)$-(lower-)regular} on a graph $R$ with vertex set $[r]$ if $(V_i, V_{i'})$ is $(\eps,d,p)_G$-(lower-)regular for every $\{i,i'\} \in E(R)$. The graph $R$ is referred to as the \emph{$(\eps,d,p)_G$-reduced graph} of $\cV$, the partition classes $V_i$ with $i \in [r]$ as \emph{clusters}, and $V_0$ as the \emph{exceptional set}. We also say that $\cV$ is \emph{$(\eps,d,p)_G$-super-regular} on a graph $R'$ with vertex set $[r]$ if $(V_i, V_{i'})$ is $(\eps,d,p)_G$-super-regular for every $\{i,i'\}\in E(R')$. Again, when we talk about reduced graphs or super-regularity, whether we are using lower-regularity or regularity will be clear from the context. We will however always specify whether a partition is regular or only lower-regular on $R$.

Analogously to Szemer\'edi's regularity lemma for dense graphs, the sparse regularity lemma, proved by Kohayakawa and Rödl~\cite{kohayakawa1997, kohayakawa2003}, asserts the existence of an $(\eps,p)$-regular partition of constant size of any sparse graph. We state a minimum degree version of this lemma, whose proof (following~\cite{bottcher2013almost}) we defer to Appendix~\ref{app:tools}. 

\begin{lemma}[Minimum degree version of the sparse regularity lemma]
\label{lem:regularitylemma}
For each $\eps >0$, each  $\alpha \in [0,1]$, and $r_0\geq 1$ there exists $r_1\geq 1$ with the following property. For any $d\in[0,1]$, any $p>0$, and any $n$-vertex graph $G$ with minimum degree $\alpha p n$ such that for any disjoint $X,Y\subset V(G)$ with $|X|,|Y|\ge\tfrac{\eps n}{r_1}$ we have $e(X,Y)\le \big(1+\tfrac{1}{1000}\eps^2\big)p|X||Y|$, there is an $(\eps,p)_G$-regular equipartition of $V(G)$ with $(\eps,d,p)_G$-reduced graph $R$ satisfying $\delta(R) \geq (\alpha-d-\eps)|V(R)|$ and $r_0 \leq |V(R)| \leq r_1$.
\end{lemma}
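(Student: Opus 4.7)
The plan is to reduce this to the (non-minimum-degree) Kohayakawa--R\"odl sparse regularity lemma and then verify the required minimum-degree condition on the reduced graph by a direct edge-counting argument. I would choose auxiliary parameters $\eps''>0$ with $\sqrt{\eps''}\le\eps^2/1000$ and $r_0'\ge\max(2r_0,1000/\eps^2)$, and apply the Kohayakawa--R\"odl lemma with parameter $\eps''$ and lower bound $r_0'$ to $G$. This is legitimate: the upper-density hypothesis $e(X,Y)\le(1+\eps^2/1000)p|X||Y|$ for large disjoint $X,Y$ is precisely the boundedness condition needed to invoke the sparse regularity lemma. The outcome is an $(\eps'',p)_G$-regular equipartition $\{V_0,V_1,\dots,V_{r'}\}$ with $r_0'\le r'\le r_1$, and I take the $r_1$ of the conclusion to be this $r_1$.

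Next I would clean up the partition. Call $V_i$ \emph{bad} if $(V_i,V_j)$ fails to be $(\eps'',0,p)_G$-regular for more than $\sqrt{\eps''}r'$ choices of $j$. Because the total number of irregular pairs is at most $\eps''\binom{r'}{2}$, a double-counting shows at most $\sqrt{\eps''}r'$ clusters are bad. I absorb every bad cluster into $V_0$, obtaining a new exceptional set of size at most $\eps''n+\sqrt{\eps''}n\le\eps n$. The remaining $r\ge r_0$ clusters $V_1,\dots,V_r$ are all of equal size $m$, and each has at most $2\sqrt{\eps''}r$ irregular partners in the new partition.

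Finally I would verify the minimum-degree condition. For each remaining $V_i$ I split the edges counted by $\sum_{v\in V_i}\deg_G(v)\ge\alpha pnm$ according to their other endpoint: internal edges of $V_i$; edges to the new exceptional set; edges to the irregular partners $B_i$; edges to the $(\eps'',0,p)_G$-regular partners $L_i$ of overall density $d_{G,p}(V_i,V_j)<d-\eps+\eps''$ (which are exactly the regular partners \emph{not} giving $(\eps,d,p)_G$-lower-regular pairs, since subset densities lie within $\eps''$ of the overall density); and edges to the neighbours $N_R(i)$ in the reduced graph. The first, second, third and fifth contributions are bounded by the upper-density hypothesis (for the internal term, split $V_i$ into two halves of size $m/2\ge\eps n/r_1$, which is legitimate since we may assume $\eps\le 1/2$ without loss of generality); for $L_i$, the $(\eps'',0,p)$-regularity forces $e(V_i,V_j)<(d-\eps+\eps'')pm^2$, since otherwise every large subpair would witness $(\eps,d,p)$-lower-regularity, contradicting $j\notin N_R(i)$. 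Dividing the resulting inequality by $pnm$ and using $|B_i|+|L_i|+|N_R(i)|=r-1$ yields a linear inequality in $|N_R(i)|/r$ whose rearrangement gives $|N_R(i)|\ge(\alpha-d-\eps)r$.

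The main obstacle will be the bookkeeping in this last step: the upper-density factor $1+\eps^2/1000$ and the irregular-partner count $\sqrt{\eps''}r$ inject multiplicative and additive errors that must be absorbed into the gap between $\alpha-d$ and $(\alpha-d-\eps)(1-d+\eps)$. A short expansion shows this gap equals $(\alpha-d)(d-\eps)+\eps(1-d+\eps)$ and is at least $\eps^2$ in the relevant regime $d\le\alpha\le 1$, so choosing $\eps''$ polynomially smaller than $\eps$ and $r_0'$ polynomially larger than $1/\eps$ is enough to close the argument.
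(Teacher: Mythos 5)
Your proposal follows essentially the same route as the paper: apply a sparse regularity lemma with a much smaller parameter, absorb the clusters with too many irregular partners into the exceptional set, and then verify the reduced-graph minimum degree by partitioning the total $G$-degree of each remaining cluster into internal edges, edges to the exceptional set, edges into irregular pairs, edges into low-density regular pairs, and edges into high-density regular pairs. The only substantive difference is that you invoke Kohayakawa--R\"odl as a black box while the paper first proves its own variant (Lemma~\ref{lem:SRLb}, via Scott's energy-increment method, applied to an initial equipartition into $s=100r_0/\eps$ parts) purely for self-containment; beyond that, two small points worth fixing are that the reduced graph in the conclusion should consist of the two-sided $(\eps,d,p)$-regular pairs rather than the lower-regular ones (your edge count of regular partners of density at least $d-\eps+2\eps''$ actually does yield the stronger two-sided statement, so nothing is lost), and the ``exactly'' in your characterisation of $L_i$ should allow a narrow gray zone of width $2\eps''$ around $d-\eps$, which does not affect the counting.
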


A key ingredient in the proof of our main theorem is the so-called sparse blow up lemma developed by H{\`a}n, Kohayakawa, Person, and two of the current authors in~\cite{blowup}. Given a subgraph $G \subseteq \Gamma =G(n,p)$ with $p \gg (\log n/n)^{1/\Delta}$ and an $n$-vertex graph $H$ with maximum degree at most $\Delta$ with vertex partitions $\cV$ and $\cW$, respectively, the sparse blow up lemma guarantees under certain conditions a spanning embedding of $H$ in $G$ which respects the given partitions. In order to state this lemma we need to introduce some definitions. 

\begin{definition}[$(\vartheta, R')$-buffer]
\label{def:buffer}
Let $R'$ be a graph on $r$ vertices and let $H$ be a graph with vertex partition $\cW=\{W_i\}_{i\in[r]}$. We say
that the family $\tcW=\{\tW_i\}_{i\in[r]}$ of subsets $\tW_i\subseteq W_i$ is an \emph{$(\vartheta,R')$-buffer} for $H$ if
 \begin{enumerate}[label=\rom]
  \item $|\tW_i|\geq\vartheta |W_i|$ for all $i\in[r]$,  and 
  \item for each $i\in[r]$ and each $x\in\tW_i$, the first and second neighbourhood of $x$ go along $R'$, i.e.,\
  for each $\{x,y\},\{y,z\}\in E(H)$ with $y\in W_j$ and $z\in W_k$ we have $\{i,j\}\in E(R')$ and $\{j,k\}\in E(R')$.
 \end{enumerate}
\end{definition}

Let $G$ and $H$ be graphs on $n$ vertices with partitions $\cV=\{V_i\}_{i\in[r]}$ of $V(G)$ and $\cW=\{W_i\}_{i\in[r]}$ of $V(H)$. We say that $\cV$ and $\cW$ are \emph{size-compatible} if  $|V_i|=|W_i|$ for all $i\in[r]$. If there exists an integer $m \geq 1$ such that $m \leq |V_i| \leq \kappa m$ for every $i\in [r]$, then we say that $(G,\cV)$ is $\kappa$-balanced. Given a graph $R$ on $r$ vertices, we call $(G, \cV)$ an \emph{$R$-partition} if for every edge $\{x,y\}\in E(G)$ with $x \in V_i$ and $y\in V_{i'}$ we have $\{i,i'\}\in E(R)$. 

We will actually need a little more than just an embedding of $H$ into $G$ respecting given partitions: we will need to restrict the images of some vertices of $H$ to subsets of the clusters of $G$. The following definition encapsulates the properties we have to guarantee for the sparse blow-up lemma to obtain such an embedding.

\begin{definition}[Restriction pair]
\label{def:restrict} 
 Let $\eps,d>0$, $p \in [0,1]$, and let $R$ be a graph on $r$ vertices. Furthermore, let $G$ be a (not necessarily spanning) subgraph of $\Gamma = G(n,p)$ and let $H$ be a graph given with vertex partitions $\cV= \{V_i\}_{i\in[r]}$ and $\cW = \{W_i\}_{i\in[r]}$, respectively, such that $(G,\cV)$ and $(H,\cW)$ are size-compatible $R$-partitions.
  Let $\cI=\{I_x\}_{x\in V(H)}$ be a collection of subsets of $V(G)$, called
  \emph{image restrictions}, and $\cJ=\{J_x\}_{x\in V(H)}$ be a collection of
  subsets of $V(\Gamma)\setminus V(G)$, called \emph{restricting vertices}.
   For each $i\in [r]$ we define $R_i\subseteq W_i$ to be the set of all vertices $x \in W_i$ for which $I_x \neq V_i$. 
  We say that $\cI$ and $\cJ$ are a
  \emph{$(\rho,\zeta,\Delta,\Delta_J)$-restriction pair} if the
  following properties hold for each $i\in[r]$ and $x\in W_i$.
  \begin{enumerate}[label=\itmarab{RP}]
    \item\label{itm:restrict:numres} We have $|R_i|\leq\rho|W_i|$.
    \item\label{itm:restrict:sizeIx} If $x\in R_i$, then $I_x\subseteq
    \bigcap_{u\in J_x} N_\Gamma(u, V_i)$ is of size at least $\zeta(dp)^{|J_x|}|V_i|$.
    \item\label{itm:restrict:Jx} If $x\in R_i$, then $|J_x|+\deg_H(x)\leq\Delta$ and 
    if $x\in W_i\setminus R_i$, then $J_x=\varnothing$.
    \item\label{itm:restrict:DJ} Each vertex in $V(G)$ appears in at most $\Delta_J$ of the sets of $\cJ$.
    \item\label{itm:restrict:sizeGa} We have
    $\big|\bigcap_{u\in J_x} N_\Gamma(u, V_i)\big| = (p\pm\eps p)^{|J_x|}|V_i|$.
    \item\label{itm:restrict:Ireg} If $x\in R_i$, for each $xy\in E(H)$ with $y\in W_j$, 
    \[\text{the pair }\quad\Big( V_i \cap \bigcap_{u\in J_x}N_\Gamma(u), V_j \cap \bigcap_{v\in J_y}N_\Gamma(v)\Big)\quad\text{ is
    $(\eps,d,p)_G$-lower-regular.}\] 
  \end{enumerate}
\end{definition}
Suppose $\cV$ is an $(\eps,d,p)_G$-lower-regular partition of $V(G)$ with reduced graph $R$, and let $R'$ be a subgraph of $R$. We say $(G,\cV)$ has \emph{one-sided inheritance on $R'$} if for every 
 $\{i,j\}, \{j,k\}\in E(R')$ and every $v\in V_i$ the pair $\big(N_\Gamma(v, V_j),V_k\big)$ is $(\eps,d,p)_G$-lower-regular. Given a $(\vartheta,R')$-buffer $\tcW$, we say that $(G,\cV)$ has \emph{two-sided inheritance on $R'$ for $\tcW$} if whenever there is a triangle $w_iw_jw_k\in H$ with $w_i\in\tW_i$, $w_j\in W_j$ and $w_k\in W_k$, it follows that for every $v\in V_i$ the pair $\big(N_\Gamma(v, V_j),N_\Gamma(v, V_k)\big)$ is $(\eps,d,p)_G$-lower-regular.

Now we can finally state the sparse blow up lemma.

\begin{lemma}[{\cite[Lemma 1.21]{blowup}}]
\label{thm:blowup}
  For each $\Delta$, $\Delta_{R'}$, $\Delta_J$, $\vartheta,\zeta, d>0$, $\kappa>1$
  there exist $\eBL,\rho>0$ such that for all $r_1$ there is a $\CBL$ such that for
  $p\geq\CBL(\log n/n)^{1/\Delta}$ the random graph $\Gamma=G_{n,p}$ asymptotically
  almost surely satisfies the following.
   
  Let $R$ be a graph on $r\le r_1$ vertices and let $R'\subseteq R$ be a spanning
  subgraph with $\Delta(R')\leq \Delta_{R'}$.
  Let $H$ and $G\subseteq \Gamma$ be graphs given with $\kappa$-balanced,
  size-compatible vertex partitions 
  $\cW=\{W_i\}_{i\in[r]}$ and $\cV=\{V_i\}_{i\in[r]}$ with parts of size at
  least $m\geq n/(\kappa r_1)$. 
  Let $\cI=\{I_x\}_{x\in V(H)}$ be a family of image restrictions, and
  $\cJ=\{J_x\}_{x\in  V(H)}$  be a family of restricting vertices.
  Suppose that
  \begin{enumerate}[label=\itmarab{BUL}]
  \item\label{itm:blowup:H} $\Delta(H)\leq \Delta$, 
	for every edge $\{x,y\}\in E(H)$ with $x\in W_i$ and $y\in W_j$ we have $\{i,j\}\in E(R)$ and $\tcW=\{\tW_i\}_{i\in[r]}$ is an
    $(\vartheta,R')$-buffer for $H$,
	\item\label{itm:blowup:G} $\cV$ is $(\eBL,d,p)_G$-lower-regular on $R$, $(\eBL,d,p)_G$-super-regular on $R'$, has one-sided inheritance on $R'$, and two-sided inheritance on $R'$ for $\tcW$,
  \item\label{itm:blowup:restrict} $\cI$ and $\cJ$ form
    a $(\rho,\zeta,\Delta,\Delta_J)$-restriction pair.
  \end{enumerate}
  Then there is an embedding $\phi\colon V(H)\to V(G)$ such that $\phi(x)\in
  I_x$ for each $x\in H$.
\end{lemma}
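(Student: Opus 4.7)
The plan is to follow the overall strategy of the original Koml\'os--S\'ark\"ozy--Szemer\'edi blow-up lemma, namely a random greedy embedding of $H$ into $G$ vertex-by-vertex while maintaining shrinking candidate sets, but with substantial modifications to handle the sparse regime. First I would fix a good ordering of $V(H)$: the non-buffer vertices $V(H)\setminus\bigcup_i\tW_i$ are embedded in Phase~1, and the buffer vertices $\bigcup_i\tW_i$ are saved for Phase~2. Within Phase~1 I would process vertices in small batches, interleaving across clusters so that no single still-unembedded vertex sees too many of its $H$-neighbors embedded in one batch (this is standard in order to combine concentration with union bounds). For each still-unembedded $x\in W_i$ I maintain a candidate set $C_x\subseteq V_i$, initialised to $I_x$, and updated by intersecting with $N_G(\phi(y),V_i)$ whenever a neighbour $y$ of $x$ is embedded; at each step the next vertex is embedded uniformly at random inside its current candidate set.

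The core inductive claim is that during Phase~1 the candidate sets remain well-behaved: if $x$ has $t$ already-embedded neighbours, then $|C_x|\geq \tfrac12(d-\eBL)^t p^t |V_i|$, and moreover the pair formed by $C_x$ and the analogous candidate set $C_y$ of any unembedded neighbour $y$ of $x$ remains lower-regular (up to a slowly deteriorating parameter). Maintaining this claim is exactly what the hypotheses~\ref{itm:blowup:G} and~\ref{itm:blowup:restrict} are designed for: one-sided inheritance on $R'$ controls what happens when one common neighbour is fixed, iterated inheritance along a path of embeddings controls the full intersection, and conditions~\ref{itm:restrict:sizeIx}--\ref{itm:restrict:sizeGa} guarantee that the initial image restrictions $I_x$ are already regular subsets of $V_i$. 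Concentration at each step would come from a martingale/bounded-differences argument applied cluster-by-cluster, combined with the sparse regularity inheritance lemmas from the companion paper, and a union bound over the at most $r_1\cdot\Delta^{\bigO(1)}$ relevant events.

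Phase~2 reduces, in each cluster $V_i$, to finding a perfect matching in the bipartite graph between the as-yet-unembedded buffer vertices in $\tW_i$ and the still-available target vertices in $V_i$, where $x\in\tW_i$ is joined to $v\in V_i$ iff $v\in C_x$. All neighbours of buffer vertices have been embedded by the end of Phase~1, so these candidate sets are determined. Here I would invoke a sparse matching / Hall-type lemma (precisely of the type developed in~\cite{blowup}): super-regularity on $R'$, combined with the already-established inheritance along triangles in $H$ (this is where two-sided inheritance for $\tcW$ is essential, since buffer vertices can lie in triangles of $H$), shows that the candidate bipartite graph has no Hall obstruction and therefore a perfect matching.

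The main obstacle is concentration at the threshold $p\ge\CBL(\log n/n)^{1/\Delta}$: when $x$ has the full $\Delta$ neighbours embedded, the expected candidate-set size is only of order $p^{\Delta}|V_i|=\Theta(\log n)$, so any argument that loses even a constant factor in a union bound fails. Controlling this requires exploiting that $\Gamma=G(n,p)$ a.a.s.\ enjoys the strong neighbourhood-regularity-inheritance properties of~\cite{blowup} simultaneously for all relevant subsets, and then feeding these into an Azuma-type analysis of the randomised greedy procedure whose exposure martingale has bounded Lipschitz constants thanks to the batched ordering. The other technical difficulty, smaller but still delicate, is that the restriction pair framework introduces $J_x$-indexed conditioning on $\Gamma$-neighbourhoods; handling these uniformly requires that the random graph a.a.s.\ have the inheritance property not only for its clusters but also for all bounded-size intersections of $\Gamma$-neighbourhoods, which is again a statement of the type proved in~\cite{blowup}.
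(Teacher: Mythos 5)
This lemma is imported verbatim from the companion paper~\cite{blowup} (it is their Lemma~1.21) and is not proved in the present paper, so there is no internal proof to compare against. Your sketch does capture the overall strategy of the proof in~\cite{blowup}: a two-phase argument, where Phase~1 embeds the non-buffer vertices by a randomised greedy procedure maintaining shrinking candidate sets, and Phase~2 finishes by finding a perfect matching between the buffer vertices and the unused vertices in each cluster, with the regularity inheritance lemmas supplying the property that candidate sets stay lower-regular in pairs and the two-sided inheritance used precisely because buffer vertices may lie in triangles.

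There are two places where your sketch is more optimistic than the actual argument. First, your ``process vertices in small batches interleaved across clusters'' step is a stand-in for a genuinely different mechanism in~\cite{blowup}: there the randomised greedy algorithm is run sequentially, and a vertex whose candidate set threatens to become too small (or whose random choice would damage too many future candidate sets) is pushed into a \emph{queue} and dealt with separately; the analysis then needs to show the queue never gets too long. Batching by itself does not obviously control the worst case of an unlucky vertex losing almost all of its candidate set in one batch. Second, the claim that concentration follows from ``an Azuma-type analysis \dots\ with a union bound over at most $r_1\Delta^{\bigO(1)}$ relevant events'' understates the difficulty dramatically: at $p=\Theta\big((\log n/n)^{1/\Delta}\big)$ the terminal candidate sets have size $\Theta(\log n)$, and one cannot afford to lose polynomial factors in union bounds over choices of embedded neighbours. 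The actual argument requires the random graph to a.a.s.\ satisfy the relevant neighbourhood and inheritance properties \emph{uniformly over all bounded-size vertex tuples} in advance, so that the only randomness left to concentrate is in the greedy choices themselves; the union bound is over a polynomially large family of events in~$\Gamma$, not a constant-size one. As a high-level summary your proposal is faithful to the cited proof, but it should not be read as a self-contained argument that could be made rigorous without reintroducing these two devices.
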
	

Observe that in the blow up lemma for dense graphs, proved by Koml{\'o}s, S{\'a}rk{\"o}zy, and Szemer{\'e}di~\cite{komlos1997blow}, one does not need to explicitly ask for one- and two-sided inheritance properties since they are always fulfilled by dense regular partitions. This is, however, not true in general in the sparse setting. The following two lemmas will be very useful whenever we need to choose vertices whose neighbourhoods inherit lower-regularity.

\begin{lemma}[One-sided lower-regularity inheritance,~\cite{blowup}]
\label{lem:OSRIL}
For each $\eo, \ao >0$ there exist $\eps_0 >0$ and $C >0$ such that for any $0 < \eps < \eps_0$ and $0 < p <1$  asymptotically almost surely $\Gamma= G(n,p)$ has the following property. For any disjoint sets $X$ and $Y$ in $V(\Gamma)$ with $|X|\geq C\max\big(p^{-2}, p^{-1} \log n\big)$ and $|Y| \geq C p^{-1} \log n$, and any subgraph $G$ of $\Gamma[X,Y]$ which is $(\eps, \ao,p)_G$-lower-regular, there are at most $C p^{-1}\log (en/|X|)$ vertices $z \in V(\Gamma)$ such that $(X \cap N_{\Gamma}(z),Y)$ is not $(\eo,\ao,p)_G$-lower-regular.
\end{lemma}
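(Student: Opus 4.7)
The plan combines two ingredients: any witness of non-inheritance can be localized to a small subset $B\subseteq X$ of size $<\eps|X|$, and for random $\Gamma$ the event that many $z$ have an atypically large intersection with such a $B$ is unlikely. I will choose $\eps_0\le\eo/4$ and take $C$ large in terms of $\eo$ and $\ao$ at the end.

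For localization: if $(X\cap N_\Gamma(z),Y)$ fails to be $(\eo,\ao,p)_G$-lower-regular, then there exist $X'_z\subseteq X\cap N_\Gamma(z)$ and $Y'_z\subseteq Y$ with $|X'_z|\ge\eo|X\cap N_\Gamma(z)|$, $|Y'_z|\ge\eo|Y|$ and $e_G(X'_z,Y'_z)<(\ao-\eo)p|X'_z||Y'_z|$. Taking $X'_z$ maximal one has $X'_z\subseteq B:=\{x\in X:\deg_G(x,Y'_z)<(\ao-\eo)p|Y'_z|\}$, and the $(\eps,\ao,p)_G$-lower-regularity of $(X,Y)$ together with $\eps<\eo$ forces $|B|<\eps|X|$: otherwise $(B,Y'_z)$ would itself witness non-lower-regularity of $(X,Y)$ in $G$. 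Every bad $z$ therefore comes with an associated subset $B\subseteq X$ of size $<\eps|X|$ satisfying $|B\cap N_\Gamma(z)|\ge\eo|X\cap N_\Gamma(z)|$.

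For concentration: condition on $\Gamma$ restricted to $V(\Gamma)\setminus\{z\}$, so that $X,Y,G$ and hence $B$ are all determined; the edges from $z$ into $X$ are then independent Bernoullis with probability $p$. Standard Chernoff bounds give $|N_\Gamma(z)\cap X|\in(1\pm\eo/8)p|X|$ with failure probability $\exp(-\Omega(\eo^2 p|X|))$. On this typical event, ``$z$ is bad'' reduces to $|N_\Gamma(z)\cap B|\ge(\eo/2)p|X|$ for some $B\subseteq X$ of size $<\eps|X|$, and a dyadic union bound over $|B|\in[(\eo/2)p|X|,\eps|X|]$ combined with the Chernoff tail $(2es/(\eo|X|))^{(\eo/2)p|X|}$ at scale $s=|B|$ yields a per-$z$ bad probability of at most $n^{-\Omega(\eo^2 C)}$, using $|X|\ge C\max(p^{-2},p^{-1}\log n)$. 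Independence of the bad events across $z$ then shows that for each fixed $(X,Y,G)$ the number of bad $z$ is stochastically dominated by $\mathrm{Bin}(n,n^{-\Omega(\eo^2 C)})$, which concentrates around its mean $n^{1-\Omega(C)}$ and is at most $Cp^{-1}\log(en/|X|)$ with very high probability.

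The main obstacle is the quantifier reversal, namely proving the conclusion uniformly over all admissible triples $(X,Y,G)$: a naive union bound over $G$, which ranges over exponentially many lower-regular subgraphs of $\Gamma[X,Y]$, is infeasible. The localization step helps by replacing the dependence on $G$ with the dependence on a single small set $B$, but $B$ itself depends on $G$; the delicate point is therefore the dyadic union bound over the size of $B$, where at each scale $s$ the binomial factor $\binom{|X|}{s}\le(e|X|/s)^s$ must be balanced against the Chernoff failure probability $(2es/(\eo|X|))^{(\eo/2)p|X|}$. This is what forces $\eps_0$ to be small (in terms of $\eo$) and $C$ to be large (in terms of $\eo,\ao$, so that $|X|\ge Cp^{-1}\log n$ provides a large exponent to absorb both $\log n$ and $\log(en/|X|)$ in the final Chernoff bound). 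A careful two-round exposure --- exposing the edges of $\Gamma$ inside $V(\Gamma)\setminus\{z\}$ first, then the edges from $z$ into $X$ --- finally combines the per-$(X,Y,G)$ bound with the remaining choices of $(X,Y)$ to yield the stated result.
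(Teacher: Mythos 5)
The paper does not actually prove Lemma~\ref{lem:OSRIL}: it is quoted verbatim from~\cite{blowup}, so there is no internal proof against which to compare your argument. Evaluating the sketch on its own merits, the localization step is backwards. If $X'_z$ is taken to be a maximal subset of $X\cap N_\Gamma(z)$ with $d_{G,p}(X'_z,Y'_z)<\ao-\eo$, then maximality implies that every $x\in(X\cap N_\Gamma(z))\setminus X'_z$ satisfies $\deg_G(x,Y'_z)>(\ao-\eo)p|Y'_z|$, i.e.\ $B\cap X\cap N_\Gamma(z)\subseteq X'_z$ --- the \emph{reverse} of the containment $X'_z\subseteq B$ you state and need. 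To localize correctly one must pass to a set with a higher degree threshold, say $B':=\{x\in X:\deg_G(x,Y'_z)<(\ao-\eo/2)p|Y'_z|\}$, and run an averaging argument: the density bound on $(X'_z,Y'_z)$ together with the pointwise lower bound on $\deg_G(\cdot,Y'_z)$ over $X'_z\setminus B'$ yields $|X'_z\cap B'|\geq\tfrac{\eo}{2\ao}|X'_z|$, hence a $\Theta(\eo^2/\ao)$ fraction of $X\cap N_\Gamma(z)$ lands in $B'$, not the $\eo$-fraction you claim. This is a repairable constant loss, but ``taking $X'_z$ maximal'' does not give it.

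The concentration step has a more serious gap: the union bound over $B$ does not close. Your ``dyadic union bound over $|B|=s$'' must carry a binomial factor $\binom{|X|}{s}$, since the identity of $B$ (equivalently of $Y'_z$) is not determined by the partial exposure and varies with $z$. With threshold $m=(\eo/2)p|X|$, at the lowest scale $s=m$ one has $\binom{|X|}{m}\Pr[\mathrm{Bin}(m,p)\geq m]\geq(e|X|/m)^m p^m=(2e/\eo)^m\geq 1$, so no information is obtained; and at the top scale $s\approx\eps|X|$ the binomial factor $(e/\eps)^{\eps|X|}$ overwhelms your Chernoff tail $\exp(-\Theta(p|X|\log(\eo/\eps)))$ whenever $p=o(1)$. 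Thus the claimed per-$z$ bad probability $n^{-\Omega(\eo^2 C)}$ is not established, and the downstream $\mathrm{Bin}(n,n^{-\Omega(C)})$ domination and union over $(X,Y)$ are unsupported. The missing idea is that $B$ is not an arbitrary small subset of $X$ but the low-degree set of a lower-regular pair, and this structure must be used rather than enumerated over. The standard tool here is a Gerke--Kohayakawa--R\"odl--Steger--type counting statement: in a lower-regular pair $(X,Y)$, at most a $\beta^q$-fraction (for small $\beta$) of the $q$-subsets $X'\subseteq X$ can make $(X',Y)$ fail to be $(\eo,\ao,p)$-lower-regular, uniformly over the (lower-regular) graph $G$. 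That bound simultaneously gives the required per-$z$ exponential smallness and sidesteps the union over the exponentially many $G$, which you rightly identify as the crux but do not actually resolve.
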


\begin{lemma}[Two-sided lower-regularity inheritance,~\cite{blowup}]
\label{lem:TSRIL}
For each $\et,\at>0$ there exist $\eps_0>0$ and
$C >0$ such that for any $0<\eps<\eps_0$ and $0<p<1$, asymptotically almost surely
$\Gamma=G_{n,p}$ has the following property. For any disjoint sets $X$
and $Y$ in $V(\Gamma)$ with $|X|,|Y|\ge C\max\{p^{-2},p^{-1}\log n\}$, and any
subgraph $G$ of $\Gamma[X,Y]$ which is $(\eps,\at,p)_G$-lower-regular, there are
at most $C\max\{p^{-2},p^{-1}\log (en/|X|)\}$ vertices $z \in V(\Gamma)$
such that $\big(X\cap N_\Gamma(z),Y\cap N_\Gamma(z)\big)$ is not
$(\et,\at,p)_G$-lower-regular.
\end{lemma}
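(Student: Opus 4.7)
The plan is to bound the number of bad $z$'s by a union bound over potential witness pairs, combined with concentration of common $\Gamma$-neighbourhoods. The key random-graph input I would establish a.a.s.\ is that for every set $S\subseteq V(\Gamma)$ of size in the relevant range, the codegree $|Z(S)|:=|\{z\in V(\Gamma):S\subseteq N_\Gamma(z)\}|$ satisfies $|Z(S)|\le\max\{2np^{|S|},\,C'p^{-1}\log(en/|X|)\}$; this follows from a Chernoff bound on $\mathrm{Bin}(n,p^{|S|})$ together with a union bound over $S$. Via the standard upper- and lower-uniform properties of $G(n,p)$ I would also assume that every $z\in V(\Gamma)$ has $|N_\Gamma(z)\cap X|=(1\pm\eps)p|X|$ and $|N_\Gamma(z)\cap Y|=(1\pm\eps)p|Y|$; the contribution of $z$'s failing this is absorbed in the final count given $|X|,|Y|\ge Cp^{-1}\log n$.

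For each remaining bad $z$ the definition yields a witness $(X_z^*,Y_z^*)$ with $|X_z^*|\ge\tfrac12\et p|X|$, $|Y_z^*|\ge\tfrac12\et p|Y|$ and $d_{G,p}(X_z^*,Y_z^*)<\at-\et$. I would partition bad $z$'s into $\bigO(\log^2 n)$ geometric bins indexed by pairs $(a,b)$, and within each bin truncate the witnesses to subsets $(\tilde X_z,\tilde Y_z)$ of exact sizes $(a,b)$; for a sufficiently small bin ratio, $d_{G,p}(\tilde X_z,\tilde Y_z)<\at-\et/2$. Because $(X,Y)$ is $(\eps,\at,p)_G$-lower-regular with $\eps<\et/2$, any such low-density pair must satisfy $a<\eps|X|$ or $b<\eps|Y|$; assume by symmetry that $a<\eps|X|$. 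The bad count in bin $(a,b)$ is then at most $\sum_{(X',Y')\in\mathcal L_{a,b}}|Z(X'\cup Y')|$, where $\mathcal L_{a,b}$ collects the pairs in $X\times Y$ of sizes $(a,b)$ with $G$-density below $\at-\et/2$.

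The main obstacle is bounding $|\mathcal L_{a,b}|$, since the trivial estimate $\binom{|X|}{a}\binom{|Y|}{b}$ is far too generous to balance against the Chernoff bound on $|Z|$. I would exploit the lower-regularity of $(X,Y)$ quantitatively: if $b\ge\eps|Y|$, then extending a low-density $X'$ by uniformly random vertices of $X\setminus X'$ to a set of size $\eps|X|$ must, by lower-regularity of $(X,Y)$, produce a pair of density $\ge\at-\eps$, so only a small fraction of subsets of size $a$ can be low-density witnesses for a given $Y'$; if $b<\eps|Y|$ the same extension is applied on both sides. Balancing this refined pair count against $|Z|\le\max\{2np^{a+b},C'p^{-1}\log(en/|X|)\}$ yields a contribution of $\bigO\bigl(\max\{p^{-2},p^{-1}\log(en/|X|)\}/\log^2 n\bigr)$ per bin, where the $p^{-2}$ reflects the Chernoff fluctuation scale $\sqrt{np^{a+b}}$ in the regime $np^{a+b}\ge1$ and the logarithmic term the union bound when $np^{a+b}\ll 1$. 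Summing over bins and the symmetric case $b<\eps|Y|$ gives the statement; the most delicate step is making the refined pair-counting quantitatively sharp enough to absorb all the losses.
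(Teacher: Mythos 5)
First, note that Lemma~\ref{lem:TSRIL} is stated here as a citation from~\cite{blowup}; the present paper does not contain its proof, so there is nothing to compare your attempt against within this source. I therefore assess your argument on its own terms, and there is a genuine gap at the step you yourself flag as "the most delicate": the asserted balancing $\sum_{(X',Y')\in\mathcal L_{a,b}}|Z(X'\cup Y')| = \bigO\bigl(\max\{p^{-2},p^{-1}\log(en/|X|)\}/\log^2 n\bigr)$ is false for $p=o(1)$. Your witness sizes force $a\ge\tfrac12\et p|X|$ and $b\ge\tfrac12\et p|Y|$, so $\binom{|X|}{a}\binom{|Y|}{b}\approx(c/p)^{a+b}$ for an absolute constant $c$. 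The best "small subsets inherit regularity" pair-counting bound (which is what your random-extension heuristic is gesturing at) has the form $|\mathcal L_{a,b}|\le\beta^{\min(a,b)}\binom{|X|}{a}\binom{|Y|}{b}$ where $\beta\in(0,1)$ is a constant depending only on $\et,\at$ and \emph{not} on $p$. Since $\beta/p\to\infty$ as $p\to 0$, the quantity $\beta^{\min(a,b)}\binom{|X|}{a}\binom{|Y|}{b}$ still grows super-exponentially in $a+b\ge\et C\log n$, and multiplying by any sensible codegree bound leaves you astronomically far above $p^{-2}$. Your random-extension sketch also does not actually bound $|\mathcal L_{a,b}|$: lower-regularity says nothing about subsets of size below $\eps|X|$, and the adversary can arrange (without violating lower-regularity) that all subsets of some $X_0\subset X$ of size just under $\eps|X|$ have density zero against $Y$, giving exponentially many low-density pairs. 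Separately, the codegree claim $|Z(S)|\le\max\{2np^{|S|},C'p^{-1}\log(en/|X|)\}$ is not what the union bound yields: over sets $S$ of size $s$ the threshold has to be at least $\Omega(s\log(en/s))$, which for $s\approx p(|X|+|Y|)$ with $|X|,|Y|=\Theta(n)$ is $\Theta(pn\log(1/p))\gg p^{-1}$ whenever $p^2n\log(1/p)>1$, that is, throughout the regime $p\ge(\log n/n)^{1/2}$ where the lemma is applied.

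The deeper problem is structural. Bounding the number of bad $z$ by $\sum_{\mathcal L_{a,b}}|Z(\cdot)|$ severs precisely the coupling that has to be exploited: for any fixed adversarial $G$, the overwhelming majority of pairs in $\mathcal L_{a,b}$ satisfy $|Z(X'\cup Y')|=0$, and the quantity you need to control is the number of $z$ for which $\mathcal L_{a,b}$ happens to intersect $\binom{N_\Gamma(z)\cap X}{a}\times\binom{N_\Gamma(z)\cap Y}{b}$. That interaction between the randomness of $\Gamma$ (determining $N_\Gamma(z)$) and the adversary's later choice of $G$ (determining $\mathcal L_{a,b}$) cannot be recovered from a product of a worst-case codegree with a pair count, and a direct union bound over witness families is also unavailable since there are far too many of them. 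Any working proof has to handle the adversarial $G$ differently---for example by showing that a large family of bad $z$'s together with their neighbourhood witnesses would already force $(X,Y)$ to fail lower-regularity, or by running a moment/conditioning argument at the level of a single random vertex $z$ rather than summing over witness pairs. As written, the proposal cannot be completed along the lines sketched.
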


We close this section with some probabilistic tools.
We start with the following useful observation. Roughly speaking, it states that a.a.s.~nearly all vertices in $G(n,p)$ have approximately the expected number of neighbours within large enough subsets. 

\begin{proposition}[]
\label{prop:chernoff}
For each $\eps>0$ there exists a constant $C >0$ such that for every $0<p<1$ asymptotically almost surely $\Gamma=G(n,p)$ has the following properties. For any disjoint $X,Y\subset V(\Gamma)$ with $|X|\ge Cp^{-1}\log n$ and $|Y|\ge Cp^{-1}\log (en/|X|)$, we have $e(X,Y)=(1\pm\eps)p|X||Y|$ and $e(X)\le 2p|X|^2$. Furthermore, for every $X \subseteq V(\Gamma)$ with $|X| \geq C p^{-1} \log n$, the number of vertices $v \in V(\Gamma)$ with $\big||\NGa(v,X)| - p |X|\big| > \eps p |X|$ is at most $C p^{-1} \log (en/|X|)$.
\end{proposition}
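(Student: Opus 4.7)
The plan is to apply standard multiplicative Chernoff bounds to each binomial count of interest, then take union bounds whose entropy is exactly absorbed by the hypotheses on $|X|$ and $|Y|$.

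For the first assertion of part (a), I would fix sizes $a=|X|$ and $b=|Y|$. The variable $e(X,Y)$ is binomial with mean $pab$, so Chernoff gives $\Pr\big(|e(X,Y)-pab|>\eps pab\big)\le 2\exp\bigl(-\tfrac{1}{3}\eps^2 pab\bigr)$. The number of ordered disjoint pairs with prescribed sizes is at most $\binom{n}{a}\binom{n}{b}\le\exp\bigl(a\log(en/a)+b\log(en/b)\bigr)$. The key calculation is that the hypotheses $a\ge Cp^{-1}\log n$ and $b\ge Cp^{-1}\log(en/a)$ imply $pab\ge Ca\log(en/a)$, and since $a\ge Cp^{-1}\log n\ge Cp^{-1}\log(en/b)$ also $pab\ge Cb\log(en/b)$. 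Thus for $C$ sufficiently large in terms of $\eps$ the Chernoff exponent beats the entropy by a factor of $\log n$, and summing over the $\le n^2$ choices of $a,b$ gives a total failure probability of $o(1)$. The bound $e(X)\le 2p|X|^2$ is the same argument applied to $e(X)\sim\mathrm{Bin}\bigl(\binom{|X|}{2},p\bigr)$, whose mean is at most $p|X|^2/2$.

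For part (b), I would fix $a\ge Cp^{-1}\log n$ and put $t:=Cp^{-1}\log(en/a)$. For any $X$ of size $a$ and any vertex $v$, the variable $|N_\Gamma(v,X)|$ is binomial with mean within $1$ of $pa$, and since $\eps pa\gg 1$ Chernoff gives that $v$ is bad with probability at most $q:=2\exp\bigl(-\tfrac{1}{4}\eps^2 pa\bigr)\le 2n^{-\eps^2 C/4}$. I then want to bound the probability that some set of $t+1$ vertices is simultaneously bad. For a candidate bad set $B$, split $B=B_1\dcup B_2$ with $B_1=B\cap X$ and $B_2=B\setminus X$. The events indexed by $B_2$ depend on disjoint edge sets (from each $v\in B_2$ to $X$) and are therefore mutually independent of each other and of the edges inside $X$. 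After conditioning on the edges inside $X$ with both endpoints in $B_1$, the events indexed by $B_1$ depend on the independent $\mathrm{Bin}(a-|B_1|,p)$ contributions from $X\setminus B_1$, shifted by a constant that alters the deviation threshold by at most $|B_1|\ll\eps pa$. This yields $\Pr(\text{all of }B\text{ bad})\le(2q)^{|B|}$, and the union bound over $X$ of size $a$ and $B$ of size $t+1$ gives at most
\[\binom{n}{a}\binom{n}{t+1}(2q)^{t+1}\le\exp\Bigl(a\log(en/a)+(t+1)\log(en/t)-\tfrac{1}{5}\eps^2 C(t+1)\log n\Bigr),\]
which is $o(n^{-2})$ for $C$ large; summing over $a\in[Cp^{-1}\log n,n]$ finishes the proof.

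The only nonroutine step is handling the dependence among the bad events for vertices inside $X$, which is taken care of by the conditioning described above; everything else is bookkeeping to verify that the Chernoff exponents comfortably dominate the union-bound entropies under the stated assumptions on $|X|$ and $|Y|$.
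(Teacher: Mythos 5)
Your part (a) argument (Chernoff plus a union bound whose entropy is absorbed by the hypotheses on $|X|$ and $|Y|$) is essentially what the paper does and is correct.

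For part (b), however, you take a genuinely different route from the paper, and the route has a gap. The paper does not argue probabilistically about degrees at all: it first establishes the edge-count concentration of part (a) with parameter $\eps/2$ (plus a crude bound $e(A)\le 4p|A|^2+2|A|\log n$ valid for all $A$), and then \emph{deterministically} deduces the degree statement from it. Concretely, if $Y$ is a putative large set of vertices with too few neighbours in $X$, one counts $e(Y,X\setminus Y)$ and contradicts part (a); if $Y$ has too many neighbours, one first trims $Y$ to $Y'$ using the crude bound on $e(Y)$ to remove vertices with many neighbours inside $Y$, and then contradicts part (a) via $e(Y',X\setminus Y)$. This completely sidesteps any dependence among the degree events.

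Your argument instead tries to control $\Pr[\text{all of }B\text{ bad}]$ directly, and the crux is the assertion that after conditioning on the internal edges of $B_1=B\cap X$, the shift in the deviation threshold is at most $|B_1|\ll\eps p a$. This is false in general. You have $|B_1|\le t+1$ with $t=Cp^{-1}\log(en/a)$ and $\eps p a \ge\eps C\log n$, so $|B_1|\ll\eps pa$ would require $p^{-1}\log(en/a)\ll\eps\log n$. Taking $p=n^{-\alpha}$ for some fixed $\alpha\in(0,1)$ and $a$ near its lower threshold $Cp^{-1}\log n$, one has $\log(en/a)=\Theta(\log n)$, hence $t=\Theta\big(n^{\alpha}\log n\big)$ while $\eps p a=\Theta(\log n)$, so $|B_1|$ can in fact be \emph{polynomially} larger than $\eps pa$ and even of order $a$ itself. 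In this regime the conditioning can shift the mean of the residual $\mathrm{Bin}(a-|B_1|,p)$ completely outside the interval $(1\pm\eps)pa$, so the conditional bad probabilities are not small, and the claimed bound $\Pr[\text{all of }B\text{ bad}]\le(2q)^{|B|}$ is not justified. To repair this you would need to treat the vertices of $B$ inside $X$ by a different mechanism (for example, via concentration of $e(X)$ or of $e(B_1,X\setminus B_1)$), which is exactly the deterministic reduction the paper uses; as written your argument does not establish the stated bound for the bad vertices inside $X$.
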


Note that in most of this paper we will use the upper bound $\log(en/|X|)\le\log n$ when applying this proposition, and Lemmas~\ref{lem:OSRIL} and~\ref{lem:TSRIL}, valid since (in all applications) we have $|X|\ge e$. We will only need the full strength of these three results when proving the Lemma for $G$ (Lemma~\ref{lem:G}).

In the proof of Proposition~\ref{prop:chernoff} we use the following version of Chernoff's Inequalities (see e.g.~\cite[Chapter~2]{janson2011random} for a proof).

\begin{theorem}[Chernoff's Inequality,~\cite{janson2011random}]
\label{thm:chernoff}
Let $X$ be a random variable which is the sum of independent Bernoulli random variables. Then we have for $\eps\leq 3/2$
\[\Pr\big[|X-\Ex[X]| > \eps \Ex[X]\big] < 2e^{-\eps^2\Ex[X]/3}\,.\]
Furthermore, if $t\ge 6\Ex[X]$ then we have
\[\Pr\big[X\ge\Ex[X]+t\big]\le e^{-t}\,.\]
\end{theorem}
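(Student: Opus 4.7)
The plan is to follow the standard exponential-moment (Cram\'er--Chernoff) argument; see \cite[Chapter~2]{janson2011random} for the full write-up. First I would establish the moment-generating-function bound: if $X=\sum_i X_i$ with independent $X_i\sim\text{Bernoulli}(p_i)$ and $\mu:=\Ex X=\sum_i p_i$, then independence together with the elementary inequality $1+x\le e^x$ yields
\[
\Ex\bigl[e^{\lambda X}\bigr]=\prod_i\bigl(1+p_i(e^\lambda-1)\bigr)\le \exp\bigl(\mu(e^\lambda-1)\bigr)
\qquad\text{for every }\lambda\in\mathbb{R}.
\]
Applying Markov to $e^{\lambda X}$ (with $\lambda>0$) gives $\Pr[X\ge a]\le \exp(\mu(e^\lambda-1)-\lambda a)$ for any $a>0$, and the choice $\lambda=\log(a/\mu)$ optimises the right-hand side to produce the Cram\'er--Chernoff bound $\Pr[X\ge a]\le (\mu/a)^a e^{a-\mu}$.

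For the first, multiplicative inequality in the statement, I would set $a=(1+\eps)\mu$, so that this becomes $\exp(-\mu\psi(\eps))$ with $\psi(\eps):=(1+\eps)\log(1+\eps)-\eps$, and reduce matters to the elementary estimate $\psi(\eps)\ge \eps^2/3$ on $[0,3/2]$. This is a short calculus check: setting $g(\eps):=\psi(\eps)-\eps^2/3$ one has $g(0)=0$ and $g'(\eps)=\log(1+\eps)-2\eps/3$, so $g'$ starts at $0$, becomes positive, then negative; verifying the boundary value $g(3/2)>0$ (numerically, $2.5\log 2.5 - 1.5 - 0.75 \approx 0.04$) forces $g\ge 0$ throughout $[0,3/2]$, and $g$ turns negative slightly beyond, which pins down the stated range. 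The matching lower-tail bound follows identically with $\lambda<0$, and combining the two sides produces the factor~$2$.

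For the second, large-deviation form, I would set $a=\mu+t$, so the Cram\'er--Chernoff bound becomes $\exp\bigl(t-(\mu+t)\log(1+t/\mu)\bigr)$; it then suffices to check that $(\mu+t)\log(1+t/\mu)\ge 2t$ whenever $t\ge 6\mu$. Writing $r:=t/\mu\ge 6$ this reduces to verifying $(1+r)\log(1+r)\ge 2r$ for all $r\ge 6$, which follows from $7\log 7>12$ at $r=6$ together with the observation that the ratio $(1+r)\log(1+r)/(2r)$ is increasing for $r\ge 1$. There is no conceptual obstacle here; the only delicate parts are the elementary function-estimate inequalities needed to extract the precise constants $1/3$ and $6$ appearing in the statement from the optimised Cram\'er bound, and these are routine.
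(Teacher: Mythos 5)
Your proposal is correct and is exactly the standard Cram\'er--Chernoff exponential-moment argument that the paper's cited reference \cite[Chapter~2]{janson2011random} uses; the paper itself supplies no proof, only the citation. All the calculus reductions you indicate (the MGF bound $\Ex[e^{\lambda X}]\le\exp(\mu(e^\lambda-1))$, the optimisation $\lambda=\log(a/\mu)$, the inequality $(1+\eps)\log(1+\eps)-\eps\ge\eps^2/3$ on $[0,3/2]$ via unimodality of $g$ with $g(0)=0$ and $g(3/2)>0$, the symmetric lower-tail estimate, and $(1+r)\log(1+r)\ge 2r$ for $r\ge 6$) check out.
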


\begin{proof}[Proof of Proposition~\ref{prop:chernoff}]
Since the statement of the proposition is stronger when $\eps$ is smaller, we may assume that $0<\eps\le 1$. We set $C'=100\eps^{-2}$ and $C=1000C'\eps^{-1}$.

We first show that $\Gamma=G(n,p)$ a.a.s.\ has the following two properties. For any disjoint $A,B\subset V(\Gamma)$, with $|A|\ge C'p^{-1}\log n$ and $|B|\ge C'p^{-1}\log(en/|A|)$, we have $e(A,B)=\big(1\pm\tfrac{\eps}{2}\big)p|A||B|$. For any $A\subset V(\Gamma)$, we have $e(A)\le 4p|A|^2+2|A|\log n$, and if $|A|\ge C'p^{-1}\log n$ then $e(A)\le 2p|A|^2$. Note that these properties imply the first two conclusions of the proposition.

We estimate the failure probability of the first property using Theorem~\ref{thm:chernoff} and the union bound. Assuming without loss of generality that $|A|\ge|B|$, this probability is at most
\begin{align*}
 \sum_{|A|,|B|\le n} \binom{n}{|A|}^2\cdot 2e^{-\eps^2p|A||B|/12}&\le 2n\sum_{|A|}\Big(\frac{en}{|A|}\Big)^{2|A|}e^{-\eps^2C'|A|\log (en/|A|)/12}\\
 &<2n\sum_{|A|}\Big(\frac{en}{|A|}\Big)^{-2|A|}\,.
\end{align*}

For the second property, observe that $4p|A|^2>7p\binom{|A|}{2}$, so that for any given $A$ by Theorem~\ref{thm:chernoff} we have
\[\Pr\big[e(A)\ge 4p|A|^2+2|A|\log n\big]\le e^{-2|A|\log n}=n^{-2|A|}\,.\]
Taking a union bound over the at most $n^{|A|}$ choices of $A$ given $|A|$, we see that the failure probability of the second property is at most $\sum_{a=1}^nn^{-a}$.

Finally, the failure probability of the last property is at most
\[\sum_{|A|\ge C'p^{-1}\log n}n^{|A|}\cdot 2e^{-p\binom{|A|}{2}/3}\le\sum_{|A|}2n^{|A|}e^{-C'|A|\log n/12}\le 2n^{-2}\,,\]
and since all three failure probabilities tend to zero as $n\to\infty$, we conclude that a.a.s.\ $G(n,p)$ enjoys both properties.

Now suppose $\Gamma$ has these properties, and let $X\subset V(\Gamma)$ have size at least $Cp^{-1}\log n$. We first show that there are at most $C'p^{-1}\log (en/|X|)$ vertices in $\Gamma$ which have less than $(1-\eps)p|X|$ neighbours in $X$. If this were false, then we could choose a set $Y$ of $C'p^{-1}\log (en/|X|)$ vertices in $\Gamma$ which have less than $(1-\eps)p|X|$ neighbours in $X$. By choice of $C$ and since $|X|>e$, we have $(1-\eps)p|X|\le \big(1-\tfrac\eps2\big)p|X\setminus Y|$, so we see that $e(Y,X\setminus Y)<\big(1-\tfrac{\eps}{2}\big)p|Y||X\setminus Y|$. This is a contradiction since $|X\setminus Y|\ge C'p^{-1}\log n$.

Next we show that there are at most $2C'p^{-1}\log (en/|X|)$ vertices of $\Gamma$ which have more than $(1+\eps)p|X|$ neighbours in $X$. Again, if this is not the case we can let $Y$ be a set of $2C'p^{-1}\log (en/|X|)$ vertices of $\Gamma$ with more than $(1+\eps)p|X|$ neighbours in $X$. Now $e(Y)\le 4p|Y|^2+2|Y|\log n=8C'|Y|\log(en/|X|)+2|Y|\log n\le 10C'|Y|\log n$, so there are at most $|Y|/2$ vertices in $Y$ which have $40C'\log n$ or more neighbours in $Y$. Let $Y'\subset Y$ consist of those vertices with at most $40C'\log n$ neighbours in $Y$. For each $v\in Y'$ we have
\[(1+\eps)p|X|\le\deg(v;X)\le \deg(v;Y)+\deg(v;X\setminus Y)\,,\]
and so, by choice of $C$, each vertex of $Y'$ has at least $\big(1+\tfrac\eps2\big)p|X\setminus Y|$ neighbours in $X\setminus Y$. Since $|Y'|\ge C'p^{-1}\log(2en/|X|)$ and $|X\setminus Y|\ge |X|/2\ge C'p^{-1}\log n$, this is a contradiction. Finally, since by choice of $C$ we have $3C'p^{-1}\log n<Cp^{-1}\log n$ we conclude that all but at most $Cp^{-1}\log (en/|X|)$ vertices of $\Gamma$ have $(1\pm\eps)p|X|$ neighbours in $X$, as desired.
\end{proof}

Now let $N$, $m$, and $s$ be positive integers and let $S$ and $S' \subseteq S$ be two sets with $|S| = N$ and $|S'| = m$. The \emph{hypergeometric distribution} is the distribution of the random variable $X$ that is defined by drawing $s$ elements of $S$ without replacement and counting how many of them belong to $S'$. It can be shown that Theorem~\ref{thm:chernoff} still holds in the case of hypergeometric distributions (see e.g.~\cite{janson2011random}, Chapter~2 for a proof) with $\Ex[X]:= ms/N$.  

\begin{theorem}[Hypergeometric inequality,~\cite{janson2011random}]
\label{thm:hypergeometric}
Let $X$ be a random variable that follows the hypergeometric distribution with parameters $N$, $m$, and $s$. Then for any $\eps>0$ and $t\ge\eps ms/N$ we have
\[\Pr\big[|X - ms/N| > t \big] < 2e^{-\eps^2t/3}\,.\]
\end{theorem}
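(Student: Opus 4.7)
The plan is to reduce the hypergeometric tail bound to the binomial one of Theorem~\ref{thm:chernoff} via Hoeffding's classical moment-generating-function (MGF) comparison. With $X$ as in the statement and $Y\sim\mathrm{Bin}(s,m/N)$ the analogous with-replacement sum, both have mean $\mu:=ms/N$, and the key claim I would establish is that
\[\Ex\!\left[e^{\lambda X}\right]\le\Ex\!\left[e^{\lambda Y}\right]\qquad\text{for every }\lambda\in\mathbb{R}.\]
Granted this, the standard Chernoff/Markov optimisation that delivers Theorem~\ref{thm:chernoff} for $Y$ yields an identical bound on the tails of $X$, and folding $t\ge\eps\mu$ into the resulting exponent produces the target inequality.

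The heart of the argument is the MGF comparison, which follows from Hoeffding's more general convex-ordering lemma: for every convex $f\colon\mathbb{R}\to\mathbb{R}$,
\[\Ex[f(X)]\le\Ex[f(Y)].\]
To prove the lemma, couple $X$ and $Y$ on a common space. Encode the population by $(b_1,\dots,b_N)\in\{0,1\}^N$ with exactly $m$ ones, let $\sigma$ be a uniformly random permutation of $[N]$, and set $X=\sum_{i=1}^s b_{\sigma(i)}$; independently, let $Z_1,\dots,Z_s$ be i.i.d.\ $\mathrm{Bernoulli}(m/N)$ and set $Y=\sum_{i=1}^s Z_i$. I would then run Hoeffding's induction on $s$: the base case $s=1$ is trivial since both variables are $\mathrm{Bernoulli}(m/N)$, and for the inductive step, condition on $b_{\sigma(1)}$ on one side and on $Z_1$ on the other, so that the remaining $s-1$ draws reduce to smaller hypergeometric and binomial instances respectively, to which the inductive hypothesis applies. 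A short exchangeability argument (or, equivalently, Jensen's inequality for the convex $f$ against the residual randomness from averaging over the removed population element) then yields the inequality in the required direction. Specialising to $f(x)=e^{\lambda x}$, convex for every $\lambda\in\mathbb{R}$, produces the MGF comparison.

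With the MGF domination in place, Markov's inequality for $\lambda>0$ gives
\[\Pr\!\left[X\ge\mu+t\right]\le e^{-\lambda(\mu+t)}\Ex\!\left[e^{\lambda X}\right]\le e^{-\lambda(\mu+t)}\Ex\!\left[e^{\lambda Y}\right],\]
and the same optimisation over $\lambda$ used in the proof of Theorem~\ref{thm:chernoff} transfers verbatim to $X$; the lower tail is handled symmetrically using $\lambda<0$. Combining both tails and simplifying the resulting exponent under the hypothesis $t\ge\eps\mu$ delivers the announced bound $2e^{-\eps^2 t/3}$. The single genuinely non-routine ingredient is Hoeffding's convex-ordering lemma, and that is where essentially all of the care in the proof would go; every subsequent step is a direct copy of the binomial Chernoff proof.
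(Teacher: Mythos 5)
The paper does not actually prove this theorem; it simply refers the reader to Janson, {\L}uczak, and Ruci\'nski, where the bound is obtained by exactly the route you describe: prove the MGF domination $\Ex[e^{\lambda X}]\le\Ex[e^{\lambda Y}]$ for the comparison binomial $Y\sim\mathrm{Bin}(s,m/N)$ (Hoeffding's convex-ordering theorem gives this), then rerun the Chernoff optimisation. So your high-level strategy is the standard one and matches the cited source.

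Your sketch of the key lemma, however, has a genuine gap at the inductive step. Conditioning on $b_{\sigma(1)}=a\in\{0,1\}$ leaves a residual $\mathrm{Hyp}(N-1,m-a,s-1)$, and the inductive hypothesis only compares that to $\mathrm{Bin}\bigl(s-1,\tfrac{m-a}{N-1}\bigr)$; conditioning on $Z_1$ on the other side leaves $\mathrm{Bin}(s-1,m/N)$, with a different success probability. Since $\tfrac{m-1}{N-1}<\tfrac{m}{N}<\tfrac{m}{N-1}$, neither conditional comparison follows termwise from the inductive hypothesis; one actually has to show that the $a$-mixture of $\mathrm{Bin}\bigl(s-1,\tfrac{m-a}{N-1}\bigr)$ with weights $m/N$, $1-m/N$ is dominated by $\mathrm{Bin}(s-1,m/N)$ in the convex order. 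Specialised to $f(x)=e^{\lambda x}$, with $u=e^\lambda$, $p=m/N$, $q=1-p$, this is the inequality
\[pu\left(\frac{(m-1)u+N-m}{N-1}\right)^{s-1}+q\left(\frac{mu+N-m-1}{N-1}\right)^{s-1}\le(pu+q)^{s},\]
which is true but requires a real computation: Jensen applied to the convex map $x\mapsto x^{s-1}$ on the left-hand mixture gives a lower bound, i.e.\ the wrong direction, so ``a short exchangeability argument'' does not close it. Since the paper's own position is to cite rather than prove, the cleanest fix is the same for you: cite Hoeffding's Theorem 4 (1963) for the convex ordering (or derive the MGF bound from negative association of sampling without replacement) rather than attempt a self-contained induction that, as sketched, is incomplete.
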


We require the following technical lemma, which is a consequence of the hypergeometric inequality stated in Theorem~\ref{thm:hypergeometric}. 

\begin{lemma}\label{lem:hypgeo}
 For each $\eta>0$ and $\Delta$ there exists $C$ such that the following holds. Let $W\subset [n]$, let $t\le 100n^\Delta$, and let $T_1,\ldots,T_t$ be subsets of $W$. For each $m\le |X|$ there is a set $S\subset W$ of size $m$ such that
 \[|T_i\cap S|=\frac{m}{|W|}|T_i|\pm \big(\eta|T_i|+C\log n\big)\text{ for every }i\in[t]\,.\]
\end{lemma}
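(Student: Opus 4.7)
The plan is a direct application of the probabilistic method combined with the hypergeometric concentration bound stated as Theorem~\ref{thm:hypergeometric}. I would choose $S$ uniformly at random among all $m$-element subsets of $W$ and show, via a union bound over $i \in [t]$, that with positive probability the required estimate holds for every $T_i$ simultaneously. For each fixed $i$, the random variable $X_i := |T_i \cap S|$ follows a hypergeometric distribution with parameters $N=|W|$, population $|T_i|$, and sample size $m$, so $\mathbb{E}[X_i] = \tfrac{m}{|W|}|T_i|$.

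The only twist is that the error term $\eta|T_i|+C\log n$ has two regimes, so I would split on the size of $|T_i|$. If $|T_i|\le C\log n$, then deterministically
\[
\bigl|X_i - \mathbb{E}[X_i]\bigr| \le |T_i| \le C\log n,
\]
and there is nothing to prove. Otherwise $|T_i|>C\log n$, and I apply Theorem~\ref{thm:hypergeometric} with deviation parameter $\tau:=\eta|T_i|$ and with $\eps:=\eta$. The required hypothesis $\tau \ge \eps \cdot \tfrac{m|T_i|}{|W|}$ is immediate from $m\le|W|$, so
\[
\Pr\bigl[|X_i-\mathbb{E}[X_i]|>\eta|T_i|\bigr] < 2\exp\!\bigl(-\tfrac{\eta^{3}|T_i|}{3}\bigr) \le 2\exp\!\bigl(-\tfrac{\eta^{3}C\log n}{3}\bigr).
\]

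To finish, I choose $C := \lceil 6(\Delta+2)/\eta^{3}\rceil$, so that each failure probability is at most $2n^{-2(\Delta+2)}$. A union bound over the $t\le 100 n^\Delta$ indices gives a total failure probability of at most $200\,n^{-\Delta-4}$, which is strictly less than $1$ for $n$ large (and for small $n$ we may assume $C\log n \ge |W|$, making the conclusion trivial). Hence some outcome $S$ satisfies the stated estimate for every $i\in[t]$, proving the lemma. There is no real obstacle here; the slight care needed is the case split to keep the bound meaningful when $|T_i|$ is smaller than $\log n$, and the observation that $m/|W|\le 1$ lets us take $\eps=\eta$ in Theorem~\ref{thm:hypergeometric} without worrying about whether $\mathbb{E}[X_i]$ is large compared to $|T_i|$.
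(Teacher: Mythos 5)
Your proof is correct and follows essentially the same route as the paper's: choose $S$ uniformly at random, observe that each $|T_i\cap S|$ is hypergeometric, apply Theorem~\ref{thm:hypergeometric}, and take a union bound over $i\in[t]$. The only difference is stylistic: the paper avoids your case split by plugging the full deviation $\tau=\eta|T_i|+C\log n$ into the concentration bound in one step (this always satisfies $\tau\ge C\log n$ and $\tau\ge\eta\cdot\tfrac{m}{|W|}|T_i|$ simultaneously), which yields a slightly sharper constant $C=30\eta^{-2}\Delta$ versus your $C\sim\eta^{-3}$; both are of course sufficient since the lemma only asserts existence of some $C$.
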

\begin{proof}
 Set $C=30\eta^{-2}\Delta$. Observe that for each $i$, the size of $T_i\cap S$ is hypergeometrically distributed.
 By Theorem~\ref{thm:hypergeometric}, for each $i$ we have
 \[\Pr\Big[|T_i\cap S|\neq \frac{m}{|W|}|T_i|\pm \big(\eta|T_i|+C\log n\big)\Big]<2e^{-\eta^2C\log n/3}<\frac{2}{n^{1+\Delta}}\,,\]
 so taking the union bound over all $i\in[t]$ we conclude that the probability of failure is at most $2t/n^{1+\Delta}\le 200/n\to 0$ as $n\to\infty$, as desired.
\end{proof}

We shall also use McDiarmid's Inequality.

\begin{lemma}[McDiarmid's Inequality~\cite{McDiarmid}]\label{lem:McDiarmid}
Let $X_1, \ldots, X_k$ be independent random variables, where $X_i$ takes values in a finite set $A_i$ for each $i\in [k]$. Suppose that a function $g: A_1 \times \ldots \times A_k \to \mathbb R$ satisfies for each $i\in [k]$ 
$$\sup_{x_1,\ldots, x_k, \hat x_i}|g(x_1,x_2,\ldots,x_k)-g(x_1,x_2,\ldots, x_{i-1}, \hat{x}_i, x_{i+1}, \ldots, x_k)| \leq c_i.$$ 
Then, for any $\eps >0$, we have 
$$\Pr\big[|\Ex[g(X_1,\ldots, X_k)]- g(X_1,\ldots, X_k)| \geq \eps \big]\leq 2\exp\left\{-\frac{2\eps^2}{\sum_{i\in[k]} c_i^2}\right\}\,.$$
\end{lemma}

%%%%%%%%%%%%%%%%%%%%%%%%%%%%%%%%%%%%%%%%%%%%%%%%%%%%%%%%%

% Main Lemmas

%%%%%%%%%%%%%%%%%%%%%%%%%%%%%%%%%%%%%%%%%%%%%%%%%%%%%%%%%

\section{Proof overview and main lemmas}
\label{sec:mainlemmas}

Theorem~\ref{thm:main} is a corollary of the following more general Theorem~\ref{thm:maink}, which we prove in Section~\ref{sec:proofmain}. We require one preliminary definition.
\begin{definition}[Zero-free colouring]\label{def:zerofree}
  Let $H$ be a $(k+1)$-colourable graph on $n$ vertices and let $\cL$ be a
  labelling of its vertex set by integers $1, \ldots, n$ of bandwidth at most
  $\beta n$. A proper $(k+1)$-colouring $\sigma:V(H) \to \{0,\ldots,k\}$ of its
  vertex set is said to be \emph{$(z,\beta)$-zero-free} with respect to $\cL$ if
  any $z$ consecutive blocks contain at most one block with colour zero, where a
  block is defined as a set of the form $\{(t-1)4k\beta n +1, \ldots, t4k\beta
  n\}$ with some $t \in [1/(4k\beta)]$,
  and a block
  with colour zero is a block in which at least one vertex is coloured with zero.
\end{definition}

\begin{theorem}
\label{thm:maink}
For each $\gamma>0$, $\Delta \geq 2$, and $k\geq 2$, there exist constants $\beta >0$, $z>0$, and $C>0$ such that the following holds asymptotically almost surely for $\Gamma = G(n,p)$ if $p\geq C\left(\frac{\log n}{n}\right)^{1/\Delta}$. Let $G$ be a spanning subgraph of $\Gamma$ with $\delta(G) \geq\left(\frac{k-1}{k}+\gamma\right)pn$ and let $H$ be a graph on $n$ vertices with $\Delta(H) \leq \Delta$ that has a labelling $\cL$ of its vertex set of bandwidth at most $\beta n$, a $(k+1)$-colouring that is $(z,\beta)$-zero-free with respect to $\cL$ and where the first $\sqrt{\beta} n$ vertices in $\cL$ are not given colour zero and the first $\beta n$ vertices in $\cL$ include $C p^{-2}$ vertices that are not contained in any triangles of $H$. Then $G$ contains a copy of $H$.
\end{theorem}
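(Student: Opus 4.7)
The plan is to follow the standard ``regularise, assign, balance, blow up'' template for bandwidth-type theorems, adapted to the sparse random-graph setting where the sparse blow-up lemma (Lemma~\ref{thm:blowup}) replaces its dense analogue. First I would apply Lemma~\ref{lem:regularitylemma} to $G$ to obtain an $(\eps,d,p)_G$-lower-regular equipartition $\cV=\{V_0,V_1,\ldots,V_r\}$ with reduced graph $R$ of minimum degree at least $\big(\tfrac{k-1}{k}+\tfrac{\gamma}{2}\big)r$. A first technical lemma (the ``Lemma for $G$'') then builds inside $R$ a spanning $K_k$-factor $R'$ by a Hajnal--Szemerédi-style argument, arranged so that consecutive cliques of $R'$ are connected into a path-of-cliques along which consecutive slices of $H$ can be embedded. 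Using the inheritance lemmas (Lemmas~\ref{lem:OSRIL} and~\ref{lem:TSRIL}) I would then move an $o(1)$-fraction of each cluster into $V_0$ to guarantee that $(G,\cV)$ is $(\eps',d,p)_G$-super-regular on $R'$ and satisfies both one-sided and two-sided inheritance on $R'$, as required by~\ref{itm:blowup:G}.

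On the $H$ side (the ``Lemma for $H$''), I would use the bandwidth bound together with the $(z,\beta)$-zero-free $(k+1)$-colouring to cut $H$ into short consecutive chunks along $\cL$ and assign each chunk to a $K_k$-clique of $R'$. The bandwidth bound ensures that every edge of $H$ lies inside a chunk or between consecutive chunks, while the zero-free property lets us ``absorb'' the sporadic colour-$0$ vertices into adjacent cliques whilst preserving proper colouring. This yields a size-compatible partition $\cW=\{W_i\}_{i\in[r]}$ of $V(H)$ together with a $(\vartheta,R')$-buffer $\tcW$ satisfying~\ref{itm:blowup:H}. The exceptional set $V_0$ is then covered by reserving image restrictions for a carefully chosen subset of the first $\beta n$ vertices of $\cL$ that are not contained in any triangle of $H$; the resulting pair $(\cI,\cJ)$ satisfies Definition~\ref{def:restrict} with $\Delta_J=\bigO(1)$. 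Crucially, because these vertices have independent $H$-neighbourhoods, the regularity of the restricted pairs only demands one-sided inheritance (Lemma~\ref{lem:OSRIL}), which is what is available at the threshold $p\gg(\log n/n)^{1/\Delta}$.

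Finally (the ``Balancing lemma''), I would redistribute a few vertices between clusters to enforce $|V_i|=|W_i|$ for every $i$, using Proposition~\ref{prop:subpairs3} and Lemma~\ref{lem:hypgeo} to show that lower-regularity, super-regularity and both inheritance properties survive the rebalancing; then Lemma~\ref{thm:blowup} produces the desired embedding $\phi\colon V(H)\to V(G)$ with $\phi(x)\in I_x$ for every $x\in V(H)$. The hardest part will be ensuring two-sided inheritance on $R'$ simultaneously with super-regularity and a small exceptional set: each application of Lemma~\ref{lem:TSRIL} exposes up to $\bigO(p^{-2})$ potentially bad vertices per relevant triple of clusters, so controlling $|V_0|$ and the image-restriction sizes $\zeta(dp)^{|J_x|}|V_i|$ becomes a delicate bookkeeping exercise. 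It is precisely at this point, when embedding $V_0$ into regions where only one-sided inheritance is available, that the hypothesis of $Cp^{-2}$ triangle-free vertices among the first $\beta n$ positions of $\cL$ is indispensable.
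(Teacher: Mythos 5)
Your high-level skeleton (regularise $G$, homomorphism for $H$, balance, sparse blow-up) matches the paper's, but the proposal has a genuine gap at the single most delicate point: how the exceptional set $V_0$ is covered. You write that $V_0$ is ``covered by reserving image restrictions for a carefully chosen subset of the first $\beta n$ vertices.'' But in Definition~\ref{def:restrict} the image restrictions $I_x$ are subsets of the \emph{clusters} $V_i$, and the blow-up lemma embeds $H$ bijectively onto $\bigcup_i V_i = V(G)\setminus V_0$. No choice of $\cI,\cJ$ can make Lemma~\ref{thm:blowup} place a vertex of $H$ on a vertex of $V_0$, so the embedding you get is not spanning. What is actually needed is a separate \emph{pre-embedding}: one picks $v\in V_0$, a triangle-free vertex $x$ from the first $\sqrt\beta n$ positions of $\cL$, embeds $x\mapsto v$ and $N_H(x)$ onto vertices in $N_G(v)$ chosen with the common neighbourhood lemma (Lemma~\ref{lem:common}, which your proposal does not invoke), and only \emph{then} records image restrictions on the second $H$-neighbourhood of $x$ so the blow-up lemma can finish. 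The independence of $N_H(x)$ is what makes this step work at all — it means no edges are required among the chosen images of $N_H(x)$ — and the total number of pre-embedded $x$'s is governed by $|V_0| = O(p^{-2})$, which is why $Cp^{-2}$ such vertices are hypothesised.

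A related confusion: you argue that the triangle-free hypothesis is needed because ``only one-sided inheritance is available'' at $p\gg(\log n/n)^{1/\Delta}$. Both inheritance lemmas (Lemmas~\ref{lem:OSRIL} and~\ref{lem:TSRIL}) hold already at $p\gg(\log n/n)^{1/2}$; two-sided inheritance is used in the Lemma for $G$ and is the source of the $\Theta(p^{-2})$ bound on $|V_0|$, while the threshold $p\gg(\log n/n)^{1/\Delta}$ comes from the sparse blow-up lemma itself. A smaller point: finding a $K_k$-factor in $R$ via Hajnal--Szemerédi is not sufficient — one needs the backbone graph $B^k_r$ (a $K_k$-factor with the consecutive cliques linked), and the paper obtains it by applying the dense bandwidth theorem to the reduced graph. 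If you add the pre-embedding machinery (Lemma~\ref{lem:common} and the bookkeeping of a reserved set $S$ for the pre-embedded neighbours so super-regularity and image-restriction sets are not depleted), the rest of your outline aligns with the paper's argument.
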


\subsection{Proof overview}\label{subsec:over}
We now give a brief sketch of the proof of Theorem~\ref{thm:maink}. Ultimately,
our goal is to apply the sparse blow-up lemma, Lemma~\ref{thm:blowup}, to find
an embedding of $H$ into $G$. Thus, the proof boils down to obtaining the
required conditions. But there is a catch: this is not as such possible, as for
any lower-regular partition of $G$ there can be $O(p^{-2})$ exceptional vertices which
are `badly behaved' with respect to the partition. These vertices will never
satisfy the conditions of the sparse blow-up lemma, and we will have to deal
with them beforehand. We will do this by `pre-embedding' some vertices of $H$ to
cover the exceptional vertices, and then apply the sparse blow-up lemma to
complete the embedding of $H$ into $G$, using image restrictions to ensure we
really obtain an embedding of $H$. Let us now fill in a few more details.
 
We start by obtaining, in the lemma for $G$, Lemma~\ref{lem:G}, a lower-regular
partition of $G$ into parts $V_0$ and $V_{i,j}$ for $i\in[r]$ (where $r$ may be
large but is bounded above by a constant) and $j\in [k]$ with several extra
properties. The most important properties are that $|V_0|=O\big(p^{-2}\big)$,
that the corresponding reduced graph, which we call $R^k_r$, on the vertex set
$[r]\times[k]$ has high minimum degree and contains a so-called backbone graph
on which the partition does not merely provide lower-regular pairs but
super-regular pairs, and that all vertices outside $V_0$ have inheritance
properties with respect to all lower-regular pairs. Here, a \emph{backbone
  graph} has all edges between $(i,j)$ and $(i',j')$ with $|i-i'|\le 1$ and $j\neq j'$. One
should think of the backbone graph as consisting of copies of $K_k$ (one for
each $i\in[r]$) connected in a linear order; and the high minimum degree of
$R^k_r$ ensures that each $K_k$ extends to $K_{k+1}$ in $R^k_r$. In short, if
the exceptional vertices $V_0$ did not exist, this partition, together with a
corresponding partition of $V(H)$, would be what we need to apply the sparse
blow-up lemma.
 
Passing over for now the inconvenient existence of $V_0$, our next task is to
find the corresponding partition of $V(H)$, for which we use the lemma for $H$,
Lemma~\ref{lem:H2}. The basic idea is then to split $H$ into intervals in the bandwidth
order. We assign the first interval to the first $K_k$ of the backbone graph
according to the given colouring of $H$, with the few vertices of colour zero
assigned to a vertex extending this clique of the backbone graph to $K_{k+1}$,
and so on. Using the bandwidth property and zero-freeness of the colouring one
can do this in such a way as to obtain a graph homomorphism from $H$ to $R^k_r$,
which is what we need. In addition, we need the number of vertices assigned to
each $(i,j)\in V(R^k_r)$ to be very close to $|V_{i,j}|$. We cannot guarantee
exact equality, but we can get very close by making further use of bandwidth,
zero-freeness, and the fact that $K_k$s in $R^k_r$ extend to $K_{k+1}$s.
 
 Now we have to deal with the exceptional set $V_0$. We do this as follows. We choose a vertex $v$ in the exceptional set, and `pre-embed' to it a vertex $x$ picked from the first $\sqrt{\beta}n$ vertices of $\cL$ which is not in any triangle of $H$. Using the common neighbourhood lemma, Lemma~\ref{lem:common}, we choose $\Delta$ neighbours of $v$ which are `well-behaved' with respect to the clusters $V_{i,j}$ for some $i\in[r]$, and pre-embed the neighbours of $x$ to these vertices. The `well-behaved' properties are what we need to generate image restrictions for the second neighbours of $x$ (which we will embed using the sparse blow-up lemma) satisfying the restriction pair properties. We also need to change the assignment from the Lemma for $H$ locally (up to a large but constant distance from $x$) to accommodate this: the vertex $x$, and its first and second neighbours, might have been assigned somewhere quite different previously. We repeat this until we have pre-embedded to all exceptional vertices, and let $H'$ and $G'$ be respectively the unembedded vertices of $H$ and the vertices of $G$ to which we did not pre-embed.
 
 At this point we have all the conditions we need to apply the sparse blow-up lemma to complete the embedding, except that the partitions of $H'$ and $G'$ we have do not quite have parts of matching sizes. We use the balancing lemma, Lemma~\ref{lem:balancing}, to deal with this. The idea is simple: we take some carefully selected vertices in clusters of $G$ which are too big (compared to the assigned part of $H$) and move them to other clusters, first in order to make sure that the total number of vertices in $\bigcup_i V_{i,j}$ is correct for each $j$ (using the high minimum degree of $R^k_r$) and then (using the structure of the backbone graph) to give each cluster the correct size.
 
 At last, applying the sparse blow-up lemma, Lemma~\ref{thm:blowup}, we complete the embedding of $H$ into $G$.

 We note that this proof sketch glosses over some subtleties. In particular, at the two places where `we choose' vertices onto which to pre-embed, we have to be quite careful to choose vertices correctly so that this strategy can be completed and we do not destroy good properties obtained earlier. We will return to this point immediately before the proof of Theorem~\ref{thm:maink} in Section~\ref{sec:proofmain} to explain how we do this.
 
\subsection{Main lemmas}

In this subsection we formulate the four main lemmas that we use in the proof of Theorem~\ref{thm:maink} mentioned in the above overview. We defer the proofs of these lemmas to later sections. Before stating these lemmas, we need some more definitions. 

Let $r, k \geq 1$ and let $B^k_r$ be the backbone graph on $kr$ vertices. That is, we have
\[V(B^k_r) := [r] \times [k]\] and for every $j \neq j' \in [k]$ we have $\{(i,j),(i',j')\} \in E(B^k_r)$ if and only if $|i-i'|\le1$.

Let $K^k_r \subseteq B^k_r$ be the spanning subgraph of $B^k_r$ that is the disjoint union of $r$ complete graphs on $k$ vertices given by the following components: the complete graph $K^k_r[\{(i,1),\ldots, (i,k)\}]$ is called the \emph{$i$-th component} of $K^k_r$ for each $i\in [r]$. 

A vertex partition $\cV' = \{V_{i,j}\}_{i\in[r],j\in[k]}$ is called \emph{$k$-equitable} if $\big||V_{i,j}| -|V_{i,j'}|\big|\leq 1$ for every $i\in [r]$ and $j,j'\in[k]$. Similarly, an integer partition $\{n_{i,j}\}_{i\in[r],j\in[k]}$ of $n$ (meaning that $n_{i,j} \in \mathbb Z_{\geq 0}$ for every $i\in [r],j\in[k]$ and $\sum_{i\in[r]j\in[k]} n_{i,j} = n$) is \emph{$k$-equitable} if $|n_{i,j}-n_{i,j'}| \leq 1$ for every $i\in[r]$ and $j,j'\in[k]$. 

The lemma for $G$ says that a.a.s.~$\Gamma = G(n,p)$ satisfies the following property if $p \gg (\log n/n)^{1/2}$. For any spanning subgraph $G\subset\Gamma$ with minimum degree a sufficiently large fraction of $pn$, there exists an $(\eps,d,p)_G$-lower-regular vertex partition $\cV$ of $V(G)$ whose reduced graph $R^k_r$ contains a clique factor $K^k_r$ on which the corresponding vertex sets of $\cV$ are pairwise $(\eps,d,p)$-super-regular. Furthermore, $(G,\cV)$ has one-sided and two-sided inheritance with respect to $R^k_r$, and the $\Gamma$-neighbourhoods of all vertices but the ones in the exceptional set of $\cV$ have almost exactly their expected size in each cluster.
The proof of Lemma~\ref{lem:G} is given in Section~\ref{sec:prooflemG}.

\begin{lemma}[Lemma for $G$] 
\label{lem:G}
For each $\gamma > 0$ and integers $k \geq 2$ and $r_0 \geq 1$ there exists $d > 0$ such that for every $\eps \in \left(0, \frac{1}{2k}\right)$ there exist $r_1\geq 1$ and $\Ca>0$ such that the following holds a.a.s.~for $\Gamma = G(n,p)$ if $p \geq \Ca \left(\log n/n\right)^{1/2}$. Let $G=(V,E)$ be a spanning subgraph of $\Gamma$ with $\delta(G) \geq  \left(\frac{k-1}{k} + \gamma\right)pn$. Then there exists an integer $r$ with $r_0\leq kr \leq r_1$, a subset $V_0 \subseteq V$  with $|V_0| \leq \Ca p^{-2}$, 
a $k$-equitable vertex partition $\cV = \{\Vij\}_{i\in[r],j\in[k]}$ of $V(G)\setminus V_0$,
and a graph $R^k_r$ on the vertex set $[r] \times [k]$ with $K^k_r \subseteq B^k_r \subseteq R^k_r$, with $\delta(R^k_r) \geq \left(\frac{k-1}{k} + \frac{\gamma}{2}\right)kr$, and such that the following are true.
\begin{enumerate}[label=\itmarab{G}]
\item \label{lemG:size} $\frac{n}{4kr}\leq |\Vij| \leq \frac{4n}{kr}$ for every $i\in[r]$ and $j\in[k]$,
\item \label{lemG:regular} $\cV$ is $(\eps,d,p)_G$-lower-regular on $R^k_r$ and $(\eps,d,p)_G$-super-regular on $K^k_r$,
\item \label{lemG:inheritance} both $\big(\NGa(v, V_{i,j}),V_{i',j'}\big)$ and $\big(\NGa(v, V_{i,j}),\NGa(v, V_{i',j'})\big)$ are $(\eps,d,p)_G$-lower-regular pairs for every $\{(i,j),(i',j')\} \in E(R^k_r)$ and $v\in V\setminus V_0$,
\item \label{lemG:gamma} $|\NGa(v,V_{i,j})| = (1 \pm \eps)p|V_{i,j}|$ for every $i \in [r]$, $j\in [k]$ and every $v \in V \setminus V_0$.
\end{enumerate}
Furthermore, if we replace~\ref{lemG:inheritance} with the weaker
\begin{enumerate}[label=\itmsol{G}{3'}]
 \item \label{lemG:inheritancep} $\big(\NGa(v, V_{i,j}),V_{i',j'}\big)$ is an $(\eps,d,p)_G$-lower-regular pair for every $\{(i,j),(i',j')\} \in E(R^k_r)$ and $v\in V\setminus V_0$,
\end{enumerate}
then we have the stronger bound $|V_0|\le\Ca p^{-1}$.
\end{lemma}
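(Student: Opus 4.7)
The plan is to apply the sparse regularity lemma (Lemma~\ref{lem:regularitylemma}) to $G$, use a Hajnal--Szemer\'edi-type argument to identify both the clique factor $K^k_r$ and the backbone graph $B^k_r$ inside the reduced graph, and then clean up the resulting partition by moving the atypical vertices either into the exceptional set or to other clusters. Choose auxiliary constants $\eps'\ll\eps$ and a suitable $d$ with $d\ll\gamma$. The upper density assumption of Lemma~\ref{lem:regularitylemma} holds a.a.s.\ in $\Gamma=G(n,p)$ by Proposition~\ref{prop:chernoff} applied with parameter $\eps^2/1000$; hence, on $\Gamma$, Lemma~\ref{lem:regularitylemma} supplies an $(\eps',p)_G$-regular equipartition $\{V_0',V_1,\ldots,V_s\}$ of $V(G)$, with $r_0\le s\le s_1$, whose reduced graph $R$ satisfies $\delta(R)\ge\big(\tfrac{k-1}{k}+\tfrac{3}{4}\gamma\big)s$. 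By the Hajnal--Szemer\'edi theorem, $R$ contains a spanning $K_k$-factor $C_1,\ldots,C_r$ (so $s=kr$). Using the remaining $\tfrac{\gamma}{4}s$ slack in $\delta(R)$, a local relabelling of vertices inside each $C_i$ and a linear ordering of the $C_i$'s (essentially the chain-building argument of~\cite{bottcher2009proof}) yields a labelling $V(R)=[r]\times[k]$ realising $K^k_r\subseteq B^k_r\subseteq R^k_r=:R$, with $\delta(R^k_r)\ge\big(\tfrac{k-1}{k}+\tfrac{\gamma}{2}\big)kr$.

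Next I would collect into a set $V_0^{\mathrm{bad}}$ every vertex of $V(G)$ that fails any of the desired local properties with respect to this partition: vertices whose $\Gamma$-degree into some cluster $V_{i,j}$ is not $(1\pm\eps)p|V_{i,j}|$ (contributing $O(p^{-1})$ per cluster by Proposition~\ref{prop:chernoff}); vertices for which one-sided inheritance fails along some edge of $R^k_r$ (contributing $O(p^{-1})$ per edge by Lemma~\ref{lem:OSRIL}); and, when \ref{lemG:inheritance} is required, vertices for which two-sided inheritance fails along some pair of edges sharing a vertex in $R^k_r$ (contributing $O(p^{-2})$ per such pair, by Lemma~\ref{lem:TSRIL}). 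Since $r\le r_1$ is bounded by a constant, these contributions sum to $O(p^{-2})$ (respectively $O(p^{-1})$ if only \ref{lemG:inheritancep} is needed), which matches the claimed bounds $\Ca p^{-2}$ and $\Ca p^{-1}$ for $|V_0|$.

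The remaining difficulty is to deal with $V_0'$ and with vertices that spoil super-regularity on $K^k_r$: for each edge $(V_{i,j},V_{i,j'})$ of $K^k_r$, Proposition~\ref{prop:neighbourhood} alone only bounds the number of vertices with too few $G$-neighbours in $V_{i,j'}$ by $\eps|V_{i,j}|$, which is vastly larger than $p^{-2}$. I would resolve this by \emph{redistribution} rather than deletion: for each vertex $v\in V_0'$ together with each super-regularity violator $v$, search for a cluster $V_{i,j}$ such that placing $v$ there gives the correct $\Gamma$-degrees into all neighbouring clusters (so \ref{lemG:gamma} is preserved), the necessary inheritance properties hold (so \ref{lemG:inheritance}/\ref{lemG:inheritancep} continue to hold after applying Proposition~\ref{prop:subpairs3}), and $v$ has at least $(d-\eps)p|V_{i,j''}|$ $G$-neighbours in each of the $k-1$ partner clusters $V_{i,j''}$ of the component. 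Provided $v\notin V_0^{\mathrm{bad}}$, the first two requirements are automatic; the third is achievable for at least one choice of $(i,j)$ because $\delta(G)\ge\big(\tfrac{k-1}{k}+\gamma\big)pn$ forces $v$ to have an above-average $G$-density into most clusters, so only a $\gamma$-fraction of cluster choices can simultaneously be bad for $v$ on all $k$ roles of the component. Phrasing this as a Hall-type matching, with capacities set so that the cluster sizes remain $k$-equitable and within $[n/(4kr),4n/(kr)]$, completes the redistribution.

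Finally, after redistribution the partition $\cV=\{V_{i,j}\}$ together with $V_0:=V_0^{\mathrm{bad}}$ satisfies \ref{lemG:size} by the capacity bounds, \ref{lemG:regular} by Proposition~\ref{prop:subpairs3} (moved vertices form an $o(1)$-fraction of every cluster) together with super-regularity enforced by construction, and \ref{lemG:inheritance} (or \ref{lemG:inheritancep}) and \ref{lemG:gamma} because $V_0$ absorbed every vertex that could violate them. The main obstacle, as indicated, is the super-regularity step: the standard bound $\eps|V_{i,j}|$ on Proposition~\ref{prop:neighbourhood}-violators is far too weak, and replacing ``delete'' by ``reassign'' forces us to use the full strength of the $G$-minimum degree rather than only the lower-regular density $d$.
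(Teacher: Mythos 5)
Your overall architecture — sparse regularity lemma, find the backbone graph in the reduced graph, redistribute the exceptional set and the super-regularity violators instead of deleting them, and use the minimum degree of $G$ to place each such vertex into a clique of the clique factor — matches the paper's strategy, and you correctly identify that deletion fails because the super-regularity violators number $\Theta(\eps n)$, far more than $p^{-2}$. (The paper obtains $B^k_r\subseteq R^k_r$ by applying Theorem~\ref{thm:bandwidth} to the reduced graph, which packages the Hajnal--Szemer\'edi-plus-chain argument you cite; that step is equivalent.)

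However, there is a real gap in your final cleanup. You set $V_0:=V_0^{\mathrm{bad}}$ and claim that \ref{lemG:gamma} and \ref{lemG:inheritance} hold ``because $V_0$ absorbed every vertex that could violate them,'' and that \ref{lemG:regular} follows from Proposition~\ref{prop:subpairs3} since moved vertices form an $o(1)$-fraction of each cluster. But $V_0^{\mathrm{bad}}$ is defined relative to the \emph{original} partition $\{U_{i,j}\}$, whereas the properties must hold with respect to the \emph{redistributed} partition $\{V_{i,j}\}$. The issue is that in the sparse setting, the symmetric difference $U_{i,j}\symd V_{i,j}$ is $o(1)\cdot|U_{i,j}|$, and that is indeed tiny compared to a full cluster — but the quantities controlling \ref{lemG:gamma}, \ref{lemG:inheritance}, and the super-regularity degree bound are $\Gamma$-neighbourhoods of size $\sim p|U_{i,j}|$, and for a single vertex $u$ the set $N_\Gamma(u,U_{i,j}\symd V_{i,j})$ can be as large as $|U_{i,j}\symd V_{i,j}|$ itself, dwarfing $p|U_{i,j}|$. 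So for a non-moved vertex $u\notin V_0^{\mathrm{bad}}$, you cannot conclude $\deg_\Gamma(u,V_{i,j})=(1\pm\eps)p|V_{i,j}|$ from $\deg_\Gamma(u,U_{i,j})=(1\pm\eps')p|U_{i,j}|$, nor can you apply Proposition~\ref{prop:subpairs3} to pass $\big(N_\Gamma(u,U_{i,j}),U_{i',j'}\big)$-lower-regularity to $\big(N_\Gamma(u,V_{i,j}),V_{i',j'}\big)$, because the proportional perturbation of $N_\Gamma(u,U_{i,j})$ is not controlled. The same problem destroys your argument that super-regularity survives: a vertex that had $(d-2\eps')p|U_{i,j'}|$ $G$-neighbours in $U_{i,j'}$ might lose almost all of them when passing to $V_{i,j'}$ if it happens to be highly adjacent in $\Gamma$ to the set that was moved out.

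The fix is a second exceptional set (the paper's $Z_2$), consisting of all vertices $u$ with $\deg_\Gamma(u,U_{i,j}\symd V'_{i,j})$ atypically large for some $i,j$. Since $|U_{i,j}\symd V'_{i,j}|$ is $\Theta(\eps' n/r)$, Proposition~\ref{prop:chernoff} bounds this set by $O(p^{-1})$ per cluster, so $|Z_2|=O(p^{-1})$ and $V_0:=Z_1\cup Z_2$ keeps the claimed size. For vertices surviving $Z_2$, the $\Gamma$-degree into the symmetric difference is $O(\eps' p|U_{i,j}|)$, which is exactly what you need to run Proposition~\ref{prop:subpairs3} on the $\Gamma$-neighbourhood pairs and to preserve the super-regularity degree lower bound. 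Without this second removal, your proof does not close.
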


After Lemma~\ref{lem:G} has constructed a lower-regular partition $\cV$ of $V(G)$, the second main lemma deals with the graph $H$ that we would like to find as a subgraph of $G$. More precisely, Lemma~\ref{lem:H2} provides a homomorphism $f$ from the graph $H$ to the reduced graph $R^k_r$ given by Lemma~\ref{lem:G} which has among others the following properties. The edges of $H$ are mapped to the edges of $R^k_r$, and the vast majority of the edges of $H$ are assigned to edges of the clique factor $K^k_r \subseteq R^k_r$. The number of vertices of $H$ mapped to a vertex of $R^k_r$ only differs slightly from the size of the corresponding cluster of $\cV$. The lemma further guarantees that each of the first $\sqrt{\beta}n$ vertices of the bandwidth ordering of $V(H)$ is mapped to $(1,j)$ with $j$ being the colour that the vertex has received by the given colouring of $H$. In case $H$ is $D$-degenerate the next lemma also ensures that for every $(i,j) \in [r] \times [k]$, a constant fraction of vertices mapped to $(i,j)$ have each at most $2D$ neighbours. 

\begin{lemma}[Lemma for~$H$]\label{lem:H2}
Given $D, k, r \geq 1$ and $\xi, \beta > 0 $ the following holds if $\xi \leq 1/(kr)$ and $\beta \leq 10^{-10}\xi^2/(D k^4r)$. Let $H$ be a $D$-degenerate graph on $n$ vertices, let $\mathcal L$ be a labelling of its vertex set of bandwidth at most $\beta n$ and let $\sigma: V(H) \to \{0,\ldots k\}$ be a proper $(k+1)$-colouring that is $(10/\xi, \beta)$-zero-free with respect to $\mathcal L$, where the colour zero does not appear in the first $\sqrt{\beta}n$ vertices of $\mathcal{L}$. Furthermore, let $R^k_r$ be a graph on vertex set $[r] \times [k]$ with $K^k_r \subseteq B^k_r \subseteq R^k_r$ such that for every $i\in [r]$ there exists a vertex $z_i \in \big([r]\setminus\{i\}\big) \times [k]$ with $\big\{z_i, (i,j)\big\} \in E(R^k_r)$ for every $j\in [k]$. Then, given a $k$-equitable integer partition $\{m_{i,j}\}_{i\in[r],j\in[k]}$ of $n$ with $n/(10kr) \leq m_{i,j} \leq 10n/(kr)$ for every $i \in[r]$ and $j\in [k]$, there exists a mapping $f \colon V(H) \to [r]\times[k]$ and a set of special vertices $X \subseteq V(H)$ such that we have for every $i\in [r]$ and $j\in[k]$
%with the following properties, where $W_{i,j} := f^{-1}(i,j)$. 

\begin{enumerate}[label=\itmarab{H}]
\item\label{lemH:H1} $m_{i,j} - \xi n  \leq |f^{-1}(i,j)| \leq m_{i,j} + \xi n$,
\item\label{lemH:H2} $|X| \leq \xi n$,
\item\label{lemH:H3} $\{f(x),f(y)\} \in E(R^k_r)$  for every $\{x,y\} \in E(H)$,
\item\label{lemH:H4} $y,z\in \cup_{j'\in[k]}f^{-1}(i,j')$ for every $x\in f^{-1}(i,j)\setminus X$ and $xy,yz\in E(H)$,
\item\label{lemH:H5} $f(x) = \big(1, \sigma(x)\big)$ for every $x$ in the first $\sqrt{\beta}n$ vertices of $\mathcal{L}$, and
\item\label{lemH:H6} $|\{x\in f^{-1}(i,j): \deg(x) \leq 2D\}| \geq \tfrac{1}{24D} |f^{-1}(i,j)|$. 
\end{enumerate}
\end{lemma}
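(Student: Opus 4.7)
The plan is to partition $V(H)$ into $r$ consecutive intervals along $\mathcal{L}$, assign the $i$-th interval to the $i$-th clique of $K^k_r\subseteq R^k_r$, and within that interval place each vertex of colour $j\in[k]$ into the cluster $(i,j)$ and each vertex of colour~$0$ into the extra vertex $z_i$. The bulk of the work splits into (a) choosing the interval boundaries so that the resulting map is a homomorphism to $R^k_r$, (b) rebalancing the fibre sizes to match $\{m_{i,j}\}$ within error $\xi n$, and (c) securing the degeneracy condition~\ref{lemH:H6}.

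I would first slice $V(H)$ into blocks $B_1,\ldots,B_T$ of size $4k\beta n$ as in Definition~\ref{def:zerofree}, so that by the bandwidth assumption every edge of $H$ lies inside one block or crosses a single block boundary. Call a block \emph{zero} if it contains a vertex of colour~$0$; by $(10/\xi,\beta)$-zero-freeness any $10/\xi$ consecutive blocks contain at most one zero block. Writing $n_i:=\sum_{j\in[k]} m_{i,j}$, the $k$-equitability of $\{m_{i,j}\}$ gives $n_i\in[n/(10kr),10n/(kr)]$, and the choice of $\beta$ leaves at least $\Omega(\xi/\beta)\gg 10/\xi$ blocks of freedom when placing each boundary. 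I can therefore partition $[T]$ into contiguous intervals $J_1,\ldots,J_r$ so that the total number of vertices in $J_i$ is $n_i\pm\xi n/4$ and so that the first and last blocks of each $J_i$ are non-zero; I would also arrange $J_1$ to begin at position~$1$ of $\mathcal{L}$, which is consistent since $n_1\geq\sqrt{\beta}n$ follows from the bound on $\beta$.

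Set $f_0(x):=(i,\sigma(x))$ for $x$ in a block of $J_i$ with $\sigma(x)\neq 0$, and $f_0(x):=z_i$ otherwise. An edge of $H$ contained inside one $J_i$ maps into $(\{i\}\times[k])\cup\{z_i\}$, which is a clique of $R^k_r$ by the hypothesis on $z_i$ and the containment $K^k_r\subseteq R^k_r$. An edge crossing the boundary between $J_i$ and $J_{i+1}$ must lie between the corresponding pair of boundary blocks, which are both non-zero by construction, so both endpoints receive colours from $[k]$ and the edge is mapped to $B^k_r\subseteq R^k_r$ by properness of $\sigma$. This yields~\ref{lemH:H3}, and the choice of the starting position of $J_1$ gives~\ref{lemH:H5}. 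Letting $X\subseteq V(H)$ be the set of vertices whose $H$-distance-$2$ neighbourhood leaves its own $J_i$, the bandwidth bound gives $|X|\leq 4(r-1)\beta n\leq\xi n$, which is~\ref{lemH:H2}, and by the very definition of $X$ condition~\ref{lemH:H4} holds.

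The last two properties~\ref{lemH:H1} and~\ref{lemH:H6} must be enforced by modifying $f_0$ to $f$. Within each clique $\{i\}\times[k]$ I would swap interior vertices (those in $V(H)\setminus X$) between cluster labels: an $x\in f_0^{-1}(i,j_1)$ can be reassigned to $(i,j_2)$ as long as no $H$-neighbour of $x$ currently sits in $(i',j_2)$ for $i'\in\{i-1,i,i+1\}$, and such a swap preserves the homomorphism. The delicate part will be~\ref{lemH:H6}: one must arrange that each final fibre $f^{-1}(i,j)$ contains a $\geq 1/(24D)$-fraction of vertices of $H$-degree at most $2D$. My approach is to first show, via the $D$-degeneracy of $H$ combined with averaging over the intervals (and, if necessary, shifting interval boundaries using the slack $\Theta(\xi/\beta)$), that each $J_i$ contains $\Omega(|J_i|/D)$ interior vertices of $H$-degree at most $2D$; and then to exploit the fact that such a low-degree vertex has at most $2D$ labels blocked in its clique in order to distribute the low-degree vertices roughly evenly among the $k$ fibres of clique $i$, before performing the final fine adjustments with high-degree interior vertices to hit exactly $m_{i,j}$. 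The main obstacle, and the step to which I would devote the most care, is coordinating this size-balancing with the low-degree-balancing so that both~\ref{lemH:H1} and~\ref{lemH:H6} end up holding simultaneously.
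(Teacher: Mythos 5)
Your opening moves match the paper's: slice $\mathcal L$ into blocks of size $4k\beta n$, group them into $r$ sections of total size $\approx n_i:=\sum_j m_{i,j}$ with zero-free boundary blocks, and set $f_0(x)=(i,\sigma(x))$ (resp.\ $z_i$) for $x$ in section $i$. Two points already need repair. First, your $X$ (vertices whose distance-$2$ neighbourhood leaves $J_i$) is too small for~\ref{lemH:H4}: if $x\notin X$ has a second neighbour $z$ of colour $0$ inside $J_i$, then $f_0(z)=z_i\notin\{i\}\times[k]$, so~\ref{lemH:H4} fails. You must also put every vertex within distance $2$ of a colour-zero vertex into $X$ (which is fine: zero-freeness keeps this small). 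Second and more seriously, your route to~\ref{lemH:H1} and~\ref{lemH:H6} via one-at-a-time recolourings of interior vertices does not close. The imbalance you have to correct is not a fine adjustment: $\sigma$ can assign, say, a $0.9$ fraction of a section to colour $1$, so to reach $|f^{-1}(i,j)|=m_{i,j}\pm\xi n$ you must move $\Theta(n/r)$ vertices out of one fibre, far exceeding the $O(\xi n)$ headroom and far exceeding what you call ``fine adjustments.'' Your swap condition (no $H$-neighbour already carries the target label in cliques $i-1,i,i+1$) is necessary for~\ref{lemH:H3}, but you give no argument that $\Theta(n/r)$ such swaps can all be made consistently: two adjacent vertices cannot both move to the same new colour, and the greedy process can deadlock. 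Worse, the lemma only assumes $D$-degeneracy, not bounded maximum degree, so ``final fine adjustments with high-degree interior vertices'' can be impossible—a vertex whose neighbourhood already meets all $k$ labels in the relevant cliques has no legal target at all.

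The paper's proof sidesteps all of this by never recolouring individual vertices. It subdivides each section further into intervals of $b=k/\sqrt{\beta}$ blocks, draws a uniformly random permutation $\pi_{i,\ell}$ of $[k]$ for each interior interval, and uses the zero-free ``switching block'' at the start of each interval (Claim~\ref{claim:switching}) to pass from one permuted copy of $\sigma$ to the next while staying proper. Setting $f(x)=(i,\pi_{i,\ell}(\sigma(x)))$ on the bulk of each interval makes the homomorphism automatic, makes every fibre size $\tfrac1k\sum_j|\sigma^{-1}(j)\cap S_i|$ in expectation regardless of how skewed $\sigma$ is, and likewise spreads the $\Omega(|S_i|/D)$ low-degree vertices evenly in expectation. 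McDiarmid's inequality (each permutation perturbs a fibre by at most one interval's worth) then gives~\ref{lemH:H1} and~\ref{lemH:H6} simultaneously with positive probability, after which $X$ simply absorbs everything near section boundaries, switching blocks, and colour-zero vertices. The bulk-permutation-plus-concentration mechanism is the idea your proposal is missing, and it is what replaces the unjustified greedy swap argument.
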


Lemma~\ref{lem:H2} is a strengthened version of~\cite[Lemma~8]{BoeTarWue}. The proof of~\cite[Lemma~8]{BoeTarWue} is  deterministic; here we use a probabilistic argument to show the existence of a function $f$ that also satisfies the additional property~\ref{lemH:H6}, which is required for Theorem~\ref{thm:degenerate}. However, we still borrow ideas from the proof of~\cite[Lemma~8]{BoeTarWue}. The proof of Lemma~\ref{lem:H2} will be given in Section~\ref{sec:lemH}. 

 During the pre-embeding, we embed a vertex $x$ of $H$ onto a vertex $v$ of $V_0$, and we also embed its neighbours $N_H(x)$. This creates restrictions on the vertices of $G$ to which we can embed the second neighbours, and for application of Lemma~\ref{thm:blowup} we need certain conditions to be satisfied. The next lemma states that we can find vertices in $N_G(v)$, to which we will embed $N_H(x)$, satisfying these conditions.

\begin{lemma}[Common neighbourhood lemma]
\label{lem:common}
For each $d>0$, $k \geq 2$, and $\Delta \geq 2$ there exists $\alpha >0$ such that for every $\eps^\ast \in (0,1)$ there exists $\eps_0 >0$ such that for every $r\geq 1$ and every $0<\eps\le\eps_0$ there exists $\Ca >0$ such that the following is true. If $p \geq \Ca \left(\log n/n\right)^{1/\Delta}$, then $\Gamma = G(n,p)$ a.a.s.~satisfies the following. Let $G=(V,E)$ be a (not necessarily spanning) subgraph of $\Gamma$ and $\{V_i\}_{i\in[k]}\cup \{W\}$ a vertex partition of a subset of $V$ such that the following are true for every $i,i'\in [k]$.
\begin{enumerate}[label=\itmarab{V}]
 \item\label{cnl:bal} $\frac{n}{4kr}\le |V_i|\le \frac{4n}{kr}$,
 \item\label{cnl:Vreg} $(V_i,V_{i'})$ is $(\eps, d, p)_G$-lower-regular,
 \item\label{cnl:W} $|W|\ge 10^{-10}\frac{\eps^4 pn}{k^4r^4}$, and
 \item\label{cnl:Wdeg} $|N_G(w,V_i)| \geq dp|V_i|$ for every $w \in W$. 
\end{enumerate}
Then there exists a tuple $(w_1, \ldots, w_\Delta) \in \binom{W}{\Delta}$ such that for every $\Lambda,\Lambda^\ast\subseteq[\Delta]$, and every $i \neq i' \in [k]$ we have
\begin{enumerate}[label=\itmarab{W}]
 \item\label{cnl:Gsize} $|\bigcap_{j\in \Lambda} N_G(w_j,V_i)|\geq \alpha p^{|\Lambda|}|V_i|$,
 \item\label{cnl:Gasizen} $|\bigcap_{j\in \Lambda} N_{\Gamma}(w_j)| \le (1 + \eps^\ast)p^{|\Lambda|}n$,
 \item\label{cnl:Gasize} $|\bigcap_{j\in \Lambda} N_{\Gamma}(w_j,V_i)| = (1 \pm \eps^\ast)p^{|\Lambda|}|V_i|$, and
 \item\label{cnl:Nreg} $\big(\bigcap_{j\in \Lambda}\NGa(w_j,V_i),\bigcap_{j^\ast\in \Lambda^\ast}\NGa(w_{j^\ast},V_{i'})\big)$ is $(\eps^\ast, d,p)_G$-lower-regular if $|\Lambda|,|\Lambda^\ast| < \Delta$ and either $\Lambda\cap\Lambda^\ast=\varnothing$ or $\Delta\geq 3$ or both.
\end{enumerate} 
\end{lemma}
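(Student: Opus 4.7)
The plan is to build the tuple $(w_1,\ldots,w_\Delta)$ sequentially from $W$, showing at each step that a positive proportion of $w_\ell\in W$ extends the tuple while preserving all required conditions. The basic quantitative reason this works is that $|W|=10^{-10}\eps^4 pn/(k^4r^4)$ is much larger than $\max\{p^{-2},p^{-1}\log n\}$ once $\Ca$ is taken large, so any condition excluding only $O(\max\{p^{-2},p^{-1}\log n\})$ vertices of $V(\Gamma)$ still leaves almost all of $W$ available.

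Properties~\ref{cnl:Gasizen},~\ref{cnl:Gasize}, and~\ref{cnl:Nreg} would be handled by the standard sparse toolkit. Assuming inductively that all common $\Gamma$-neighbourhoods $\bigcap_{j\in\Lambda}\NGa(w_j,V_i)$ built so far have size of order $p^{|\Lambda|}|V_i|\gg p^{-1}\log n$, Proposition~\ref{prop:chernoff} rules out only $O(p^{-1}\log n)$ vertices $w_\ell$ per size-type condition, Lemma~\ref{lem:OSRIL} rules out $O(p^{-1}\log n)$ vertices per one-sided inheritance, and Lemma~\ref{lem:TSRIL} rules out $O(\max\{p^{-2},p^{-1}\log n\})$ per two-sided inheritance. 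The side conditions $|\Lambda|,|\Lambda^\ast|<\Delta$ together with $\Lambda\cap\Lambda^\ast=\varnothing$ or $\Delta\ge 3$ in~\ref{cnl:Nreg} are precisely what is needed to keep the input sets to these inheritance lemmas large enough for their conclusions to have force. Since only $O_\Delta(k^2)$ such conditions arise at each step, their combined exclusion is $o(|W|)$.

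The main obstacle is property~\ref{cnl:Gsize}, since the only structural information on $G[W,V_i]$ is the lower bound~\ref{cnl:Wdeg} on $W$-side degrees, and the naive reverse-Markov bound gives only constant failure probability per pair $(\Lambda,i)$, too large to union bound directly. To handle this I would maintain inductively, for each $\Lambda\subseteq\{1,\ldots,\ell\}$ and $i\in[k]$, both the size invariant $|S_{\Lambda,i}|\ge\alpha_\ell p^{|\Lambda|}|V_i|$ (for $S_{\Lambda,i}:=\bigcap_{j\in\Lambda}N_G(w_j,V_i)$) and an auxiliary density invariant $\sum_{v\in S_{\Lambda,i}}|N_G(v,W)|\ge\beta_\ell p^{|\Lambda|+1}|V_i||W|$, with constants $\alpha_\ell,\beta_\ell>0$ depending only on $d,\Delta,k$. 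The base case $\Lambda=\varnothing$ of the density invariant is exactly condition~\ref{cnl:Wdeg}. For the inductive step, the double-counting identity $\sum_{w\in W}|N_G(w,S_{\Lambda,i})|=\sum_{v\in S_{\Lambda,i}}|N_G(v,W)|$, combined with a Cauchy--Schwarz estimate on $\sum_{v\in S_{\Lambda,i}}|N_G(v,W)|^2$ and the upper bound $|N_G(w,S_{\Lambda,i})|\le 2p|S_{\Lambda,i}|$ valid outside $O(p^{-1}\log n)$ vertices $w$ (by Proposition~\ref{prop:chernoff}), yields via reverse Markov a constant fraction of $w_\ell\in W$ preserving both invariants for each $(\Lambda,i)$. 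The delicate bookkeeping is to choose $\alpha_\ell$ and $\beta_\ell$ as geometric sequences decaying fast enough that the intersection of the good sets for all $O_\Delta(k)$ simultaneous conditions at step~$\ell$, minus the $o(|W|)$ exclusions from the other properties, remains a positive-constant proportion of $W$. Iterating $\Delta$ times then produces the desired tuple, distinctness of the $w_j$ being automatic since at each step the set of acceptable choices has size much larger than $\Delta$.
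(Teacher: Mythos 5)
You have correctly located the crux: property~\ref{cnl:Gsize} is the only one not handled by the standard inheritance/Chernoff toolkit, because the sole structural input on the bipartite graphs $G[W,V_i]$ is the $W$-side minimum-degree condition~\ref{cnl:Wdeg}. However, the auxiliary density invariant you introduce does not escape the constant-failure-probability problem you yourself diagnose. Double-counting and your invariant give that the average of $w\mapsto|N_G(w,S_{\Lambda,i})|$ over $W$ is at least $\beta_\ell p^{|\Lambda|+1}|V_i|$, whereas the near-universal ceiling inherited from $\Gamma$ is roughly $(1+o(1))p|S_{\Lambda,i}|\approx p^{|\Lambda|+1}|V_i|$. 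With the mean only a $\Theta(\beta_\ell)$-fraction of the ceiling, nothing prevents the degrees from being concentrated on a $\Theta(\beta_\ell)$-fraction of $W$; decaying $\alpha_\ell$ lowers the threshold but cannot push the good set above roughly a $\beta_\ell$-fraction of $W$, which is far below $1-\tfrac{1}{2^\ell k}$. The Cauchy--Schwarz step you mention yields a \emph{lower} bound on $\sum_v|N_G(v,W)|^2$, not the variance \emph{upper} bound that concentration would require. Since the inductive step must choose one $w_{\ell+1}$ satisfying the size and density invariants simultaneously for all $\Lambda\subseteq[\ell]$ and all $i\in[k]$, and each constraint rules out up to a $(1-\Theta(\beta_\ell))$-fraction of $W$, the intersection can be empty. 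This is a genuine gap, not delicate bookkeeping.

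The missing idea is to manufacture regularity on the $W$-side before starting the induction. The paper applies a version of the sparse regularity lemma (Lemma~\ref{lem:SRLb}) to $G[W\cup V_1\cup\dots\cup V_k]$ with initial partition $\{V_i\setminus W\}_{i\in[k]}\cup\{W\}$, and uses~\ref{cnl:Wdeg} and averaging to extract a cluster $W'\subset W$ and clusters $V'_i\subset V_i$ (of size at least $|V_i|/(2t_1)$) with $(W',V'_i)$ lower-regular of density at least $d/2$. The key extra invariant then maintained is~\ref{cnl:cl:Wreg} of Claim~\ref{claim:common}, namely that $\bigl(\bigcap_{j\in\Lambda}N_\Gamma(w_j,V'_i),W'\bigr)$ remains lower-regular; this is propagated by one-sided inheritance (Lemma~\ref{lem:OSRIL}) at a cost of $O(p^{-1}\log n)$ bad vertices of $W'$, and this lower-regularity then implies that only an $\epsaa_{|\Lambda|}$-fraction of $W'$ can have too few $G$-neighbours inside $\bigcap_{j\in\Lambda}N_G(w_j,V'_i)$, which is exactly the near-measure-one conclusion your argument lacks. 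Both error terms are $o(|W'|)$ and so survive the union bound over $(\Lambda,i)$, and the eventual $\alpha$ simply absorbs the passage from $V'_i$ back to $V_i$. Without this pre-regularisation step, the degree condition plus reverse Markov alone cannot furnish the required uniformity over $W$.
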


Let $H'$ and $G'$ denote the subgraphs of $H$ and $G$ that result from removing all vertices that were used in the pre-embedding process. As a last step before finally applying the sparse blow-up lemma, the clusters in $\restr{\cV}{G'}$ need to be adjusted to the sizes of $\restr{W_{i,j}}{H'}$. The next lemma states that this is possible, and that after this redistribution the regularity properties needed for Lemma~\ref{thm:blowup} still hold.

\begin{lemma}[Balancing lemma]
\label{lem:balancing}
For all integers $k\geq 1$, $r_1, \Delta \geq 1$, and reals $\gamma, d >0$ and $0 < \eps < \min\{d,1/(2k)\}$ there exist $\xi >0$ and $\Ca >0$ 
such that the following is true for every $p \geq \Ca \left(\log n/n\right)^{1/2}$ and every $10\gamma^{-1}\le r \leq r_1$ provided that $n$ is large enough. Let $\Gamma$ be a graph on the vertex set $[n]$ and let $G=(V,E)\subseteq \Gamma$ be a (not necessarily spanning) subgraph with vertex partition $\cV = \{\Vij\}_{i\in[r],j\in[k]}$ that satisfies $n/(8kr) \leq |\Vij| \leq 4n/(kr)$ for each $i\in[r]$, $j\in[k]$. 
Let $\{n_{i,j}\}_{i \in [r], j\in [k]}$ be an integer partition of $\sum_{i\in[r],j\in[k]} |V_{i,j}|$. Let $R^k_r$ be a graph on the vertex set $[r] \times [k]$ with minimum degree $\delta(R^k_r) \geq \big((k-1)/k+\gamma/2\big) kr$ such that $K^k_r \subseteq B^k_r \subseteq R^k_r$.
Suppose that the partition $\cV$ satisfies the following properties for each $i\in[r]$, each $j\neq j'\in[k]$, and each $v\in V$. 
\begin{enumerate}[label=\itmarab{B}]
\item \label{lembalancing:sizes} We have $n_{i,j} -  \xi n \leq |V_{i,j}| \leq n_{i,j} +  \xi n$,
\item \label{lembalancing:regular1} $\cV$ is $\big(\tfrac{\eps}{4},d,p\big)_G$-lower-regular on $R^k_r$ and $\big(\tfrac{\eps}{4},d,p\big)_G$-super-regular on $K^k_r$,
\item \label{lembalancing:inheritance1}  both $\big(\NGa(v, V_{i,j}),V_{i,j'}\big)$ and $\big(\NGa(v, V_{i,j}),\NGa(v, V_{i,j'})\big)$ are $\big(\tfrac{\eps}{4},d,p\big)_G$-lower-regular pairs, and
\item \label{lembalancing:gamma1} we have $|\NGa(v,V_{i,j})| = \big(1 \pm \tfrac{\eps}{4}\big)p|\Vij|$.
\end{enumerate}
Then, there exists a partition $\mathcal{V'}= \{V'_{i,j}\}_{i\in[r],j\in[k]}$ of $V$ such that the following properties hold for each $i\in[r]$, each $j\neq j'\in [k]$, and each $v\in V$.
\begin{enumerate}[label=\itmarabp{B}{'}]
\item\label{lembalancing:sizesout} We have $|V'_{i,j}|=n_{i,j}$,
\item\label{lembalancing:symd} We have $|V_{i,j}\symd V'_{i,j}|\le 10^{-10}\eps^4k^{-2}r_1^{-2} n$,
\item \label{lembalancing:regular} $\mathcal{V'}$ is $(\eps,d,p)_G$-lower-regular on $R^k_r$ and $(\eps,d,p)_G$-super-regular on $K^k_r$,
\item \label{lembalancing:inheritance} both $\big(N_{\Gamma}(v,V'_{i,j}), V'_{i,j'}\big)$ and $\big(N_{\Gamma}(v,V'_{i,j}), N_\Gamma(v,V'_{i,j'})\big)$ are $(\eps,d,p)_G$-lower-regular pairs, and
\item\label{lembalancing:gammaout} For each $1\le s\le\Delta$ and vertices $v_1,\ldots,v_s\in[n]$ we have 
\[\big|N_\Gamma(v_1,\ldots,v_s;V_{i,j})\symd N_\Gamma(v_1,\ldots,v_s;V'_{i,j})\big|\le 10^{-10}\eps^4k^{-2}r_1^{-2}\deg_\Gamma(v_1,\ldots,v_s)+\Ca\log n\,.\]
\end{enumerate} 
Furthermore, if for any two disjoint vertex sets $A,A'\subset V(\Gamma)$ with $|A|,|A'|\ge\tfrac{1}{50000kr_1}\eps^2\xi pn$ we have $e_\Gamma(A,A')\le \big(1+\tfrac{1}{100}\eps^2\xi\big)p|A||A'|$, and if `lower-regular' is replaced with `regular' in~\ref{lembalancing:regular1}, and~\ref{lembalancing:inheritance1}, then we can replace `lower-regular' with `regular' in~\ref{lembalancing:regular} and~\ref{lembalancing:inheritance}.
\end{lemma}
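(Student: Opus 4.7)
The plan is to construct $\cV'$ from $\cV$ by a sequence of small, carefully chosen vertex reassignments, organised in two phases. Phase~$1$ (colour balancing) makes $\sum_i|V^{(1)}_{i,j}|=\sum_i n_{i,j}$ for every $j$ by reassigning vertices inside the cliques of $K^k_r$ (i.e.\ keeping the component index $i$ fixed and only changing the colour $j$); Phase~$2$ (component balancing) then reshuffles vertices within each colour class along consecutive backbone components of $B^k_r$ until the final sizes equal $n_{i,j}$. The total volume of moves per cluster will be $O(r_1^2k\xi n)$, so taking $\xi$ sufficiently small in terms of $\eps, d, k, r_1, \Delta$ both pushes the symmetric differences below the bound in~\ref{lembalancing:symd} and keeps every perturbation ratio well under control.

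Before moving any vertices I set aside for each $(i,j)$ a large \emph{good core} $V^g_{i,j}\subseteq V_{i,j}$, consisting of those $v\in V_{i,j}$ satisfying $|N_G(v,V_{i',j'})|\ge(d-\tfrac{\eps}{2})p|V_{i',j'}|$ for each of the $O(k)$ clusters $V_{i',j'}$ that might serve as a destination co-clique member of $v$ in a planned move. Proposition~\ref{prop:neighbourhood} applied to the relevant $(\tfrac{\eps}{4},d,p)_G$-lower-regular pairs of $R^k_r$ gives $|V^g_{i,j}|\ge(1-O(k\eps))|V_{i,j}|$, leaving plenty of room to draw from. Every moved vertex is drawn uniformly at random from a good core, which automatically certifies the super-regularity $G$-degree requirement at its new location. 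For Phase~$1$, a flow decomposition of the colour-excesses $D_j=\sum_i(|V_{i,j}|-n_{i,j})$ yields transfer quantities $t_{j,j'}=O(k\xi n)$ between surplus and deficit colours; I distribute each $t_{j,j'}$ uniformly across the components $i$ and reassign the selected good vertices of $V_{i,j}$ to $V^{(1)}_{i,j'}$, where the $K^k_r$-edge $\{(i,j),(i,j')\}$ guarantees the required $G$-degree. For Phase~$2$, each colour $j$ is treated independently: the cumulative excess $f^j_{i,i+1}=\sum_{i'\le i}(|V^{(1)}_{i',j}|-n_{i',j})$ is $O(r\xi n)$, and the realisation transfers good vertices from $V^{(1)}_{i,j}$ directly to $V^{(1)}_{i+1,j}$; here the backbone edges $\{(i,j),(i+1,j')\}\in B^k_r$ for each $j'\ne j$ supply the lower-regularity needed so that a good vertex of $V^{(1)}_{i,j}$ has enough $G$-neighbours in each new co-clique cluster $V_{i+1,j'}$.

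Once the two phases are done, $\cV'$ satisfies~\ref{lembalancing:sizesout} by construction and~\ref{lembalancing:symd} by the move volume. Lower-regularity on $R^k_r$ (the first half of~\ref{lembalancing:regular}) follows from Proposition~\ref{prop:subpairs3}: each cluster's perturbation ratio is $O(r_1^2k\xi)$, so the $(\tfrac{\eps}{4})$-parameter survives as at most $\tfrac{\eps}{4}+O(\sqrt{r_1^2k\xi})\le\eps$. Super-regularity on $K^k_r$ is preserved because, for every surviving vertex $u$, the $G$-degree loss from removed vertices of its partner cluster concentrates around its expected value, bounded above by roughly $p(1+O(k\eps))$ times the number of removed vertices (using~\ref{lembalancing:gamma1} to bound $|N_G(u,V_{i,j})|$ from above) via the hypergeometric inequality (Theorem~\ref{thm:hypergeometric}); a union bound over all $u\in V(G)$ absorbs this into the $3\eps/4$-slack from the initial $(\tfrac{\eps}{4})$-super-regularity. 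Property~\ref{lembalancing:gammaout} and the inheritance~\ref{lembalancing:inheritance} (the latter following from~\ref{lembalancing:gammaout} for $s=1$ together with Proposition~\ref{prop:subpairs3} applied inside each $\Gamma$-neighbourhood) follow from one application of Lemma~\ref{lem:hypgeo}, with the $T_i$ ranging over the $\le 100n^\Delta$ common $\Gamma$-neighbourhoods $N_\Gamma(v_1,\dots,v_s;V_{i,j})$ for $s\le\Delta$.

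For the ``furthermore'' regular conclusion, the extra $e_\Gamma(A,A')\le(1+\tfrac{1}{100}\eps^2\xi)p|A||A'|$ hypothesis is precisely the upper-bound ingredient that Proposition~\ref{prop:subpairs3} needs to upgrade each preserved lower-regular pair to a regular one, so~\ref{lembalancing:regular} and~\ref{lembalancing:inheritance} hold in their stronger (regular) forms. The main obstacle I expect is establishing~\ref{lembalancing:gammaout} simultaneously for all $\Delta$-tuples of vertices: a naive Chernoff union bound would fail because there are $\Omega(n^\Delta)$ tuples, but Lemma~\ref{lem:hypgeo} delivers exactly the right $\eta|T|+C\log n$ additive error uniformly over such tuples, matching the statement of~\ref{lembalancing:gammaout}.
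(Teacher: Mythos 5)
Your Phase~$1$ has a genuine gap. When you move a vertex $v$ from $V_{i,j}$ to the new cluster $V^{(1)}_{i,j'}$, the super-regularity of $\cV'$ on $K^k_r$ requires $\deg_G(v;V'_{i,j''})\ge(d-\eps)p|V'_{i,j''}|$ for \emph{every} $j''\neq j'$. One of these is $j''=j$, the cluster $v$ just left: you would need $v$ to have many $G$-neighbours in $V_{i,j}$ itself. But the hypotheses~\ref{lembalancing:regular1}--\ref{lembalancing:gamma1} say nothing about $G[V_{i,j}]$: the pair $(V_{i,j},V_{i,j})$ is not an edge of $R^k_r$ (nor of anything), so Proposition~\ref{prop:neighbourhood} does not apply, and $G$ might have \emph{no} edges inside $V_{i,j}$ at all. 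Consequently your ``good core'' $V^g_{i,j}$ cannot be certified large (it could even be empty), and the super-regularity claim for the reassigned vertices fails. Nothing in the move-volume or random-selection machinery repairs this, since super-regularity is a for-all condition over vertices. A telling symptom is that your argument never uses the minimum-degree bound $\delta(R^k_r)\ge\big(\tfrac{k-1}{k}+\tfrac{\gamma}{2}\big)kr$, which is present precisely to make the global balancing phase work.

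The paper's Stage~$1$ avoids the problem by always moving vertices to a \emph{different} component $i'>1$: the surplus in colour $j^\ast$ is drained from $V_{1,j^\ast}$ into some $V_{i',j'}$, where the high minimum degree of $R^k_r$ guarantees an index $i'$ such that $(V_{1,j^\ast},V_{i',j''})$ is lower-regular for all $j''\in[k]$. Then the destination co-clique clusters $\{V_{i',j''}\}_{j''\neq j'}$ are all disjoint from the source $V_{1,j^\ast}$, and a ``good core'' with respect to those pairs is certifiably large. Your Phase~$2$ (moving along the backbone from $V_{i,j}$ to $V_{i+1,j}$) is sound and essentially matches the paper's local-balancing stage, for the same reason: the source cluster is never a destination co-clique cluster. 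To fix Phase~$1$, route the colour-excess through a fresh component chosen via the minimum degree of $R^k_r$, as the paper does.
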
 

%%%%%%%%%%%%%%%%%%%%%%%%%%%%%%%%%%%%%%%%%%%%%%%%%%%%%%%%%

% Proof of the main lemmas

%%%%%%%%%%%%%%%%%%%%%%%%%%%%%%%%%%%%%%%%%%%%%%%%%%%%%%%%%

\section{The lemma for \texorpdfstring{$G$}{G}}
\label{sec:prooflemG}

In this section we prove the Lemma for~$G$ (Lemma~\ref{lem:G}), which borrows from the proof of \cite[Proposition~17]{bottcher2009proof} and from the proof of \cite[Lemma~9]{bottcher2013almost}. Our strategy is as follows. We first apply Lemma~\ref{lem:regularitylemma} to obtain an equitable partition of $V(G)$ within whose reduced graph we can find a backbone graph by Theorem~\ref{thm:bandwidth}. We let $Z_1$ be the vertices whose $\Gamma$-degrees are `wrong' to this partition, or whose neighbourhoods fail to inherit lower-regularity (plus a few extra to maintain $k$-equitability), and we remove the vertices $Z_1$. Now there may be some vertices in each cluster which destroy super-regularity on the clique factor of the backbone graph. We redistribute these, and the exceptional set of the regular partition, to other clusters. Now we would like to say we are finished, but the moving of vertices may have destroyed some of the regularity inheritance, $\Gamma$-neighbourhood, and super-regularity properties we tried to obtain. However, it is easy to check that a vertex only witnesses failure of these properties if exceptionally many of its $\Gamma$-neighbours were moved from or to a cluster. We let $Z_2$ be the set of all such vertices, and remove them. We will see that $Z_2$ is so small that its removal does not significantly affect the properties we want, so that we can set $V_0=Z_1\cup Z_2$ and we are done.

\begin{proof}[Proof of Lemma~\ref{lem:G}]
We first fix the constants in the proof. Given $\gamma>0$, $k\ge 2$ and $r_0\ge 1$, set $d=\tfrac{\gamma}{32}$. Let $\beta$ and $n_0$ be returned by Theorem~\ref{thm:bandwidth} for input $\tfrac12\gamma$, $3k$ and $k$. Let $r'_0=\max\{n_0,k/d,10k/\beta,r_0\}$. 

Given $\eps\in\big(0,\tfrac{1}{2k}\big]$, let $0<\epsa\le 10^{-10}\eps^2\gamma k^{-2}$ be small enough for both Lemmas~\ref{lem:OSRIL} and~\ref{lem:TSRIL} for input $\tfrac12\eps$ and $d$. Let $C$ be large enough for these applications of Lemmas~\ref{lem:OSRIL} and~\ref{lem:TSRIL}, and also large enough for Proposition~\ref{prop:chernoff} with input $\tfrac{1}{1000}\big(\tfrac{\epsa}{k}\big)^2$.

Now Lemma~\ref{lem:regularitylemma}, with input $\tfrac{1}{k}\epsa$, $\tfrac{k-1}{k}+\gamma$ and $r'_0+k$, returns $r_1$. We set $\Ca=1000k^3r_1^5C/(\epsa)^2$.

Given $p\ge \Ca\big(\tfrac{\log n}{n}\big)^{1/2}$, the random graph $G(n,p)$ a.a.s.\ satisfies the good events of Lemmas~\ref{lem:OSRIL} and~\ref{lem:TSRIL}, and Proposition~\ref{prop:chernoff}. We condition on $\Gamma=G(n,p)$ satisfying these good events.

Given $G\subset\Gamma$ with $\delta(G)\ge\big(\tfrac{k-1}{k}+\gamma\big)pn$, we apply Lemma~\ref{lem:regularitylemma}, with input $\tfrac{1}{k}\epsa$, $\tfrac{k-1}{k}+\gamma$, $r'_0+k$, and $d$, to $G$. We may do this because $G$ is a subgraph of $\Gamma$, and by choice of $\Ca$  we have $Cp^{-1}\log n\le\tfrac{\epsa n}{kr_1}$, so that the condition of Lemma~\ref{lem:regularitylemma} is satisfied because the good event of Proposition~\ref{prop:chernoff} holds for $\Gamma$. The result is a $\big(\tfrac{1}{k}\epsa,p\big)$-lower-regular partition of $V(G)$ into $t'\in [r'_0+k, r_1]$ equally sized clusters, with exceptional set of size at most $\tfrac{1}{k}\epsa n$, whose $(\epsa,d,p)$-reduced graph has minimum degree at least $\big(\tfrac{k-1}{k}+\gamma-d-\tfrac{1}{k}\epsa\big)t'$. We remove at most $k-1$ of these clusters to the exceptional set, obtaining an $(\epsa,p)$-lower-regular partition $\cU$ of $V(G)$ into $kr$ equally sized clusters, where $r'_0\le kr\le r_1$,  with exceptional set $U_0$ of size at most $\epsa n$, whose $(\epsa,d,p)$-reduced graph $R^k_r$ has minimum degree at least $\big(\tfrac{k-1}{k}+\gamma-d-\tfrac{1}{k}\epsa\big)kr-k$.

By choice of $d$ and $\epsa$, and by choice of $r'_0$, we have
\[\Big(\frac{k-1}{k}+\gamma-d-\frac{1}{k}\epsa\Big)kr-k\ge\Big(\frac{k-1}{k}+\frac{\gamma}{2}\Big)kr\,.\]
Observe that $B^k_r$ has bandwidth at most $2k<\beta r'_0$, and maximum degree less than $3k$. Thus Theorem~\ref{thm:bandwidth}, with input $\tfrac{\gamma}{2}$, $3k$, and $k$, in particular states that $R^k_r$ contains a copy of $B^k_r$. We fix one such copy. We let its vertices $\{(i,j)\}_{i\in[r],j\in[k]}$ label the vertices of $R^k_r$, and similarly let the cluster of $\cU$ corresponding to the vertex $(i,j)$ of $B^k_r$ be $U_{i,j}$ for each $i\in[r]$ and $j\in[k]$. The partition $\cU$ is equitable, and thus in particular $k$-equitable.

We now create $Z_1$ as follows. We start with all vertices $v$ of $G$ for which there are $(i,j)$ and $(i',j')$ in $V(R^k_r)$, with $\{(i,j),(i',j')\}$ an edge of $R^k_r$, such that either $\big(N_\Gamma(v,U_{i,j}),U_{i',j'}\big)$ or $\big(N_\Gamma(v,U_{i,j}),N_\Gamma(v,U_{i',j'})\big)$ is not $\big(\tfrac{1}{2}\eps,d,p\big)_G$-lower-regular. We add all vertices $v$ of $G$ for which there exists $U_{i,j}$ with $\deg_\Gamma(v,U_{i,j})\neq(1\pm\epsa)p|U_{i,j}|$, or for which $\deg_\Gamma(v,U_0)>2\epsa p n$. Finally we add a minimum number of vertices to obtain $k$-equitability of the sets $\big\{U_{i,j}\setminus Z_1\big\}_{i\in[r],j\in[k]}$. Note that we have $|U_{i,j}|\ge n/(2kr_1)$ for each $i,j$, and we can estimate the number of vertices with more than $2\epsa p n$ neighbours in $U_0$ by considering a superset of $U_0$ of size $\epsa n$. It follows that for each $i,j$ we have $\log(en/|U_{i,j}|),\log(en/|U_0|)\le\log(ekr_1/\epsa)$. By Lemma~\ref{lem:OSRIL} and Lemma~\ref{lem:TSRIL}, and Proposition~\ref{prop:chernoff}, we have
\begin{equation}\label{eq:sizeZ1}
 |Z_1|\le 4kr_1^2 C\max\big\{p^{-2},p^{-1}\log (ekr_1/\epsa)\big\}\le 8k^2r_1^3Cp^{-2}/\epsa\le\frac{\epsa}{kr_1} n\,,
\end{equation}
where the factor $k$ accounts for vertices removed to maintain $k$-equitability.

We now try to obtain super-regularity on the copy of $K^k_r$ in $B^k_r$. For each $i\in[r]$ and $j\in[k]$ let $W_{i,j}$ be the vertices of $U_{i,j}\setminus Z_1$ which have less than $(d-2\epsa)p|U_{i,j'}|$ neighbours in $U_{i,j'}$ for some $j'\neq j$. Because $(U_{i,j},U_{i,j'})$ is $(\epsa,d,p)$-lower-regular for each $i\in[r]$ and $j\neq j'\in[k]$, we have $|W_{i,j}|\le k\epsa|U_{i,j}|$ for each $i\in[r]$ and $j\in[k]$.

Now let $W$ contain $U_0\setminus Z_1$ together with all $W_{i,j}$ for $i\in[r]$ and $j\in[k]$, and a minimum number of additional vertices from $V(G)\setminus Z_1$ to obtain $k$-equitability of the sets $\big\{U_{i,j}\setminus (Z_1\cup W)\big\}_{i\in[r],j\in[k]}$. By construction, we have $|W|\le\epsa n+kr\cdot k\epsa\tfrac{n}{kr}\le 2k\epsa n$.

Given any $w\in W$, because $w\not\in Z_1$ we have
\[\deg_\Gamma(w,U_0)\le 2\epsa pn\quad\text{and}\quad\deg_\Gamma(w,U_{i,j})\le(1+\epsa)p|U_{i,j}|\]
for each $i\in[r]$ and $j\in[k]$. Now let us consider the edges of $G$ leaving $w$. At most $2\epsa pn$ of these go to $U_0$, and by definition at most $2dpn$ go to sets $U_{i,j}$ such that $\deg_G(w,U_{i,j})\le 2dp|U_{i,j}|$. Since $\deg_G(w)\ge\big(\tfrac{k-1}{k}+\gamma\big)pn$, at least $\big(\tfrac{k-1}{k}+\tfrac{\gamma}{2}\big)pn$ edges leaving $w$ go to sets $U_{i,j}$ with $\deg_G(w,U_{i,j})\ge 2dp|U_{i,j}|$. Since $|U_{i,j}|\le \tfrac{1}{kr}n$, in particular there are at least
\[\frac{\Big(\frac{k-1}{k}+\frac{\gamma}{2}\Big)pn}{(1+\epsa)p\frac{n}{kr}}\ge \Big(\frac{k-1}{k}+\frac{\gamma}{4}\Big)kr\]
sets $U_{i,j}$ with $i\in[r]$ and $j\in[k]$ such that $\deg_G(w,U_{i,j})\ge 2dp|U_{i,j}|$. It follows that there are at least $\tfrac{\gamma}{4}r$ indices $i\in[r]$ such that $\deg_G(w,U_{i,j})\ge 2dp|U_{i,j}|$ for each $j\in[k]$.

We now assign to each $w\in W$ sequentially an index $c(w)\in[r]\times [k]$. For each $w$, we choose $c(w)=(i,j)$ as follows. The index $i$ is chosen minimal in $[r]$ such that $\deg_G(w,U_{i,j'})\ge 2dp|U_{i,j'}|$ for each $j'\in[k]$, but at most $\tfrac{100}{r}k\epsa \gamma^{-1} n$ vertices $w'\in W$ have so far been assigned $c(w')=(i,j')$ for any $j'\in[k]$. We choose $j\in[k]$ minimising the number of vertices $w'\in W$ with $c(w)=(i,j)$. Because $|W|\le 2k\epsa n$, this assignment is always possible.

Next, for each $i\in[r]$ and $j\in[k]$, we let $V'_{i,j}$ consist of $U_{i,j}\setminus (Z_1\cup W_{i,j})$, together with all $w\in W$ such that $c(w)=(i,j)$. By construction, we have
\[|U_{i,j}\symd V'_{i,j}|\le |Z_1|+|W_{i,j}|+\frac{100}{r}k\epsa \gamma^{-1}n\le 1000k^2\epsa\gamma^{-1}|U_{i,j}|\,.\]

Finally, we let $Z_2$ be the vertices $v\in V(G)\setminus Z_1$ with $\deg_\Gamma(v,U_{i,j}\symd V'_{i,j})\ge 2000k^2\epsa\gamma^{-1} p|U_{i,j}|$ for some $i\in[r]$ and $j\in[k]$, together with a minimum number of additional vertices of $V(G)\setminus Z_1$ to obtain $k$-equitability of the sets $V_{i,j}:=V'_{i,j}\setminus Z_2$. We set $V_0=Z_1\cup Z_2$. We claim that $\cV=\{V_{i,j}\}_{i\in[r],j\in[k]}$ is the desired partition of $V(G)\setminus V_0$.

Note that the sets $V'_{i,j}$ and $V'_{i,j'}$ differ in size by at most one for any $i\in[r]$ and $j,j'\in[k]$, by our construction of the assignment $c$. We apply Proposition~\ref{prop:chernoff} to estimate the number of vertices $v\in V(G)\setminus Z_1$ with $\deg_\Gamma(v,U_{i,j}\symd V'_{i,j})\ge 2000k^2\epsa\gamma^{-1} p|U_{i,j}|$ by considering a superset of $U_{i,j}\symd V'_{i,j}$ of size $1000k^2\epsa\gamma^{-1}|U_{i,j}|\ge \epsa n/r_1$. By Proposition~\ref{prop:chernoff} we thus have
\begin{equation}\label{eq:sizeZ2}
 |Z_2|\le r_1+Ckr_1p^{-1}\log (er_1/\epsa)\le 4Ckr_1^2p^{-1}/\epsa\le\frac{\epsa}{kr_1}pn\,.
\end{equation}
This gives
\begin{equation}\label{eq:symUV}
 |U_{i,j}\symd V_{i,j}|\le|U_{i,j}\symd V'_{i,j}|+|Z_2|\le 2000k^2\epsa\gamma^{-1}|U_{i,j}|\,.
\end{equation}

Now given any $v\in V(G)\setminus V_0$, for each $i\in[r]$ and $j\in[k]$, because $v\not\in Z_2$ we have $\deg_\Gamma(v,U_{i,j}\symd V'_{i,j})\le 2000k^2\epsa\gamma^{-1}p|U_{i,j}|$. We thus have
\begin{equation}\label{eq:symdUV}
 \deg_\Gamma(v,U_{i,j}\symd V_{i,j})\le 2000k^2\epsa\gamma^{-1}p|U_{i,j}|+|Z_2|\le 3000k^2\epsa\gamma^{-1}p|U_{i,j}|\,,
\end{equation}
and because $v\not\in Z_1$ we have $\deg_\Gamma(v,U_{i,j})=(1\pm\epsa)p|U_{i,j}|$, and hence by~\eqref{eq:symUV}
\begin{equation}\label{eq:vU}
 \deg_\Gamma(v,V_{i,j})=\big(1\pm 10000k^2\epsa\gamma^{-1}\big)p|V_{i,j}|\,.
\end{equation}

Adding up~\eqref{eq:sizeZ1} and~\eqref{eq:sizeZ2}, we conclude
\begin{equation}\label{eq:sizeV0}
|V_0|\le 8k^2r_1^3Cp^{-2}/\epsa+4Ckr_1^2p^{-1}/\epsa\le \Ca p^{-2}\,,
\end{equation}
as desired. The partition $\cV=\{V_{i,j}\}_{i\in[r],j\in[k]}$ is by construction $k$-equitable, and the graph $R^k_r$ has minimum degree $\big(\tfrac{k-1}{k}+\tfrac{\gamma}{2}\big)kr$ as desired.

For each $i\in[r]$ and $j\in[k]$ we have $|U_{i,j}|=(1\pm\epsa)\frac{n}{kr}$, and so~\eqref{eq:symUV} and our choice of $\epsa$ give~\ref{lemG:size}.

Next, if $\{(i,j),(i',j')\}$ is an edge of $R^k_r$, then $G$ is $(\epsa,d,p)$-lower-regular on $(U_{i,j},U_{i',j'})$ by construction. By~\eqref{eq:symUV}, Proposition~\ref{prop:subpairs3}, and our choice of $\epsa$, $G$ is $(\eps,d,p)$-lower-regular on $(V_{i,j},V_{i',j'})$. Given $i\in[r]$ and $j\neq j'\in[k]$, let $v$ be a vertex of $V_{i,j}$. Observe that since $v\in V_{i,j}$, either we have $v\in U_{i,j}$, in which case, since $v\not\in W$ we have $\deg_G(v,U_{i,j'})\ge (d-2\epsa)p|U_{i,j'}|$, or $v$ is in $W$ and has $c(v)=(i,j)$, in which case $\deg_G(v,U_{i,j'})\ge dp|U_{i,j'}|$. By~\eqref{eq:symUV} and~\eqref{eq:symdUV} we have
\[\deg_G(v,V_{i,j'})\ge(d-2\epsa)p|U_{i,j'}|-3000k^2\epsa\gamma^{-1}p|U_{i,j'}|\ge(d-\eps)p|V_{i,j'}|\,,\]
giving~\ref{lemG:regular}.

If $\{(i,j),(i',j')\}\in E(R^k_r)$, then for any $v\in V(G)\setminus V_0$, since $v\not\in Z_1$, the pairs $\big(N_\Gamma(v,U_{i,j}),U_{i',j'}\big)$ and $\big(N_\Gamma(v,U_{i,j}),N_\Gamma(v,U_{i',j'})\big)$ are $\big(\tfrac12\eps,d,p\big)_G$-lower-regular. Using~\eqref{eq:symUV} and~\eqref{eq:symdUV}, Proposition~\ref{prop:subpairs3} and our choice of $\epsa$, we conclude~\ref{lemG:inheritance}.

Finally,~\ref{lemG:gamma} follows from~\eqref{eq:vU} and our choice of $\epsa$.

Note that if we alter the definition of $Z_1$, removing the condition on $\big(N_\Gamma(v,U_{i,j}),N_\Gamma(v,U_{i',j'})\big)$, then we do not need to use Lemma~\ref{lem:TSRIL} and the bound in~\eqref{eq:sizeZ1} improves to $|Z_1|\le 8k^2r_1^3Cp^{-1}/\epsa$. Thus, if we only require~\ref{lemG:inheritancep}, we obtain $|V_0|\le\Ca p^{-1}$ as claimed.
\end{proof}

\section{The lemma for \texorpdfstring{$H$}{H}}\label{sec:lemH} 

In this section we present the proof of Lemma~\ref{lem:H2}. 
The proof idea is as follows. First, given the zero-free labelling $\cL$ and $(k+1)$-colouring $\sigma$ of $H$, we split $\cL$ into the blocks of the definition of zero-freeness. We partition the blocks into $r$ `sections' of consecutive blocks, such that the $i$-th section contains about $\sum_{j\in[k]}m_{i,j}$ vertices, and furthermore such that the `boundary vertices, namely the first and last $\beta n$ vertices of each section, do not receive colour zero. Now it is easy to check that assigning the vertices of colour $j$ in the $i$-th section to $(i,j)$ for each $i\in[r]$ and $j\in[k]$, and the vertices of colour zero in the $i$-th section to $z_i$, is a graph homomorphism. However it can be very unbalanced, since different colours in $[k]$ may be used with very different frequencies in each section. To fix this, we replace $\sigma$ with a new colouring $\sigma'$, which we obtain as follows. We partition each section into `intervals' of consecutive blocks, and for each interval except the last in each section, we pick a random permutation of $[k]$. We will show that there is a colouring $\sigma'$ such that all but the first few vertices of each interval are coloured according to the permutation applied to $\sigma$, with vertices of colour zero staying coloured zero. We use this colouring $\sigma'$ in place of $\sigma$ to define the mapping $f$. We let $X$ consist of all vertices whose distance is two or less to either boundary vertices, vertices near the start of an interval, or colour zero vertices.

To complete the proof, we show that so few vertices receive colour zero that they do not much affect the desired conclusions. Now the mapping $f$ is in expectation balanced, and using Lemma~\ref{lem:McDiarmid} we can show that it is also with high probability close to balanced. It is also easy to check that, since $H$ is $D$-degenerate, in the $i$-th section of $\cL$ there are many vertices of degree at most $2D$. In expectation these are distributed about evenly over the $\big\{(i,j)\big\}_{j\in[k]}$ by $f$, and again McDiarmid's inequality shows that with high probability the same holds. These two observations give us~\ref{lemH:H1} and~\ref{lemH:H6}, while the other four desired conclusions hold by construction.

\begin{proof}[Proof of Lemma~\ref{lem:H2}]
For given $D\geq 1$, set $\alpha = 1/(24D)$. Let $k, r \geq 1$ and $\xi, \beta >0$ be given, where $\xi \leq 1/(kr)$ and $\beta \leq 10^{-10}\xi^2/(D k^4r)$. Let  $H$ and $K^k_r \subseteq B^k_r \subseteq R^k_r$ be graphs as in the statement of the lemma. Let $\mathcal L$ be the given labelling of $V(H)$ of bandwidth at most $\beta n$. We denote the set of the first $\sqrt{\beta}n$ vertices of $\mathcal L$ by $F$.  Let $\sigma: V(H) \to \{0,\ldots k\}$ be the given proper $(k+1)$-colouring of $V(H)$ that is $(10/\xi, \beta)$-zero-free with respect to $\mathcal L$ and such that $\sigma(F) \subseteq [k]$. 
Also, let $z_1, \ldots, z_n$ be vertices such that $z_i \in \big([r]\setminus\{i\}\big) \times [k]$ with $\big\{z_i, (i,j)\big\} \in E(R^k_r)$ for every $i\in [r]$ and $j\in [k]$.
Finally, set $b=k/\sqrt{\beta}$. 

Let $\{m_{i,j}\}_{i\in[r],j\in[k]}$ be the given $k$-equitable integer partition of $n$ with $n/(10kr) \leq m_{i,j} \leq 10n/(kr)$ for every $i\in[r]$ and $j\in[k]$.

Let us now introduce the notation that we use in this proof. Recall that for every $t\in \big[1/(4k\beta)\big]$ the $i$-th block is defined as 
\[B_t:= \{(t-1)4k\beta n +1, \ldots, t4k\beta n\}.\]
Next we split the labelling $\mathcal L$ into $r$ \emph{sections}, where the first and the last block of each section are zero-free. Each section is partitioned into \emph{intervals}, each of which but possibly the last one consists of $b$ \emph{blocks}. 

Since $\sigma$ is $(10/\xi, \beta)$-zero-free with respect to $\mathcal L$, we can choose indices $0 = t_0 \leq t_1 \leq \ldots \leq t_{r-1} \leq t_r = 1/(4k\beta)$ such that $B_{t_i}$ and $B_{t_{i}+1}$ are zero-free blocks for every $i\in [r]$  and 
\[\sum_{t=1}^{t_i}|B_t| \leq \sum_{t=1}^{i} \sum_{j\in[k]}m_{t,j} < 12k\beta n + \sum_{t=1}^{t_i} |B_t|.\] 
Since $m_{i,j} \geq n/(10kr) > 12k\beta n$, indices $t_0, \ldots, t_r$ are distinct. 
For every $i \in [r]$ we define the $i$-th section $S_i$ as 
\[\bigcup_{t = t_{i-1}+1}^{t_i} B_t.\]
This means by the choice of the indices $t_0, \ldots, t_r$ that the first and last block of each section are zero-free. 
Since $\{m_{i,j}\}_{i\in[r],j\in[k]}$ is a $k$-equitable partition, we have in particular 
\begin{equation}\label{eq:mijSi}
\frac{1}{k} (|S_i|- 12k\beta n) \leq  m_{i,j} \leq \frac{1}{k} \big(|S_i| + 12 k \beta n\big).
\end{equation}
The last $\beta n$ vertices of the blocks $B_{t_i}$ and the first $\beta n$ vertices of the blocks $B_{t_{i+1}}$ are called \emph{boundary vertices} of $H$. Notice that colour zero is never assigned to boundary vertices by $\sigma$. For each $i\in [r]$, we split $S_i$ into $s_i:= \left\lceil (t_i-t_{i-1}-1)/b \right\rceil$ intervals, where each of the first $(s_i-1)$ intervals is the concatenation of exactly $b$ blocks and the last interval consists of $t_i-t_{i-1}-1 - b(s_i-1) \leq b$ blocks. 
Therefore, for every $i\in [r]$, we have 
\begin{equation}\label{eq:sizeSi}
s_i(b-1)4k\beta n +1 \leq |S_i| \leq s_i b 4 k \beta n. 
\end{equation}
Using Equation~\eqref{eq:mijSi}, $b= k/\sqrt{\beta}$, and $n/(10kr)\leq m_{i,j} \leq 10n/(kr)$ we get, for every $i\in[r]$, the following bounds on $s_i$ 
\begin{equation*}\label{eq:si}
\frac{1}{100rk^2\sqrt{\beta}} \leq s_i \leq \frac{10}{rk^2\sqrt{\beta}}.
\end{equation*}
We denote the intervals of the $i$-th section by $I_{i,1}, \ldots, I_{i,s_i}$. Let $\Bs_{i,\ell}$ denote the union of the first two blocks of each interval $I_{i,\ell}$. All of these blocks but $\Bs_{i,1}$ and $\Bs_{i,s_i}$   will be used to switch colours within parts of $H$. Notice that we have $|\Bs_{i,\ell}| = 8k \beta n$ and, since $\sigma$ is $(10/\xi, \beta)$-zero-free with respect to $\mathcal L$, at least one of the two blocks of $\Bs_{i,\ell}$ is zero-free. We will not use $\Bs_{i,1}$ and $\Bs_{i,s_i}$ to switch colours because we will need that the boundary vertices do not receive colour zero.

For every $i\in [r]$ and every $\ell \in \{2,\ldots, s_i-1\}$, we choose a permutation $\pi_{i,\ell}: [k] \to [k]$ uniformly at random. 

The next claim ensures that we can use zero-free blocks to obtain a proper colouring of the vertex set such that vertices before the switching block are coloured according to the original colouring and the colours of the vertices after the switching block are permuted as wished. A proof can be found in \cite{BoeTarWue}. 
\begin{claim}[\cite{BoeTarWue}]
\label{claim:switching}
Let $\sigma: [n] \to \{0,\ldots, k\}$ be a proper $(k+1)$-colouring of $H$, let $B_t$ be a zero-free block and let $\pi$ be any permutation of $[k]$. Then there exists a proper $(k+1)$-colouring $\sigma'$ of $H$ with $\sigma'(x) = \sigma(x)$ for all $x\in \bigcup_{i<t} B_i$ and
\[\sigma'(x) = \begin{cases} \pi(\sigma(x)) & \text{if } \sigma(x) \neq 0 \\
			  0 & \text{otherwise} \end{cases}\]
 for all $x\in \bigcup_{i>t}B_i$.
\end{claim}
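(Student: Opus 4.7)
The plan is to decompose $\pi$ into transpositions and realise the recolouring one transposition at a time within $B_t$, using colour $0$ (which is unused by $\sigma$ on $B_t$) as a temporary buffer colour. Write $\pi = \tau_s \cdots \tau_1$ with each $\tau_i = (a_i\,b_i)$ and $s \leq k-1$, and set $\sigma_i := \tau_i \cdots \tau_1 \sigma$. Reserve the first $\beta n$ positions of $B_t$ as a start buffer on which we put $\sigma' := \sigma$; use the next $3s\beta n$ positions as consecutive sub-blocks $S_1, \ldots, S_s$ of width $3\beta n$ each (which fits, since $\beta n + 3(k-1)\beta n \leq 4k\beta n$); on any remaining tail of $B_t$ set $\sigma' := \sigma_s$; and on the blocks $B_i$ with $i > t$ put $\sigma'$ as prescribed in the claim.

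To transition from $\sigma_{i-1}$ to $\sigma_i = \tau_i \sigma_{i-1}$ within $S_i$, split $S_i$ into three consecutive thirds $S_i^L, S_i^M, S_i^R$ of width $\beta n$ each and modify $\sigma_{i-1}$ in three stages, leaving colours outside $\{a_i, b_i\}$ untouched throughout: in $S_i^L$ send colour $a_i$ to $0$; in $S_i^M$ continue to send $a_i$ to $0$ and additionally send $b_i$ to $a_i$; in $S_i^R$ send the (former) $a_i$-vertices from $0$ to $b_i$ and keep the $b_i$-vertices at $a_i$. The colouring on $S_i^R$ is then precisely $\tau_i \sigma_{i-1} = \sigma_i$, so iterating yields $\sigma_s = \pi\sigma$ at the right end of $S_s$.

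It remains to verify that $\sigma'$ is a proper $(k+1)$-colouring of $H$. By the bandwidth bound, every edge of $H$ spans at most $\beta n$ positions in $\mathcal L$, so each edge either lies within a single third, or straddles two adjacent thirds of the same $S_i$, or straddles two consecutive sub-blocks (or the boundary between the start buffer and $S_1^L$, or the boundary between $S_s^R$/tail and $B_{t+1}$). Each case is settled by a short case distinction on $\big(\sigma_{i-1}(u), \sigma_{i-1}(v)\big) \in \{a_i, b_i, \text{other}\}^2$, invoking properness of $\sigma_{i-1}$ and the key observation that, since $\sigma$ is zero-free on $B_t$ and each $\tau_j$ permutes $[k]$, every intermediate $\sigma_{i-1}$ is also zero-free on $B_t$; consequently our rules assign colour $0$ only when the partner endpoint already carries a nonzero colour. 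The one delicate point is the boundary between the start buffer and $S_1^L$, where a neighbour $u$ of some $v \in S_1^L$ could \emph{a priori} lie in $B_{t-1}$ and therefore have $\sigma(u) = 0$; the $\beta n$-wide start buffer rules this out, since then $u$ is forced to lie at position at least the start of $B_t$. The symmetric boundary with $B_{t+1}$ is handled identically, using that $\sigma_s = \pi\sigma$ is zero-free on $B_t$.
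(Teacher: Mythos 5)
Your proof is correct: decomposing $\pi$ into at most $k-1$ transpositions, realising each swap over a $3\beta n$-wide sub-block of $B_t$ via the unused colour $0$, and checking properness by cases on pairs of adjacent $\beta n$-wide thirds is a valid argument, and you correctly identify and handle the one delicate boundary (the $\beta n$-wide start buffer, which ensures no neighbour of a vertex in $S_1^L$ lies in $B_{t-1}$ where $\sigma$ might assign colour $0$). The paper does not reproduce a proof of Claim~\ref{claim:switching} — it simply refers the reader to~\cite{BoeTarWue} — so there is no in-paper argument to compare against, but your approach is the standard one for such colour-switching lemmas and there is nothing missing.
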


We use Claim~\ref{claim:switching} to switch colours at the beginning of each interval except for the first and last interval of each section. More precisely, we switch colours within the sets $\Bs_{i,\ell}$ so that the colouring of the remaining vertices in the interval $I_{i,\ell}$ matches $\pi_{i, \ell}$. Note that we can indeed use $\Bs_{i,\ell}$ to do the switching since one of the two blocks in $\Bs_{i,\ell}$ is zero-free. In particular, we get a proper $(k+1)$-colouring $\sigma'= \sigma'\big(\pi_{1,2}, \ldots, \pi_{r,s_r-1}\big): V(H) \to \{0, \ldots k+1\}$ of $H$ that fulfils the following. 
For every $x\in I_{1,1}$ we have 
\[\sigma'(x) = \sigma(x),\]
for each $i \in [r]$ and $\ell \in \{2,\ldots, s_i-1\}$ and every $x\in I_{i,\ell} \setminus \Bs_{i,\ell}$ we have that 
\[\sigma'(x) = 
\begin{cases} \pi_{i,\ell}\big(\sigma(x)\big) & \text{if } \sigma(x) \neq 0 \\
			  0 & \text{otherwise} \end{cases}\]
and for each $i\in [r]$ and every $x\in I_{i,s_{i}}\cup  I_{i+1,1}$ (where $I_{r+1,1}:= \varnothing$) we have that
\[\sigma'(x) = \pi_{i, s_i-1}\big(\sigma(x)\big).\]
While $\sigma'$ is well-defined on the sets $\Bs_{1,2}, \ldots, \Bs_{r, s_r-1}$ by Claim~\ref{claim:switching}, the definition on these sets is rather complicated as it is depends on which of the two blocks in $\Bs_{i,\ell}$ is zero-free and on the colourings before and after the switching. However, the precise definition on these sets is not important for the remainder of the proof. Hence, we omit it here. Observe that $\sigma'$ never assigns colour zero to boundary vertices. 

Using $\sigma'$ we now define $f= f\big(\pi_{1,2}, \ldots, \pi_{r,s_r-1}\big): V(H) \to [r] \times [k]$ as follows. For each $i\in [r]$ and $x\in S_i$ we set 
\[f(x):= 
\begin{cases} \big(i, \sigma'(x)\big) & \text{if } \sigma'(x) \neq 0 \\
			  z_i & \text{otherwise},\end{cases}\]
where $z_i \in \big([r]\setminus\{i\}\big) \times [k]$ is the vertex defined in the statement of the lemma. 
Let $X$ consist of all vertices at distance two or less from a boundary vertex of $\cL$, from a vertex in any $\Bs_{i,\ell}$, or from a colour zero vertex. We now show that $f$ and $X$ satisfy Properties~\ref{lemH:H2}--\ref{lemH:H5} with probability 1 and Properties~\ref{lemH:H1} and~\ref{lemH:H6} with high probability. In particular, this implies that the desired $f$ and $X$ exist.

We start with Property~\ref{lemH:H1}. 
For each $i\in [r]$ let 
\[S^\ast_i:=S_i \setminus \left( \bigcup_{ \ell \in [s_i]} \Bs_{i,\ell} \cup  I_{i,1} \cup I_{i,s_i} \right)\]
be the set of all vertices in $S_i$ except for the first and last interval and the first two blocks of each interval of $S_i$. 
We will also make use of the following restricted function 
\[f^\ast = f^\ast\big(\pi_{1,2},\ldots, \pi_{r,s_r}\big):= \restr{f}{\bigcup_{i\in[r]} S^\ast_i }.\] 
The basic idea of the proof of Property~\ref{lemH:H1} is to determine bounds on $|{f^\ast}^{-1}(i,j)|$ that hold with positive probability and then deduce the desired bounds on $|f^{-1}(i,j)|$. 
Since the permutations $\pi_{i,\ell}$ were chosen uniformly at random, we have by definition of $f^\ast$ that the expected number of vertices mapped to $(i,j)\in [r]\times[k]$ by $f^\ast$ is 
\begin{multline*}
\Ex\big[|{f^{\ast}}^{-1}(i,j)|\big] = \frac{1}{k} \Big[ (s_i-2) (b-2) 4k \beta n  -\big|\{x \in S^\ast_{i}: \sigma(x) = 0  \}\big| \Big] \\
 + \big|\bigcup_{\iota\in [r]\setminus \{i\}} \{x\in S^\ast_{\iota}: \sigma(x) = 0 \text{ and } z_{\iota} = (i,j) \}\big|\,.
\end{multline*}
In particular, the following bounds on the expected value of $|{f^{\ast}}^{-1}(i,j)|$ hold. 
\begin{equation}\label{eq:fastijlow}
\Ex\big[|{f^{\ast}}^{-1}(i,j)|\big] \leq  (s_i-2) (b-2) 4 \beta n + \frac{\xi}{10} n
\end{equation}
and
\begin{equation}\label{eq:fastijup}
\Ex\big[|{f^{\ast}}^{-1}(i,j)|\big] \geq (1- \xi/ 10) (s_i-2) (b-2) 4 \beta n  \geq  (s_i-2) (b-2) 4 \beta n - \frac{\xi}{10}n.
\end{equation}

If one replaced a permutation $\pi_{i,\ell}$ by some other permutation $\tilde{\pi}: [k] \to [k]$, then $|{f^\ast}^{-1}(i,j)|$ would change by at most $(b-2) 4k\beta n$. Hence, by McDiarmid's Inequality (Lemma~\ref{lem:McDiarmid}) we have
\begin{multline}
\Pr\left[\big| (s_i-2) (b-2) 4 \beta n- |{f^\ast}^{-1}(i,j)|\big| \geq  \frac{\xi}{5} n \right] \overset{\eqref{eq:fastijlow}, \eqref{eq:fastijup}}\leq  \\
\Pr\left[\big| \Ex[|{f^\ast}^{-1}(i,j)] - |{f^\ast}^{-1}(i,j)|\big| \geq  \frac{\xi}{10} n \right] \leq 
2\exp\left\{-\frac{\xi^2 n^2}{50 (s_i-2) \big((b-2)4k\beta n\big)^2}\right\}.
\end{multline}

Taking the union bound over all $j\in [k]$ and using $s_i\leq 10/(rk^2\sqrt{\beta})$ and $b=k/\sqrt{\beta}$ as well as $\beta \leq 10^{-10}\xi^2/(D k^4r)$  yields 
\[ \Pr\left[\big| (s_i-2) (b-2) 4 \beta n- |{f^\ast}^{-1}(i,j)|\big| \geq  \frac{\xi}{5} n \text{ for all } j\in [k] \right] \leq 2k \exp\left\{-\frac{\xi^2r}{8000k^2\sqrt{\beta}}\right\}\leq 2k e^{-k} < 1 .\] 

Observe that $|{f^\ast}^{-1}(i,j)|$ is independent of the choices for $\pi_{i',\ell}$ if $i' \neq i$. Hence, with positive probability we have, for every $i\in [r]$ and $j\in [k]$, that
\[(s_i-2) (b-2) 4 \beta n - \frac{\xi}{5}n \leq |{f^\ast}^{-1}(i,j)| \leq (s_i-2) (b-2) 4 \beta n + \frac{\xi}{5} n. \]
From the definition of $f^\ast$ it follows that $|f^{-1}(i,j)|\geq |{f^\ast}^{-1}(i,j)|$ and 
\[|f^{-1}(i,j)| \leq |{f^\ast}^{-1}(i,j)| + |I_{i,1}| + |I_{i,s_i}| + \sum_{\ell=2}^{s_i-1} |\Bs_{i,\ell}|+ \Big|\big\{x \in\hspace{-2mm} \bigcup_{\iota \in [r] \setminus \{i\}} S_{\iota}\setminus S^\ast_{\iota}: \sigma'(x) = 0 \text{ and } z_{\iota}=(i,j) \big\}\Big|. \]
Using $s_i\leq 10/(rk^2\sqrt{\beta})$ and $b=k/\sqrt{\beta}$ and $\beta \leq 10^{-10}\xi^2/(D k^4r)$, with positive probability we have for every $i\in[r]$ and $j\in[k]$ that 
\begin{align*}
|f^{-1}(i,j)| &\geq |{f^\ast}^{-1}(i,j)| 
\geq (s_i-2) (b-2) 4 \beta n - \frac{\xi}{5} n\\ &\geq 
(s_i-2)(b-2) 4 \beta n - \frac{\xi}{5} n + \left(8 (s_i + b) \beta n - \frac{4}{5}\xi n\right)\\ &\geq 
s_ib4\beta n + 16 \beta n - \xi n\\ &
\overset{\eqref{eq:sizeSi}}\geq \frac{1}{k}\big(|S_i| + 16 k \beta n\big) - \xi n \overset{\eqref{eq:mijSi}}\geq 
m_{i,j} - \xi n.
\end{align*}
On the other hand, 
\begin{align*}
|f^{-1}(i,j)| &\leq |{f^\ast}^{-1}(i,j)| + |I_{i,1}| + |I_{i,s_i}| + \sum_{\ell=2}^{s_i-1} |\Bs_{i,\ell}| \\&\,\,\quad+ \Big|\big\{x \in \bigcup_{\iota \in [r] \setminus \{i\}}  S_{\iota}\setminus S^\ast_{\iota}: \sigma'(x) = 0 \text{ and } z_{\iota}=(i,j) \big\}\Big|\\&
\leq (s_i-2) (b-2) 4 \beta n + \frac{\xi}{5} n +8bk\beta n+ (s_i-2)8k\beta n+  \frac{\xi}{10}n \\&
\leq \frac{1}{k}\big((s_i-2)(b-2)4k\beta n \big) + \xi n\\&
\leq \frac{1}{k} (|S_i|-12k\beta n)  + \xi n \overset{\eqref{eq:mijSi}}\leq m_{i,j}+\xi n,
\end{align*}
 which shows that Property~\ref{lemH:H1} holds with positive probability.

By definition of $X$, since $\mathcal L$ is a $\beta n$-bandwidth ordering, any vertex in $X$ is at distance at most $2\beta n$ in $\cL$ from a boundary vertex, a vertex of some $\Bs_{i,\ell}$, or from a vertex assigned colour zero. Because there are $r$ sections, the boundary vertices form $r-1$ intervals each of length $2\beta n$, and so at most $6r\beta n$ vertices of $H$ are at distance $2$ or less from a boundary vertex. There are $\sum_{i\in[r]} s_i$ intervals and hence $\sum_{i\in[r]} s_i$ switching blocks each of size $8k\beta n$. As $s_i \leq 10/(rk^2\sqrt{\beta})$ for every $i\in[r]$, there are at most $(4+8k)\beta n\cdot 10/(k^2\sqrt{\beta})$ vertices at distance 2 or less from a vertex of some switching block. 
Similarly, because $\cL$ is $(10/\xi,\beta)$-zero-free, in any consecutive $10/\xi$ blocks at most one contains vertices of colour zero, and hence at most $(8+4k)\beta n$ vertices in any such $10/\xi$ consecutive blocks are at distance $2$ or less from a vertex of colour zero. Thus we have
\[|X|\le 6r\beta n+(4+8k)\beta n\left(\frac{10}{k^2\sqrt{\beta}n}\right)+(8+4k)\beta n\left(\frac{n}{4k\beta n\cdot 10/\xi}+1\right)\le 6r\beta n+\frac{1}{4}\xi n+\frac{1}{3}\xi n\le\xi n\,,\]
which gives~\ref{lemH:H2}.

Since $\sigma'$ is a proper colouring, and boundary vertices are not adjacent to colour zero vertices, by definition, $f$ restricted to the boundary vertices is a graph homomorphism to $B^k_r$. On the other hand, on each section $S_i$, again since $\sigma'$ is a proper colouring and since $\big\{(i,j)\big\}_{j\in[k]}\cup\{z_i\}$ forms a clique in $R^k_r$, $f$ is a graph homomorphism to $R^k_r$. Since $\cL$ is a $\beta n$-bandwidth ordering, any edge of $H$ is either contained in a section or goes between two boundary vertices, and we conclude that $f$ is a graph homomorphism from $H$ to $R^k_r$, giving~\ref{lemH:H3}.

Now, given $i\in[r]$ and $j\in[k]$, and $x\in f^{-1}(i,j)\setminus X$, if $\{x,y\}$ and $\{y,z\}$ are edges of $H$, then $y$ and $z$ are at distance two or less from $x$ in $H$. In particular, by definition of $X$ neither $y$ nor $z$ is either a boundary vertex, in any $\Bs_{i,\ell}$, or assigned colour zero. Since boundary vertices appear in intervals of length $2\beta n$ in $\cL$, and $\cL$ is a $\beta n$-bandwidth ordering, it follows that $y$ and $z$ are both in $S_i$. Furthermore, suppose $x\in I_{i,\ell}$ for some $\ell$. By definition $x\not\in\Bs_{i,\ell}$. Because $\Bs_{i,\ell}$ and $\Bs_{i,\ell+1}$ (if the latter exists) are intervals of length $8k\beta n$, both $y$ and $z$ are also in $I_{i,\ell}\setminus \Bs_{i,\ell}$, and in particular both $y$ and $z$ are in $\bigcup_{j'\in[k]}f^{-1}(i,j')$, giving~\ref{lemH:H4}.

Since $\sqrt{\beta}n \leq b4k\beta n \leq |I_{1,1}|$ and $\sigma'(x) \neq 0$ for each $x$ in the first $\sqrt{\beta}n$ vertices of $\mathcal{L}$, it follows directly from the definition of $f$ that $f(x) = \big(1,\sigma(x)\big)$, which shows Property~\ref{lemH:H5}. 

Finally, we show that Property~\ref{lemH:H6} holds with positive probability. 
Let $i\in[r]$ and $j\in[k]$. We define the random variable $\mathcal E_{i,j} := |\{x \in {f^\ast}^{-1}(i,j): \deg(x) \leq 2 D\}|$. Since $H$ is $D$-degenerate and $\mathcal L$ is a labelling of bandwidth at most $\beta n$ we have 
\[e\big(S_i^\ast, V(H)\big) \leq D |S_i^\ast| + D 4\beta n \leq D \big(1+ 1/(4D)\big)  |S_i^\ast|.\]  
 Hence, it must hold that $|\{x\in S_i^\ast: \deg(x) \geq 2D + 1\}| (2D + 1) \leq 2 D \big(1+1/(4D)\big) |S_i^\ast|.$
This yields $|\{x\in S_i^\ast: \deg(x) \leq 2D\}| \geq |S_i^\ast|/(6D)$ and therefore 
\[\Ex[\mathcal E_{i,j}] \geq \frac{1}{6kD} |S_i^\ast| \geq  \frac{1}{6D} (s_i-2) (b-2)4\beta n.\]
By applying Chernoff's Inequality (Theorem~\ref{thm:chernoff}) and using Equations~\eqref{eq:mijSi} and~\eqref{eq:sizeSi} as well as $\alpha= 1/(24D)$ we get
\begin{align*}
&\Pr\Big[\big|\{x \in f^{-1}(i,j):\deg(x) \leq 2 D \}\big| < \alpha |f^{-1}(i,j)|\Big] \overset{\ref{lemH:H1}}\leq 
\Pr\Big[\mathcal E_{i,j} < \alpha (s_i b 4 \beta n + 2\xi n)\Big]\\ &\leq 
\Pr\Big[\mathcal E_{i,j} < 2 \alpha \big((s_i-2)(b-2) 4 \beta n\big)\Big] \leq 
\Pr\Big[\mathcal E_{i,j} < \frac{1}{2}\Ex[\mathcal E_{i,j}]\Big] < 2 \exp\left\{- \frac{(s_i-2)(b-2)4\beta n}{72}\right\}.
\end{align*}
Taking the union bound over all $i\in[r]$ and $j\in [k]$ yields that Property~\ref{lemH:H6} holds with positive probability. 
\end{proof}

\section{The common neighbourhood lemma}

In order to prove Lemma~\ref{lem:common} we need the following version of the Sparse Regularity Lemma, allowing for a partition equitably refining an initial partition with parts of very different sizes. Given a partition $V(G)=V_1\dcup\dots\dcup V_s$, we say a partition $\{V_{i,j}\}_{i\in[s],j\in[t]}$ is an equitable $(\eps,p)$-regular refinement of $\{V_i\}_{i\in[s]}$ if $|V_{i,j}|=|V_{i,j'}|\pm 1$ for each $i\in[s]$ and $j,j'\in[t]$, and there are at most $\eps s^2t^2$ pairs $(V_{i,j},V_{i',j'})$ which are not $(\eps,0,p)$-regular. 

\begin{lemma}
\label{lem:SRLb}
For each $\eps>0$ and $s\in\mathbb{N}$ there exists $t_1\geq 1$ such that the following holds. Given any graph $G$, suppose $V_1\dcup\dots\dcup V_s$ is a partition of $V(G)$. Suppose that $e(V_i)\le 3p|V_i|^2$ for each $i\in[s]$, and $e(V_i,V_{i'})\le 2p|V_i||V_{i'}|$ for each $i\neq i'\in[s]$. Then there exist sets $V_{i,0}\subset V_i$ for each $i\in[s]$ with $|V_{i,0}|<\eps|V_i|$, and an equitable $(\eps,p)$-regular refinement $\{V_{i,j}\}_{i\in[s],j\in[t]}$ of $\{V_i\setminus V_{i,0}\}_{i\in[s]}$ for some $t\le t_1$.
\end{lemma}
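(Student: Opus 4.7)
My plan is to adapt the refinement-based proof of the Kohayakawa--R\"odl sparse regularity lemma so that the given partition $\{V_i\}_{i\in[s]}$ is preserved and every intermediate partition is equitable within each $V_i$. I start from the trivial refinement $V_{i,1}:=V_i$, $V_{i,0}:=\varnothing$ (so $t=1$) and iterate the following step until fewer than $\eps s^2 t^2$ of the pairs $(V_{i,j},V_{i',j'})$ fail to be $(\eps,0,p)$-regular. For each irregular pair, fix witness subsets $A\subseteq V_{i,j}$, $B\subseteq V_{i',j'}$ of relative size at least~$\eps$ on which the $p$-density differs from $d_{G,p}(V_{i,j},V_{i',j'})$ by more than~$\eps$; refine each $V_{i,j}$ by intersecting with all witness subsets incident to it, producing at most $2^{O(st)}$ atoms per part; and within each $V_i$ regroup these atoms into $t'$ equal-sized pieces for a common $t'$ across all~$i$, sending any leftover vertices from~$V_i$ into~$V_{i,0}$.

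Termination is controlled by a truncated sparse mean-square density index
\[ q(\mathcal{P}) := \sum_{\{a,b\}} \frac{|V_a||V_b|}{n^2}\,\min\bigl(d_{G,p}(V_a,V_b),M\bigr)^2, \]
summed over unordered pairs of parts of the current partition, where $M = M(\eps,s)$ is a sufficiently large constant. The truncated index is trivially bounded above by $M^2$, and the defect form of Cauchy--Schwarz applied pair-by-pair shows that $q$ grows by at least $\eps^5/2$ per iteration among irregular pairs of truncated density below~$M$. The upper-density hypotheses $e(V_i)\le 3p|V_i|^2$ and $e(V_i,V_{i'})\le 2p|V_i||V_{i'}|$ imply that the total weight $\sum |V_a||V_b|/n^2$ of pairs of parts with $d_{G,p}(V_a,V_b)>M$ is at most $O(s^2/M)$, which for $M$ large enough makes such super-dense pairs contribute negligibly to the count of $\eps$-irregular pairs. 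Consequently the iteration terminates after $O(M^2/\eps^5)$ steps and yields a final $t\le t_1(\eps,s)$.

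The main obstacle is reconciling the equitability within each~$V_i$ with the atoms produced by intersecting with witnesses, whose sizes can vary wildly; naively chopping atoms into equal-sized pieces could destroy the density defect detected in a witness by smearing it across many new pieces. I resolve this by choosing $t'$ much larger than the number of atoms per step, so that each witness subset is approximated, up to relative error much smaller than~$\eps$, by a union of whole equal-sized pieces; this ensures that the witness remains a witness (with slightly worse parameters) in the refined partition. The discard to each $V_{i,0}$ per step is kept below $\eps|V_i|$ divided by the a~priori bound on the number of iterations, so that the final exceptional set satisfies $|V_{i,0}|<\eps|V_i|$ for every~$i$.
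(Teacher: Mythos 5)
Your overall strategy (a Scott-type iterative refinement driven by a truncated mean-square density, with equitable re-chopping inside each $V_i$) matches the paper's proof closely, and your resolution of the equitability/atoms tension by taking $t'$ much larger than the number of atoms is essentially the same device the paper uses, so that part is sound. The genuine gap is in your choice of truncated index $q(\mathcal{P})=\sum \frac{|V_a||V_b|}{n^2}\min\bigl(d_{G,p}(V_a,V_b),M\bigr)^2$: the function $x\mapsto\min(x,M)^2$ is \emph{not convex} (its derivative drops from $2M$ to $0$ at $x=M$), so the index is not monotone non-decreasing under refinement. Concretely, if a pair $(P,P')$ with $d_{G,p}(P,P')>M$ is split by the Venn-diagram step into a piece of density $0$ carrying half the weight and a piece of density $\gg M$ carrying the other half, its contribution to $q$ drops from $\frac{|P||P'|}{n^2}M^2$ to $\frac{|P||P'|}{2n^2}M^2$. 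Your attempt to rescue this via the upper-density hypotheses does not close the gap: those hypotheses give that the total relative weight $\sum|P||P'|/n^2$ of pairs with $d_{G,p}>M$ is $O(1/M)$ (not just $O(s^2/M)$), so the worst-case loss of $q$ per iteration is of order $\frac{1}{M}\cdot M^2=M$, which is not small relative to your per-step gain of $\eps^5/2$ — and increasing $M$ only makes it worse. So the energy argument as stated does not terminate.

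The paper avoids this by choosing a \emph{convex} truncation of $x\mapsto x^2$: it replaces $x^2$ above a threshold $L$ by the tangent line $2Lx-L^2$, i.e.\ the function equal to $x^2$ for $x\le L$ and to $2Lx-L^2$ for $x>L$. This is convex (the derivative $2\min(x,L)$ is nondecreasing), bounded above on the relevant range, and still equal to $x^2$ where the defect Cauchy--Schwarz increment is harvested from moderate-density irregular pairs. Convexity then gives, via Jensen, that the index is nondecreasing under \emph{every} refinement — including the refinement of super-dense pairs — so no loss term needs to be controlled at all, and the iteration terminates after $O_{\eps,s}(1)$ steps. Replacing your $\min(x,M)^2$ by this capped-parabola function and rerunning your argument would repair the proof; as written, it does not go through.
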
 
 The proof is standard, following Scott's method~\cite{Scott}. We defer it to Appendix~\ref{app:tools}.

To prove Lemma~\ref{lem:common}, we work as follows. First, we choose a regularity parameter $\epsaa_0$ and apply Lemma~\ref{lem:SRLb} with $\epsaa_0$ and the initial partition $V_1\setminus W,\dots,V_k\setminus W,W$. From this partition, all we need is a part $W'\subset W$ and parts $V'_i\subset V_i\setminus W$ for each $i\in[k]$, such that each pair $(W',V'_i)$ is $(\epsaa_0,d/2,p)$-lower-regular, which we find by averaging. We now choose our vertices $w_1,\dots,w_\Delta$ sequentially (in Claim~\ref{claim:common}), such that the desired~\ref{cnl:Gsize}--\ref{cnl:Nreg} hold for all subsets of the so far chosen vertices at each stage. This is in spirit very much like the usual dense case `Key Lemma' sequential embedding of vertices using regularity, but in the sparse setting here we need to work somewhat harder and use the regularity inheritance lemmas to show that we can choose vertices which give us lower-regular pairs for future embedding (rather than this being automatic from the slicing lemma, as it is in the dense case).

Thus, the proof mainly amounts to showing that the number of vertices which break one of the desired properties and which we therefore cannot choose is always much smaller than $|W'|$. In order to show this for~\ref{cnl:Gsize} we need to maintain some extra properties, specifically sizes of $G$- and $\Gamma$-neighbourhoods of chosen vertices within each $V'_i$, and that these $\Gamma$-neighbourhoods of chosen vertices in each $V'_i$ form lower-regular pairs with $W'$.

Note that the way we choose our various regularity parameters amounts to ensuring that, even after $\Delta-1$ successive applications of regularity inheritance lemmas, we still have sufficient regularity for our argument. Furthermore, it is important to note that the choice of $\epsaa_0$ does not have anything to to with $\epsa$ or $\eps_0$, rather it affects only the returned value of $\alpha$.

\begin{proof}[Proof of Lemma~\ref{lem:common}]
First we fix all constants that we need throughout the proof. 
Given $d>0$, $k\geq 1$, and $\Delta\geq 2$, let $\epsaa_{\Delta}:=8^{-\Delta}\frac{1}{(k+1)^2}\left(\frac d 8\right)^{\Delta}$.
Now, for each $j=1,\dots,\Delta$ sequentially, choose $\epsaa_{\Delta-j}\le\epsaa_{\Delta-j+1}$ not larger than the $\eps_0$ returned by Lemma~\ref{lem:OSRIL} for input $\epsaa_{\Delta-j}$ and $\tfrac{d}{2}$.

Now, Lemma~\ref{lem:SRLb} with input $\epsaa_0$ and $s=k+1$ returns $t_1\ge 1$. We set
\[\alpha:=\frac{1}{2t_1}\Big(\frac{d}{4}\Big)^\Delta\,.\]

Next, given $\epsa>0$, let $\epsa_{\Delta-1,\Delta-1}:=\epsa$, and let $\epsa_{j,\Delta}=\epsa_{\Delta,j}=1$ for each $1\le j\le\Delta$. For each $(j,j')\in[\Delta]^2\setminus\{(1,1)\}$ in lexicographic order sequentially, we choose
\[\epsa_{\Delta-j,\Delta-j'}\le\min\{\epsa_{\Delta-j+1,\Delta-j'},\epsa_{\Delta-j,\Delta-j'+1},\epsa_{\Delta-j+1,\Delta-j'+1}\}\]
not larger than the $\eps_0$ returned by Lemma~\ref{lem:OSRIL} for both input $\epsa_{\Delta-j+1,\Delta-j'}$ and $d$, and for input $\epsa_{\Delta-j,\Delta-j'+1}$ and $d$, and not larger than the $\eps_0$ returned by Lemma~\ref{lem:TSRIL} for input $\epsa_{\Delta-j+1,\Delta-j'+1}$ and $d$.

 We choose $\eps_0$ small enough such that $(1+\eps_0)^{\Delta} \leq 1+\epsa$ and $(1-\eps_0)^{\Delta} \geq 1-\epsa$. Given $r\ge 1$ and $\eps$ with $0<\eps\le\eps_0$, suppose that $C$ is large enough for each of these calls to Lemmas~\ref{lem:OSRIL} and~\ref{lem:TSRIL}, and for Proposition~\ref{prop:chernoff} with input $\eps_0$. Finally, we set
\[\Ca= 10^{12}k^4t_1r^4\eps^{-4}2^{2\Delta}C\,.\]

Given $p\ge \Ca\big(\tfrac{\log n}{n}\big)^{1/\Delta}$, a.a.s.\ the good events of each of the above calls to Lemma~\ref{lem:OSRIL} and~\ref{lem:TSRIL}, and to Proposition~\ref{prop:chernoff} and Lemma~\ref{lem:SRLb}, occur. We condition from now on upon these events occurring for $\Gamma=G(n,p)$.

Let $G=(V,E)$ be a subgraph of $\Gamma$. Suppose $\{V_i\}_{i\in[k]}$ and $W$ satisfy the conditions of the lemma. We first apply Lemma~\ref{lem:SRLb}, with the promised input parameters $\epsaa_0$ and $s=k+1$, to $G[V_1\cup\dots\cup V_k\cup W]$, with input partition $\{V_i\setminus W\}_{i\in[k]}\cup\{W\}$. We can do this because $Cp^{-1}\log n<10^{-10}\frac{\eps^4 pn}{k^4r^4}$, so that the good event of Proposition~\ref{prop:chernoff} guarantees that the conditions of Lemma~\ref{lem:SRLb} are satisfied. This returns a partition refining each set of $\{V_i\setminus W\}_{i\in[k]}\cup\{W\}$ into $1\le t\le t_1$ clusters together with a small exceptional set. Let $W'\subset W$ be a cluster which is in at most $2k\epsaa_0 t$ pairs with clusters in $\big(V_1\cup\dots\cup V_k\big)\setminus W$ which are not $(\epsaa_0,p)_G$-lower-regular. Such a cluster exists by averaging. By Proposition~\ref{prop:chernoff} and~\ref{cnl:bal}, at most $4(k+1)\epsaa_0 p \tfrac{4n}{r}|W'|$ edges lie in the pairs between $W'$ and the $V_i$ which are not lower-regular, and by Proposition~\ref{prop:chernoff} and~\ref{cnl:W} at most $2p|W||W'|<\epsaa_0 p\tfrac{n}{r}|W'|$ edges leaving $W'$ lie in $W$. By~\ref{cnl:Wdeg}, for each $i\in[k]$ each $w\in W'$ has at least $dp|V_i|$ neighbours in $V_i$, and hence there are at least $\tfrac{dp}{2}|V_i||W'|$ edges from $W'$ to $V_i\setminus W$ which lie in $(\epsaa_0,p)_G$-lower-regular pairs. By averaging, for each $i\in[k]$ there exists a cluster $V'_i$ of the partition such that $(W',V_i')$ is $(\epsaa_0, d/2, p)_G$-lower-regular. For the remainder of the proof, we will only need these $k+1$ clusters from the partition.

Notice that for every $i\in[k]$ we have 
\[|V_i| \geq |V_i'| \geq \frac{n}{8kt_1r} \geq \frac{1}{8kt_1r} (\Ca)^{2}p^{-2}\log n \geq \Ca p^{-2} \log n\]
and 
\begin{equation}
\label{eq:sizeW}
|W'| \geq 10^{-11}\frac{\eps^4 pn}{t_1k^4r^4}  \geq 10^{-11}\frac{\eps^4}{t_1k^4r^4}(\Ca)^{2}p^{-1}\log n \geq \Ca p^{-1} \log n
\end{equation}
both by the choice of $\Ca$ and $p$.

 We choose the $\Delta$-tuple $(w_1,\dots,w_\Delta)$ inductively, using the following claim.
\begin{claim}
\label{claim:common}
For each $0\le \ell\le\Delta$ there exists an $\ell$-tuple $(w_1,\ldots,w_\ell) \in \binom{W'}{\ell}$ such that the following holds.

For every $\Lambda, \Lambda^\ast \subseteq [\ell]$, and every $i \neq i' \in [k]$ we have

\begin{enumerate}[label=\itmarab{L}]
 \item\label{cnl:cl:Wreg} $\big(\bigcap_{j\in \Lambda}\NGa(w_j,V_i'),W'\big)$ is $(\epsaa_{|\Lambda|},\frac d 2, p)_G$-lower-regular if $|\Lambda|< \Delta$,
 \item\label{cnl:cl:NGVp} $|\bigcap_{j\in\Lambda} N_G(w_j,V_i')| \geq \big(\frac{d}{4}\big)^{|\Lambda|} p^{|\Lambda|}|V_i'|$,
 \item\label{cnl:cl:NGa} $|\bigcap_{j\in\Lambda}\NGa(w_j)| \leq (1+\eps_0)^{|\Lambda|}p^{|\Lambda|} n$, 
 \item\label{cnl:cl:NGaVp} $|\bigcap_{j\in\Lambda}\NGa(w_j,V_i')| = (1\pm\eps_0)^{|\Lambda|}p^{|\Lambda|} |V_i'|$, 
 \item\label{cnl:cl:NGaV} $|\bigcap_{j\in\Lambda}\NGa(w_j,V_i)| = (1\pm\eps_0)^{|\Lambda|}p^{|\Lambda|} |V_i|$, and
 \item\label{cnl:cl:Vreg} $\big(\bigcap_{j\in\Lambda}\NGa(w_j,V_i),\bigcap_{j^\ast\in\Lambda^\ast}\NGa(w_{j^\ast},V_{i'})\big)$ is $(\epsa_{|\Lambda|,|\Lambda^\ast|},d,p)_G$-lower-regular if\\ $|\Lambda|,|\Lambda^\ast| < \Delta$ and either $\Delta \geq 3$ or $\Lambda \cap \Lambda^\ast = \varnothing$ or both. 
\end{enumerate}
\end{claim}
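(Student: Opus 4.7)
The plan is to prove Claim~\ref{claim:common} by induction on $\ell$. The base case $\ell=0$ is immediate: the only index set is $\Lambda=\Lambda^\ast=\varnothing$, the intersections over the empty set return the ambient sets, and all six properties follow from the setup, in particular from the $(\epsaa_0,d/2,p)_G$-lower-regularity of $(V_i',W')$ constructed via Lemma~\ref{lem:SRLb} and from hypothesis~\ref{cnl:Vreg} (after ensuring $\eps_0\le\epsa_{0,0}$).

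For the induction step, given $w_1,\ldots,w_\ell$ satisfying the claim, I will select $w_{\ell+1}\in W'\setminus\{w_1,\ldots,w_\ell\}$ by a union bound over the six properties and the new subsets $\Lambda\subseteq[\ell+1]$ (or pairs $(\Lambda,\Lambda^\ast)$ for~\ref{cnl:cl:Vreg}) containing $\ell+1$. Writing $\Lambda'=\Lambda\setminus\{\ell+1\}$ and $X_{\Lambda'}=\bigcap_{j\in\Lambda'}N_\Gamma(w_j,V_i')$, the appropriate tool for each property is as follows: Lemma~\ref{lem:OSRIL} for~\ref{cnl:cl:Wreg}, applied to the inductively $(\epsaa_{|\Lambda'|},d/2,p)_G$-lower-regular pair $(X_{\Lambda'},W')$; Proposition~\ref{prop:chernoff} for~\ref{cnl:cl:NGa},~\ref{cnl:cl:NGaVp} and~\ref{cnl:cl:NGaV}, applied to the relevant $\Gamma$-neighbourhoods (whose sizes satisfy the required $\ge Cp^{-1}\log n$ threshold by the inductive lower bounds and the choice of $\Ca$); and Lemma~\ref{lem:OSRIL} or Lemma~\ref{lem:TSRIL} for~\ref{cnl:cl:Vreg} (the latter only when $\ell+1\in\Lambda\cap\Lambda^\ast$, which the statement excludes unless $\Delta\ge 3$). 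The nested sequential choices of $\epsaa_j$ and $\epsa_{j,j'}$ are tailored precisely so that these inheritance lemmas produce the regularity parameters claimed; the sets entering them, of size at least $\approx p^{|\Lambda'|}|V_i'|$ with $|\Lambda'|\le\Delta-1$, comfortably exceed the required size thresholds thanks to the choice of $\Ca$.

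The main technical obstacle is~\ref{cnl:cl:NGVp}, since it requires a lower bound on the $G$-degree of $w_{\ell+1}$ into the common $G$-neighbourhood $Y:=\bigcap_{j\in\Lambda'}N_G(w_j,V_i')$, whereas my inductive handle on lower-regularity lives on the possibly much larger common $\Gamma$-neighbourhood $X_{\Lambda'}$. My plan is to combine the inductive bounds $|Y|\ge(d/4)^{|\Lambda'|}p^{|\Lambda'|}|V_i'|$ and $|X_{\Lambda'}|\le(1+\eps_0)^{|\Lambda'|}p^{|\Lambda'|}|V_i'|$ to deduce $|Y|\ge\epsaa_{|\Lambda'|}|X_{\Lambda'}|$, which is exactly what the choice $\epsaa_\Delta\le(d/64)^\Delta$ guarantees. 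Then, if the bad set $W^{\mathrm{bad}}:=\{w\in W':|N_G(w,Y)|<(d/4)p|Y|\}$ had size at least $\epsaa_{|\Lambda'|}|W'|$, applying the $(\epsaa_{|\Lambda'|},d/2,p)_G$-lower-regularity of $(X_{\Lambda'},W')$ to the pair $(Y,W^{\mathrm{bad}})$ would force $e_G(Y,W^{\mathrm{bad}})\ge(d/2-\epsaa_{|\Lambda'|})p|Y||W^{\mathrm{bad}}|\ge(d/4)p|Y||W^{\mathrm{bad}}|$, contradicting the definition of $W^{\mathrm{bad}}$. Hence $|W^{\mathrm{bad}}|<\epsaa_{|\Lambda'|}|W'|$, and any $w_{\ell+1}\notin W^{\mathrm{bad}}$ satisfies $|\bigcap_{j\in\Lambda}N_G(w_j,V_i')|=|N_G(w_{\ell+1},Y)|\ge(d/4)p|Y|\ge(d/4)^{|\Lambda|}p^{|\Lambda|}|V_i'|$.

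Summing the bad counts over the at most $2^{2\Delta}$ subsets/pairs, the six properties, and adding $\ell\le\Delta$ to avoid reusing previously chosen vertices, the total is bounded by a constant (depending on $\Delta$) times $C\max(p^{-2},p^{-1}\log n)$ plus $2^\Delta\epsaa_\Delta|W'|$ plus $\Delta$. The choice of $\Ca$ makes the first term much smaller than $|W'|$ (noting that the $Cp^{-2}$ contribution from Lemma~\ref{lem:TSRIL} arises only when $\Delta\ge 3$, in which case $p^3n\to\infty$; for $\Delta=2$ only the OSRIL contribution $Cp^{-1}\log n$ appears, which is $o(pn)$ since $p^2n/\log n\to\infty$), and the choice of $\epsaa_\Delta\le 8^{-\Delta}$ makes the second term at most a small fraction of $|W'|$. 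Since $|W'|\ge 10^{-11}\eps^4pn/(t_1k^4r^4)$ by~\eqref{eq:sizeW}, the total bad count is strictly less than $|W'|$, so a valid $w_{\ell+1}$ exists and the induction completes.
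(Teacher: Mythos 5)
Your proposal is correct and follows essentially the same route as the paper's proof: the same base case, the same inductive structure with a union bound over new index sets containing $\ell+1$, the same appeals to Lemma~\ref{lem:OSRIL}, Lemma~\ref{lem:TSRIL} and Proposition~\ref{prop:chernoff} for~\ref{cnl:cl:Wreg},~\ref{cnl:cl:NGa}--\ref{cnl:cl:NGaV} and~\ref{cnl:cl:Vreg}, and the same key step for~\ref{cnl:cl:NGVp} — namely, using $|Y|\ge\epsaa_{|\Lambda'|}|X_{\Lambda'}|$ together with the $(\epsaa_{|\Lambda'|},d/2,p)_G$-lower-regularity of $(X_{\Lambda'},W')$ from~\ref{cnl:cl:Wreg} to bound the number of $w\in W'$ with $\deg_G(w,Y)<(d/4)p|Y|$ by $\epsaa_{|\Lambda'|}|W'|$.

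One small point of precision: your aside that the bad counts are small because ``$p^3n\to\infty$'' or ``$p^2n/\log n\to\infty$'' is slightly weaker than what is needed — the union bound must be beaten by the specific polynomial margin built into $\Ca$, not merely asymptotically — but you correctly attribute the conclusion to the choice of $\Ca$ in the same breath, so there is no real gap.
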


We prove this claim by induction on $\ell$. Recall that if $\Lambda=\emptyset$ then $\bigcap_{j\in\Lambda}\NGa(w_j,V_i')$ is by definition equal to $V'_i$, and that $[0]=\emptyset$. 

\begin{claimproof}[Proof of Claim~\ref{claim:common}]
	For the base case $\ell=0$, observe that~\ref{cnl:cl:Wreg} follows from our choice of $W'$ and the $V_i'$. For every $i,j\in [k]$, the pair $(V_i,V_j)$ is $(\eps,d,p)_G$-lower-regular by~\ref{cnl:Vreg}, and since $\eps\le\epsa_{0,0}$ this gives~\ref{cnl:cl:Vreg}. The remaining three properties~\ref{cnl:cl:NGVp},~\ref{cnl:cl:NGaVp} and~\ref{cnl:cl:NGaV} are tautologies for $\ell=0$.
	 
	For the inductive step, suppose that for some $0\le\ell<\Delta$ there exists an $\ell$-tuple $(w_1, \ldots, w_{\ell}) \in \binom{W'}{\ell}$ satisfying~\ref{cnl:cl:Wreg}--\ref{cnl:cl:Vreg}. We now find a vertex $w_{\ell+1} \in W'$ such that the $(\ell+1)$-tuple $(w_1, \ldots, w_{\ell+1})$ still satisfies~\ref{cnl:cl:Wreg}--\ref{cnl:cl:Vreg}. We do this by determining, for each of these five conditions, an upper bound on the number of vertices in $W'$ that violate them and show that the sum of these upper bounds is less than $|W'|-\ell$.
	
	Suppose $\Lambda \subseteq [\ell]$ satisfies $|\Lambda| < \Delta -1$, and suppose $i \in [k]$. By the choice of $C$ and $p$ we have for every $i \in [k]$ 
	\begin{equation}\label{eq:common:sizeNGa}
	 \big|\bigcap_{j\in \Lambda}\NGa(w_j,V_i')\big|\geBy{\ref{cnl:cl:NGaVp}} (1-\eps_0)^{|\Lambda|}p^{|\Lambda|}|V_i'|
	\overset{|\Lambda| < \Delta-1}\geq (1-\eps_0)^{\Delta-2} p^{\Delta-2} \frac{n}{8ktr} \geq C p^{-2} \log n\,.
	\end{equation}
	We also have $|W'| \geq \Ca p^{-1} \log n$ by \eqref{eq:sizeW} and
  $\big(\bigcap_{j\in \Lambda} \NGa(w_j,V_i'), W'\big)$ is an
  $(\epsaa_{|\Lambda|}, d/2, p)_G$-lower-regular pair by~\ref{cnl:cl:Wreg}.
  Since the good event of Lemma~\ref{lem:OSRIL} with input
  $\epsaa_{|\Lambda|+1}$ and $\tfrac{d}{2}$ occurs, there exist at most $C
  p^{-1} \log n$ vertices $w$ in $W'$ such that
  \[\bigg(\bigcap_{j\in
      \Lambda}\NGa(w_j,V_i') \cap \NGa(w), W'\bigg) = \bigg(\bigcap_{j\in
      \Lambda}\NGa(w_j,V_i') \cap \NGa(w,V_i'),W'\bigg)\]
  is not $(\epsaa_{|\Lambda|+1},\frac d 2, p)_G$-lower-regular. Summing over all possible choices of $\Lambda \subseteq [l]$ and $i \in [k]$, there are at most $2^\Delta k^2 C p^{-1} \log n$ vertices $w$ in $W'$ such that $(w_1, \ldots, w_l, w)$ does not satisfy~\ref{cnl:cl:Wreg}.
	
	Moving on to~\ref{cnl:cl:NGVp}, let $\Lambda\subset[\ell]$ and $i\in[k]$ be given. We have
	\begin{align*}
	 \big|\bigcap_{j\in\Lambda}N_G(w_j,V'_i)\big|&\geBy{\ref{cnl:cl:NGVp}}\Big(\frac{d}{4}\Big)^{|\Lambda|}p^{|\Lambda|}|V'_i|\quad\text{and}\\
	 \big|\bigcap_{j\in\Lambda}N_\Gamma(w_j,V'_i)\big|&\leBy{\ref{cnl:cl:NGaVp}}(1+\eps_0)^{|\Lambda|}p^{|\Lambda|}|V'_i|\,.
	\end{align*}
	By choice of $\eps_0$ and $\epsaa_{|\Lambda|}$, we thus have $\big|\bigcap_{j\in\Lambda}N_G(w_j,V'_i)\big|\ge\epsaa_{|\Lambda|}\big|\bigcap_{j\in\Lambda}N_\Gamma(w_j,V'_i)\big|$.
	Now by~\ref{cnl:cl:Wreg}, the pair $\big(W',\bigcap_{j\in\Lambda}N_\Gamma(w_j,V'_i)\big)$ is $\big(\epsaa_{|\Lambda|},\tfrac{d}{2},p\big)_G$-lower-regular, and thus the number of vertices $w\in W'$ such that 
	\[\big|N_G(w,V'_i)\cap\bigcap_{j\in\Lambda}N_G(w_j,V'_i)\big|<\Big(\frac{d}{4}\Big)^{|\Lambda|+1}p^{|\Lambda|+1}|V'_i|\]
	is at most $\epsaa_{|\Lambda|}|W'|\le\epsaa_{\Delta}|W'|$. Summing over the choices of $\Lambda\subset[\ell]$ and $i\in[k]$, the number of $w\in W'$ violating~\ref{cnl:cl:NGVp} is at most $2^\Delta k\epsaa_{\Delta}|W'|$.
	
	For~\ref{cnl:cl:NGaVp}, given $\Lambda\subset[\ell]$ and $i\in[k]$, by~\ref{cnl:cl:NGaVp} we have
	\[ \big|\bigcap_{j\in\Lambda}\NGa(w_j,V_i')\big| = (1\pm\eps_0)^{|\Lambda|}p^{|\Lambda|} |V_i'|\,,\]
	and by choice of $\eps_0$ and $p$, in particular $\big|\bigcap_{j\in\Lambda}\NGa(w_j,V_i')\big|\ge C p^{-1}\log n$. Since the good event of Proposition~\ref{prop:chernoff} occurs, the number of vertices $w\in W'$ such that $\big|\NGa(w,V'_i)\cap\bigcap_{j\in\Lambda}\NGa(w_j,V_i')\big|$ is either smaller than $(1-\eps_0)^{|\Lambda|+1}p^{|\Lambda|+1}|V_i'|$
	or larger than $(1+\eps_0)^{|\Lambda|+1}p^{|\Lambda|+1}|V_i'|$ is at most $2C p^{-1}\log n$. Summing over the choices of $\Lambda\subset[\ell]$ and of $i\in[k]$, we conclude that at most $2^{\Delta+1}kC p^{-1}\log n$ vertices of $W'$ violate~\ref{cnl:cl:NGaVp}. Since $n\ge |V_i|\ge|V'_i|$, the same calculation shows that a further at most $2^{\Delta+1}kC p^{-1}\log n$ vertices of $W'$ violate~\ref{cnl:cl:NGaV}, and at most $2^{\Delta+1}kC p^{-1}\log n$ vertices of $W'$ violate~\ref{cnl:cl:NGa}.
	
	Finally, we come to~\ref{cnl:cl:Vreg}. Suppose we are given
  $\Lambda,\Lambda'\subset[\ell]$ and distinct $i,i'\in[k]$. Suppose that
  $|\Lambda|\le\Delta-2$ and $|\Lambda'|\le\Delta-1$. We wish to show that for
  most vertices $w\in W'$, the pair $\big(\NGa(w,V_i)\cap
  \bigcap_{j\in\Lambda}\NGa(w_j,V_i),\bigcap_{j\in\Lambda}\NGa(w_j,V'_i)\big)$
  is $\big(\epsa_{|\Lambda|+1,|\Lambda'|},d,p\big)_G$-lower-regular, and
  furthermore, if $\Delta\ge 3$ and $|\Lambda'|\le\Delta-2$, that the pair
  \[\bigg(\NGa(w,V_i)\cap
  \bigcap_{j\in\Lambda}\NGa(w_j,V_i),\NGa(w,V_{i'})\cap\bigcap_{j\in\Lambda}\NGa(w_j,V'_i)\bigg)\]
  is $\big(\epsa_{|\Lambda|+1,|\Lambda'|+1},d,p\big)_G$-lower-regular.
	
	By~\ref{cnl:cl:NGaV}, and by choice of $\eps_0$, $C$ and $p$, we have
	\begin{align*}
	 \big|\bigcap_{j\in\Lambda}\NGa(w_j,V_i)\big|&\ge(1-\eps_0)^{|\Lambda|} p^{|\Lambda|}|V_i|\ge C p^{|\Lambda|-\Delta}\log n\quad\text{and}\\
	 \big|\bigcap_{j\in\Lambda'}\NGa(w_j,V_{i'})\big|&\ge(1-\eps_0)^{|\Lambda'|} p^{|\Lambda'|}|V_{i'}|\ge C p^{|\Lambda'|-\Delta}\log n\,.
	\end{align*}
	By~\ref{cnl:cl:Vreg}, the pair $\big(\bigcap_{j\in\Lambda}\NGa(w_j,V_i),\bigcap_{j\in\Lambda}\NGa(w_j,V'_i)\big)$ is $\big(\epsa_{|\Lambda|,|\Lambda'|},d,p\big)_G$-lower-regular. Since the good event of Lemma~\ref{lem:OSRIL} with input $\epsa_{|\Lambda|+1,|\Lambda'|}$ and $d$ occurs, there are at most $C p^{-1}\log n$ vertices $w$ of $W'$ such that $\big(\NGa(w,V_i)\cap\bigcap_{j\in\Lambda}\NGa(w_j,V_i),\bigcap_{j\in\Lambda}\NGa(w_j,V'_i)\big)$ is not $\big(\epsa_{|\Lambda|+1,|\Lambda'|},d,p\big)_G$-lower-regular. Furthermore, if $|\Lambda'|\le\Delta-2$, then since the good event of Lemma~\ref{lem:TSRIL} with input $\epsa_{|\Lambda|+1,|\Lambda'|+1}$ and $d$ occurs, there are at most $C p^{-2}\log n$ vertices $w$ of $W'$ such that 
	\[\Big(\NGa(w,V_i)\cap\bigcap_{j\in\Lambda}\NGa(w_j,V_i),\NGa(w,V_{i'})\cap \bigcap_{j\in\Lambda}\NGa(w_j,V'_i)\Big)\text{ is not }\big(\epsa_{|\Lambda|+1,|\Lambda'|},d,p\big)_G\text{-lower-regular.}\]
	Observe that if $\Delta=2$ the property~\ref{cnl:cl:Vreg} does not require this pair to be lower-regular. Summing over the choices of $\Lambda,\Lambda'\subset[\ell]$ and $i,i'\in[k]$, we conclude that if $\Delta=2$ then at most $2^{2\Delta}k^2C p^{-1}\log n$ vertices $w$ of $W'$ cause~\ref{cnl:cl:Vreg} to fail, while if $\Delta\ge 3$, at most $2^{2\Delta}k^2C(p^{-1}+p^{-2})\log n$ vertices $w$ of $W'$ violate~\ref{cnl:cl:Vreg}.
	
	Summing up, if $\Delta=2$ then at most
	\begin{equation}\label{eq:common:bad2}
	 2^{\Delta}k^2C p^{-1}\log n+2^\Delta k\epsaa_{\Delta}|W'|+3\cdot 2^{\Delta+1}kC p^{-1}\log n+2^{2\Delta}k^2C p^{-1}\log n
	\end{equation}
	vertices $w$ of $W'$ cannot be chosen as $w_{\ell+1}$. By choice of $\Ca$ and $\epsaa_{\Delta}$, and by choice of $p$, this is at most $\tfrac12|W'|$, so that there exists a vertex of $W'$ which can be chosen as $w_{\ell+1}$, as desired. If on the other hand $\Delta\ge 3$, then at most
	\begin{equation}\label{eq:common:bad3}
	 2^{\Delta}k^2C p^{-1}\log n+2^\Delta k\epsaa_{\Delta}|W'|+3\cdot 2^{\Delta+1}kC p^{-1}\log n+2^{2\Delta}k^2C (p^{-1}+p^{-2})\log n
	\end{equation}
	vertices of $W'$ cannot be chosen as $w_{\ell+1}$. Again by choice of $\Ca$, $\epsaa_{\Delta}$ and $p$, this is at most $\tfrac12|W'|$, and again we therefore can choose $w_{\ell+1}$ satisfying~\ref{cnl:cl:Wreg}--\ref{cnl:cl:Vreg} as desired.
\end{claimproof}

Finally, let us argue why the lemma is a consequence of Claim~\ref{claim:common}. Let $(w_1,\ldots, w_\Delta) \in \binom{W'}{\Delta}$ be a tuple satisfying~\ref{cnl:cl:Wreg}--\ref{cnl:cl:Vreg}. By~\ref{cnl:cl:NGVp}, for any $\Lambda\subset[\ell]$ and $i\in[k]$ we have
\[\Big|\bigcap_{j\in\Lambda}N_G(w_j,V_i)\Big|\ge \Big(\frac{d}{4}\Big)^{|\Lambda|} p^{|\Lambda|}|V_i'|\ge\Big(\frac{d}{4}\Big)^\Delta p^{|\Lambda|}\frac{|V_i|}{2t_1}\ge\alpha p^{|\Lambda|}|V_i|\,,\]
as required for~\ref{cnl:Gsize}. Properties~\ref{cnl:Gasizen},~\ref{cnl:Gasize} and~\ref{cnl:Nreg} are respectively~\ref{cnl:cl:NGa},~\ref{cnl:cl:NGaV} and~\ref{cnl:cl:Vreg}, by choice of $\eps_0$.
\end{proof}

\section{The balancing lemma}
\label{sec:prooflembalancing}
The statement of Lemma~\ref{lem:balancing} gives us a partition of $V(G)$ with parts $\big(V_{i,j}\big)_{i\in[r],j\in[k]}$, and a collection of `target integers' $\big(n_{i,j}\big)_{i\in[r],j\in[k]}$, with each $n_{i,j}$ close to $|V_{i,j}|$, and with $\sum n_{i,j}=\sum |V_{i,j}|$. Our aim is to find a partition of $V(G)$ with parts $\big(V'_{i,j}\big)_{i\in[r],j\in[k]}$ such that $|V'_{i,j}|=n_{i,j}$  for each $i,j$. This partition is required to maintain similar regularity properties as the original partition, while not substantially changing common neighbourhoods of vertices.

There are two steps to our proof. In a first step, we correct \emph{global imbalance}, that is, we find a partition $\tcV$ which maintains all the desired properties and which has the property that $\sum_i |\tV_{i,j}|=\sum_i n_{i,j}$ for each $j\in[k]$. To do this, we identify some $j^\ast$ such that $\sum_i |V_{i,j^\ast}|>\sum_i n_{i,j^\ast}$ and $j'$ such that $\sum_i |V_{i,j'}|<\sum_i n_{i,j'}$. We move $\sum_i |V_{i,j^\ast}|- n_{i,j^\ast}$ vertices from $V_{1,j^\ast}$ to some cluster $V_{i',j'}$, maintaining the desired properties, and repeat this procedure until no global imbalance remains.

In a second step, we correct \emph{local imbalance}, that is, for each $i=1,\dots,r-1$ sequentially, and for each $j\in[k]$, we move vertices between $\tV_{i,j}$ and $\tV_{i+1,j}$, maintaining the desired properties, to obtain the partition $\cV'$ such that $|V'_{i,j}|=n_{i,j}$ for each $i,j$. Observe that because $\tcV$ is globally balanced, once we know $|V'_{i,j}|=n_{i,j}$ for each $i\in[r-1]$ and each $j\in[k]$ we are guaranteed that $|V'_{r,j}|=n_{r,j}$ for each $j\in[k]$.

The proof of the lemma then comes down to showing that we can move vertices and maintain the desired properties. Because we start with a partition in which $V_{i,j}$ is very close to $n_{i,j}$ for each $i$ and $j$, the total number of vertices we move in any step is at most the sum of the differences, which is much smaller than any $n_{i,j}$. The following lemma shows that we can move any small (compared to all $n_{i,j}$) number of vertices from one part to another and maintain the desired properties.

\begin{lemma}\label{lem:smallmove}
 For all integers $k, r_1, \Delta \geq 1$, and reals $d>0$ and $0<\eps<1/2k$ as well as $0 < \xi < 1/(100kr_1^3)$, there exists $\Ca>0$  such that the following holds for all sufficiently large $n$.
 
 Let $\Gamma$ be a graph on vertex set $[n]$, and let $G$ be a not necessarily spanning subgraph. Let $X,Z_1,\ldots,Z_{k-1} \subseteq V(G)$ be pairwise disjoint subsets, each of size at least $n/(16kr_1)$, such that $(X,Z_i)$ is $(\eps,d,p)_G$-lower-regular for each $i$. Then for each $1\le m\le 2r_1^2\xi n$, there exists a set $S$ of $m$ vertices of $X$ with the following properties.
 \begin{enumerate}[label=\itmarab{SM}]
  \item\label{smallmove:degG} For each $v\in S$ we have $\deg_G(v;Z_i)\ge(d-\eps)p|Z_i|$ for each $i\in[k-1]$, and
  \item\label{smallmove:I} for each $1\le s\le\Delta$ and every collection of vertices $v_1,\ldots,v_s\in[n]$ we have
  \[\deg_\Gamma(v_1,\ldots,v_s;S)\le 100kr_1^3\xi\deg_\Gamma(v_1,\ldots,v_s; X)+\frac{1}{100}\Ca\log n\,.\]
 \end{enumerate}
\end{lemma}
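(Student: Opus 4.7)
The plan is to use a random selection via Lemma~\ref{lem:hypgeo} after first throwing away from $X$ the few vertices that fail~\ref{smallmove:degG}.

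First, I will fix constants: given $k, r_1, \Delta, d, \eps, \xi$ as in the statement, I set $\eta = 36 k r_1^3 \xi$ and let $C$ be the constant returned by Lemma~\ref{lem:hypgeo} applied with $\eta$ and $\Delta$. Then I set $\Ca := 100 C$.

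Next, I let $X' \subseteq X$ be the set of vertices $v \in X$ such that $\deg_G(v; Z_i) \geq (d - \eps) p |Z_i|$ for every $i \in [k-1]$. Applying Proposition~\ref{prop:neighbourhood} once for each $Z_i$, at most $(k-1)\eps |X|$ vertices of $X$ are excluded; since $\eps < 1/(2k)$, this gives $|X'| \geq |X|/2 \geq n/(32 k r_1)$. By construction, any subset $S \subseteq X'$ automatically satisfies~\ref{smallmove:degG}, so the only thing left is to find $S$ of size $m$ inside $X'$ satisfying~\ref{smallmove:I}.

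To obtain~\ref{smallmove:I}, I will apply Lemma~\ref{lem:hypgeo} with ground set $W = X'$ and with the family of sets $\{T_i\}$ consisting of $N_\Gamma(v_1, \ldots, v_s; X')$ for every tuple $(v_1, \ldots, v_s)$ with $1 \leq s \leq \Delta$ and $v_1, \ldots, v_s \in [n]$. The number of such tuples is at most $\sum_{s=1}^{\Delta} n^s \leq \Delta n^\Delta \leq 100 n^\Delta$ for $n$ large, so the hypothesis $t \leq 100 n^\Delta$ of Lemma~\ref{lem:hypgeo} is met. The condition $m \leq |W|$ is also satisfied since $m \leq 2 r_1^2 \xi n \leq n/(32 k r_1) \leq |X'|$ by the assumption $\xi < 1/(100 k r_1^3)$. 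Lemma~\ref{lem:hypgeo} thus returns a set $S \subseteq X'$ of size $m$ such that for every tuple, writing $T = N_\Gamma(v_1, \ldots, v_s; X')$, we have $|T \cap S| \leq \tfrac{m}{|X'|}|T| + \eta |T| + C \log n$.

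To finish, observe that $\tfrac{m}{|X'|} \leq \tfrac{2 r_1^2 \xi n}{n/(32 k r_1)} = 64 k r_1^3 \xi$, so together with the choice $\eta = 36 k r_1^3 \xi$ the coefficient of $|T|$ is at most $100 k r_1^3 \xi$. Since $S \subseteq X' \subseteq X$ we have $\deg_\Gamma(v_1,\ldots,v_s;S) = |T \cap S|$ and $|T| \leq \deg_\Gamma(v_1,\ldots,v_s;X)$, and since $\Ca = 100 C$ the additive error $C \log n$ is at most $\tfrac{1}{100}\Ca \log n$. This gives~\ref{smallmove:I}. There is no serious obstacle; the only point requiring care is bookkeeping the constants so that the fraction $m/|X'|$ plus the slack $\eta$ from the hypergeometric concentration fit inside the allowed coefficient $100 k r_1^3 \xi$, and that the number of joint-neighbourhood sets we need to control is polynomial in $n$ so that Lemma~\ref{lem:hypgeo} applies.
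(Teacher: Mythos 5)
Your proposal is correct and follows essentially the same route as the paper: first trim $X$ to the subset $X'$ of vertices with good $G$-degree into each $Z_i$ (using lower-regularity), then select $S\subset X'$ via Lemma~\ref{lem:hypgeo} applied to all $\Gamma$-common-neighbourhood sets restricted to $X'$. The only cosmetic difference is your choice of the slack parameter $\eta=36kr_1^3\xi$ where the paper uses $\eta=\xi$, and the constant bookkeeping is correct in both cases.
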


\begin{proof}
 Given $k$, $r_1$, $\Delta$, $d$, $\xi$ and $\eps$, let $C$ be returned by Lemma~\ref{lem:hypgeo} for input $\xi$ and $\Delta$. We set $\Ca=100C$. Given $\Gamma$, $G$ and $X$, $Y$, $Z_1,\ldots,Z_{k-1}$, let $X'$ be the set of vertices $v\in X$ such that $\deg_G(v;Z_i)\ge(d-\eps)p|Z_i|$ for each $i\in[k-1]$. Because each pair $(X,Z_i)$ for $i\in[k-1]$ is $(\eps,d,p)_G$-lower-regular, we have $|X'|\ge |X|-k\eps|X|\ge|X|/2$. 
 
 We now apply Lemma~\ref{lem:hypgeo}, with input $\xi$, $\Delta$, $W=X'$ and the sets $T_i$ being the sets $N_\Gamma(v_1,\ldots,v_s;X')$ for each $1\le s\le\Delta$ and $v_1,\ldots,v_s\in[n]$, to choose a set $S$ of size $m\leq 2r_1^2\xi n\leq |X'|$ in $X'$. We then have
 \begin{align*}
  \deg_\Gamma(v_1,\ldots,v_s;S)&\le \left(\frac{2r_1^2\xi n}{|X'|}+\xi\right)\deg_\Gamma(v_1,\ldots,v_s;X')+C\log n\\
  &\le 100kr_1^3\xi\deg_\Gamma(v_1,\ldots,v_s;X)+\frac{1}{100}\Ca\log n\,,
 \end{align*}
 where the final inequality is by choice of $\Ca$, and since $|X'|\ge|X|/2\ge n/(32kr_1)$. Thus the set $S$ satisfies~\ref{smallmove:I}, and since $S\subseteq X'$ we have~\ref{smallmove:degG}.
\end{proof}

We now prove the balancing lemma.

\begin{proof}[Proof of Lemma~\ref{lem:balancing}]
Given integers $k, r_1, \Delta \geq 1$ and reals $\gamma, d >0$ and $0< \eps < \min\{d, 1/(2k)\}$, we set 
\[\xi =10^{-15} \eps^4d/(k^3r_1^5).\]
 Let $\Ca_1$ be returned by Lemma~\ref{lem:smallmove} with input $k$, $r_1$, $\Delta$, $d$, $\eps/4$ and $\xi$, and let $\Ca_2$ be returned by Lemma~\ref{lem:smallmove} with input $k$, $r_1$, $\Delta$, $d$, $3\eps/4$ and $\xi$. We set $\Ca = \max\{\Ca_1, \Ca_2\}$. 

%Given integers $k, r_1, \Delta \geq 1$ as well as reals $\gamma, d >0$ and $0< \eps < \min\{d, 1/(2k)\}$, we choose $\Ca$ large enough, and $0<\xi<\tfrac{\eps^2d}{100000kr_1^4}$ small enough, to apply Lemma~\ref{lem:smallmove} with input $k$, $r_1$, $\Delta$, $d$ and $\eps/4$, and with input $k$, $r_1$, $\Delta$, $d$ and $3\eps/4$.

Now suppose that $p\ge\Ca\big(\frac{\log n}{n}\big)^{1/2}$, that $10\gamma^{-1}\le r\le r_1$, and that graphs $\Gamma$ and $G$, a partition $\cV$ of $V=V(G)$, and graphs $R^k_r$, $B^k_r$  and $K^k_r$ on $[r]\times [k]$ as in the statement of Lemma~\ref{lem:balancing} are given.

\textbf{First stage (global imbalance):} 

We use the following algorithm.

\begin{algorithm}
\caption{Global balancing}\label{alg:global}
 \While{$\exists j\in[k]$ such that $\sum_{i\in[r]}|V_{i,j}|-n_{i,j}\neq 0$}{
  Choose $j^\ast\in[k]$ maximising $\sum_{i\in[r]}|V_{i,j^\ast}|-n_{i,j^\ast}$ \;
  Choose $i'>1$ such that $V_{i',j}$ is not changed and $(V_{1,j^\ast},V_{i',j})$ is $\big(\tfrac{\eps}{4},d,p\big)_G$-lower-regular $\forall j\in[k]$\;
  Choose $j'\in[k]$ such that $\sum_{i\in[r]}|V_{i,j'}|-n_{i,j'}<0$ \;
  Select $S\subset V_{1,j^\ast}$ with $|S|=\sum_{i\in[r]}|V_{i,j^\ast}|-n_{i,j^\ast}$ \;
  Set $V_{1,j^\ast}:=V_{1,j^\ast}\setminus S$ and $V_{i',j'}=V_{i',j'}\cup S$ \;
  Flag $V_{1,j^\ast}$ and $V_{i',j'}$ as changed \;
 }
\end{algorithm}

In each step where we select $S$, we make use of Lemma~\ref{lem:smallmove} to do so, with input $k$, $r_1$, $\Delta$, $d$, and $\eps/4$, with $X=V_{1,j^\ast}$ and with the $Z_1,\ldots,Z_{k-1}$ being the $V_{i',j''}$ with $j''\neq j'$.

We claim that the algorithm completes successfully, in other words that each of
the choices is possible, and that Lemma~\ref{lem:smallmove} is always applicable. In each While loop, since $\sum_{i,j}|V_{i,j}|-n_{i,j}=0$ and since the While condition is satisfied, $j^\ast$ satisfies $\sum_{i\in[r]}|V_{i,j^\ast}|-n_{i,j^\ast}>0$.

Observe that the While loop is run at most $k$ times, since at the end of the While loop in which we selected some $j=j^\ast$ we have $\sum_{i\in[r]}|V_{i,j^\ast}|-n_{i,j^\ast}=0$ and therefore we do not select $j$ as either $j^\ast$ or $j'$ in future iterations. It follows that the number of $V_{i,j}$ flagged as changed never exceeds $2k$. Now the set $V_{1,j^\ast}$ has degree at least $\big(k-1+\tfrac{\gamma k}{2}\big)r$ in $R^k_r$, and so there are at least $\gamma k r/2$ indices $i\in[r]$ such that $V_{1,j^\ast}$ is adjacent to each $V_{i,j}$ in $R^k_r$. Since $\gamma k r/2>3k$, in particular we can choose $i'$ such that $V_{1,j^\ast}$ is adjacent to each $V_{i',j}$ in $R^k_r$ and no $V_{i',j}$ is flagged as changed. It follows that each pair $(V_{1,j^\ast},V_{i',j})$ is $\big(\tfrac{\eps}{4},d,p\big)_G$-lower-regular and thus it is possible to choose $i'$. It is possible to choose $j'$ since the While condition holds. Finally, we need to show that Lemma~\ref{lem:smallmove} is always applicable with the given parameters. In each application, the sets denoted $X,Z_1,\ldots,Z_{k-1}$ are parts of the partition $\cV$ (so they were not changed by the algorithm yet). It follows that each set has size at least $n/(8kr)>n/(16kr_1)$. Since $\cV$ is $\big(\tfrac\eps4,d,p)$-lower-regular on $B_k^r$, the pairs $(X,Z_1),\dots,(X,Z_{k-1})$ are $\big(\tfrac\eps4,d,p)$-lower-regular as required. Finally, by choice of $j^\ast$ we see that the sizes of the sets $S$ we select in each step are decreasing, so it is enough to show that in the first step we have $|S|\le r\xi n$, which follows from~\ref{lembalancing:sizes}. Thus Lemma~\ref{lem:smallmove} is applicable in each step, and we conclude that the algorithm indeed completes. We denote the resulting vertex partition by $\tcV=\{\tV_{i,j}\}_{i\in[r],j\in[k]}$.

\begin{claim} We have the following properties.
\begin{enumerate}[label=\itmarab{P}]
 \item\label{claimpVt:sizeparts} For each $i\in[r]$ and $j\in[k]$ we have $\big||\tV_{i,j}|-n_{i,j}\big|\le 2r\xi n$,
 \item\label{claimpVt:regular} $\tcV$ is $\big(\tfrac{\eps}{2},d,p\big)_G$-lower-regular on $R^k_r$ and $\big(\tfrac{\eps}{2},d,p\big)_G$-super-regular on  $K^k_r$,
 \item\label{claimpVt:NGa} For each $i\in[r]$, $j\in[k]$ and $1\le s\le\Delta$ and $v_1,\ldots,v_s\in[n]$ we have
 \[|\NGa(v_1,\ldots,v_s,\tV_{i,j}) \symd \NGa(v_1,\ldots,v_s,V_{i,j})| \leq 100kr_1^3\xi\deg_\Gamma\big(v_1,\ldots,v_s;V(G)\big)+\frac{1}{100}\Ca\log n\,.\]
\end{enumerate}
\end{claim}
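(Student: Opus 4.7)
Before verifying the three properties, I would collect several facts about the algorithm. First, the flagging mechanism ensures that every cluster is modified at most once: once $V_{1,j^\ast_t}$ has been shrunk, its column sum $\sum_i|V_{i,j^\ast_t}|-n_{i,j^\ast_t}$ equals zero, so it cannot be selected as $j^\ast$ again (which requires a strictly positive maximum) nor as $j'$ (which requires a strictly negative sum); meanwhile the condition that $V_{i',j}$ be unchanged for all $j\in[k]$ forces each row $i>1$ to contribute at most one modified cluster. An easy induction on the iteration counter $t$ shows that every column sum remains bounded above by $r\xi n$ throughout: only $j'_t$'s sum strictly increases in iteration $t$, and it moves from a negative value to at most $|S_t|\le r\xi n$. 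Hence $|S_t|\le r\xi n$ in every iteration. A valid $i'_t$ always exists because $\delta(R^k_r)\ge\big(\tfrac{k-1}{k}+\tfrac{\gamma}{2}\big)kr$, which by a direct counting argument leaves at least $\gamma kr/2$ rows $i\in[r]\setminus\{1\}$ satisfying $\{(1,j^\ast_t),(i,j)\}\in E(R^k_r)$ for every $j\in[k]$, while at most $2k$ clusters are ever flagged (and $r\ge 10/\gamma$). Finally, Lemma~\ref{lem:smallmove} is applicable in each iteration: with $X=V_{1,j^\ast_t}$ and the sets $Z_\ell=V_{i'_t,j}$ for $j\neq j'_t$ (all unmodified at that point), the clusters have size at least $n/(8kr_1)\ge n/(16kr_1)$, the pairs $(X,Z_\ell)$ are $(\eps/4,d,p)_G$-lower-regular by~\ref{lembalancing:regular1}, and $|S_t|\le r\xi n\le 2r_1^2\xi n$.

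Property~\ref{claimpVt:sizeparts} is then immediate: each $\tV_{i,j}$ differs from $V_{i,j}$ by at most $r\xi n$ vertices, and combined with $\big||V_{i,j}|-n_{i,j}\big|\le\xi n$ from~\ref{lembalancing:sizes} this yields $\big||\tV_{i,j}|-n_{i,j}\big|\le(r+1)\xi n\le 2r\xi n$. Property~\ref{claimpVt:NGa} follows from~\ref{smallmove:I} applied to each invocation of Lemma~\ref{lem:smallmove}: when $V_{i,j}$ is modified in iteration $t$, the symmetric difference $\tV_{i,j}\symd V_{i,j}$ equals the selected set $S_t\subseteq X\subseteq V(G)$, so
\[\big|N_\Gamma(v_1,\ldots,v_s;\tV_{i,j})\symd N_\Gamma(v_1,\ldots,v_s;V_{i,j})\big|=\deg_\Gamma(v_1,\ldots,v_s;S_t)\le 100kr_1^3\xi\deg_\Gamma(v_1,\ldots,v_s;V(G))+\tfrac{1}{100}\Ca\log n\,,\]
and the bound is trivial if $V_{i,j}$ is never modified.

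The bulk of the work lies in~\ref{claimpVt:regular}. Lower-regularity on $R^k_r$ follows from Proposition~\ref{prop:subpairs3}: the relative change in each part is at most $r\xi n/(n/(8kr))\le 8kr_1^2\xi$, so the new parameter is at most $\eps/4+4\sqrt{8kr_1^2\xi}\le\eps/2$ by our choice $\xi=10^{-15}\eps^4 d/(k^3 r_1^5)$. Super-regularity on $K^k_r$ is the main obstacle and needs a case analysis. Fix an edge $\{(i,j),(i,j')\}\in E(K^k_r)$ and a vertex $v\in\tV_{i,j}$; we must verify $\deg_G(v;\tV_{i,j'})\ge(d-\eps/2)\max\big(p|\tV_{i,j'}|,\deg_\Gamma(v;\tV_{i,j'})/2\big)$. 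If $v$ was added to $\tV_{i,j}$ in some iteration $t$, then $i=i'_t>1$ and row $i$ is modified only once, so $\tV_{i,j'}=V_{i,j'}$, and~\ref{smallmove:degG} applied in that iteration (with $V_{i,j'}$ playing the role of some $Z_\ell$) immediately yields $\deg_G(v;\tV_{i,j'})\ge(d-\eps/4)p|\tV_{i,j'}|$. Otherwise $v\in V_{i,j}\setminus S$ for some (possibly empty) removed set, and we start from $\deg_G(v;V_{i,j'})\ge(d-\eps/4)p|V_{i,j'}|$ via~\ref{lembalancing:regular1}; if $V_{i,j'}$ was shrunk to $\tV_{i,j'}=V_{i,j'}\setminus S_{t'}$, the loss is $\deg_\Gamma(v;S_{t'})$, which~\ref{smallmove:I} with $s=1$ combined with $\deg_\Gamma(v;V_{i,j'})\le(1+\eps/4)p|V_{i,j'}|$ from~\ref{lembalancing:gamma1} caps by $200kr_1^3\xi p|V_{i,j'}|+\tfrac{1}{100}\Ca\log n$, which is strictly dominated by $(\eps/4)p|V_{i,j'}|$ thanks to our choice of $\xi$ and the lower bound $p\ge\Ca(\log n/n)^{1/2}$ (making the additive logarithm negligible). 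The $\max$ on the right-hand side of super-regularity is handled identically via~\ref{lembalancing:gamma1}, and the symmetric condition from $v\in\tV_{i,j'}$ is analogous. The subtlest subcase, and the principal difficulty, is when both sides of a $K^k_r$-edge in row $1$ have been shrunk by different iterations—there the three ingredients~\ref{smallmove:degG},~\ref{smallmove:I}, and~\ref{lembalancing:gamma1} must all cooperate, and both the $\xi$-proportional error and the logarithmic error must be absorbed by the slack built into $\xi$ and into our lower bound on $p$.
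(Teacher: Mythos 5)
Your proof is correct and follows the paper's own route: establish (P1) directly from the fact that each cluster is altered at most once by at most $r\xi n$ vertices together with~\ref{lembalancing:sizes}, deduce (P3) from~\ref{smallmove:I}, use Proposition~\ref{prop:subpairs3} for lower-regularity in (P2), and prove super-regularity on $K^k_r$ by the same two-way case analysis (moved versus unmoved $v$, using~\ref{smallmove:degG} in the first case and~\ref{lembalancing:regular1} together with~\ref{smallmove:I} and~\ref{lembalancing:gamma1} in the second). The extra bookkeeping you supply at the start (the flagging argument, the monotone bound $|S_t|\le r\xi n$) is the material the paper proves \emph{before} stating the Claim rather than inside its proof, which is fine as context. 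One small inaccuracy in your final paragraph: the "subtlest subcase'' of both row-$1$ clusters being shrunk is not in fact special and does not invoke~\ref{smallmove:degG} --- since row-$1$ clusters are only ever shrunk, such a $v$ is always in $V_{1,j}$ and falls under your second (unmoved) case, which needs only~\ref{lembalancing:regular1},~\ref{smallmove:I} and~\ref{lembalancing:gamma1}; this does not affect the correctness of the proof because your earlier case analysis already handles it.
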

\begin{claimproof}
 Observe that vertices were removed from or added to each $V_{i,j}$ to form $\tV_{i,j}$ at most once in the running of Algorithm~\ref{alg:global}, and the number of vertices added or removed was at most $r\xi n$. Since $|V_{i,j}|$ satisfies~\ref{lembalancing:sizes}, we conclude that~\ref{claimpVt:sizeparts} holds. Furthermore, the vertices added to or removed from $V_{i,j}$ satisfy~\ref{smallmove:I} and therefore~\ref{claimpVt:NGa} holds.
 
 Since each set $V_{i,j}$ has size at least $n/(8kr)$, we can apply Proposition~\ref{prop:subpairs3} with $\mu=\nu=8kr^2\xi$ to each edge of $R^k_r$, concluding that $\tcV$ is $\big(\tfrac{\eps}{2},d,p\big)_G$-lower-regular on $R^k_r$ since $\tfrac{\eps}{4}+4\sqrt{8kr^2\xi}<\tfrac{\eps}{2}$. Now for any $i\in[r]$ and $j\in[k]$, consider $v\in\tV_{i,j}$. If $v\not\in V_{i,j}$, then we applied Lemma~\ref{lem:smallmove} to select $v$, and when we did so no $V_{i,j'}$ was flagged as changed by Algorithm~\ref{alg:global}. Thus by~\ref{smallmove:degG} we have
 \[\deg_G(v;\tV_{i,j'})=\deg_G(v;V_{i,j'})\ge\Big(d-\frac{\eps}{4}\Big)p|V_{i,j'}|=\Big(d-\frac{\eps}{4}\Big)p|\tV_{i,j'}|\]
 for each $j'\neq j$, since $V_{i,j}$ is then flagged as changed and thus $V_{i,j'}=\tV_{i,j'}$ for each $j'\neq j$. If on the other hand $v\in V_{i,j}$, then by~\ref{lembalancing:regular1} we started with $\deg_G(v;V_{i,j'})\ge\big(d-\tfrac{\eps}{4}\big)p|V_{i,j'}|$. By~\ref{smallmove:I} and~\ref{lembalancing:gamma1}, we have
 \[\deg_G(v;\tV_{i,j'})\ge \Big(d-\frac{\eps}{4}\Big)p|V_{i,j'}|-\frac{\eps^2}{1000kr_1}\Big(1+\frac{\eps}{4}\Big)p|V_{i,j'}|-\frac{1}{100}\Ca\log n
 \ge\Big(d-\frac{\eps}{2}\Big)p|\tV_{i,j'}|\,,\]
 where the final inequality follows by choice of $n$ sufficiently large and since $|\tV_{i,j'}|\le|V_{i,j'}|+r\xi n\le \big(1+\tfrac{\eps d}{100}\big)|V_{i,j'}|$. We conclude that $\tcV$ is $\big(\tfrac{\eps}{2},d,p\big)$-super-regular on $K^k_r$, giving~\ref{claimpVt:regular}.
\end{claimproof}

\textbf{Second stage (local imbalance):}

We use Algorithm~\ref{alg:local} to correct the local imbalances in $\tcV$.

\begin{algorithm}
\caption{Local balancing}\label{alg:local}
 \ForEach{$i=1,\dots,r-1$}{
  \ForEach{$j=1,\dots,k$}{
   \If{$|\tV_{i,j}|>n_{i,j}$}{
    Select $S\subset \tV_{i,j}$ with $|S|=|\tV_{i,j}|-n_{i,j}$ \;
    Set $\tV_{i,j}:=\tV_{i,j}\setminus S$ and $\tV_{i+1,j}:=\tV_{i+1,j}\cup S$ \; 
   }
   \Else{
    Select $S\subset \tV_{i+1,j}$ with $|S|=n_{i,j}-|\tV_{i,j}|$ \;
    Set $\tV_{i+1,j}:=\tV_{i+1,j}\setminus S$ and $\tV_{i,j}:=\tV_{i,j}\cup S$ \; 
   }
  }
 }
\end{algorithm}

Again, in each step when we select $S$ we make use of Lemma~\ref{lem:smallmove} to do so. If we select $S$ from $\tV_{i,j}$, then we use input $k$, $r_1$, $d$, $3\eps/4$ and $\xi$ with $X=\tV_{i,j}$ and the sets $Z_1,\ldots,Z_{k-1}$ being $\tV_{i+1,j'}$ for $j'\neq j$. If on the other hand we select $S$ from $\tV_{i+1,j}$, then we use input $k$, $r_1$, $d$ and $3\eps/4$, with $X=\tV_{i+1,j}$ and the sets $Z_1,\ldots,Z_{k-1}$ being $\tV_{i,j'}$ for $j'\neq j$.

We claim that Lemma~\ref{lem:smallmove} is always applicable. To see that this is true, observe first that the number of vertices which we move between any $\tV_{i,j}$ and $\tV_{i+1,j}$ in a given step is by~\ref{claimpVt:sizeparts} bounded by $2k^2r^2\xi n$. We change any given $\tV_{i,j}$ at most twice in the running of the algorithm, so that in total at most $4k^2r^2\xi n$ vertices are changed. In particular, we maintain $|\tV_{i,j}|\ge n/(16kr_1)$ throughout, and, by Proposition~\ref{prop:subpairs3}, with input $\mu=\nu=\tfrac{4r^2\xi n}{n/(16kr_1)}<100r_1^3k\xi$, and using~\ref{claimpVt:regular}, we maintain the property that any pair in $R^k_r$, and in particular any pair in $B^k_r$, is $\big(\tfrac{3\eps}{4},d,p\big)$-lower-regular throughout.  This shows that Lemma~\ref{lem:smallmove} is always applicable, and therefore the algorithm completes and returns a partition $\cV'$. We claim that this is the desired partition. We need to check that~\ref{lembalancing:sizesout}---\ref{lembalancing:gammaout} hold.

Since for each $j\in[k]$ we have $\sum_i|V'_{i,j}|=\sum_i|\tV_{i,j}|=\sum_in_{i,j}$, and since $|V'_{i,j}|=n_{i,j}$ for each $i\in[r-1]$ and $j\in[k]$, we conclude that $|V'_{i,j}|=n_{i,j}$ for all $i$ and $j$, giving~\ref{lembalancing:sizesout}.

For the first part of~\ref{lembalancing:regular}, we have justified that we maintain $\big(\tfrac{3\eps}{4},d,p\big)_G$-lower-regularity on $R^k_r$ throughout the algorithm. For the second part, we need to show that for each $i\in[r]$ and $j\neq j'\in[k]$, and each $v\in V'_{i,j}$, we have $\deg_G(v;V'_{i,j'})\ge(d-\eps)p|V'_{i,j'}|$. If $v\in\tV_{i,j}$, then by~\ref{claimpVt:regular} we have $\deg_G(v;\tV_{i,j'})\ge\big(d-\tfrac{\eps}{2}\big)p|\tV_{i,j'}|$. We change $\tV_{i,j'}$ at most twice to obtain $V'_{i,j'}$, both times by adding or removing vertices satisfying~\ref{smallmove:I}. As in the proof of Claim~\ref{claimpVt:NGa} above, using~\ref{lembalancing:gamma1} and~\ref{claimpVt:NGa} we obtain $\deg_G(v;\tV_{i,j'})\ge(d-\eps)p|V'_{i,j}|$ as desired. If $v\not\in\tV_{i,j}$, then it was added to the set $\tV_{i,j}$ by Algorithm~\ref{alg:local}, and $\tV_{i,j'}$ was changed at most twice thereafter. Again, using~\ref{smallmove:degG},~\ref{smallmove:I},~\ref{lembalancing:gamma1} and~\ref{claimpVt:NGa} we obtain $\deg_G(v;\tV_{i,j'})\ge(d-\eps)p|V'_{i,j}|$ as desired.

Now~\ref{lembalancing:symd} holds since the total number of vertices moved in Algorithm~\ref{alg:global} is at most $k^2r\xi n$, in Algorithm~\ref{alg:local} at most $4k^2r^2\xi n$ vertices are changed in each cluster, and by choice of $\xi$. To see that~\ref{lembalancing:inheritance} holds, observe that by~\ref{lembalancing:gamma1},~\ref{claimpVt:NGa} and~\ref{smallmove:I} we have
\[\big|N_\Gamma(v;V_{i,j})\Delta N_\Gamma(v;V'_{i,j}) \big|\le \frac{\eps^2}{100kr_1}\deg_\Gamma\big(v;V(G)\big)+\frac{1}{10}\Ca\log n\le \frac{\eps^2}{50}\deg_\Gamma(v;V_{i,j})\]
where the final inequality follows by choice of $p$ and of $n$ sufficiently large. Using~\ref{lembalancing:inheritance1}, we can apply Proposition~\ref{prop:subpairs3}, with $\mu=\nu=\tfrac{\eps^2}{50}$, to deduce~\ref{lembalancing:inheritance}.

For~\ref{lembalancing:gammaout}, observe that for any given $i\in[r]$ and $j\in[k]$ we change $\tV_{i,j}$ at most twice in the running of Algorithm~\ref{alg:local}, both times either adding or removing a set satisfying~\ref{smallmove:I}. By~\ref{claimpVt:NGa} and choice of $\xi$, we conclude that~\ref{lembalancing:gammaout} holds.

Finally, suppose that for any two disjoint vertex sets $A,A'\subset V(\Gamma)$ with $|A|,|A'|\ge \tfrac{1}{50000kr_1}\eps^2\xi pn$ we have $e_\Gamma(A,A')\le \big(1+\tfrac{1}{100}\eps^2\xi\big)p|A||A'|$. In each application of Proposition~\ref{prop:subpairs3} we have $\mu,\nu\ge\tfrac{1}{50}\eps^2\xi$, and, and if we have `regular' in place of `lower-regular' in~\ref{lembalancing:regular1}, and~\ref{lembalancing:inheritance1}, we always apply Proposition~\ref{prop:subpairs3} to a regular pair with sets of size at least $\tfrac{\eps}{1000kr_1}pn$, so it returns regular pairs for~\ref{lembalancing:regular}, and~\ref{lembalancing:inheritance}, as desired.
\end{proof}

%%%%%%%%%%%%%%%%%%%%%%%%%%%%%%%%%%%%%%%%%%%%%%%%%%%%%%%%%

						% Proof of the main theorem

%%%%%%%%%%%%%%%%%%%%%%%%%%%%%%%%%%%%%%%%%%%%%%%%%%%%%%%%%

\section{The Bandwidth Theorem in random graphs}
\label{sec:proofmain}

Before embarking on the proof, we first recall from the proof overview (Section~\ref{subsec:over}) the main ideas. Given $G$, we first use the lemma for~$G$ (Lemma~\ref{lem:G}) to find a lower-regular partition of $V(G)$, with an extremely small exceptional set $V_0$, and whose reduced graph $R^k_r$ contains a spanning backbone graph $B^k_r$, on whose subgraph $K^k_r$ the graph $G$ is super-regular and has one- and two-sided inheritance. Given this, and $H$ together with a $(z,\beta)$-zero-free $(k+1)$-colouring, we use the lemma for~$H$ (Lemma~\ref{lem:H2}) to find a homomorphism $f$ from $V(H)$ to $R^k_r$ almost all of whose edges are mapped to $K^k_r$ and in which approximately the `right' number of vertices of $H$ are mapped to each vertex of $R^k_r$. At this point, if $V_0$ were empty, and if the `approximately' were exact, we would apply the sparse blow-up lemma (Lemma~\ref{thm:blowup}) to obtain an embedding of $H$ into $G$.

Our first aim is to deal with $V_0$. We do this one vertex at a time. Given $v\in V_0$, we choose $x\in V(H)$ from the first $\beta n$ vertices of the supplied bandwidth order $\cL$ which is not in any triangles. We embed $x$ to $v$. We then embed the neighbours of $x$ to carefully chosen neighbours of $v$, which we obtain using Lemma~\ref{lem:common}. Here we use the fact that $N_H(x)$ is independent. This then fixes a clique of $K^k_r$ to which $N^2_H(x)$ must be assigned, and gives image restrictions in the corresponding parts of the lower-regular partition for these vertices. Since $N^2_H(x)$ may have been assigned by $f$ to some quite different clique in $K^k_r$, we have to adjust $f$ to match. This we can do using the fact, which follows from our assumptions on $\cL$, that $x$ is far from vertices of colour zero.

Now the idea is simply to repeat the above procedure, choosing vertices of $V(H)$ to pre-embed which are widely separated in $H$, until we pre-embedded vertices to all of $V_0$. We end up with a homomorphism $f^*$ from what remains of $V(H)$ to $R^k_r$. It is easy to check that this homomorphism still maps about the right number of vertices of $H$ to each vertex of $R^k_r$, simply because $V_0$ is small. We now apply the Balancing Lemma (Lemma~\ref{lem:balancing}) to correct the sizes of the clusters to match $f^*$, and complete the embedding of $H$ using the Sparse Blow-up Lemma (Lemma~\ref{thm:blowup}).

There are two difficulties with this idea, the `subtleties' mentioned in the proof overview (Section~\ref{subsec:over}). First, if $\Delta=2$ we might have $|V_0|\gg pn$, so that we should be worried that at some stage of the pre-embedding we choose $v\in V_0$ and discover most or all of its neighbours have already been pre-embedded to. It turns out to be easy to resolve this: we choose each $v\in V_0$ not arbitrarily, but by taking those which have least available neighbours first. We will show that this is enough to avoid the problem.

More seriously, because we perform the pre-embedding sequentially, we might use up a significant fraction of $N_G(w)$ for some $w\in V(G)$ in the pre-embedding, destroying super-regularity of $G$ on $K^k_r$, or we might use up a significant fraction of some common neighbourhood which defines an image restriction for the sparse blow-up lemma. In order to avoid this, before we begin the pre-embedding we fix a set $S\subset V(G)$ whose size is a very small constant times $n$, chosen using Lemma~\ref{lem:hypgeo} to not have a large intersection with any $N_G(w)$ or with any $\Gamma$-common neighbourhood of at most $\Delta$ vertices of $\Gamma$ (which could define an image restriction). We perform the pre-embedding as outlined above, \emph{except} that we choose our neighbours of each $v$ within $S$. This procedure is guaranteed not to use up neighbourhood sets guaranteed by super-regularity or image restriction sets, since these sets are all contained in $V\setminus V_0$ and even using up all of $S$ would not be enough to do damage.

\begin{proof}[Proof of Theorem~\ref{thm:maink}]
Given $\gamma>0$, $\Delta\ge 2$ and $k\ge 2$, let $d$ be returned by Lemma~\ref{lem:G}, with input $\gamma$, $k$ and $r_0:=10\gamma^{-1}$. Let $\alpha$ be returned by Lemma~\ref{lem:common} with input $d$, $k$ and $\Delta$. We set $D=\Delta$, and let $\eBL>0$ and $\rho>0$ be returned by Lemma~\ref{thm:blowup} with input $\Delta$, $\Delta_{R'}=3k$, $\Delta_J=\Delta$, $\vartheta=\tfrac{1}{100D}$, $\zeta=\tfrac14\alpha$, $d$ and $\kappa=64$. Next, putting $\epsa=\tfrac18\eBL$ into Lemma~\ref{lem:common} returns $\eps_0>0$. We choose $\eps=\min\big(\eps_0,d,\tfrac{1}{4D}\epsa,\tfrac{1}{100k}\big)$. Putting $\eps$ into Lemma~\ref{lem:G} returns $r_1$. Next, Lemma~\ref{lem:balancing}, for input $k$, $r_1$, $\Delta$, $\gamma$, $d$ and $8\eps$, returns $\xi>0$. We assume without loss of generality that $\xi\le 1/(10kr_1)$, and set $\beta=10^{-12}\xi^2/(\Delta k^4r_1^2)$. Let $\mu=\tfrac{\eps^2}{100000kr}$. Finally, suppose $\Ca$ is large enough for each of these lemmas, for Lemma~\ref{thm:blowup}, for Proposition~\ref{prop:chernoff} with input $\eps$, and for Lemma~\ref{lem:hypgeo} with input $\eps\mu^2$ and $\Delta$.

We set $C=10^{10}k^2r_1^2\eps^{-2}\xi^{-1}\Delta^{2r_1+20}\mu^{-\Delta}\Ca$, and $z=10/\xi$. Given $p\ge C\big(\tfrac{\log n}{n}\big)^{1/\Delta}$, a.a.s.\ $G(n,p)$ satisfies the good events of Lemma~\ref{thm:blowup}, Lemma~\ref{lem:G} and Lemma~\ref{lem:common}, and Proposition~\ref{prop:chernoff}, with the stated inputs. Suppose that $\Gamma=G(n,p)$ satisfies these good events.

Suppose $G\subseteq \Gamma$ is any spanning subgraph with $\delta(G) \geq \big(\tfrac{k-1}{k}+\gamma\big)pn$. Let $H$ be a graph on $n$ vertices with $\Delta(H) \leq \Delta$, and $\mathcal{L}$ be a labelling of vertex set $V(H)$, of bandwidth at most $\beta n$, such that the first $\beta n$ vertices of $\mathcal L$ include $Cp^{-2}$ vertices that are not contained in any triangles of $H$, and such that there exists a $(k+1)$-colouring that is $(z, \beta)$-zero-free with respect to $\mathcal L$, and the colour zero is not assigned to the first $\sqrt{\beta}n$ vertices.  

Applying Lemma~\ref{lem:G} to $G$, with input $\gamma$, $k$, $r_0$ and $\eps$, we obtain an integer $r$ with $10\gamma^{-1}\le kr\le r_1$, a set $V_0\subset V(G)$ with $|V_0|\le \Ca p^{-2}$, a $k$-equitable partition $\cV=\{\Vij\}_{i\in[r],j\in[k]}$ of $V(G)\setminus V_0$, and a graph $R^k_r$ on vertex set $[r]\times[k]$ with minimum degree $\delta(R^k_r)\ge\big(\tfrac{k-1}{k}+\tfrac{\gamma}{2}\big)kr$, such that $K^k_r\subset B^k_r\subset R^k_r$, and such that
\begin{enumerate}[label=\itmarabp{G}{a}]
\item\label{main:Gsize} $\frac{n}{4kr}\leq |\Vij| \leq \frac{4n}{kr}$ for every $i\in[r]$ and $j\in[k]$,
\item\label{main:Greg} $\cV$ is $(\eps,d,p)_G$-lower-regular on $R^k_r$ and $(\eps,d,p)_G$-super-regular on $K^k_r$,
\item\label{main:Ginh} both $\big(\NGa(v, V_{i,j}),V_{i',j'}\big)$ and $\big(\NGa(v, V_{i,j}),\NGa(v, V_{i',j'})\big)$ are $(\eps, d,p)_G$-lower-regular pairs for every $\{(i,j),(i',j')\} \in E(R^k_r)$ and $v\in V\setminus V_0$, and
\item\label{main:Ggam} $|\NGa(v,V_{i,j})| = (1 \pm \eps)p|\Vij|$ for every $i \in [r]$, $j\in [k]$ and every $v \in V \setminus V_0$.
\end{enumerate}

Given $i\in[r]$, because $\delta(R^k_r)>(k-1)r$, there exists $v\in V(R^k_r)$ adjacent to each $(i,j)$ with $j\in[k]$. This, together with our assumptions on $H$, allow us to apply Lemma~\ref{lem:H2} to $H$, with input $D$, $k$, $r$, $\tfrac{1}{10}\xi$ and $\beta$, and with $m_{i,j}:=|V_{i,j}|+\tfrac{1}{kr}|V_0|$ for each $i\in[r]$ and $j\in[k]$, choosing the rounding such that the $m_{i,j}$ form a $k$-equitable integer partition of $n$. Since $\Delta(H)\le\Delta$, in particular $H$ is $\Delta$-degenerate. Let $f\colon V(H) \to [r] \times [k]$ be the mapping returned by Lemma~\ref{lem:H2}, let $W_{i,j} := f^{-1}(i,j)$, and let $X \subseteq V(H)$ be the set of special vertices returned by Lemma~\ref{lem:H2}. For every $i\in [r]$ and $j\in [k]$ we have
\begin{enumerate}[label=\itmarabp{H}{a}] 
\item\label{H:size} $m_{i,j} - \tfrac{1}{10}\xi n  \leq |W_{i,j}| \leq m_{i,j} + \tfrac{1}{10}\xi n$,
\item\label{H:sizeX} $|X| \leq \xi n$,
\item\label{H:edge} $\{f(x),f(y)\} \in E(R^k_r)$  for every $\{x,y\} \in E(H)$,
\item\label{H:special} $y,z\in \bigcup_{j'\in[k]}f^{-1}(i,j')$ for every $x\in f^{-1}(i,j)\setminus X$ and $xy,yz\in E(H)$, and
\item\label{H:v1} $f(x)=\big(1,\sigma(x)\big)$ for every $x$ in the first $\sqrt{\beta}n$ vertices of $\mathcal{L}$.
\end{enumerate}
Lemma~\ref{lem:H2} actually gives a little more, which we do not require for this proof. We let $F$ be the first $\beta n$ vertices of $\mathcal{L}$. By definition of $\mathcal{L}$, in $F$ there are at least $C p^{-2}$ vertices whose neighbourhood in $H$ is independent.

Next, we apply Lemma~\ref{lem:hypgeo}, with input $\eps\mu^2$ and $\Delta$, to choose a set $S\subset V(G)$ of size $\mu n$. We let the $T_i$ of Lemma~\ref{lem:hypgeo} be all sets which are common neighbourhoods in $\Gamma$ of at most $\Delta$ vertices of $\Gamma$, together with the sets $V_{i,j}$ for $i\in[r]$ and $j\in[k]$. The result of Lemma~\ref{lem:hypgeo} is that for any $1\le\ell\le\Delta$ and vertices $u_1,\dots,u_\ell$ of $V(G)$, we have
\begin{equation}\label{eq:intS}
 \begin{split}
  \Big|S\cap\bigcap_{1\le i\le\ell}N_\Gamma(u_i)\Big|&=(1\pm\eps\mu)\mu\Big|\bigcap_{1\le i\le\ell}N_\Gamma(u_i)\Big|\pm \eps\mu p^\ell n\,,\quad \text{and}\\
  \big|S\cap V_{i,j}\big|&\le 2\mu|V_{i,j}|\quad\text{for each $i\in[r]$ and $j\in[k]$,}
 \end{split}
\end{equation}
where we use the fact $p\ge C\big(\tfrac{\log n}{n}\big)^{1/\Delta}$ and choice of $C$ to deduce $\Ca\log n<\eps\mu p^\Delta n$.

Our next task is to create the pre-embedding that covers the vertices of $V_0$. We use the following algorithm, starting with $\phi_0$ the empty partial embedding.
\begin{algorithm}
\caption{Pre-embedding}
\label{alg:pre}
 Set $t:=0$ \;
 \While{$V_0\setminus\im(\phi_t)\neq\emptyset$}{
  \lnl{line:choosev} Let $v_{t+1}\in V_0\setminus\im(\phi_t)$ minimise $\big|\big(N_G(v)\cap S\big)\setminus\im(\phi_t)\big|$ over $v\in V_0\setminus\im(\phi_t)$ \;
  Choose $x_{t+1}\in F$ with $N_H(x)$ independent, with $\dist\big(x_{t+1},\dom(\phi_t)\big)\ge 2r+20$ \;
  Let $N_H(x_{t+1})=\{y_1,\dots,y_\ell\}$ \;
  \lnl{line:choosenbs} Choose $w_1,\dots,w_{\ell}\in \big(N_G(v)\cap S\big)\setminus\im(\phi_t)$ \;
  $\phi_{t+1}:=\phi_t\cup\{x_{t+1}\to v_{t+1}\}\cup\{y_1\to w_1\}\cup\dots\cup\{y_\ell\to w_\ell\}$ \;
  $t:=t+1$ \;
 }
\end{algorithm}
Suppose this algorithm does not fail, terminating with $t=t^*$. The final $\phi_{t^*}$ is an embedding of some vertices of $H$ into $V(G)$ which covers $V_0$ and is contained in $V_0\cup S$. Before we specify how exactly we choose vertices at line~\ref{line:choosenbs}, we justify that the algorithm does not fail. In other words, we need to justify that at every time $t$ there are vertices of $F$ whose neighbourhood is independent and which are not close to any vertices in $\dom(\phi_t)$, and that at every time $t$, the set $\big(N_G(v)\cap S\big)\setminus\im(\phi_t)$ is big. For the first, observe that since $|V_0|\le\Ca p^{-2}$, we have $\dom(\phi_t)\le\Ca\Delta p^{-2}$ at every step. Thus the number of vertices at distance less than $2r+20$ from $\dom(\phi_t)$ is at most
\[\big(1+\Delta+\dots+\Delta^{2r+19}\big)\Ca\Delta p^{-2}< 2\Ca\Delta^{2r+20} p^{-2}\]
which by choice of $C$ is smaller than the number of vertices in $F$ with $N_H(x)$ independent. For the second part, suppose that at some time $t$ we pick a vertex $v$ such that $\big|\big(N_G(v)\cap S\big)\setminus\im(\phi_t)\big|<\tfrac14\mu pn$. For each $t-\tfrac{1}{100(\Delta+1)}\mu pn\le t'<t$, we have $\big|\big(N_G(v)\cap S\big)\setminus\im(\phi_{t'})\big|<\tfrac3{10}\mu pn$, yet at each of these times $v$ is not picked, so that the vertex picked at each time has at most as many uncovered neighbours in $S$ as $v$. Let $Z$ be the set of vertices chosen at line~\ref{line:choosev} in each of these time steps. Then for each $z\in Z$ we have $\big|\big(N_G(v)\cap S\big)\setminus\im(\phi_t)\big|\le\tfrac3{10}\mu pn$. But since $\delta(G)>\tfrac12pn$, by~\eqref{eq:intS} and choice of $\eps$ we have $\big|N_G(z)\cap S\big|\ge \tfrac25\mu pn$, so $\big|N_G(z)\cap\im(\phi_t)\big|\ge \tfrac1{10}\mu pn$ for each $z\in Z$. By choice of $C$, we have $|Z|=\tfrac{1}{100(\Delta+1)}\mu pn \ge \Ca p^{-1}\log n$. Since $|\im(\phi)|\le(\Delta+1)|V_0|\le\tfrac{1}{100}\mu n$, by choice of $C$, this contradicts the good event of Proposition~\ref{prop:chernoff}.

We have justified that Algorithm~\ref{alg:pre} completes, and indeed that at each time we reach line~\ref{line:choosenbs} there are at least $\tfrac14\mu pn$ vertices of $\big(N_G(v)\cap S\big)\setminus\im(\phi)$ to choose from. In order to specify how to choose these vertices, we need the following claim.

\begin{claim}\label{cl:chooseW}
 Given any set $Y$ of $\tfrac14\mu pn$ vertices of $V(G)$, there exists $W\subset Y$ of size at least $\tfrac{1}{8r}\mu pn$ and an index $i\in[r]$ with the following property. For each $w\in W$ and each $j\in[k]$, we have $|N_G(w,V_{i,j})|\ge dp|V_{i,j}|$.
\end{claim}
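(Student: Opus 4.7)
The plan is a double-counting argument. Call an index $i\in[r]$ \emph{complete} for a vertex $w$ if $\deg_G(w, V_{i,j})\ge dp|V_{i,j}|$ for every $j\in[k]$. I will show that most vertices $w\in Y$ have many complete indices, and then pigeonhole over $i\in[r]$ to find one that is complete for many $w$'s simultaneously.

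First, for each $w\in Y\setminus V_0$, property~\ref{main:Ggam} gives the upper bound $\deg_G(w, V_{i,j})\le (1+\eps)p|V_{i,j}|$. Calling a pair $(i,j)$ \emph{bad} for $w$ if $\deg_G(w, V_{i,j})<dp|V_{i,j}|$, each bad pair contributes a deficit of at least $(1+\eps-d)p|V_{i,j}|$ relative to the $\Gamma$-upper bound. Combining with $\deg_G(w)\ge(\tfrac{k-1}{k}+\gamma)pn$ and $\deg_G(w, V_0)\le|V_0|$ yields
\[
(1+\eps-d)p\sum_{\text{bad }(i,j)}|V_{i,j}|\;\le\;(\tfrac{1}{k}+\eps-\gamma)pn+|V_0|.
\]
The construction in Lemma~\ref{lem:G} actually provides the sharp estimate $|V_{i,j}|=(1\pm o(1))\tfrac{n}{kr}$, much tighter than the factor-$4$ bound stated in~\ref{main:Gsize}; with this, the number of bad pairs, and hence the number of \emph{partial} indices $i$ (those with at least one bad pair in their column), is at most $r(1-k\gamma)(1+o(1))$. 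Consequently each $w\in Y\setminus V_0$ has at least $\tfrac{1}{2}k\gamma r$ complete indices.

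Summing and pigeonholing over $i\in[r]$ produces some $i^*$ that is complete for at least $|Y\setminus V_0|\cdot\tfrac{k\gamma}{2}$ vertices of $Y$; take this set as $W$. Since $|V_0|\le\Ca p^{-2}$ is much smaller than $|Y|=\tfrac{1}{4}\mu pn$ for large $n$, this gives $|W|\ge\tfrac{k\gamma}{16}\mu pn\ge\tfrac{1}{8r}\mu pn$, using the choice $r\ge r_0=10/\gamma\ge 2/(k\gamma)$ made earlier in the proof of Theorem~\ref{thm:maink}.

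The main subtlety lies in the required sharpness of $|V_{i,j}|\approx n/(kr)$: the explicit factor-$4$ bound in~\ref{main:Gsize} alone is too weak for the counting to force strictly fewer than $r$ partial indices, so the argument must instead use the $(1\pm o(1))n/(kr)$ estimate that falls out of the construction in the proof of Lemma~\ref{lem:G}. A further complication is ensuring $|V_0|\ll |Y|$: this is automatic when $\Delta\ge 3$, but when $\Delta=2$ one has $|V_0|$ possibly of order $p^{-2}\gg pn$, so additional care will be needed to handle the case when $Y$ has a large intersection with $V_0$.
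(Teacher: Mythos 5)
There are two genuine gaps in your proposal. First, you invoke a cluster-size estimate $|V_{i,j}|=(1\pm o(1))\tfrac{n}{kr}$ that is not available at this point in the argument: Claim~\ref{cl:chooseW} lives inside the proof of Theorem~\ref{thm:maink}, where the only information about cluster sizes is what Lemma~\ref{lem:G} delivers, namely the factor-$4$ bound~\ref{main:Gsize} and $k$-equitability. Asserting that a sharper bound ``falls out of the construction'' would require reopening and strengthening the statement of Lemma~\ref{lem:G}. This is also unnecessary: the paper instead proves the weaker assertion that every surviving $y$ has \emph{at least one} complete index, and $k$-equitability alone suffices. Indeed, if each $i\in[r]$ had some bad $j(i)$ then
$\deg_G(y)\le\deg_\Gamma(y,V_0)+dp\sum_i|V_{i,j(i)}|+(1+\eps)p\sum_i\sum_{j\neq j(i)}|V_{i,j}|$,
and $k$-equitability bounds $\sum_i\sum_{j\neq j(i)}|V_{i,j}|\le\tfrac{k-1}{k}n+r$; combined with a bound $\deg_\Gamma(y,V_0)<\eps pn$ this contradicts $\delta(G)\ge\big(\tfrac{k-1}{k}+\gamma\big)pn$. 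Pigeonholing one index per vertex over a set $Y'$ with $|Y'|\ge\tfrac12|Y|$ already yields $|W|\ge\tfrac{1}{8r}\mu pn$, so the more ambitious ``$\Theta(r)$ complete indices per vertex'' count you attempt is not required.

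Second, your bound $\deg_G(w,V_0)\le|V_0|$ is too weak, and you misdiagnose the difficulty. You flag $\Delta=2$ as a problem ``when $Y$ has a large intersection with $V_0$'', but the real issue is that even for $w\in Y\setminus V_0$ the quantity $\deg_G(w,V_0)$ can be as large as $|V_0|\le\Ca p^{-2}$, which for $\Delta=2$ (where $p$ can be close to $(\log n/n)^{1/2}$) is of larger order than $pn$ and therefore swamps the main term $\big(\tfrac1k+\eps-\gamma\big)pn$ in your deficit inequality; at this probability also $|Y\setminus V_0|$ itself may be tiny. The fix, which is how the paper proceeds, is to first prune from $Y$ those $y$ with $\deg_\Gamma(y,V_0)\ge\eps pn$ or with $|N_\Gamma(y,V_{i,j})|\neq(1\pm\eps)p|V_{i,j}|$ for some $(i,j)$. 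By Proposition~\ref{prop:chernoff} at most $O\big(kr\Ca p^{-1}\log n\big)\le\tfrac12|Y|$ vertices are discarded (note this bound does not grow with $|V_0|$), and every surviving $y$, whether or not it lies in $V_0$, has $\deg_G(y,V_0)<\eps pn$ and $\Gamma$-degree $(1\pm\eps)p|V_{i,j}|$ to each $V_{i,j}$; these are exactly the properties the degree-counting step needs.
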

\begin{claimproof}
First let $Y'$ be obtained from $Y$ by removing all vertices $y\in Y$ such that either $|N_\Gamma(y,V_0)|\ge \eps p n$, or for some $i\in[r]$ and $j\in[k]$ we have $\big|N_\Gamma(y,V_{i,j})\big|\neq (1\pm\eps)p|V_{i,j}|$. Because the good event of Proposition~\ref{prop:chernoff} occurs, the total number of vertices removed is at most $2kr \Ca p^{-1}\log n<\tfrac12|Y|$, where the inequality is by choice of $C$. Now given any $y\in Y'$, if for each $i\in[r]$ there is $j\in[k]$ such that $\big|N_G(y,V_{i,j})\big|<dp|V_{i,j}|$, then, since the $\{V_{i,j}\}$ are $k$-equitable, we have $|N_G(y)|\le \eps p n+dpn+(1+\eps)\tfrac{k-1}{k}pn+r<\big(\tfrac{k-1}{k}+\gamma\big)pn$, a contradiction. We conclude that for each $y\in Y'$ there exists $i\in[r]$ such that $|N_G(y,V_{i,j})|\ge dp|V_{i,j}|$ for each $j\in[k]$. We let $W$ be the vertices of $Y'$ giving a majority choice of $i$.
\end{claimproof}

Now at each time $t$, in line~\ref{line:choosenbs} of Algorithm~\ref{alg:pre}, we choose the vertices $w_1,\dots,w_\ell$ as follows. Let $Y=\big(N_G(v_t)\cap S\big)\setminus\im(\phi_t)$. Let $i_t\in[r]$ be an index, and $W\subset Y$ be a set of size $\tfrac{1}{8r}\mu pn$, such that $\big|N_G(w,V_{i_t,j})\big|\ge dpn|V_{i_t,j}|$ for each $j\in[k]$, whose existence is guaranteed by Claim~\ref{cl:chooseW}. By construction, and by our choice of $\mu$, we can apply Lemma~\ref{lem:common} with input $d$, $k$, $\Delta$, $\epsa$, $r$ and $\eps$, with the clusters $\big\{V_{i_t,j}\big\}_{j\in[k]}$ as the $\big\{V_i\big\}_{i\in[k]}$, and inputting a subset of $W$ of size $10^{-10}\tfrac{\eps^4pn}{k^4r^4}$ as requried for~\ref{cnl:W}. This last is possible by choice of $\mu$. To verify the conditions of Lemma~\ref{lem:common}, observe that~\ref{cnl:bal} follows from~\ref{main:Gsize},~\ref{cnl:Vreg} from~\ref{main:Greg}, and~\ref{cnl:Wdeg} from Claim~\ref{cl:chooseW}. We obtain a $\Delta$-tuple of vertices in $W$ satisfying~\ref{cnl:Gsize}--\ref{cnl:Nreg}. We let $w_1,\dots,w_\ell$ be the first $\ell$ vertices of this tuple.

Let $H'=H- \dom(\phi_{t^*})$. We next define image restricting vertex sets and create an updated homomorphism $f^*:V(H')\to [r]\times[k]$. For each $x\in V(H)\setminus\dom(\phi_{t^*})$, let $J_x=\phi_{t^*}\big(N_H(x)\cap\dom(\phi_{t^*})\big)$. Now, since the vertices $\{x_t\}_{t\in[t^*]}$ are by construction at pairwise distance at least $2r+20$, in particular for each $y\in V(H')$ with $J_y\neq\emptyset$ the vertex $y$ is at distance two from one $x_t$, and at distance greater than $r+10$ from all others. Let $j\in [k]$ such that $f(y)=(1,j)$. Then we set $f^*(y):=(i_{t},j)$. Next, for each $t\in[t^*]$ and each $z\in V(H)$ at distance $3,\dots,i_t+1$ from $x_t$, we set $f^*(z)$ as follows. Recall that $f(z)=(1,j)$ for some $j\in[k]$. We set $f^*(z)=\big(i_t+2-\dist(x_t,z),j\big)$. Because the $\{x_t\}$ are at pairwise distance at least $2r+20$, no vertex is at distance $r+5$ or less from any two $x_t$ and $x_{t'}$, so that $f^*$ is well-defined. Because $R^k_r$ contains $B^k_r$, the $f^*$ we constructed so far is a graph homomorphism. Furthermore, for each $x_t$ the set of vertices $z$ at distance $i_t+1$ from $x_t$ are in the first $\sqrt{\beta}n$ vertices of $\cL$, and so by~\ref{H:v1} satisfy $f^*(z)=f(z)$. We complete the construction of $f^*$ by setting $f^*(z)=f(z)$ for each remaining $z\in V(H)\setminus\dom(\phi_{t^*})$. Because $f$ is a graph homomorphism, $f^*$ is also a graph homomorphism whose domain is $V(H')$. For each $i\in[r]$ and $j\in[k]$, let $W'_{i,j}$ be the set of vertices $w\in V(H')$ with $f^*(w)\in V_{i,j}$, and let $X'$ consist of $X$ together with all vertices of $H'$ at distance $r+10$ or less from some $x_t$ with $t\in[t^*]$. The total number of vertices $z\in V(H)$ at distance at most $r+10$ from some $x_t$ is at most $2\Delta^{r+10}|V_0|<\tfrac{1}{100}\xi n$. Since $W_{i,j}\symd W'_{i,j}$ contains only such vertices, we have
\begin{enumerate}[label=\itmarabp{H}{b}]
 \item\label{Hp:sizeWp} $m_{i,j}-\tfrac15\xi n\le |W'_{i,j}|\le m_{i,j}+\tfrac15\xi n$,
 \item\label{Hp:sizeX} $|X'| \leq 2\xi n$, 
 \item\label{Hp:edge} $\{f^*(x),f^*(y)\} \in E(R^k_r)$  for every $\{x,y\} \in E(H')$, and
 \item\label{Hp:special} $y,z\in \bigcup_{j'\in[k]}W'_{i,j'}$ for every $x\in W'_{i,j}\setminus X'$ and $xy,yz\in E(H')$.
\end{enumerate}
where~\ref{Hp:sizeX},~\ref{Hp:edge} and~\ref{Hp:special} hold by~\ref{H:sizeX} and definition of $X'$, by definition of $f^*$, and by~\ref{H:special} and choice of $X'$ respectively.

Furthermore, we have
\begin{enumerate}[label=\itmarabp{G}{a}]
\item $\frac{n}{4kr}\leq |\Vij| \leq \frac{4n}{kr}$ for every $i\in[r]$ and $j\in[k]$,
\item $\cV$ is $(\eps,d,p)_G$-lower-regular on $R^k_r$ and $(\eps,d,p)_G$-super-regular on $K^k_r$,
\item both $\big(\NGa(v, V_{i,j}),V_{i',j'}\big)$ and $\big(\NGa(v, V_{i,j}),\NGa(v, V_{i',j'})\big)$ are $(\eps, d,p)_G$-lower-regular pairs for every $\{(i,j),(i',j')\} \in E(R^k_r)$ and $v\in V\setminus V_0$, and
\item $|\NGa(v,V_{i,j})| = (1\pm \eps)p|\Vij|$ for every $i \in [r]$, $j\in [k]$ and every $v \in V \setminus V_0$.
 \item\label{main:GpI} $\big|V_{f^*(x)}\cap\bigcap_{u\in J_x}N_G(u)\big|\ge\alpha p^{|J_x|}|V_{f^*(x)}|$ for each $x\in V(H')$,
 \item\label{main:GpGI} $\big|V_{f^*(x)}\cap\bigcap_{u\in J_x}N_\Gamma(u)\big|=(1\pm\eps^*)p^{|J_x|}|V_{f^*(x)}|$  for each $x\in V(H')$, and
 \item\label{main:GpIreg} $\big(V_{f^*(x)}\cap\bigcap_{u\in J_x}N_\Gamma(u),V_{f^*(y)}\cap\bigcap_{v\in J_y}N_\Gamma(v)\big)$ is $(\eps^*,d,p)_G$-lower-regular for each $xy\in E(H')$.
 \item\label{main:GaI} $\big|\bigcap_{u\in J_x}N_\Gamma(u)\big|\le(1+\epsa) p^{|J_x|}n$ for each $x\in V(H')$,
\end{enumerate}
Properties~\ref{main:Gsize} to~\ref{main:Ggam} are repeated for convenience. Properties~\ref{main:GpI},~\ref{main:GpGI} and~\ref{main:GaI}, are trivial when $J_x=\emptyset$, and are otherwise guaranteed by Lemma~\ref{lem:common}. Finally~\ref{main:GpIreg} follows from~\ref{main:Greg} when $J_x,J_y=\emptyset$, and otherwise is guaranteed by Lemma~\ref{lem:common}.

For each $i\in[r]$ and $j\in[k]$, let $V'_{i,j}=V_{i,j}\setminus\im(\phi_{t^*})$, and let $\cV'=\{V'_{i,j}\}_{i\in[r],j\in[k]}$. Because $V_{i,j}\setminus V'_{i,j}\subset S$ for each $i\in[r]$ and $j\in[k]$, using~\eqref{eq:intS} and Proposition~\ref{prop:subpairs3}, and our choice of $\mu$, we obtain
\begin{enumerate}[label=\itmarabp{G}{b}]
\item\label{Gp:sizeV} $\frac{n}{6kr}\leq |V'_{i,j}| \leq \frac{6n}{kr}$ for every $i\in[r]$ and $j\in[k]$,
\item\label{Gp:Greg} $\cV'$ is $(2\eps,d,p)_G$-lower-regular on $R^k_r$ and $(2\eps,d,p)_G$-super-regular on $K^k_r$,
\item\label{Gp:Ginh} both $\big(\NGa(v, V'_{i,j}),V'_{i',j'}\big)$ and $\big(\NGa(v, V'_{i,j}),\NGa(v, V'_{i',j'})\big)$ are $(2\eps, d,p)_G$-lower-regular pairs for every $\{(i,j),(i',j')\} \in E(R^k_r)$ and $v\in V\setminus V_0$, and
\item\label{Gp:GsGa} $|\NGa(v,V'_{i,j})| = (1 \pm 2\eps)p|V_{i,j}|$ for every $i \in [r]$, $j\in [k]$ and every $v \in V \setminus V_0$.
 \item\label{Gp:sizeI} $\big|V'_{f^*(x)}\cap\bigcap_{u\in J_x}N_G(u)\big|\ge\tfrac12\alpha p^{|J_x|}|V'_{f^*(x)}|$,
 \item\label{Gp:sizeGa} $\big|V'_{f^*(x)}\cap\bigcap_{u\in J_x}N_\Gamma(u)\big|=(1\pm2\eps^*)p^{|J_x|}|V'_{f^*(x)}|$, and
 \item\label{Gp:Ireg} $\big(V'_{f^*(x)}\cap\bigcap_{u\in J_x}N_\Gamma(u),V'_{f^*(y)}\cap\bigcap_{v\in J_y}N_\Gamma(v)\big)$ is $(2\eps^*,d,p)_G$-lower-regular.
 \item\label{Gp:GaI} $\big|\bigcap_{u\in J_x}N_\Gamma(u)\big|\le(1+2\epsa) p^{|J_x|}n$ for each $x\in V(H')$,
\end{enumerate}

 We are now almost finished. The only remaining problem is that we do not necessarily have $|W'_{i,j}|=|V'_{i,j}|$ for each $i\in[r]$ and $j\in[k]$. Since $|V'_{i,j}|=|V_{i,j}|\pm 2\Delta^{r+10}|V_0|=m_{i,j}\pm 3\Delta^{r+10}|V_0|$, by~\ref{Hp:sizeWp} we have $|V'_{i,j}|=|W'_{i,j}|\pm \xi n$. We can thus apply Lemma~\ref{lem:balancing}, with input $k$, $r_1$, $\Delta$, $\gamma$, $d$, $8\eps$, and $r$. This gives us sets $V''_{i,j}$ with $|V''_{i,j}|=|W'_{i,j}|$ for each $i\in[r]$ and $j\in[k]$ by~\ref{lembalancing:sizesout}. Let $\cV''=\{V''_{i,j}\}_{i\in[r],j\in[k]}$. Lemma~\ref{lem:balancing} guarantees us the following.
\begin{enumerate}[label=\itmarabp{G}{c}]
\item\label{Gpp:sizeV} $\frac{n}{8kr}\leq |V''_{i,j}| \leq \frac{8n}{kr}$ for every $i\in[r]$ and $j\in[k]$,
\item\label{Gpp:Greg} $\cV''$ is $(4\epsa,d,p)_G$-lower-regular on $R^k_r$ and $(4\epsa,d,p)_G$-super-regular on $K^k_r$,
\item\label{Gpp:Ginh} both $\big(\NGa(v, V''_{i,j}),V''_{i',j'}\big)$ and $\big(\NGa(v, V''_{i,j}),\NGa(v, V''_{i',j'})\big)$ are $(4\epsa, d,p)_G$-lower-regular pairs for every $\{(i,j),(i',j')\} \in E(R^k_r)$ and $v\in V\setminus V_0$, and
\item\label{Gpp:GsGa} we have 
 $(1-4\eps)p|V''_{i,j}| \leq |\NGa(v,V''_{i,j})| \leq (1 + 4\eps)p|V''_{i,j}|$ for every $i \in [r]$, $j\in [k]$ and every $v \in V \setminus V_0$.
 \item\label{Gpp:sizeI} $\big|V''_{f^*(x)}\cap\bigcap_{u\in J_x}N_G(u)\big|\ge\tfrac14\alpha p^{|J_x|}|V''_{f^*(x)}|$,
 \item\label{Gpp:sizeGa} $\big|V''_{f^*(x)}\cap\bigcap_{u\in J_x}N_\Gamma(u)\big|=(1\pm4\eps^*)p^{|J_x|}|V'_{f^*(x)}|$, and
 \item\label{Gpp:Ireg} $\big(V''_{f^*(x)}\cap\bigcap_{u\in J_x}N_\Gamma(u),V''_{f^*(y)}\cap\bigcap_{v\in J_y}N_\Gamma(v)\big)$ is $(4\eps^*,d,p)_G$-lower-regular.
\end{enumerate} 
Here~\ref{Gpp:sizeV} comes from~\ref{Gp:sizeV} and~\ref{lembalancing:symd}, while~\ref{Gpp:Greg} comes from~\ref{lembalancing:regular} and choice of $\eps$. \ref{Gpp:Ginh} is guaranteed by~\ref{lembalancing:inheritance}. Now, each of~\ref{Gpp:GsGa},~\ref{Gpp:sizeI} and~\ref{Gpp:sizeGa} comes from the corresponding~\ref{Gp:GsGa},~\ref{Gp:sizeI} and~\ref{Gp:sizeGa} together with~\ref{lembalancing:gammaout}. Finally,~\ref{Gpp:Ireg} comes from~\ref{Gp:Ireg} and~\ref{Gp:GaI} together with Proposition~\ref{prop:subpairs3} and~\ref{lembalancing:gammaout}.

For each $x\in V(H')$ with $J_x=\emptyset$, let $I_x=V''_{f^*(x)}$. For each $x\in V(H')$ with $J_x\neq\emptyset$, let $I_x=V''_{f^*(x)}\cap\bigcap_{u\in J_x}N_G(u)$. Now $\cW'$ and $\cV''$ are $\kappa$-balanced by~\ref{Gpp:sizeV}, size-compatible by construction, partitions of respectively $V(H')$ and $V(G)\setminus\im(\phi_{t^*})$, with parts of size at least $n/(\kappa r_1)$ by~\ref{Gpp:sizeV}. Letting $\widetilde{W}_{i,j}:=W'_{i,j}\setminus X'$, by~\ref{Hp:sizeX}, choice of $\xi$, and~\ref{Hp:special}, $\{\widetilde{W}_{i,j}\}_{i\in[r],j\in[k]}$ is a $\big(\vartheta,K^k_r\big)$-buffer for $H'$. Furthermore since $f^*$ is a graph homomorphism from $H'$ to $R^k_r$, we have~\ref{itm:blowup:H}. By~\ref{Gpp:Greg},~\ref{Gpp:Ginh} and~\ref{Gpp:GsGa} we have~\ref{itm:blowup:G}, with $R=R^k_r$ and $R'=K^k_r$. Finally, the pair $(\cI,\cJ)=\big(\{I_x\}_{x\in V(H')},\{J_x\}_{x\in V(H')}\big)$ form a $\big(\rho,\tfrac14\alpha,\Delta,\Delta\big)$-restriction pair. To see this, observe that the total number of image restricted vertices in $H'$ is at most $\Delta^2|V_0|<\rho|V_{i,j}|$ for any $i\in[r]$ and $j\in[k]$, giving~\ref{itm:restrict:numres}. Since for each $x\in V(H')$ we have $|J_x|+\deg_{H'}(x)=\deg_H(x)\le\Delta$ we have~\ref{itm:restrict:Jx}, while~\ref{itm:restrict:sizeIx} follows from~\ref{Gpp:sizeI}, and~\ref{itm:restrict:sizeGa} follows from~\ref{Gpp:sizeGa}. Finally,~\ref{itm:restrict:Ireg} follows from~\ref{Gpp:Ireg}, and~\ref{itm:restrict:DJ} follows since $\Delta(H)\le\Delta$. Together this gives~\ref{itm:blowup:restrict}. Thus, by Lemma~\ref{thm:blowup} there exists an embedding $\phi$ of $H'$ into $G\setminus\im(\phi_{t^*})$, such that $\phi(x)\in I_x$ for each $x\in V(H')$. Finally, $\phi\cup\phi_{t^*}$ is an embedding of $H$ in $G$, as desired.
\end{proof}

With Theorem~\ref{thm:maink} in hand, we can now present the proof of Theorem~\ref{thm:main}.

\begin{proof}[Proof of Theorem~\ref{thm:main}]
Given $\gamma$, $\Delta$, and $k$, let $\beta>0$, $z>0$, and $C >0$ be returned by Theorem~\ref{thm:maink} with input $\gamma$, $\Delta$, and $k$. Set $\beta^\ast := \beta/2$ and $\Ca := C/\beta$. Let $H$ be a $k$-colourable graph on $n$ vertices with $\Delta(H) \leq \Delta$ such that there exists a set $W$ of at least $\Ca p^{-2}$ vertices in $V(H)$ that are not contained in any triangles of $H$ and such that there exists a labelling $\cL$ of its vertex set of bandwidth at most $\beta^\ast n$. By the choice of $\Ca$ we find an interval $I \subseteq \cL$ of length $\beta n$ containing a subset $F \subseteq W$ with $|F| = C p^{-2}$. Now we can rearrange the labelling $\cL$ to a labelling $\cL'$ of bandwidth at most $2 \beta^\ast n = \beta n$ such that $F$ is contained in the first $\beta n$ vertices in $\cL'$. Then, by Theorem~\ref{thm:maink} we know that $\Gamma = G(n,p)$ satisfies the following a.a.s.~if $p\geq C(\log n/n)^{1/\Delta}$ and in particular if $p\geq \Ca(\log n/n)^{1/\Delta}$. If $G$ is a spanning subgraph of $\Gamma$ with $\delta(G) \geq \big((k-1)/k+\gamma\big)pn$, then $G$ contains a copy of $H$, which finishes the proof. 
\end{proof}

\section{Lowering the probability for degenerate graphs}
\label{sec:proofdegen}
As with Theorem~\ref{thm:main}, we deduce Theorem~\ref{thm:degenerate} from the following more general statement.

\begin{theorem}\label{thm:degen}
For each $\gamma>0$, $\Delta \geq 2$, $D\ge 1$ and $k\geq 1$, there exist constants $\beta >0$, $z>0$, and $C>0$ such that the following holds asymptotically almost surely for $\Gamma = G(n,p)$ if $p\geq C\big(\frac{\log n}{n}\big)^{1/(2D+1)}$. Let $G$ be a spanning subgraph of $\Gamma$ with $\delta(G) \geq\big(\frac{k-1}{k}+\gamma\big)pn$ and let $H$ be a graph on $n$ vertices with $\Delta(H) \leq \Delta$ and degeneracy at most $D$, that has a labelling $\cL$ of its vertex set of bandwidth at most $\beta n$, a $(k+1)$-colouring that is $(z,\beta)$-zero-free with respect to $\cL$ and where the first $\sqrt{\beta} n$ vertices in $\cL$ are not given colour zero and the first $\beta n$ vertices in $\cL$ include $C p^{-2}$ vertices that are not in any triangles or copies of $C_4$ in $H$. Then $G$ contains a copy of $H$.
\end{theorem}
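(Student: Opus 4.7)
The plan is to follow the proof of Theorem~\ref{thm:maink} in Section~\ref{sec:proofmain} step-for-step, replacing three ingredients by variants tailored to the lower probability $p\ge C(\log n/n)^{1/(2D+1)}$ and exploiting the $D$-degeneracy of $H$. Lemma~\ref{lem:G} and Lemma~\ref{lem:balancing} already apply at this probability since they only require $p\ge\Ca(\log n/n)^{1/2}$, and Lemma~\ref{lem:H2} is deterministic in $p$. Moreover, Lemma~\ref{lem:H2} was stated with exactly this application in mind: its conclusion~\ref{lemH:H6} ensures that in each cluster $W_{i,j}$ at least a $\tfrac{1}{24D}$-fraction of the vertices have degree at most $2D$, and these will populate the buffer $\tcW$ required by the degenerate blow-up lemma.

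The first swap is to replace Lemma~\ref{thm:blowup} by the $D$-degenerate sparse blow-up lemma from~\cite{blowup} (the engine behind Theorem~\ref{thm:Duniversal}), which applies at $p\ge C(\log n/n)^{1/(2D+1)}$ provided the buffer $\tcW$ consists of vertices of degree at most $2D$. The second swap tightens the choice of the pre-embedded vertex $x\in F$ in Algorithm~\ref{alg:pre}: we require $x$ to lie in no triangle \emph{and} no four-cycle of $H$, which is possible by the hypothesis on the first $\beta n$ vertices of $\cL$. This no-four-cycle property has the decisive consequence that any two distinct $y_1,y_2\in N_H(x)$ satisfy $N_H(y_1)\cap N_H(y_2)=\{x\}$, since any further common neighbour would close a four-cycle through $x$. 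Consequently, each second neighbour $z$ of $x$ in $H$ has exactly one neighbour in $N_H(x)$, so once $x$ is embedded to $v\in V_0$ and $N_H(x)$ to $w_1,\dots,w_\ell$, the restriction set $J_z$ satisfies $|J_z|=1$. All image restrictions created during the pre-embedding therefore have \emph{depth exactly one}.

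This shallow depth enables the third swap: Lemma~\ref{lem:common} is replaced by a much weaker analogue valid at $p\ge C(\log n/n)^{1/(2D+1)}$, which only needs to produce a tuple $(w_1,\dots,w_\ell)$ whose \emph{single-vertex} restricted neighbourhoods and pairs have the correct sizes and lower-regularity. This task invokes the regularity-inheritance lemmas (Lemmas~\ref{lem:OSRIL} and~\ref{lem:TSRIL}) only at depth one and consequently only needs $p^2n\gg\log n$, which is implied by $p\ge C(\log n/n)^{1/(2D+1)}$ since $D\ge 1$. The main obstacle to be handled is the bookkeeping: we must check that the error terms $\Ca\log n$ appearing in~\ref{lembalancing:gammaout} remain negligible against $p\,|V_{f^*(x)}|$ at the new probability, and that the greedy `least-remaining-neighbourhood' rule on line~\ref{line:choosev} of Algorithm~\ref{alg:pre} continues to prevent $N_G(v)\cap S$ from being exhausted for any $v\in V_0$. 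The latter is comfortable because $|V_0|\le\Ca p^{-2}$ while $|S|=\mu n$, so the argument of Theorem~\ref{thm:maink} applies verbatim. Once these checks are in place the degenerate blow-up lemma completes the embedding, and Theorem~\ref{thm:degenerate} is deduced from Theorem~\ref{thm:degen} exactly as Theorem~\ref{thm:main} was deduced from Theorem~\ref{thm:maink}.
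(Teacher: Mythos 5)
Your overall architecture matches the paper's: you correctly spot that the no-triangle, no-$C_4$ hypothesis forces $|J_z|\le 1$ for every pre-embedded second-neighbour $z$, that this allows replacing Lemma~\ref{lem:common} with a direct one-step application of Lemmas~\ref{lem:OSRIL} and~\ref{lem:TSRIL}, and that property~\ref{lemH:H6} of Lemma~\ref{lem:H2} supplies the low-degree buffer. But there is a genuine gap in how you invoke the degenerate blow-up lemma.

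You describe the degenerate blow-up lemma (Lemma~\ref{thm:dblow}) as applying ``provided the buffer $\tcW$ consists of vertices of degree at most $2D$,'' and treat the rest as bookkeeping. That understates the hypotheses. Lemma~\ref{thm:dblow} additionally requires a $(\tD,p,m)$-bounded order $\tau$ with exceptional set $W^e$, where $\tau$ must satisfy~\ref{ord:Dx},~\ref{ord:halfD} and~\ref{ord:NtX}, and where $|W^e|\le\eps p^{\max_{x\in W^e}\pitau(x)}n/r_1$. To get $\tD=2D+1$ one takes $\tau$ to be a degeneracy order, $W^e$ the set of image-restricted vertices, and then has to actually check these three order conditions (which, for instance, needs the observation that $D=1$ forces $H$ to be triangle-free so that $\tD_x=\tD-1$ suffices). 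None of this is in your proposal; it is not routine bookkeeping, it is where the exponent $1/(2D+1)$ actually comes from.

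Even more concretely, your proposal breaks at $D=1$. You write $|V_0|\le\Ca p^{-2}$ and rely on the pre-embedding argument of Theorem~\ref{thm:maink} ``verbatim.'' The pre-embedding survives, but the $W^e$ bound does not: with $\max_{x\in W^e}\pi^{\tau'}(x)\le D+1=2$, the required inequality $|W^e|\le\Delta^2\Ca p^{-2}\le\eps p^{2}n/r_1$ needs $p\gtrsim n^{-1/4}$, whereas the theorem only guarantees $p\gtrsim (\log n/n)^{1/3}\ll n^{-1/4}$. The paper avoids this by invoking the weaker variant~\ref{lemG:inheritancep} of the Lemma for $G$ when $D=1$ (dropping two-sided inheritance, which is unneeded since $H$ is then a forest), yielding the sharper bound $|V_0|\le\Ca p^{-1}$ and hence only requiring $p\gtrsim n^{-1/3}$. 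Your assertion that ``Lemma~\ref{lem:G} already applies'' is technically true but misses that a different conclusion of Lemma~\ref{lem:G} must be used for $D=1$.
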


The proof of Theorem~\ref{thm:degen} is quite similar to that of
Theorem~\ref{thm:maink}. We provide only a sketch, highlighting the differences.
(For more details and background on this result see~\cite{JuliaEsDiss}.)
The most important of these differences are that we do not use Lemma~\ref{lem:common} in the pre-embedding, and that we use a version of Lemma~\ref{thm:blowup} whose performance is better for degenerate graphs. In order to state this, we need the following definitions. Given an order $\tau$ on $V(H)$ and a family $\cJ$ of image restricting vertices, we define $\pitau(x):=|J_x|+\big|\{y\in N_H(x):\tau(y)<\tau(x)\}\big|$. Now the condition on $\tau$ we need for our enhanced blow-up lemma is the following.

\begin{definition}[$(\tD,p,m)$-bounded order]\label{def:Dpm_bdd_order} % m=\eps n/r_1
  Let~$H$ be a graph given with buffer sets $\widetilde{\mathcal{W}}$ and
  a restriction pair~$\cI=\{I_i\}_{i\in[r]}$ and~$\cJ=\{J_i\}_{i\in[r]}$.
  Let~$\tW=\bigcup\widetilde{\mathcal{W}}$.
  Let~$\tau$ be an ordering of $V(H)$ and $W^e\subset V(H)$.
  Then~$\tau$ is a \emph{$(\tD,p,m)$-bounded order} for~$H$, $\widetilde{\mathcal{W}}$,
  $\cI$ and $\cJ$
  with \emph{exceptional set} $W^e$ if the following conditions are
  satisfied for each $x\in V(H)$.
  \begin{enumerate}[label=\itmarab{ORD}]
   \item\label{ord:Dx} Define \[
     \tD_x:=\begin{cases}
       \tD-2 & \text{if there is $yz\in E(H)$ with $y,z\in N_H(x)$ and $\tau(y),\tau(z)>\tau(x)$}\\
       \tD-1 & \text{else if there is $y\in N_H(x)$ with $\tau(y)>\tau(x)$}\\
       \tD & \text{otherwise}\,.
     \end{cases}\]
     We have $\pitau(x)\le \tD_x$, and if $x\in N(\tW)$ even $\pitau(x)\le \tD_x-1$. Finally, if $x\in\tW$ we have $\deg(x)\le \tD$.
    \item\label{ord:halfD} One of the following holds:
      \begin{itemize}
        \item $x\in W^e$,
        \item $\pitau(x)\le \frac12 \tD$,
        \item $x$ is not image restricted and every neighbour~$y$ of~$x$
          with $\tau(y)<\tau(x)$ satisfies $\tau(x)-\tau(y)\le p^{\pitau(x)}m$.
      \end{itemize}
    \item\label{ord:NtX} If $x\in N(\tW)$ then all but at most $\tD-1-\max_{z\not\in W^e}\pitau(z)$
      neighbours~$y$ of $x$ with $\tau(y)<\tau(x)$ satisfy
      $\tau(x)-\tau(y)\le p^{\tD} m$.
 \end{enumerate}  
\end{definition}

To obtain the best probability bound, one should choose $\tau$ to minimise $\tD$. In the proof of Theorem~\ref{thm:degen} we will take $\tau$ to be an order witnessing $D$-degeneracy, $W^e$ will contain all image restricted vertices, and we will choose buffer sets containing vertices of degree at most $2D+1$. One can easily check that this allows us to choose $\tD=2D+1$.

\begin{lemma}[{\cite[Lemma~1.23]{blowup}}]
\label{thm:dblow}
  For all $\Delta\ge 2$, $\Delta_{R'}$, $\Delta_J$, $\tD$, 
  $\alpha,\zeta, d>0$, $\kappa>1$ there exist
  $\eps,\rho>0$ such that for all $r_1$ there is a $C$ such that for
  \[p\ge C\bigg(\frac{\log n}{n}\bigg)^{1/\tD}\] 
  the random graph
  $\Gamma=G_{n,p}$ a.a.s.\ satisfies the following.
   
  Let $R$ be a graph on $r\le r_1$ vertices and let $R'\subset R$ be a
  spanning subgraph with $\Delta(R')\leq \Delta_{R'}$.  Let $H$ and $G\subset
  \Gamma$ be graphs with $\kappa$-balanced, size-compatible vertex
  partitions $\cW=\{W_i\}_{i\in[r]}$ and $\cV=\{V_i\}_{i\in[r]}$,
  respectively, which have parts of size at least $m\ge n/(\kappa r_1)$.
  Let $\widetilde{\mathcal{W}}=\{\tW_i\}_{i\in[r]}$ be a family of subsets of $V(H)$, $\cI=\{I_x\}_{x\in
    V(H)}$ be a family of image restrictions, and $\cJ=\{J_x\}_{x\in V(H)}$
  be a family of restricting vertices.  
  Let $\tau$ be an order of $V(H)$ and $W^e\subset V(H)$ be a set of size
  $|W^e|\le\eps p^{{\max_{x\in W^e}\pitau(x)}}n/r_1$. 
  Suppose that
  \begin{enumerate}[label=\itmarab{DBUL}]
  \item\label{dbul:1} $\Delta(H)\leq \Delta$, $(H,\cW)$ is an $R$-partition, 
    and $\widetilde{\mathcal{W}}$ is an $(\alpha,R')$-buffer for $H$,
  \item\label{dbul:2} $(G,\cV)$ is an $(\eps,d,p)$-lower-regular $R$-partition, which is 
    $(\eps,d,p)$-super-regular on $R'$, 
    has one-sided inheritance on~$R'$,
    and two-sided inheritance on~$R'$ for $\widetilde{\mathcal{W}}$,
  \item\label{dbul:3} $\cI$ and $\cJ$ form
    a $(\rho,\zeta,\Delta,\Delta_J)$-restriction pair.
  \item\label{dbul:4} $\tau$ is a $(\tD,p,\eps n/r_1)$-bounded order for~$H$, $\widetilde{\mathcal{W}}$,
    $\cI$, $\cJ$ with
    exceptional set~$W^e$.
  \end{enumerate}
  Then there is an embedding $\psi\colon V(H)\to V(G)$ such that
  $\psi(x)\in I_x$ for each $x\in H$.
\end{lemma}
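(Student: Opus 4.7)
The plan is to follow the standard architecture of blow-up lemma proofs, adapted to the sparse and degenerate setting. The proof has three phases: a setup phase reserving buffer vertices, a main random-greedy embedding phase processing vertices in the order $\tau$, and a finishing phase that completes the embedding of the buffer vertices $\bigcup \widetilde{\mathcal{W}}$ via a matching argument. In the setup, for each $i \in [r]$ we set aside a small reserve $V_i^{\mathrm{res}} \subset V_i$ of size proportional to $|\widetilde{W}_i|$ that is untouched until the finishing phase. The main embedding then processes vertices $x \in V(H) \setminus \bigcup \widetilde{\mathcal{W}}$ in the order $\tau$, and for each not-yet-embedded vertex $y \in W_i$ we maintain a candidate set
\[
  C_y := I_y \cap \bigcap_{\substack{z \in N_H(y),\, \tau(z) < \tau(y),\\ z \text{ already embedded}}} N_\Gamma(\phi(z)),
\]
of vertices of $V_i$ to which $y$ could still be mapped consistently with the partial embedding $\phi$. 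When we reach $x$, we choose $\phi(x)$ uniformly at random from $C_x \cap \bigcap_{z} N_G(\phi(z))$ after excluding a polynomially small set of "spoiling" vertices that would cause some future candidate set to shrink too much.

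The heart of the analysis is showing that the candidate sets remain close to their expected size throughout. By~\ref{ord:Dx}, the value $\pitau(x) \le \tD_x \le \tD$ at every point, so the expected size of the $\Gamma$-part of $C_x$ is $\Theta(p^{\pitau(x)} |V_i|)$, and our lower bound $p \ge C(\log n/n)^{1/\tD}$ makes this polynomially larger than $\log n$. Using the one-sided inheritance guaranteed by~\ref{dbul:2} together with the regularity inheritance lemmas, one inductively maintains that each $C_y$ forms a lower-regular pair with each cluster $V_j$ adjacent to $i$ in $R$, so the $G$-edges into $C_y$ behave as expected at the next step. Condition~\ref{ord:halfD} is crucial: it splits vertices into those with "few" already-embedded neighbours (at most $\tD/2$, for which naive concentration suffices), those with a small $\tau$-window of back-neighbours (for which short-range concentration suffices), and a small exceptional set $W^e$ absorbing the rest. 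This trichotomy is precisely what allows a union bound over "bad events" to close at the probability threshold $(\log n/n)^{1/\tD}$.

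The finishing phase handles the buffer vertices. After the main phase, for each $x \in \widetilde{W}_i$ we have a candidate set $C_x \subseteq V_i^{\mathrm{res}}$ reflecting the $G$-edges to its already-embedded neighbours. Condition~\ref{ord:NtX} ensures that all but a controlled number of back-neighbours of a buffer vertex were embedded recently (within a window of size $p^{\tD} m$), and the two-sided inheritance on $\widetilde{\mathcal{W}}$ required in~\ref{dbul:2} propagates lower-regularity through pairs of choices for those recent embeddings. Consequently the bipartite candidacy graph between the yet-unembedded buffer vertices and the reserves $V_i^{\mathrm{res}}$ is super-regular on each edge of $R'$, and a Hall/König-type perfect-matching argument (analogous to the finishing step of the Komlós–Sárközy–Szemerédi blow-up lemma) completes the embedding.

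The main obstacle is the concentration analysis in the second phase: one must track candidate set sizes and regularity for every future vertex simultaneously, and show via a martingale exposure or a deferred-decision argument that all remain well-behaved with high probability. The bounded-order conditions~\ref{ord:Dx}--\ref{ord:NtX} are engineered exactly so that the probability of any single "spoiling" choice at step $x$ is $o(1/|C_x|)$, enabling the random greedy procedure to succeed. I expect the choice of $\rho$ and $\eps$ in terms of the other parameters, and the precise bookkeeping of how many vertices we can afford to place into $W^e$ (limited to $\eps p^{\max \pitau} n/r_1$), to be the most delicate technical part.
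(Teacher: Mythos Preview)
This lemma is not proved in the present paper; it is quoted as Lemma~1.23 of~\cite{blowup} and used as a black box in the proof of Theorem~\ref{thm:degen}. There is therefore no proof in this paper against which to compare your proposal.

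That said, your outline does capture the three-phase architecture (reserve sets for the buffer, random-greedy main embedding in the order~$\tau$, and a matching-based finishing phase for~$\bigcup\widetilde{\mathcal{W}}$) that underlies the sparse blow-up lemmas of~\cite{blowup}, and your reading of the roles of~\ref{ord:Dx}--\ref{ord:NtX} is broadly on target. One small inaccuracy: condition~\ref{ord:NtX} constrains the back-neighbours of vertices in $N(\bigcup\widetilde{\mathcal{W}})$, not of the buffer vertices themselves, which matters for how regularity is propagated to the candidate sets used in the finishing phase. More importantly, what you have is a proof plan rather than a proof: the actual argument in~\cite{blowup} requires substantial additional machinery, including a careful queue-position analysis, a hierarchy of regularity levels with controlled degradation, and a quantitative ``bad vertex'' accounting to show the random-greedy step succeeds with the claimed probability. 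None of that is developed here, so while nothing you wrote is wrong as a sketch, it is far from a self-contained proof.
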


\begin{proof}[Sketch proof of Theorem~\ref{thm:degen}]
 We set up constants quite similarly as in the proof of Theorem~\ref{thm:maink}. Specifically,
 given $\gamma>0$, $\Delta\ge 2$, $D$ and $k\ge 2$, let $d$ be returned by Lemma~\ref{lem:G}, with input $\gamma$, $k$ and $r_0:=10\gamma^{-1}$. Let $\alpha=\tfrac{d}{2}$. Let $\tD=2D+1$. Now let $\eBL>0$ and $\rho>0$ be returned by Lemma~\ref{thm:dblow} with input $\Delta$, $\Delta_{R'}=3k$, $\Delta_J=\Delta$, $\tD'$, $\vartheta=\tfrac{1}{100D}$, $\zeta=\tfrac14\alpha$, $d$ and $\kappa=64$. Let $\epsa=\tfrac18\eBL$, and then Lemma~\ref{lem:OSRIL}, for input $\epsa$ and $d$, returns $\eps_1>0$. Let $\eps_0>0$ be small enough both for Lemma~\ref{lem:TSRIL} with input $\epsa$ and $d$, and for Lemma~\ref{lem:OSRIL} with input $\eps_1$ and $d$. 
 
  We choose $\eps=\min\big(\eps_0,d,\tfrac{1}{4D}\epsa,\tfrac{1}{2k}\big)$. Putting $\eps$ into Lemma~\ref{lem:G} returns $r_1$. Next, Lemma~\ref{lem:balancing}, for input $k$, $r_1$, $\Delta$, $\gamma$, $d$ and $8\eps$, returns $\xi>0$. We assume without loss of generality that $\xi\le 1/(10kr_1)$, and set $\beta=10^{-12}\xi^2/(\Delta k^4r_1^2)$. Let $\mu=\tfrac{\eps^2}{100000kr}$. Finally, suppose $\Ca$ is large enough for each of these lemmas, for Lemma~\ref{thm:blowup}, for Proposition~\ref{prop:chernoff} with input $\eps$, and for Lemma~\ref{lem:hypgeo} with input $\eps\mu^2$ and $\Delta$.

 We set $C=10^{10}k^2r_1^2\eps^{-2}\xi^{-1}\Delta^{2r_1+20}\mu^{-1}\Ca$, and $z=10/\xi$. Given $p\ge C\big(\tfrac{\log n}{n}\big)^{1/(2D+1)}$, a.a.s.\ $G(n,p)$ satisfies the good events of Lemma~\ref{thm:dblow}, Lemma~\ref{lem:G}, Lemma~\ref{lem:OSRIL} and Lemma~\ref{lem:TSRIL}, and Proposition~\ref{prop:chernoff}, with the stated inputs. Suppose that $\Gamma=G(n,p)$ satisfies these good events. 
 
 Let $G$ be a spanning subgraph of $\Gamma$ with $\delta(G)\ge\big(\tfrac{k-1}{k}+\gamma\big)pn$. Let $H$ be any graph on $n$ vertices with $\Delta(H)\le\Delta$, and let $\cL$ be a labelling of $V(H)$ of bandwidth at most $\beta n$ whose first $\beta n$ vertices include $C p^{-2}$ vertices that are not contained in any triangles or four-cycles of $H$, and such that there exists a $(k+1)$-colouring that is $(z,\beta)$-zero-free with respect to $\cL$, and the colour zero is not assigned to the first $\sqrt{\beta}n$ vertices. Furthermore, let $\tau$ be a $D$-degeneracy order of $V(H)$.
 
 Next, as in the proof of Theorem~\ref{thm:maink}, we apply Lemma~\ref{lem:G} to $G$, obtaining a partition of $V(G)$ with the properties~\ref{main:Gsize}--\ref{main:Ggam}. Note that if $D=1$, in place of~\ref{main:Ginh} we will ask only for the weaker condition
 \begin{enumerate}[label=\itmsol{G}{3'}]
  \item\label{main:Ginhp} $\big(N_\Gamma(v,V_{i,j}),V_{i',j'}\big)$ is an $(\eps,d,p)_G$-lower-regular pair for every $\big\{(i,j),(i',j')\big\}\in E(R^k_r)$ and $v\in V\setminus V_0$,
 \end{enumerate}
 and thus for $D=1$ we have $|V_0|\le\Ca p^{-1}$, while for $D\ge 2$ we have $|V_0|\le\Ca p^{-2}$.
 
 Next, we apply Lemma~\ref{lem:H2} to obtain a partition of $V(H)$. We use the same inputs as in the proof of Theorem~\ref{thm:maink}, with the exception that $D$ is now given in the statement of Theorem~\ref{thm:degen} rather than being set equal to $\Delta$. The result is a function $f:V(H)\to V(R^k_r)$ and a special set $X$ with the same properties~\ref{H:size}--\ref{H:v1}, and in addition
 \begin{enumerate}[label=\itmsol{H}{6a}]
  \item\label{H:buf} $|\{x\in f^{-1}(i,j): \deg(x) \leq 2D\}| \geq \tfrac{1}{24D} |f^{-1}(i,j)|$. 
 \end{enumerate}
 
 We now continue following the proof of Theorem~\ref{thm:maink}, using Lemma~\ref{lem:hypgeo} with input $\eps\mu^2$ and $D+1$ (rather than $\eps\mu^2$ and $\Delta$), to choose a set $S$ satisfying~\eqref{eq:intS} for each $1\le\ell\le D+1$ and vertices $u_1,\dots,u_{\ell}$ of $V(G)$. We use the same pre-embedding Algorithm~\ref{alg:pre}, with the exception that we choose vertices at line~\ref{line:choosenbs} differently. As before, given $v_{t+1}\in V_0\setminus\im(\phi_t)$, we use Claim~\ref{cl:chooseW} to find a set $W\subset N_G(v_{t+1})$ of size at least $\tfrac{1}{8r}\mu p n$ and an index $i\in[r]$ such that for each $w\in W$ we have $\big|N_G(w,V_{i,j})\big|\ge dp|V_{i,j}|$ for each $j\in[k]$. However, rather than applying Lemma~\ref{lem:common}, we let $w_1,\dots,w_\ell$ be distinct vertices of $W$ which satisfy~\ref{main:GpI}--\ref{main:GaI}. We now justify that this is possible. We choose the $w_1,\dots,w_\ell$ successively. Since $x_{t+1}$ is not contained in any triangle or four-cycle of $H$, we have $|J_x|\le 1$ for each $x\in V(H)$, so that~\ref{main:GpI} is automatically satisfied. By Proposition~\ref{prop:chernoff},~\ref{main:GpGI} and~\ref{main:GaI} are satisfied for all but at most $2\Ca kr_1 p^{-1}\log n$ vertices of $W$. It remains to show that we can obtain~\ref{main:GpIreg}, which we do as follows. For $s\in[\ell]$, when we come to choose $w_s$, we insist that for any $\big\{(i,j),(i',j')\big\}\in E(R^k_r)$, the following hold. First, $\big(N_\Gamma(w_s,V_{i,j}),V_{i',j'}\big)$ is $(\eps_1,d,p)_G$-lower-regular. Second, $\big(N_\Gamma(w_s,V_{i,j}),N_\Gamma(w_s,V_{i',j'})\big)$ is $(\epsa,d,p)_G$-lower-regular. Third, for each $1\le t\le s-1$, $\big(N_\Gamma(w_s,V_{i,j}),N_\Gamma(w_t,V_{i',j'})\big)$ is $(\epsa,d,p)_G$-lower-regular. The conditions of respectively Lemma~\ref{lem:OSRIL}, Lemma~\ref{lem:TSRIL}, and Lemma~\ref{lem:OSRIL} are in each case satisfied (in the last case by choice of $w_t$) and thus in total at most $3\Ca k^2r_1^2\max\{p^{-2},p^{-1}\log n\}$ vertices of $W$ are prohibited. Since $5\Ca k^2r_1^2\max\{p^{-2},p^{-1}\log n\}<\tfrac{|W|}{2}<\ell$ by choice of $C$, at each step there is a valid choice of $w_s$. Since for each $x\in V(H')$ we have $|J_x|\le 1$, this construction guarantees~\ref{main:GpIreg}.
 
 We now return to following the proof of Theorem~\ref{thm:maink}. We obtain $\cV'$ by removing the images of pre-embedded vertices, and $\cV''$ by applying Lemma~\ref{lem:balancing}. Note that here~\ref{lembalancing:gammaout} may be trivial, that is, the error term $\Ca\log n$ may dominate the main term when $s$ is large, but we only require it for $s=1$ to obtain~\ref{Gpp:sizeV}--\ref{Gpp:Ireg}.
 
 Finally, we are ready to apply Lemma~\ref{thm:dblow} to complete the embedding. We define $(\cI,\cJ)$ as in the proof of Theorem~\ref{thm:maink}. We however let $\widetilde{W}_{i,j}$ consist of the vertices of $W'_{i,j}\setminus X$ whose degree is at most $2D$. By~\ref{H:buf} there are at least $\tfrac{1}{100D}|W'_{i,j}|$ of these, so that $\widetilde{\mathcal{W}}$ is a $(\vartheta,K^k_r)$-buffer, giving~\ref{dbul:1}. Now~\ref{dbul:2} follows from~\ref{Gpp:Greg} and~\ref{Gpp:Ginh}. Finally, $(\cI,\cJ)$ is a $(\rho,\tfrac14\alpha,\Delta,\Delta)$-restriction pair, giving~\ref{dbul:3}, exactly as in the proof of Theorem~\ref{thm:maink}. However now we need to give an order $\tau'$ on $V(H')$ and a set $W^e\subset V(H')$. The former is simply the restriction of $\tau$ to $V(H')$, and the set $W^e$ consists of all vertices $x\in V(H)$ with $|J_x|>0$.
 
 We now verify the remaining conditions of Lemma~\ref{thm:dblow}. We claim $|W^e|\le \Delta^2|V_0|\le \eps p^{\max_{x\not\in W^e}\pi^{\tau'}(x)}n/r_1$. Observe that $\pi^{\tau'}(x)\le\pitau(x)+|J_x|\le D+1$. For $D=1$, we have $|V_0|\le\Ca p^{-1}$, and by choice of $C$ the desired inequality follows. For $D\ge 2$, we have $|V_0|\le\Ca p^{-2}$, and again by choice of $C$ we have the desired inequality.
 
 The last condition we must verify is~\ref{dbul:4}, that $\tau'$ is a $(\tD,p,\eps n/r_1)$-bounded order. For any vertex $x$ of $H'$, we have $\pi^{\tau'}(x)\le\pitau(x)+1\le D+1$, and furthermore for all vertices not in $W^e$ we have $\pi^{\tau'}(x)=\pitau(x)\le D$. To verify~\ref{ord:Dx}, first note that by construction the vertices of $\bigcup\widetilde{\mathcal{W}}$ have degree at most $2D\le\tD$. Further, observe that if $D=1$ then $H'$ contains no triangles, and $\tD=3=D+2$. Since vertices in $N\big(\bigcup\widetilde{\mathcal{W}}\big)$ are by construction not image restricted, so are not in $W^e$, this is as required for~\ref{ord:Dx}. If on the other hand $D\ge 2$ then $\tD\ge D+3$, and again the conditions of~\ref{ord:Dx} are met.
 Next, if $x\not\in W^e$ then $\pi^{\tau'}(x)\le D$, so that~\ref{ord:halfD} holds.
 Finally, observe that $\max_{z\not\in W^e}\pi^{\tau'}(z)\le D$, and vertices $x\in N\big(\bigcup\widetilde{\mathcal{W}}\big)$ by construction have $\pi^{\tau'}(x)=\pitau(x)\le D$, so that~\ref{ord:NtX} holds.
 
 We can thus apply Lemma~\ref{thm:dblow} to embed $H'$ into $G'$, completing the embedding of $H$ into $G$ as desired.
\end{proof}

The proof of Theorem~\ref{thm:degenerate} from Theorem~\ref{thm:degen} follows the deduction of Theorem~\ref{thm:main} from Theorem~\ref{thm:maink}, and we omit it.

\section{The Bandwidth Theorem in bijumbled graphs}
\label{sec:proofjumbled}
Again, Theorem~\ref{thm:jumbled} is a consequence of the following.
\begin{theorem}\label{thm:jumbledk}
For each $\gamma >0$, $\Delta \geq 2$, and $k \geq 1$, there exists a constant $c >0$ such that the following holds for any $p>0$. Given $\nu\le cp^{\max(4,(3\Delta+1)/2)}n$, suppose $\Gamma$ is a $\big(p,\nu\big)$-bijumbled graph, $G$ is a spanning subgraph of $\Gamma$ with $\delta(G) \geq\big(\tfrac{k-1}{k}+\gamma\big)pn$, and $H$ is a $k$-colourable graph on $n$ vertices with $\Delta(H) \leq \Delta$ and bandwidth at most $c n$. Suppose further that $H$ has a labelling $\cL$ of its vertex set of bandwidth at most $c n$, a $(k+1)$-colouring that is $(z,c)$-zero-free with respect to $\cL$, and where the first $\sqrt{c} n$ vertices in $\cL$ are not given colour zero, and the first $cn$ vertices in $\cL$ include $c^{-1}p^{-6} \nu^2n^{-1}$ vertices in $V(H)$ that are not contained in any triangles of $H$. Then $G$ contains a copy of $H$.
\end{theorem}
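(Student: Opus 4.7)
The plan is to follow exactly the framework of the proof of Theorem~\ref{thm:maink}, replacing each of the four main lemmas by its bijumbled counterpart. The single systematic technical shift is that throughout we work with the stronger notion of $(\eps,d,p)$-regularity (with two-sided density control) rather than merely lower-regularity: this is necessary because the bijumbled regularity inheritance lemmas in~\cite{blowup} deliver inherited regularity under appropriate bijumbledness hypotheses, but they require this extra strength on the input pair. The Lemma for $G$, the common neighbourhood lemma, and the Sparse Blow-up Lemma all need bijumbled replacements, whereas Lemma~\ref{lem:H2} (the Lemma for~$H$) and Lemma~\ref{lem:balancing} (the balancing lemma, whose statement already contains a \emph{regular} version with the required edge-uniformity hypothesis, which is a trivial consequence of $(p,\nu)$-bijumbledness when $\nu=o(p^2 n)$) can be used verbatim.

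First, I would establish a bijumbled analog of the Lemma for $G$ (Lemma~\ref{lem:G}). Given a $(p,\nu)$-bijumbled $\Gamma$ with a subgraph $G$ of minimum degree at least $(\tfrac{k-1}{k}+\gamma)pn$, apply a dense minimum-degree regularity lemma (valid because $\Gamma$ looks locally dense at any scale well above $\nu/p$), use Theorem~\ref{thm:bandwidth} on the reduced graph to embed a backbone $B^k_r$, and put into the exceptional set $V_0$ all vertices that either have incorrect $\Gamma$-degree into some cluster or whose $\Gamma$-neighbourhoods fail one- or two-sided inheritance on $R^k_r$, plus the few additional vertices needed to restore super-regularity on $K^k_r$. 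Using bijumbled one- and two-sided regularity inheritance (from~\cite{blowup}), the bound $\nu\le cp^{\max(4,(3\Delta+1)/2)}n$ gives $|V_0|\le Cp^{-6}\nu^2 n^{-1}$; this is precisely the count of triangle-free vertices the theorem provides at the start of $\cL$.

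Second, apply the Lemma for~$H$ (Lemma~\ref{lem:H2}) unchanged to obtain $f\colon V(H)\to V(R^k_r)$ and $X\subseteq V(H)$ with properties~\ref{lemH:H1}--\ref{lemH:H6}. Then perform the pre-embedding exactly as in Section~\ref{sec:proofmain}: fix a small set $S\subset V(G)$, chosen via a bijumbled analog of Lemma~\ref{lem:hypgeo} (a direct counting argument from the defect form of the bijumbledness inequality) so that $S$ does not over-concentrate on any joint $\Gamma$-neighbourhood of up to $\Delta$ vertices or any cluster. Then sequentially pick the vertex $v\in V_0$ with fewest remaining neighbours in $S$, embed onto it a triangle-free vertex $x\in F$ far from all previously pre-embedded vertices, and embed $N_H(x)$ into neighbours of $v$ in $S$ chosen by a bijumbled analog of the Common Neighbourhood Lemma. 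Finally, after locally rewiring $f$ near each pre-embedded $x$ to respect the chosen image cluster, apply the balancing lemma to get parts of exactly the right sizes, and conclude with the bijumbled version of the Sparse Blow-up Lemma from~\cite{blowup}. All image restriction pair conditions transfer from the random-graph proof with no change beyond replacing `lower-regular' by `regular'.

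The genuine technical obstacle is the bijumbled version of the Common Neighbourhood Lemma. Choosing $w_1,\ldots,w_\Delta\in W$ inductively so that every common $\Gamma$-neighbourhood $\bigcap_{j\in\Lambda}N_\Gamma(w_j,V_i)$ has size $(1\pm\epsa)p^{|\Lambda|}|V_i|$ and induces the pairwise regular pairs needed to set up the restriction pair: in the random case this is handled by iterated Chernoff concentration plus Lemmas~\ref{lem:OSRIL} and~\ref{lem:TSRIL}, but in the bijumbled case one needs a quantitative count of `bad' vertices $w$ violating these conditions, obtained by repeated application of the bijumbledness inequality. The bijumbledness inequality controls $s$-fold common neighbourhoods only when the product of the relevant set sizes is large compared to $\nu^2/p^{2s}$, and iterating this $\Delta-1$ times together with one additional layer of regularity inheritance forces the hypothesis $\nu\le c p^{(3\Delta+1)/2}n$ (with the separate $p^4$ term arising from the two-sided inheritance cost when $\Delta$ is small). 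This is the same barrier that appears in the bijumbled blow-up lemma of~\cite{blowup}, so the same bijumbledness bound suffices throughout.
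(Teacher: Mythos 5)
Your proposal is correct and follows essentially the same route as the paper's proof: replace Lemma~\ref{lem:G}, Lemma~\ref{lem:common}, and Lemma~\ref{thm:blowup} by their bijumbled counterparts (Lemmas~\ref{lem:pseudG}, \ref{lem:pseudcommon}, \ref{thm:jblowup}), use Lemma~\ref{lem:H2} and Lemma~\ref{lem:balancing} unchanged, work throughout with regularity rather than lower-regularity, and feed $|V_0|\le Cp^{-6}\nu^2 n^{-1}$ into the pre-embedding. Two small inaccuracies are worth flagging but do not affect the correctness of the strategy. First, you do not need a ``bijumbled analog of Lemma~\ref{lem:hypgeo}'': that lemma is a purely probabilistic statement about a uniformly random $m$-subset $S$ of a fixed ground set and makes no reference to edge distribution, so it is applied verbatim; the only thing that changes is that the floor $\Ca\log n$ in its conclusion must still be dominated by $\eps\mu p^\Delta n$, which requires combining $\nu\le cp^{\max(4,(3\Delta+1)/2)}n$ with the universal lower bound $\nu\gtrsim\sqrt{pn}$ for bijumbled graphs (Proposition~\ref{prop:bijn}). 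Second, and for the same reason, you should note that Proposition~\ref{prop:bijn} is also what supplies the hypothesis $p\ge\Ca(\log n/n)^{1/2}$ needed to invoke Lemma~\ref{lem:balancing} verbatim — merely citing the balancing lemma's built-in `regular' variant does not discharge its probability hypothesis.
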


The proof of Theorem~\ref{thm:jumbledk} is a straightforward modification of
that of Theorem~\ref{thm:maink}. Rather than repeating the entire proof, we
sketch the modifications which have to be made. Again, for more details and
background on this result see~\cite{JuliaEsDiss}.

Since we are working with bijumbled graphs, we need to work with regular pairs, rather than lower-regular pairs, at all times. In order to use this concept, and to work with bijumbled graphs, we need versions of Lemmas~\ref{thm:blowup},~\ref{lem:OSRIL}, and~\ref{lem:TSRIL}, and Proposition~\ref{prop:chernoff}, which work with regular pairs and with $\Gamma$ a bijumbled graph rather than a random graph. We also need the following easy proposition, which lower bounds the possible $\nu$ for a $(p,\nu)$-jumbled graph with $p>0$.
\begin{proposition}\label{prop:bijn}
 Suppose $\tfrac{16}{n}<p<1-\tfrac{16}{n}$. There does not exist any $(p,\nu)$-bijumbled $n$-vertex graph with $\nu\le\min\big(\sqrt{pn/32},\sqrt{(1-p)n/32}\big)$.
\end{proposition}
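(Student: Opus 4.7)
I would argue by contradiction, supposing such a $(p,\nu)$-bijumbled graph $\Gamma$ exists. The first step is to exploit complement symmetry: if $\Gamma$ is $(p,\nu)$-bijumbled then $\bar\Gamma$ is $(1-p,\nu)$-bijumbled, since for disjoint $X,Y$ we have $e_{\bar\Gamma}(X,Y)-(1-p)|X||Y|=-\bigl(e_\Gamma(X,Y)-p|X||Y|\bigr)$. Replacing $\Gamma$ by $\bar\Gamma$ if necessary, I may assume $p\le 1/2$, in which case the relevant half of the minimum is $\nu^2\le pn/32$.

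The plan is then to derive incompatible upper and lower bounds on $e(\Gamma)$. For the upper bound I first cap every vertex degree: when $\deg(v)\ge 1$ the pair $(\{v\},\NGa(v))$ is a valid disjoint pair and bijumbledness applied to it gives $(1-p)\deg(v)\le\nu\sqrt{\deg(v)}$, whence $\deg(v)\le\nu^2/(1-p)^2\le 4\nu^2\le pn/8$ using $p\le 1/2$. This bound holds trivially for isolated vertices, and summing over all $v$ yields $e(\Gamma)\le pn^2/16$. For the matching lower bound I simply fix any two disjoint subsets $X,Y\subset V(\Gamma)$ of size $\lfloor n/2\rfloor$ and apply bijumbledness directly to the pair $(X,Y)$ to obtain $e(\Gamma)\ge e(X,Y)\ge p|X||Y|-\nu\sqrt{|X||Y|}\ge p(n-1)^2/4-\nu n/2$.

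Comparing the two bounds and rearranging gives $\nu n/2\ge p(3n^2-8n+4)/16$. The range hypothesis $16/n<p<1-16/n$ forces $n\ge 33$, so $3n^2-8n+4\ge 3n^2/2$ and therefore $\nu\ge 3pn/16$. Squaring and invoking $\nu^2\le pn/32$ then forces $pn\le 8/9$, directly contradicting the standing hypothesis $pn>16$. I do not anticipate any real obstacle: the whole argument is driven by the single observation that bijumbledness applied to $(\{v\},\NGa(v))$ caps the degree of every vertex at roughly $\nu^2/(1-p)^2$, which, when $\nu$ is as small as the proposition allows, is simply too small to be consistent with the edge density that bijumbledness simultaneously forces across a balanced cut.
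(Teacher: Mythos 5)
Your proof is correct, and it takes a genuinely different route from the paper's. The paper argues by a dichotomy: either at least $\tfrac12 n$ vertices have degree at least $4pn$, in which case $e(\Gamma)\ge pn^2$ and applying bijumbledness to a maximum cut forces $\nu\ge pn/2$; or at least $\tfrac12 n$ vertices have degree below $4pn$, in which case one picks a set $A$ of $\tfrac{1}{8p}$ such vertices, finds a set $B$ of $\tfrac n4$ vertices with no neighbour in $A$, and applies bijumbledness to $(A,B)$ to get $\nu\ge\sqrt{pn/32}$ directly. Your argument avoids the case split entirely: the observation that bijumbledness applied to $(\{v\},N_\Gamma(v))$ gives $(1-p)\deg(v)\le\nu\sqrt{\deg(v)}$, hence $\deg(v)\le\nu^2/(1-p)^2$, is a clean local use of the bijumbledness hypothesis that uniformly caps every degree, and combining the resulting upper bound $e(\Gamma)\le pn^2/16$ with the lower bound from a balanced cut does yield the contradiction as you compute. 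What the paper's two-case argument buys is a bound that matches the stated threshold $\sqrt{pn/32}$ on the nose (Case 2 produces exactly $\nu\ge\sqrt{pn/32}$), whereas your argument reaches a weaker numerical conclusion ($pn\le 8/9$ versus $pn>16$) that still suffices but leaves slack. What your argument buys in exchange is a shorter, unified proof with no dichotomy and no need to construct the auxiliary non-neighbourhood set $B$; the singleton-neighbourhood trick is the key idea the paper does not use. Both deduce the $p\ge\tfrac12$ case by passing to the complement in the same way.
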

\begin{proof}
 Suppose that $\Gamma$ is a $(p,\nu)$-bijumbled graph on $n$ vertices with $p\le\tfrac12$. If $\Gamma$ contains $\tfrac12n$ vertices of degree at least $4pn$, then we have $e(\Gamma)\ge pn^2$, and letting $A,B$ be a maximum cut of $\Gamma$, by bijumbledness we have
 \[\frac12pn^2\le e(A,B)\le p|A||B|+\nu\sqrt{|A||B|}\le\tfrac14pn^2+\frac12\nu n\,,\]
 and thus $\nu\ge pn/2\ge\sqrt{pn/32}$.
 
 If on the contrary $\Gamma$ contains at least $\tfrac12n$ vertices of degree less than $4pn$, then let $A$ be a set of $\tfrac{1}{8p}$ such vertices, and $B$ a set of $\tfrac{n}{4}$ vertices with no neighbours in $A$. By bijumbledness, we have
 \[0\ge p|A||B|-\nu\sqrt{|A||B|}=\frac{n}{32}-\nu\sqrt{n/(32p)}\]
 and thus $\nu\ge \sqrt{pn/32}$. The same argument applied to $\overline{\Gamma}$ proves the $p\ge\tfrac12$ case.
\end{proof}

The following sparse blow-up lemma for jumbled graphs is proved in~\cite{blowup}.
\begin{lemma}[{\cite[Lemma~1.25]{blowup}}]\label{thm:jblowup}
  For all $\Delta\ge 2$, $\Delta_{R'}$, $\Delta_J$, $\alpha,\zeta, d>0$, $\kappa>1$
  there exist $\eps,\rho>0$ such that for all $r_1$ there is a $c>0$ such
  that if $p>0$ and 
  \[\beta\le cp^{\max(4,(3\Delta+1)/2)}n\]
  any $(p,\beta)$-bijumbled graph~$\Gamma$ on $n$ vertices satisfies the following.
   
  Let $R$ be a graph on $r\le r_1$ vertices and let $R'\subset R$ be a spanning
  subgraph with $\Delta(R')\leq \Delta_{R'}$.
  Let $H$ and $G\subset \Gamma$ be graphs given with $\kappa$-balanced,
  size-compatible vertex partitions 
  $\cX=\{X_i\}_{i\in[r]}$ and $\cV=\{V_i\}_{i\in[r]}$, respectively, which have parts of size at
  least $m\ge n/(\kappa r_1)$. Let 
  $\tcX=\{\tX_i\}_{i\in[r]}$ be a family of subsets of $V(H)$,
  $\cI=\{I_x\}_{x\in V(H)}$ be a family of image restrictions, and
  $\cJ=\{J_x\}_{x\in  V(H)}$  be a family of restricting vertices.
  Suppose that
  \begin{enumerate}[label=\itmarab{JBUL}]
  \item $\Delta(H)\leq \Delta$, $(H,\cX)$ is an $R$-partition,
    and $\tcX$ is an
    $(\alpha,R')$-buffer for $H$,
  \item $(G,\cV)$ is an $(\eps,d,p)$-regular $R$-partition, which is 
    $(\eps,d,p)$-super-regular on $R'$, 
    and has one-sided inheritance on~$R'$,
    and two-sided inheritance on~$R'$ for $\tcX$,
  \item $\cI$ and $\cJ$ form
    a $(\rho p^\Delta,\zeta,\Delta,\Delta_J)$-restriction pair.
  \end{enumerate}
  Then there is an embedding $\psi\colon V(H)\to V(G)$ such that $\psi(x)\in
  I_x$ for each $x\in H$.
\end{lemma}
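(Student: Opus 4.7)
The plan is to follow the framework of the Koml\'os--S\'ark\"ozy--Szemer\'edi blow-up lemma, adapted to the bijumbled setting, as was done in~\cite{blowup} for the random case (Lemma~\ref{thm:blowup}). The argument splits $V(H)$ into a \emph{buffer} $\bigcup_i\tX_i$ to be embedded last, and a non-buffer complement. Before starting, I would set aside, in each cluster $V_i$ of $G$, a small random \emph{reservoir} $V_i^{\mathrm{res}}$, refining the partition so that regularity on $R$, super-regularity on $R'$, one-sided inheritance on $R'$, and two-sided inheritance on $R'$ for $\tcX$ all hold simultaneously for $\cV\setminus\cV^{\mathrm{res}}$ and for $\cV^{\mathrm{res}}$. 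The embedding then proceeds in two phases: a \emph{random greedy phase} embedding the non-buffer vertices into $V\setminus V^{\mathrm{res}}$, followed by a \emph{matching phase} in which each $\tX_i$ is routed into $V_i^{\mathrm{res}}$ (together with whichever reservoir or non-reservoir slots remain unoccupied) via Hall's theorem.

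The first technical prerequisite is a collection of bijumbled analogues of our key tools: the one- and two-sided regularity inheritance lemmas (Lemmas~\ref{lem:OSRIL} and~\ref{lem:TSRIL}) and the common-neighborhood lemma (Lemma~\ref{lem:common}), all with `lower-regular' replaced by `regular'. In a $(p,\nu)$-bijumbled graph, a direct counting argument shows that for a regular pair $(X,Y)$ all but $O\bigl(\nu^2/(p^2|X|)\bigr)$ vertices $v$ have $\bigl(N_\Gamma(v,X),Y\bigr)$ and $\bigl(N_\Gamma(v,X),N_\Gamma(v,Y)\bigr)$ regular with slightly worse parameters, and for any $s\le\Delta$ vertices $w_1,\dots,w_s$ whose mutual common neighborhoods iterate this inheritance, the intersection $\bigcap_jN_\Gamma(w_j,V_i)$ has size $(1\pm o(1))p^s|V_i|$ and the induced pairs remain regular. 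Chasing the $s$-fold inheritance error terms through the recursion is what forces $\nu\le cp^{\max(4,(3\Delta+1)/2)}n$; the exponent $(3\Delta+1)/2$ comes from requiring that after $\Delta-1$ nested applications of inheritance and $\Delta$-wise common-neighborhood bounds simultaneously (for the restriction pair generated by the greedy phase), all error terms remain $o(1)$.

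In the greedy phase I would process non-buffer vertices in an order that respects the $R$-partition. For each $x$, maintain its candidate set $A_x\subseteq I_x\setminus V^{\mathrm{res}}$ obtained by intersecting with $N_G(\psi(y))$ over already-embedded neighbors $y$. The invariants to maintain inductively are: $|A_x|$ stays close to its expected value $\zeta(dp)^{|J_x|+d_x^-}|V_{f(x)}|$ where $d_x^-$ counts embedded neighbors; and the pair formed by $A_x$ with $A_{x'}$ for any unembedded neighbor $x'$ of $x$ is still $(\eps,d,p)_G$-regular. At each step we choose $\psi(x)$ uniformly from $A_x$ minus a small \emph{forbidden set} of vertices whose selection would violate one of the above invariants for a future vertex; bijumbled inheritance shows the forbidden set has size $o(|A_x|)$. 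A concentration argument (using the bijumbledness-controlled variance of candidate sizes) shows the invariants hold throughout.

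The matching phase is where the main obstacle lies. For each cluster $i$, I build the bipartite auxiliary graph on $\tX_i$ and $V_i\setminus\psi(W_i\setminus\tX_i)$ in which $x$ is joined to $v$ iff $v\in\bigcap_{y\in N_H(x)}N_G(\psi(y))$. By super-regularity and the greedy-phase invariants, each $x\in\tX_i$ has candidate set of size $\Omega\bigl(p^{\Delta}|V_i|\bigr)$, and the reservoir survives essentially untouched. Verifying Hall's condition then requires showing that for every $S\subseteq\tX_i$ the neighborhood in the auxiliary graph has size at least $|S|$; the hard case is \emph{medium} $|S|$, where one must bound the number of reservoir vertices missed by \emph{all} of the candidate sets of $S$. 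This is exactly where the full strength of bijumbledness is needed: one expresses this missed set as an intersection/union of sets defined by $\le\Delta$-wise $\Gamma$-common-neighborhoods and applies the bijumbled common-neighborhood bound. Completing this Hall verification, while simultaneously controlling leftover non-reservoir slots (rebalanced by absorbing a few extra vertices per cluster into the matching, as in~\cite{blowup}), will be the principal difficulty and is the step where the exponent $\max\bigl(4,(3\Delta+1)/2\bigr)$ is really used.
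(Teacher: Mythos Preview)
This lemma is not proved in the present paper: it is quoted verbatim from~\cite{blowup} (as Lemma~1.25 there) and used here purely as a black box in the proof of Theorem~\ref{thm:jumbledk}. There is therefore no ``paper's own proof'' to compare your proposal against.

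For what it is worth, your sketch is broadly in line with the actual strategy of~\cite{blowup}: a random greedy algorithm phase followed by a Hall's-theorem matching phase for the buffer, underpinned by regularity inheritance lemmas for bijumbled graphs. But what you have written is only an outline of the architecture, not a proof; the genuine difficulties (quantitative tracking of candidate sets through the randomised greedy algorithm, the precise way the exponent $\max(4,(3\Delta+1)/2)$ arises, and the Hall verification in the matching phase) are precisely the content of~\cite{blowup} and cannot be reconstructed from this level of description alone. If the task is to supply a proof for this paper, the correct answer is simply to cite~\cite{blowup}.
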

There are three differences between this result and Lemma~\ref{thm:blowup}. First, we assume a bijumbledness condition on $\Gamma$, rather than that $\Gamma$ is a typical random graph. Second, we require regular pairs in place of lower-regular pairs. Third, the number of vertices we may image restrict is much smaller. We will see that these last two restrictions do not affect our proof substantially.

Next, in~\cite{ABSS}, the following regularity inheritance lemmas for bijumbled graphs are proved.
\begin{lemma}[{\cite[Lemma~3]{ABSS}}]\label{lem:pOSRIL}
  For each $\eps',d>0$ there are $\eps,c>0$ such that for all $0<p<1$ the
  following holds.
  Let $G\subset \Gamma$ be graphs and $X,Y,Z$ be disjoint vertex sets in
  $V(\Gamma)$.  Assume that 
  \begin{itemize}
  \item $(X,Z)$ is $(p,cp^{3/2}\sqrt{|X||Z|})$-bijumbled
    in $\Gamma$,
  \item $(X,Y)$ is $\big(p,cp^2(\log_2\tfrac{1}{p})^{-1/2}\sqrt{|X||Y|}\big)$-bijumbled in $\Gamma$, and
   \item $(X,Y)$ is $(\eps,d,p)_G$-regular.  
  \end{itemize}
  Then, for all but at most at most $\eps'|Z|$ vertices~$z$ of~$Z$, the pair
  $\big(N_\Gamma(z)\cap X,Y\big)$ is $(\eps',d,p)_G$-regular.
\end{lemma}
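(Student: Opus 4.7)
The plan is to show that the set of ``bad'' vertices $z \in Z$ --- those $z$ for which $(N_\Gamma(z) \cap X, Y)$ fails to be $(\eps', d, p)_G$-regular --- has size at most $\eps' |Z|$. Let $d' \ge d$ be the density witnessing the $(\eps,d,p)_G$-regularity of $(X,Y)$. The natural candidate for the density of $(N_\Gamma(z)\cap X, Y)$ is this same $d'$. First I would use the bijumbledness of $(X, Z)$ with parameter $cp^{3/2}\sqrt{|X||Z|}$ to discard at most $\tfrac{\eps'}{10}|Z|$ vertices $z$, after which all remaining $z$ satisfy $|N_\Gamma(z)\cap X| = (1\pm\delta)p|X|$ for a small $\delta$ depending on $c$; this is a direct application of the bijumbledness inequality applied to the pair $(X,Z^*)$ where $Z^*$ is the set of atypical $z$. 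For these size-typical $z$, it also follows readily from bijumbledness of $(X,Z)$ that $d_{G,p}(N_\Gamma(z)\cap X, Y) = d' \pm o(1)$ for all but $\tfrac{\eps'}{10}|Z|$ of them.

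For the remaining $z$, we must verify that for every pair of subsets $A \subseteq N_\Gamma(z)\cap X$ and $B \subseteq Y$ with $|A| \ge \eps'|N_\Gamma(z)\cap X|$ and $|B| \ge \eps'|Y|$, the density $d_{G,p}(A, B)$ lies within $\eps'$ of $d'$. The key reduction is: for any fixed $B\subseteq Y$ with $|B|\ge\eps'|Y|$, the $(\eps,d,p)_G$-regularity of $(X,Y)$ (with $\eps$ chosen much smaller than $\eps'$) forces the ``exceptional'' set $X_B^\pm := \{x\in X : \deg_G(x,B) \notin (d'\pm\eps'^2/4)p|B|\}$ to have size $O(\eps/\eps'^4)\,|X|$; the bijumbledness of $(X,Y)$ gives a matching upper bound on $\deg_G(x,B)$ even for $x\in X_B^+$, namely $\deg_\Gamma(x,B) \le p|B| + cp^2\sqrt{|B|/\log(1/p)}\sqrt{|X|}$ for all but $o(|X|)$ vertices. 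If $|N_\Gamma(z)\cap X_B^\pm| \le \tfrac{\eps'^2}{4} p|X|$, then splitting $A$ into $A\cap X_B^\pm$ and $A\setminus X_B^\pm$ and summing degrees $\deg_G(x,B)$ yields $d_{G,p}(A,B) = d' \pm \eps'$ as needed.

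The main obstacle is the quantification over all $B\subseteq Y$: there are $2^{|Y|}$ candidates, so a naive union bound using only bijumbledness of $(X,Z)$ (which bounds, for each fixed $B$, the number of $z$ with $|N_\Gamma(z)\cap X_B^\pm|$ too large by roughly $c^2 p |Z|/\eps'^4$) would be utterly hopeless. This is precisely where the stronger bijumbledness condition on $(X,Y)$, with its $(\log_2\tfrac{1}{p})^{-1/2}$ factor, is crucial. I would implement a dyadic chaining argument: for each $k \ge 1$, partition the relevant $B$'s by the dyadic scale $2^{-k}|Y|$ of $|B|$, and within each scale approximate an arbitrary $B$ by one of a collection $\mathcal{B}_k$ of size roughly $\binom{|Y|}{2^{-k}|Y|}$ of ``anchor'' sets, using bijumbledness of $(X,Y)$ to control the error introduced by replacing $B$ with its anchor. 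The $(\log_2\tfrac{1}{p})^{-1/2}$ factor allows the chaining to proceed over the $\log_2(1/p)$ dyadic levels needed to cover all $B$ down to size $\eps'|Y|$, with the losses at each level summing to a constant. A union bound over the (now manageable) set of anchors, combined with the per-anchor estimate from $(X,Z)$-bijumbledness, discards a further $\tfrac{\eps'}{2}|Z|$ vertices and completes the proof.
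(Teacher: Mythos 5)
You correctly identify the basic shape of the argument and correctly observe that the main difficulty is quantifying over all witness sets $B\subseteq Y$. However, the dyadic chaining you propose to resolve this does not work, and this is a genuine gap. You suggest, at scale $k$, using a collection $\mathcal{B}_k$ of anchor sets of cardinality roughly $\binom{|Y|}{2^{-k}|Y|}$ and then union-bounding over anchors. That is $2^{\Theta(|Y|)}$ anchors. The per-anchor bound that $(X,Z)$-bijumbledness gives --- that for a fixed exceptional set $X_B^\pm\subset X$ of size $\Theta(\eps|X|)$, at most $\Theta(c^2\eps p|Z|/\eps'^4)$ vertices $z$ have $|N_\Gamma(z)\cap X_B^\pm|$ above the threshold $\tfrac{\eps'^2}{4}p|X|$ --- is only polynomially small in $p$, not exponentially small in $|Y|$. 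A union bound over $2^{\Theta(|Y|)}$ anchors therefore blows up immediately unless $c$ is allowed to shrink exponentially in $|Y|$, which it is not. The anchor count also cannot be reduced to something subexponential: the $\Gamma$-density error incurred by replacing $B$ with its anchor $B'$ is of order $p|A||B\triangle B'|$, so keeping it small requires $|B\triangle B'|\ll|B|$, and the number of Hamming balls of that radius needed to cover all $B$ of size $\approx 2^{-k}|Y|$ remains $2^{\Theta(|Y|)}$. Chaining buys something only when the covering numbers at each scale are subexponential, which they are not here.

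A second issue is the claimed source of the $(\log_2\tfrac1p)^{-1/2}$ factor. You say the chaining runs over the ``$\log_2(1/p)$ dyadic levels needed to cover all $B$ down to size $\eps'|Y|$,'' but the dyadic range of $|B|$ between $\eps'|Y|$ and $|Y|$ has only $\log_2(1/\eps')$ levels, a constant independent of $p$, so this cannot be where the $p$-dependent logarithm enters. Note also that the present paper does not reprove this lemma; it cites it from~\cite{ABSS}, whose argument does not rely on a net or chaining bound over the witness-set space $\{B\subseteq Y\}$, and where the $\log(1/p)$ factor has a different provenance.
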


\begin{lemma}[{\cite[Lemma~4]{ABSS}}]\label{lem:pTSRIL}
  For each $\eps',d>0$ there are $\eps,c>0$ such that for all $0<p<1$ the
  following holds.  Let $G\subset \Gamma$ be graphs and $X,Y,Z$ be disjoint
  vertex sets in  $V(\Gamma)$.  Assume that
  \begin{itemize}
  \item $(X,Z)$ is $(p,cp^{2}\sqrt{|X||Z|})$-bijumbled
    in $\Gamma$,
  \item $(Y,Z)$ is $(p,cp^3\sqrt{|Y||Z|})$-bijumbled in $\Gamma$,
  \item $(X,Y)$ is $(p,cp^{5/2}\big(\log_2\tfrac{1}{p}\big)^{-\frac12}\sqrt{|X||Y|})$-bijumbled in $\Gamma$, and
  \item $(X,Y)$ is $(\eps,d,p)_G$-regular.  
  \end{itemize}
  Then, for all but at most $\eps'|Z|$ vertices~$z$ of~$Z$, the pair
  $\big(N_\Gamma(z)\cap X,N_\Gamma(z)\cap Y\big)$ is $(\eps',d,p)_G$-regular.
\end{lemma}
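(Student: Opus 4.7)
The plan is to prove the lemma by contradiction, extracting from each bad $z$ a witness pair of subsets of $(X,Y)$ and then deriving a contradiction via a bijumbledness-driven moment count. This follows the standard pattern for regularity inheritance in the pseudorandom setting, adapted from the one-sided case.

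First I would use $(X,Z)$- and $(Y,Z)$-bijumbledness to ensure that for all but at most $\tfrac{\eps'}{3}|Z|$ vertices $z\in Z$, the sizes $|N_\Gamma(z)\cap X|$ and $|N_\Gamma(z)\cap Y|$ are within $(1\pm\eta)p|X|$ and $(1\pm\eta)p|Y|$ respectively, for some $\eta=\eta(\eps',d)$ much smaller than $\eps'$. Indeed, if more than $\tfrac{\eps'}{6}|Z|$ vertices had $\Gamma$-degree into $X$ outside this range on one side, bundling them into a single set $Z''\subseteq Z$ would give $\big|e_\Gamma(X,Z'')-p|X||Z''|\big|>\eta p|X||Z''|$, contradicting the $(X,Z)$-bijumbledness assumption provided $c$ is small enough; the $Y$-side is symmetric. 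Write $Z^\ast$ for the set of vertices passing both size tests.

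Now assume for contradiction that a set $Z'\subseteq Z^\ast$ with $|Z'|\ge\tfrac{2\eps'}{3}|Z|$ has the property that $\big(N_\Gamma(z)\cap X,\ N_\Gamma(z)\cap Y\big)$ is not $(\eps',d,p)_G$-regular for each $z\in Z'$. For each such $z$ extract a witness pair $A_z\subseteq N_\Gamma(z)\cap X$, $B_z\subseteq N_\Gamma(z)\cap Y$ with $|A_z|\ge\tfrac12\eps' p|X|$, $|B_z|\ge\tfrac12\eps' p|Y|$, and $\big|e_G(A_z,B_z)-d'p|A_z||B_z|\big|\ge\eps'p|A_z||B_z|$, where $d'\ge d$ is the density certifying $(\eps,d,p)_G$-regularity of $(X,Y)$. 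After partitioning $Z'$ by the sign of the deviation and dyadically by the sizes of $A_z$ and $B_z$, we may restrict to a sub-collection in which these sizes agree to within a factor of $2$ and all deviations share the same sign; the $(\log_2(1/p))^{-1/2}$ factor in the $(X,Y)$-bijumbledness hypothesis is present precisely to absorb this dyadic loss at the end of the argument.

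The main estimate is a second-moment count. Setting $\sigma_z:=e_G(A_z,B_z)-d'p|A_z||B_z|$, so that $|\sigma_z|\ge\eps'p|A_z||B_z|\ge\tfrac14\eps'^3 p^3|X||Y|$ for every $z\in Z'$, I expand
\begin{equation*}
\sum_{z\in Z'}\sigma_z^2\;=\;\sum_{z\in Z'}e_G(A_z,B_z)^2\;-\;2d'p\sum_{z\in Z'}|A_z||B_z|\,e_G(A_z,B_z)\;+\;(d'p)^2\sum_{z\in Z'}|A_z|^2|B_z|^2.
\end{equation*}
The first term is a sum over quadruples $(x,x',y,y')\in X^2\times Y^2$ of $\mathbb{1}[xy,x'y'\in E(G)]$ weighted by $|\{z\in Z': x,x'\in A_z,\ y,y'\in B_z\}|$, which is at most the four-fold common $\Gamma$-neighbourhood $|N_\Gamma(x)\cap N_\Gamma(x')\cap N_\Gamma(y)\cap N_\Gamma(y')\cap Z|$; iterated application of the three bijumbledness hypotheses shows that for all but a negligible set of quadruples this equals $(1\pm o(1))p^4|Z|$. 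Combined with the $(\eps,d,p)_G$-regularity of $(X,Y)$, which supplies the main $d'^2$-contribution matching the expected value of $e_G(A_z,B_z)^2$, the remaining error terms can be bounded by at most $\tfrac{\eps'^7}{48}p^6|X|^2|Y|^2|Z|$ once $c$ is chosen sufficiently small. This contradicts the deterministic lower bound $\sum_{z\in Z'}\sigma_z^2\ge \tfrac{\eps'^7}{24}p^6|X|^2|Y|^2|Z|$ forced by the bad witnesses, completing the proof.

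The hard part will be the precise bookkeeping of bijumbledness error terms in the four-vertex common-neighbourhood count, and in particular verifying that the asymmetric exponents $p^{5/2},p^2,p^3$ in the three hypotheses are sharp enough to control the quadruple sum. The asymmetry between $(X,Z)$-bijumbledness at $p^2$ and $(Y,Z)$-bijumbledness at $p^3$ presumably reflects an asymmetric treatment of the two coordinates (for instance a Cauchy--Schwarz step in one variable requires tighter codegree control on the corresponding side), and a secondary subtlety is tracking exactly where each of the three bijumbledness hypotheses is consumed so that no single error exceeds the main deviation $\tfrac{\eps'^7}{48}p^6|X|^2|Y|^2|Z|$.
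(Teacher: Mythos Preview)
The paper does not contain a proof of this lemma; it is quoted verbatim as \cite[Lemma~4]{ABSS} and used as a black box. Consequently there is nothing in the present paper against which to compare your proposal.

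That said, your outline follows the standard template for regularity inheritance in the pseudorandom setting (degree control via bijumbledness, extraction of witness pairs for bad vertices, and a moment expansion of the deviations), which is broadly the shape of the argument in the cited source. Your proposal is, as you acknowledge, only a sketch: the claim that the quadruple common-neighbourhood count can be controlled to within the stated error using the three bijumbledness hypotheses is where all the work lies, and you have not supplied it. In particular, the asymmetric exponents $p^2$, $p^3$, $p^{5/2}(\log_2\tfrac1p)^{-1/2}$ are not arbitrary and do not fall out of a symmetric second-moment expansion; the actual proof in \cite{ABSS} organises the counting differently (it does not compute $\sum_z\sigma_z^2$ directly but rather controls codegree distributions in stages), and the specific exponents reflect that structure. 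Your sketch is not wrong as a heuristic, but it is not yet a proof, and the bookkeeping you defer is the entire content of the lemma.
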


The following two lemmas, which more closely resemble Lemmas~\ref{lem:OSRIL} and~\ref{lem:TSRIL}, are corollaries.

\begin{lemma}
\label{lem:pseudOSRIL}
For each $\eo, \ao >0$ there exist $\eps_0 >0$ and $C >0$ such that for any $0 < \eps < \eps_0$ and $0 < p <1$, if $\Gamma$ is any $(p,\nu)$-bijumbled graph the following holds. For any disjoint sets $X$ and $Y$ in $V(\Gamma)$ with $|X|\geq C p^{-3} \nu$ and $|Y| \geq C p^{-2} \nu$, and any subgraph $G$ of $\Gamma[X,Y]$ which is $(\eps, \ao,p)_G$-regular, there are at most $C p^{-3}\nu^2|X|^{-1}$ vertices $z \in V(\Gamma)$ such that $(X \cap N_{\Gamma}(z),Y)$ is not $(\eo,\ao,p)_G$-regular.
\end{lemma}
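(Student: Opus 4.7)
The plan is to derive Lemma~\ref{lem:pseudOSRIL} directly from Lemma~\ref{lem:pOSRIL} of~\cite{ABSS} by a contradiction argument, taking the set $Z$ of violating vertices as the $Z$ of that lemma. Given $\eo,\ao>0$, I would first invoke Lemma~\ref{lem:pOSRIL} with $\eps':=\min(\eo,\tfrac12)$ and $d:=\ao$ to obtain constants $\eps_1,c_1>0$. I then set $\eps_0:=\eps_1$ and $C:=\max(4/c_1^2,\,4/c_1)$, so that the arithmetic below closes up with constants to spare.

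Next, suppose for contradiction that the set $Z_0\subseteq V(\Gamma)$ of bad vertices has $|Z_0|>Cp^{-3}\nu^2/|X|$, and set $Z:=Z_0\setminus(X\cup Y)$, so that $(X,Y,Z)$ is a disjoint triple. Since $\Gamma$ is $(p,\nu)$-bijumbled, every pair of disjoint vertex sets in $\Gamma$ is itself $(p,\nu)$-bijumbled, so it only remains to verify the two size hypotheses of Lemma~\ref{lem:pOSRIL}. For $(X,Z)$, I use $|X||Z|>Cp^{-3}\nu^2$ to conclude
\[
c_1 p^{3/2}\sqrt{|X||Z|}>c_1\sqrt{C}\,\nu\ge\nu,
\]
and for $(X,Y)$, I use $|X||Y|\ge C^2p^{-5}\nu^2$ together with the elementary fact that $p\log_2(1/p)<1$ for all $0<p<1$ to conclude
\[
c_1 p^2\bigl(\log_2(1/p)\bigr)^{-1/2}\sqrt{|X||Y|}\ge \frac{c_1 C}{\sqrt{p\log_2(1/p)}}\,\nu\ge\nu.
\]
Lemma~\ref{lem:pOSRIL} applied to $(X,Y,Z)$ then yields at most $\eps'|Z|\le\tfrac12|Z|$ bad $z\in Z$, contradicting the definition of~$Z$.

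The main obstacle is handling bad vertices inside $X\cup Y$, which the triple $(X,Y,Z)$ does not cover. I would resolve this by first replacing the contradiction threshold by $3Cp^{-3}\nu^2/|X|$ and splitting $Z_0$ into the three classes $Z_0\setminus(X\cup Y)$, $Z_0\cap X$, and $Z_0\cap Y$; one of these exceeds $Cp^{-3}\nu^2/|X|$. The first class is treated above. For the class inside $X$, observe that $N_\Gamma(z)\cap X=N_\Gamma(z)\cap(X\setminus Z_0)$ for every $z\in Z_0\cap X$ (since $z\notin N_\Gamma(z)$, and removing further vertices of $Z_0$ only shrinks by vertices not adjacent to $z$ when necessary); if $|Z_0\cap X|\le\tfrac12|X|$ then Proposition~\ref{prop:subpairs3} shows that $(X\setminus Z_0,Y)$ is still $(2\eps,\ao,p)_G$-regular, so Lemma~\ref{lem:pOSRIL} applies to the disjoint triple $(X\setminus Z_0,Y,Z_0\cap X)$ verbatim. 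The case $|Z_0\cap X|>\tfrac12|X|$ cannot arise, since it would force $|Z_0\cap X|\cdot|X|>\tfrac12|X|^2\ge\tfrac12C^2p^{-6}\nu^2$, which together with $|Z_0\cap X|>Cp^{-3}\nu^2/|X|$ is self-consistent but contradicts $|Z_0\cap X|\le|X|$ once $|X|\ge Cp^{-3}\nu$ is combined with the bound $|X|\le n\le 32\nu^2/p$ from Proposition~\ref{prop:bijn}, after a short calculation absorbing the loss into $C$. The class inside $Y$ is handled symmetrically.
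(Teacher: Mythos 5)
Your derivation for $z\notin X\cup Y$ is correct and is exactly what the paper has in mind: Lemma~\ref{lem:pseudOSRIL} is stated as a corollary of Lemma~\ref{lem:pOSRIL} with no proof supplied, and in every application the vertex $z$ is drawn from a set disjoint from $X$ and $Y$ (indeed Lemma~\ref{lem:pseudG} explicitly restricts to $v\in V\setminus(V_0\cup V_{i,j})$, $v'\in V\setminus(V_0\cup V_{i,j}\cup V_{i',j'})$), so that case is the only one the paper uses. The arithmetic closing the two bijumbledness hypotheses of Lemma~\ref{lem:pOSRIL}, using $|X||Z|>Cp^{-3}\nu^2$ and $|X||Y|\ge C^2p^{-5}\nu^2$ together with $p\log_2(1/p)<1$, is also sound.

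Your extension to bad vertices inside $X\cup Y$, however, does not go through. The identity $N_\Gamma(z)\cap X=N_\Gamma(z)\cap(X\setminus Z_0)$ fails whenever $z$ has a $\Gamma$-neighbour in $Z_0\cap X$, and the parenthetical justification does not repair it; in fact $|N_\Gamma(z)\cap Z_0\cap X|$ is exactly the quantity that would need to be small for a Proposition~\ref{prop:subpairs3} argument to transfer non-regularity of $(N_\Gamma(z)\cap X,Y)$ to $(N_\Gamma(z)\cap(X\setminus Z_0),Y)$, and you have no control on it. Moreover, Proposition~\ref{prop:subpairs3} with $\mu=\tfrac12$ produces the regularity parameter $\eps+2\sqrt{1/2}>1$, not $2\eps$, so the threshold $|Z_0\cap X|\le\tfrac12|X|$ is far too weak to preserve regularity of $(X\setminus Z_0,Y)$. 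Finally, the argument that $|Z_0\cap X|>\tfrac12|X|$ ``cannot arise'' is not valid: combining $|X|\ge Cp^{-3}\nu$ with $n\le 32\nu^2/p$ from Proposition~\ref{prop:bijn} yields only $\nu\ge Cp^{-2}/32$, a lower bound on $\nu$ which is perfectly consistent and gives no contradiction. The cleanest repair is simply to prove and apply the lemma for $z\notin X\cup Y$, which suffices for every use in the paper; covering $z\in X$ or $z\in Y$ honestly would require a genuinely different argument (for instance partitioning $X$ and using bijumbledness to bound the number of $z\in X$ with atypically many $\Gamma$-neighbours in $Z_0\cap X$), which is considerably more delicate than what you wrote.
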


\begin{lemma}
\label{lem:pseudTSRIL}
For each $\et,\at>0$ there exist $\eps_0>0$ and
$C >0$ such that for any $0<\eps<\eps_0$ and $0<p<1$, if $\Gamma$ is any $(p,\nu)$-bijumbled graph the following holds. For any disjoint sets $X$
and $Y$ in $V(\Gamma)$ with $|X|,|Y|\ge Cp^{-3}\nu$, and any
subgraph $G$ of $\Gamma[X,Y]$ which is $(\eps,\at,p)_G$-regular, there are
at most $Cp^{-6}\nu^2/\min\big(|X|,|Y|\big)$ vertices $z \in V(\Gamma)$
such that $\big(X\cap N_\Gamma(z),Y\cap N_\Gamma(z)\big)$ is not
$(\et,\at,p)_G$-regular.
\end{lemma}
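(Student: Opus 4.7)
I will derive Lemma~\ref{lem:pseudTSRIL} as a corollary of Lemma~\ref{lem:pTSRIL}, using the $(p,\nu)$-bijumbledness of $\Gamma$ to supply the three bijumbledness conditions required there. The argument is by contradiction, with the role of the test set $Z$ in Lemma~\ref{lem:pTSRIL} played by (a subset of) the set of bad vertices. Given $\et,\at>0$, apply Lemma~\ref{lem:pTSRIL} with $\eps':=\et$ and $d:=\at$ to obtain constants $\eps,c>0$, set $\eps_0:=\eps$, and choose $C$ large enough as a function of $c$. Writing $Z^* := \{z \in V(\Gamma) : (N_\Gamma(z)\cap X, N_\Gamma(z)\cap Y) \text{ is not } (\et,\at,p)_G\text{-regular}\}$ and $m:=\min(|X|,|Y|)$, suppose for contradiction that $|Z^*| > Cp^{-6}\nu^2/m$. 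Partition $Z^* = Z_0 \dcup Z_X \dcup Z_Y$ according to whether a bad vertex lies outside $X\cup Y$, in $X$, or in $Y$; by pigeonhole one of these sets has size at least $|Z^*|/3$.

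In the easy case $|Z_0|\ge|Z^*|/3$, I apply Lemma~\ref{lem:pTSRIL} directly to $X, Y, Z_0$. Each of its three bijumbledness hypotheses---on $(X,Z_0)$, $(Y,Z_0)$, and $(X,Y)$ respectively---reduces, via $(p,\nu)$-bijumbledness of $\Gamma$, to an inequality of the form $\nu\le cp^a\sqrt{|A||B|}$. The first two follow from $|Z_0|\ge Cp^{-6}\nu^2/(3m)$ combined with $|X|,|Y|\ge Cp^{-3}\nu$, provided $C$ is large enough in terms of $c$. The third, involving $(X,Y)$, reduces to $Cc\ge p^{1/2}(\log_2(1/p))^{1/2}$, which holds because the right-hand side is bounded above by an absolute constant on $(0,1)$ (its maximum, attained near $p=e^{-1}$, is below $1$). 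Lemma~\ref{lem:pTSRIL} then concludes that at most $\et|Z_0|$ vertices of $Z_0$ yield non-regular pairs; but \emph{all} of $Z_0$ does, so $|Z_0|=0$, a contradiction.

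In the hard case $|Z_X|\ge|Z^*|/3$ (the case $|Z_Y|\ge|Z^*|/3$ being symmetric), I set $X':=X\setminus Z_X$ and apply Lemma~\ref{lem:pTSRIL} with $X'$, $Y$, $Z_X$ in the roles of $X$, $Y$, $Z$. Regularity of $(X',Y)$ transfers from that of $(X,Y)$ via Proposition~\ref{prop:subpairs3}, provided the ratio $|Z_X|/|X|$ is small; if it is not, I first subdivide $Z_X$ into pieces of size at most $\eps^2|X|/16$ and apply Lemma~\ref{lem:pTSRIL} to each piece separately. The conclusion yields regularity of $(N_\Gamma(z)\cap X', N_\Gamma(z)\cap Y)$ for most $z\in Z_X$. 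Since by bijumbledness $|N_\Gamma(z)\cap Z_X|\le p|Z_X|+\nu\sqrt{|Z_X|}$ is negligible compared to $|N_\Gamma(z)\cap X|\approx p|X|$, a further application of Proposition~\ref{prop:subpairs3} transfers this to regularity of the original pair $(N_\Gamma(z)\cap X, N_\Gamma(z)\cap Y)$---again a contradiction.

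\textbf{Main obstacle.} The principal technical difficulty is the hard case: Lemma~\ref{lem:pTSRIL} requires $Z$ disjoint from $X\cup Y$, whereas bad vertices may lie in $X$ or $Y$ and, when $m$ is close to its lower bound $Cp^{-3}\nu$, may well account for most of $Z^*$. The shrink-and-transfer argument above needs care to ensure the regularity parameter does not degrade past the threshold tolerated by Lemma~\ref{lem:pTSRIL}, which is arranged by taking $\eps_0$ suitably smaller than $\eps$. Verifying the three bijumbledness conditions---with their three distinct powers of $p$---is otherwise routine bookkeeping.
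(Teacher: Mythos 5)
The overall strategy — obtaining the statement as a corollary of Lemma~\ref{lem:pTSRIL} applied to (a subset of) the set of bad vertices, splitting according to whether those vertices lie inside or outside $X\cup Y$ — is the natural route, and the easy case (bad $z\notin X\cup Y$) is handled correctly: the three bijumbledness hypotheses do reduce to the inequalities you give, and $\sqrt{p\log_2(1/p)}$ is indeed bounded. (The paper itself provides no explicit derivation, so there is no reference proof to compare against.)

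The hard case, however, contains a genuine gap in the transfer step, and the subdivision idea does not repair it. The central claim — that $|N_\Gamma(z)\cap Z_X|\le p|Z_X|+\nu\sqrt{|Z_X|}$ is negligible compared to $|N_\Gamma(z)\cap X|\approx p|X|$ — fails as stated. First, $|N_\Gamma(z)\cap X|\approx p|X|$ is \emph{not} true for all $z$: the single-vertex bijumbledness estimate only gives $|N_\Gamma(z)\cap X|=p|X|\pm\nu\sqrt{|X|}$, and $\nu\sqrt{|X|}$ need not be small compared to $p|X|$ (one needs Proposition~\ref{prop:pseudchernoff} to say this for \emph{most} $z$ and must discard the exceptions, accounting for them in the final count). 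Second, even after subdividing into pieces $P_i$ with $|P_i|\le\eps^2|X|/16$, the error term $\nu\sqrt{|P_i|}$ compared to $p|X|$ gives a ratio of order $\eps\sqrt{\nu}p^{1/2}/\sqrt{C}$, which with $\nu$ up to $np^3/C$ becomes $\eps p^2\sqrt{n}/(4C)$ — unbounded as $n\to\infty$. So the bijumbledness estimate alone cannot close this step. The fix is again to pass through Proposition~\ref{prop:pseudchernoff} (with a dichotomy on whether $|Z'|$ is large enough for that proposition to apply). Moreover, the subdivision is actually unnecessary: since $Cp^{-6}\nu^2/\min(|X|,|Y|)\le p^{-3}\nu\le\min(|X|,|Y|)/C$, it suffices to remove a single subset $Z'\subset Z_X$ of size $\lceil Cp^{-6}\nu^2/(3m)\rceil$, which is automatically at most a $1/(3C)$-fraction of $X$.

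Two smaller issues: applying Lemma~\ref{lem:pTSRIL} with $\eps':=\et$ leaves no room at all for the transfer degradation — you need to target something strictly smaller than $\et$ — and each time Proposition~\ref{prop:subpairs3} is used in its ``regular'' (as opposed to ``lower-regular'') form, its extra density hypothesis $e_\Gamma(A,A')\le(1+\mu+\nu)p|A||A'|$ must be verified via Proposition~\ref{prop:pseudchernoff}, which the proposal silently assumes.
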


Note that the bijumbledness requirements of this lemma are such that if $Y$ and $Z$ are sets of size $\Theta(n)$, then $X$ must have size $\Omega\big(p^{-6}\nu^2 n^{-1}\big)$. This is where the requirement of Theorem~\ref{thm:jumbledk} for vertices of $H$ not in triangles comes from.

Finally, we give a bijumbled graphs version of Proposition~\ref{prop:chernoff}. We defer its proof, which is standard, and similar to that of Proposition~\ref{prop:chernoff}, to Appendix~\ref{app:tools}.
\begin{proposition}
\label{prop:pseudchernoff}
For each $\eps>0$ there exists a constant $C >0$ such that for every $p>0$, any graph $\Gamma$ which is $(p,\nu)$-jumbled has the  following property. For any disjoint $X,Y \subseteq V(\Gamma)$ with $|X|,|Y|\ge \eps^{-1}p^{-1}\nu$, we have $e(X,Y)=(1\pm\eps)p|X||Y|$, and $e(X)\le 2p|X|^2$. Furthermore, for every $Y\subset V(\Gamma)$ with $|Y|\ge Cp^{-1}\nu$, the number of vertices $v \in V(\Gamma)$ with $\big||\NGa(v,Y)| - p |Y|\big| > \eps p |Y|$ is at most $Cp^{-2}\nu^2|Y|^{-1}$.  
\end{proposition}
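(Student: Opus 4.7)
My plan is to prove the two conclusions separately. For the edge-count estimate, the defining inequality $|e(X,Y)-p|X||Y|| \le \nu\sqrt{|X||Y|}$ combined with the size hypothesis $|X|,|Y| \ge \eps^{-1}p^{-1}\nu$ immediately yields $e(X,Y)=(1\pm\eps)p|X||Y|$. For $e(X)\le 2p|X|^2$, since bijumbledness requires disjoint sets, I will consider a maximum cut $X=X_1\sqcup X_2$. A uniformly random bipartition places each edge in the cut with probability $1/2$, so the max cut satisfies $e(X_1,X_2)\ge e(X)/2$; bijumbledness in turn gives $e(X_1,X_2)\le p|X_1||X_2|+\nu\sqrt{|X_1||X_2|}\le p|X|^2/4+\nu|X|/2$. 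Combining yields $e(X)\le p|X|^2/2+\nu|X|\le 2p|X|^2$, using $\nu\le\eps p|X|$ from the size hypothesis.

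For the degree-deviation statement, set $Z^+:=\{v\in V(\Gamma):\deg_\Gamma(v,Y)>(1+\eps)p|Y|\}$. The main obstacle is that $Z^+$ may overlap $Y$, so I will decompose $Z^+=A\sqcup B$ with $A=Z^+\cap Y$, $B=Z^+\setminus Y$, writing $a=|A|$, $b=|B|$. By definition of $Z^+$, $e(Z^+,Y)=\sum_{v\in Z^+}\deg(v,Y)>(1+\eps)p|Y||Z^+|$. On the other hand, the decomposition
\[e(Z^+,Y)=e(B,Y)+2e(A)+e(A,Y\setminus Z^+)\]
(using that $B$ and $Y$ are disjoint, as are $A$ and $Y\setminus Z^+$) upper bounds the first and third terms via bijumbledness, while the middle obeys $2e(A)\le pa^2+2\nu a$ from the first part of the proof applied to $A$. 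The square-root error terms $\nu\sqrt{b|Y|}+\nu\sqrt{a(|Y|-a)}$ combine by Cauchy--Schwarz to at most $\sqrt{2}\,\nu\sqrt{|Z^+||Y|}$, so after cancellation of the $pa^2$ term,
\[e(Z^+,Y)\le p|Z^+||Y|+\sqrt{2}\,\nu\sqrt{|Z^+||Y|}+2\nu|Z^+|.\]
For $|Y|\ge 4\nu/(\eps p)$, the $2\nu|Z^+|$ summand is absorbed into $\tfrac{\eps}{2}p|Y||Z^+|$, and combining with the lower bound and rearranging yields $|Z^+|\le 8\nu^2/(\eps^2 p^2|Y|)$.

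The set $Z^-:=\{v:\deg_\Gamma(v,Y)<(1-\eps)p|Y|\}$ I will handle by passing to the complement graph $\bar\Gamma$, which is $(1-p,\nu)$-bijumbled since $e_{\bar\Gamma}(X,Y)=|X||Y|-e_\Gamma(X,Y)$ for disjoint $X,Y$. Each $v\in Z^-$ satisfies $\deg_{\bar\Gamma}(v,Y)>(1-p+\eps p)|Y|-1>(1+\eta)(1-p)|Y|$ with $\eta=\eps p/(2(1-p))$, provided $|Y|\ge 2/(\eps p)$ to absorb the $-1$ correction arising when $v\in Y$. Rerunning the $Z^+$ argument inside $\bar\Gamma$ with $1-p$ and $\eta$ in place of $p$ and $\eps$, and noting $\eta(1-p)=\eps p/2$, gives $|Z^-|\le 32\nu^2/(\eps^2 p^2|Y|)$. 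Choosing $C=O(\eps^{-2})$ large enough handles both the size requirements (from all steps) and the sum $|Z^+|+|Z^-|$, completing the proof. Throughout, the essential difficulty is respecting the disjointness requirement of bijumbledness in the presence of overlap with $Y$, which forces the $A/B$ decomposition and the self-contained random-partition bound on $e(A)$ in place of a single direct application.
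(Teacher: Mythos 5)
Your argument for the first two conclusions (the edge count via bijumbledness, and $e(X)\le 2p|X|^2$ via a maximum cut) matches the paper's. For the degree bound, however, you take a genuinely different route. For the upper tail $Z^+$ you make an explicit decomposition $Z^+=A\sqcup B$ (according to overlap with $Y$), bound $e(A)$ by the max-cut estimate, and recombine the square-root error terms by Cauchy--Schwarz; the paper instead takes an arbitrary bad set $X$ and splits into two cases according to whether $X$ has many internal edges, avoiding the explicit overlap decomposition. For the lower tail $Z^-$ you pass to the complement graph $\bar\Gamma$, which is $(1-p,\nu)$-bijumbled, and re-run the upper-tail argument there; the paper argues directly on $(X,Y\setminus X)$ with $X\subseteq Z^-$, which is shorter. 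Your $Z^+$ computation checks out: the $pa^2$ terms cancel, $\sqrt{b|Y|}+\sqrt{a|Y|}\le\sqrt{2|Z^+||Y|}$, and the absorption of $2\nu|Z^+|$ only needs $|Y|\ge 4\nu/(\eps p)$, which is indeed implied by the hypothesis $|Y|\ge Cp^{-1}\nu$ for $C\ge 4/\eps$.

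There is, however, a gap in the $Z^-$ part. Your complement argument needs $|Y|\ge 2/(\eps p)$ to absorb the $-1$ that arises when $v\in Y$ (since $\deg_{\bar\Gamma}(v,Y)=|Y|-1-\deg_\Gamma(v,Y)$). You claim that choosing $C=O(\eps^{-2})$ handles this, but the hypothesis only gives $|Y|\ge Cp^{-1}\nu$, which implies $|Y|\ge 2/(\eps p)$ only if $C\nu\ge 2/\eps$; that is a lower bound on $\nu$, not a condition you can arrange by increasing $C$. Concretely, $K_n$ is $(1-c/n,\,c/2)$-bijumbled for any $0<c<n$, and taking $c$ tiny makes $\nu=c/2$ arbitrarily small while $p\to 1$, so $Cp^{-1}\nu$ can be far below $2/(\eps p)\approx 2/\eps$. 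In that regime your threshold $\eta=\eps p/(2(1-p))$ is not actually reached by the vertices of $Z^-\cap Y$, and the argument breaks. A clean fix is to bound $Z^-\cap Y$ and $Z^-\setminus Y$ separately: the complement trick works without any $-1$ correction for $v\notin Y$, and for $v\in Z^-\cap Y$ you can use the paper's direct disjoint-pair argument on $(X,Y\setminus X)$ with $X\subseteq Z^-\cap Y$, whose size condition \emph{is} controlled by $C$. Without such a repair, the final claim that $C=O(\eps^{-2})$ "handles the size requirements" is not justified.
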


Now, using these lemmas, we can prove bijumbled graph versions of Lemmas~\ref{lem:G} and~\ref{lem:common}, and use these to complete the proof of Theorem~\ref{thm:jumbledk}. All these proofs are straightforward modifications of those in the previous sections. Briefly, the modifications we make are to replace `lower-regular' with `regular' in all proofs, to replace applications of lemmas for random graphs with the bijumbled graph versions above, and to recalculate some error bounds. 

The only one of our main lemmas which changes in an important way is the following Lemma for $G$.

\begin{lemma}[Lemma for $G$, bijumbled graph version] 
\label{lem:pseudG}
For each $\gamma > 0$ and integers $k \geq 2$ and $r_0 \geq 1$ there exists $d > 0$ such that for every $\eps \in \left(0, \frac{1}{2k}\right)$ there exist $r_1\geq 1$ and $c,\Ca>0$ such that the following holds for any $n$-vertex $(p,\nu)$-bijumbled graph $\Gamma$ with $\nu\le c p^3n$ and $p>0$. Let $G=(V,E)$ be a spanning subgraph of $\Gamma$ with $\delta(G) \geq  \left(\frac{k-1}{k} + \gamma\right)pn$. Then there exists an integer $r$ with $r_0\leq kr \leq r_1$, a subset $V_0 \subseteq V$  with $|V_0| \leq \Ca p^{-6}\nu^2 n^{-1}$, 
a $k$-equitable vertex partition $\cV = \{\Vij\}_{i\in[r],j\in[k]}$ of $V(G)\setminus V_0$,
and a graph $R^k_r$ on the vertex set $[r] \times [k]$ with $K^k_r \subseteq B^k_r \subseteq R^k_r$, with $\delta(R^k_r) \geq \left(\frac{k-1}{k} + \frac{\gamma}{2}\right)kr$, and such that the following are true.
\begin{enumerate}[label=\itmarab{G}]
\item \label{plemG:size} $\frac{n}{4kr}\leq |\Vij| \leq \frac{4n}{kr}$ for every $i\in[r]$ and $j\in[k]$,
\item \label{plemG:regular} $\cV$ is $(\eps,d,p)_G$-regular on $R^k_r$ and $(\eps,d,p)_G$-super-regular on $K^k_r$,
\item \label{plemG:inheritance} both $\big(\NGa(v, V_{i,j}),V_{i',j'}\big)$ and
  $\big(\NGa(v', V_{i,j}),\NGa(v', V_{i',j'})\big)$ are $(\eps,d,p)_G$-regular
  pairs for every edge $\{(i,j),(i',j')\} \in E(R^k_r)$, every $v\in V\setminus
  (V_0 \cup V_{i,j})$ and $v'\in V\setminus (V_0 \cup V_{i,j} \cup V_{i',j'})$,
\item \label{plemG:gamma} we have 
 $(1-\eps)p|\Vij| \leq |\NGa(v,V_{i,j})| \leq (1 + \eps)p|\Vij|$ for every $i \in [r]$, $j\in [k]$ and every $v \in V \setminus V_0$.
\end{enumerate}
\end{lemma}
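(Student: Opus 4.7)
The plan is to adapt the proof of Lemma~\ref{lem:G} to the bijumbled setting by replacing each probabilistic ingredient with its bijumbled-graph counterpart. First I would apply Lemma~\ref{lem:regularitylemma} to $G$ to obtain an $(\epsa,p)_G$-regular equipartition with reduced graph of sufficient minimum degree; the hypothesis on upper bounds for small-set edge counts is guaranteed by Proposition~\ref{prop:pseudchernoff}, since $\nu\le cp^3n$ with $c$ small enough makes the bijumbledness more than strong enough on sets of size $\tfrac{\epsa n}{kr_1}$. Crucially, the partition produced by Lemma~\ref{lem:regularitylemma} is genuinely regular (not merely lower-regular), since $(\eps,p)_G$-regularity in the paper is defined via $(\eps,0,p)_G$-regular pairs, giving the matching upper bounds needed later. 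Then Theorem~\ref{thm:bandwidth} embeds a copy of $B^k_r$ in the reduced graph exactly as before, and we label clusters as $U_{i,j}$.

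Next, $Z_1$ is built analogously to the original proof, but using the bijumbled-graph regularity inheritance lemmas: it collects vertices with the wrong $\Gamma$-degree into some cluster (bounded by Proposition~\ref{prop:pseudchernoff}, giving $O(p^{-2}\nu^2/|U_{i,j}|)$ bad vertices per cluster), vertices failing one-sided regularity inheritance (bounded by Lemma~\ref{lem:pseudOSRIL}, giving $O(p^{-3}\nu^2/|U_{i,j}|)$ bad vertices per pair), and vertices failing two-sided regularity inheritance (bounded by Lemma~\ref{lem:pseudTSRIL}, giving $O(p^{-6}\nu^2/|U_{i,j}|)$ bad vertices per pair), with the asymmetric exclusion of $V_{i,j}$ or $V_{i,j}\cup V_{i',j'}$ in property~\ref{plemG:inheritance} matching the fact that we only test the relevant inheritance statements for vertices outside the clusters involved. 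Plus a few vertices to keep the clusters $k$-equitable. Since $|U_{i,j}|=\Theta(n/r_1)$, the dominant contribution to $|Z_1|$ is from the two-sided inheritance lemma, yielding $|Z_1|=O(p^{-6}\nu^2 n^{-1})$.

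The redistribution step proceeds as in the original proof: for each cluster, vertices destroying super-regularity on $K^k_r$ are collected into $W$, and using the minimum degree of $G$ we distribute each of them to an index $i$ for which they have $G$-degree at least $dp|U_{i,j'}|$ into each $U_{i,j'}$; the choice of $c$ and $\epsa$ ensures the total number of vertices moved is a negligible fraction of each cluster. Now let $Z_2$ be the set of vertices in $V(G)\setminus Z_1$ whose $\Gamma$-neighborhood in some cluster is changed by more than a tiny fraction by the redistribution (plus vertices added for $k$-equitability); its size is bounded by Proposition~\ref{prop:pseudchernoff}, contributing another $O(p^{-6}\nu^2 n^{-1})$. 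Setting $V_0:=Z_1\cup Z_2$ gives the required bound. The conclusions~\ref{plemG:size}--\ref{plemG:gamma} then follow from the strengthened version of Proposition~\ref{prop:subpairs3} for regular pairs, whose additional edge-count hypothesis is supplied by Proposition~\ref{prop:pseudchernoff} for all relevant set sizes.

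The main obstacle is bookkeeping the bijumbledness: one must verify that each application of Lemmas~\ref{lem:pseudOSRIL},~\ref{lem:pseudTSRIL} and Proposition~\ref{prop:pseudchernoff} has its size hypotheses satisfied, which translates into lower bounds of the form $|U_{i,j}|\gtrsim p^{-j}\nu$ for $j\le 3$, and into the hypothesis of the regular-pair form of Proposition~\ref{prop:subpairs3}. All of these hold provided $c$ is chosen small enough in terms of $\eps$, $d$, $k$ and $r_1$. No other step is more delicate than in the random-graph proof; the quantitative improvement in the bound on $|V_0|$, from $\Ca p^{-2}$ to $\Ca p^{-6}\nu^2 n^{-1}$, is precisely the price paid for the weaker control provided by bijumbledness, dictated by Lemma~\ref{lem:pseudTSRIL}.
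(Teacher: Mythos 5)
Your proposal follows essentially the same approach as the paper's sketch proof: run the proof of Lemma~\ref{lem:G} with each random-graph ingredient (Lemmas~\ref{lem:OSRIL},~\ref{lem:TSRIL}, Proposition~\ref{prop:chernoff}) replaced by its bijumbled counterpart (Lemmas~\ref{lem:pseudOSRIL},~\ref{lem:pseudTSRIL}, Proposition~\ref{prop:pseudchernoff}), swap ``lower-regular'' for ``regular'' throughout, and re-do the bookkeeping so that the dominant $p^{-6}\nu^2n^{-1}$ contribution to $|V_0|$ comes from the two-sided inheritance lemma. Your observations about genuine regularity from Lemma~\ref{lem:regularitylemma}, the origin of the asymmetric exclusions in~\ref{plemG:inheritance}, and the size constraints forced by the bijumbled lemmas and the regular-pair form of Proposition~\ref{prop:subpairs3} all align with the paper's argument.
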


The change here, apart from replacing `lower-regular' with `regular', and working in bijumbled graphs, is that $V_0$ may now be a much larger set. Nevertheless, the proof is basically the same.
\begin{proof}[Sketch proof of Lemma~\ref{lem:pseudG}]
 We begin the proof as in that of Lemma~\ref{lem:G}, setting up the constants in the same way, with the exception that we replace Lemmas~\ref{lem:OSRIL} and~\ref{lem:TSRIL} with Lemmas~\ref{lem:pseudOSRIL} and~\ref{lem:pseudTSRIL}, and Proposition~\ref{prop:chernoff} with Proposition~\ref{prop:pseudchernoff}. We require $C$ to be sufficiently large for Lemmas~\ref{lem:pseudOSRIL} and~\ref{lem:pseudTSRIL}, and for Proposition~\ref{prop:pseudchernoff}. We define $\Ca=100k^2r_1^3C/\epsa$ as in the proof of Lemma~\ref{lem:G}, and set
\[c=10^{-5}(\epsa)^3(kr_1)^{-3}(\Ca)^{-1}\,.\]
 
 We now assume $\Gamma$ is $(p,\nu)$-bijumbled rather than random, with $\nu\le cp^3n$. In particular, by choice of $c$ this implies that 
 \begin{equation}\label{eq:psG:s}
  10k^2r_1^2 Cp^{-2}\nu^2n^{-1}\le\epsa pn\quad\text{and}\quad 10k^2r_1^3 C p^{-6}\nu^2n^{-1}\le\epsa n\,.
 \end{equation}
 
 We obtain a regular partition, with a reduced graph containing $B^k_r$, exactly as in the proof of Lemma~\ref{lem:G}, using Proposition~\ref{prop:pseudchernoff} in place of Proposition~\ref{prop:chernoff} to justify the use of Lemma~\ref{lem:regularitylemma}. The next place where we need to change things occurs in defining $Z_1$, where we replace `lower-regular' with `regular', and in estimating the size of $Z_1$. Using Lemmas~\ref{lem:pseudOSRIL} and~\ref{lem:pseudTSRIL}, and Proposition~\ref{prop:chernoff} with Proposition~\ref{prop:pseudchernoff}, we replace~\eqref{eq:sizeZ1} with
 \[|Z_1|\le kr_1^2Cp^{-6}\nu^2n^{-1}+kr_1^2Cp^{-3}\nu^2n^{-1}+2kr_1Cp^{-2}\nu^2n^{-1}\le 4kr_1^2Cp^{-6}\nu^2n^{-1} \leByRef{eq:psG:s}\frac{\epsa}{kr_1}n\,.\]
 Note that the final conclusion is as in~\eqref{eq:sizeZ1}.
 
 We can now continue following the proof of Lemma~\ref{lem:G} until we come to estimate the size of $Z_2$, where we use Proposition~\ref{prop:pseudchernoff} and replace~\eqref{eq:sizeZ2} with
 \[|Z_2|\le r_1+kr_1 Cp^{-2}\nu^2n^{-1}\leByRef{eq:psG:s}\frac{\epsa}{kr_1}pn\,.\]
 Again, the final conclusion is as in~\eqref{eq:sizeZ2}.
 
 The next change we have to make is in estimating the size of $V_0$, when we replace~\eqref{eq:sizeV0} with
 \[|V_0|\le|Z_1|+|Z_2|\le  4kr_1^2 Cp^{-6}\nu^2n^{-1}+r_1+kr_1Cp^{-2}\nu^2 n^{-1}\le \Ca p^{-6}\nu^2n^{-1}\,.\]
 
 Finally, we need regular pairs in~\ref{plemG:regular} and~\ref{plemG:inheritance}. We obtained regular pairs from Lemma~\ref{lem:regularitylemma} and in the definition of $Z_1$, so that we only need Proposition~\ref{prop:subpairs3} to return regular pairs. We always apply Proposition~\ref{prop:subpairs3} to pairs of sets of size at least $\tfrac{\epsa pn}{r_1}$, altering them by a factor $\epsa$. Now Proposition~\ref{prop:pseudchernoff} shows that if $X$ and $Y$ are disjoint subsets of $\Gamma$ with $|X|,|Y|\le (\epsa p)^{-1}\nu$, then $e_\Gamma(X,Y)\le (1+\epsa)p|X||Y|$, as required. By choice of $c$, we have $(\epsa p)^{-1}\nu\le (\epsa)^2pn/r_1$, so that the condition of Proposition~\ref{prop:subpairs3} to return regular pairs is satisfied.
\end{proof}

The other one of our main lemmas which requires change, Lemma~\ref{lem:common}, only requires changing `lower-regular' to `regular' and replacing the random graph with a bijumbled $\Gamma$. This does require some change in the proof, as we then use the bijumbled graph versions of various lemmas, whose error bounds are different.
\begin{lemma}[Common neighbourhood lemma, bijumbled graph version]
\label{lem:pseudcommon}
For each $d>0$, $k \geq 1$, and $\Delta \geq 2$ there exists $\alpha >0$ such that for every $\eps^\ast \in (0,1)$ there exists $\eps_0 >0$ such that for every $r\geq 1$ and every $0<\eps\le\eps_0$ there exists $c>0$ such that the following is true. For any $n$-vertex $(p,cp^{\Delta+1}n)$-bijumbled graph $\Gamma$ the following holds. Let $G=(V,E)$ be a (not necessarily spanning) subgraph of $\Gamma$ and $\{V_i\}_{i\in[k]}\cup \{W\}$ a vertex partition of a subset of $V$ such that the following are true for every $i,i'\in [k]$.
\begin{enumerate}[label=\itmarab{G}]
 \item\label{pcnl:bal} $\frac{n}{4kr}\le |V_i|\le \frac{4n}{kr}$,
 \item\label{pcnl:Vreg} $(V_i,V_{i'})$ is $(\eps, d, p)_G$-regular,
 \item\label{pcnl:W} $|W|\ge\frac{\eps pn}{16kr^2}$, and
 \item\label{pcnl:Wdeg} $|N_G(w,V_i)| \geq dp|V_i|$ for every $w \in W$. 
\end{enumerate}
Then there exists a tuple $(w_1, \ldots, w_\Delta) \in \binom{W}{\Delta}$ such that for every $\Lambda,\Lambda^\ast\subseteq[\Delta]$, and every $i \neq i' \in [k]$ we have
\begin{enumerate}[label=\itmarab{W}]
 \item\label{pcnl:Gsize} $|\bigcap_{j\in \Lambda} N_G(w_j,V_i)|\geq \alpha p^{|\Lambda|}|V_i|$,
 \item\label{pcnl:Gasizen} $|\bigcap_{j\in \Lambda} N_{\Gamma}(w_j)| \le (1 + \eps^\ast)p^{|\Lambda|}n$,
 \item\label{pcnl:Gasize} $ (1-\eps^\ast)p^{|\Lambda|}|V_i| \leq |\bigcap_{j\in \Lambda} N_{\Gamma}(w_j,V_i)| \leq (1 + \eps^\ast)p^{|\Lambda|}|V_i|$, and
 \item\label{pcnl:Nreg} $\big(\bigcap_{j\in \Lambda}\NGa(w_j,V_i),\bigcap_{j^\ast\in \Lambda^\ast}\NGa(w_{j^\ast},V_{i'})\big)$ is $(\eps^\ast, d,p)_G$-regular if $|\Lambda|,|\Lambda^\ast| < \Delta$ and either $\Lambda\cap\Lambda^\ast=\varnothing$ or $\Delta\geq 3$ or both.
\end{enumerate} 
\end{lemma}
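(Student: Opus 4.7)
The plan is to mirror the proof of Lemma~\ref{lem:common} exactly, replacing Lemmas~\ref{lem:OSRIL},~\ref{lem:TSRIL} and Proposition~\ref{prop:chernoff} with their bijumbled counterparts Lemmas~\ref{lem:pseudOSRIL},~\ref{lem:pseudTSRIL} and Proposition~\ref{prop:pseudchernoff}, and replacing every occurrence of `lower-regular' by `regular' throughout. Constant-setup proceeds identically: given $d,k,\Delta$ we first fix $\epsaa_\Delta=8^{-\Delta}(k+1)^{-2}(d/8)^\Delta$ and choose $\epsaa_0\le\dots\le\epsaa_\Delta$ along a chain of applications of Lemma~\ref{lem:pseudOSRIL}; Lemma~\ref{lem:SRLb} with input $\epsaa_0$ and $s=k+1$ returns $t_1$, and we set $\alpha=(2t_1)^{-1}(d/4)^\Delta$. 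Given $\epsa$, we choose $\epsa_{j,j'}$ via successive applications of Lemmas~\ref{lem:pseudOSRIL} and~\ref{lem:pseudTSRIL} in lexicographic order just as before, take $\eps_0$ small enough that $(1\pm\eps_0)^\Delta=1\pm\epsa$, and finally choose the bijumbledness constant $c$ small enough to satisfy every error bound that appears in the proof.

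The main argument then runs verbatim: apply Lemma~\ref{lem:SRLb} (its density hypothesis follows from Proposition~\ref{prop:pseudchernoff} provided $c$ is small compared with $\epsaa_0$, $k$, $t_1$), pick a cluster $W'\subseteq W$ with few irregular pairs to clusters in the $V_i\setminus W$, and by averaging choose clusters $V_i'\subseteq V_i\setminus W$ so that each $(W',V_i')$ is $(\epsaa_0,d/2,p)_G$-regular. Then sequentially select $w_1,\dots,w_\Delta$ using the same inductive claim as in the proof of Lemma~\ref{lem:common}, namely that the tuple $(w_1,\dots,w_\ell)$ satisfies the six properties~\ref{cnl:cl:Wreg}--\ref{cnl:cl:Vreg} (with `lower-regular' replaced by `regular' in~\ref{cnl:cl:Wreg} and~\ref{cnl:cl:Vreg}, and with `$\le(1\pm\eps_0)^{|\Lambda|}p^{|\Lambda|}|V_i'|$' intepreted as a two-sided bound as in~\ref{pcnl:Gasize}). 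The bad-vertex counts at each inductive step come from the bijumbled regularity inheritance lemmas and Proposition~\ref{prop:pseudchernoff}: bad vertices for regularity inheritance number at most $Cp^{-3}\nu^2|X|^{-1}$ or $Cp^{-6}\nu^2/\min(|X|,|Y|)$, and bad vertices for the neighbourhood-size bound at most $Cp^{-2}\nu^2|Y|^{-1}$.

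The heart of the proof, and the place where a calculation must be done, is checking that the sum of all these bad-vertex counts stays below $|W'|-\ell\ge\frac{\eps pn}{32kr^2}$. At stage $\ell<\Delta$ the relevant common neighbourhoods $X=\bigcap_{j\in\Lambda}\NGa(w_j,V_i')$ have size at least $(1-\eps_0)^\Delta p^{|\Lambda|}|V_i'|\ge (2t_1)^{-1}p^{\Delta-1}n/(kr)$ by the inductive hypothesis~\ref{cnl:cl:NGaVp}, so every regularity-inheritance error term is bounded by a constant multiple of $p^{-\Delta-2}\nu^2r/n$. Summing over the $O(2^{2\Delta}k^2)$ choices of $\Lambda,\Lambda^\ast,i,i'$, and noting that $\epsaa_\Delta$-fraction contributions from~\ref{cnl:cl:NGVp} are negligible by choice of $\epsaa_\Delta$, the total number of forbidden vertices is at most a constant (depending on $d,k,\Delta,\epsa,r$) times $p^{-\Delta-2}\nu^2/n$. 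Demanding this to be below $\tfrac12|W'|\sim p n/r$ forces $\nu^2\le c'p^{\Delta+4}n^2/r^2$, i.e.\ $\nu\le c p^{(\Delta+4)/2}n$ for a suitable $c$; since $\Delta\ge2$ implies $(\Delta+4)/2\le\Delta+1$, the hypothesis $\nu\le cp^{\Delta+1}n$ of the lemma is (comfortably) stronger than what we need.

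The main obstacle is really just this bookkeeping: keeping track of which common neighbourhood we are about to apply a regularity-inheritance statement to (so as to use the correct lower bound on its size) and ensuring that the polynomial in $p$ coming out of the bijumbledness error terms never exceeds the $pn$-scale of $|W'|$. Once the bound $\nu\le cp^{\Delta+1}n$ is confirmed to dominate every intermediate requirement (including the Proposition~\ref{prop:subpairs3} hypothesis needed to promote lower-regular to regular in any residual application, which here is not required since Lemma~\ref{lem:SRLb} and the pseudorandom inheritance lemmas already return regular pairs), conclusions~\ref{pcnl:Gsize}--\ref{pcnl:Nreg} follow exactly as the corresponding conclusions in Lemma~\ref{lem:common} were deduced from~\ref{cnl:cl:NGVp},~\ref{cnl:cl:NGa},~\ref{cnl:cl:NGaV} and~\ref{cnl:cl:Vreg}.
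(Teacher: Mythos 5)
Your overall plan is exactly the paper's: mirror the proof of Lemma~\ref{lem:common}, replacing Lemmas~\ref{lem:OSRIL},~\ref{lem:TSRIL} and Proposition~\ref{prop:chernoff} with Lemmas~\ref{lem:pseudOSRIL},~\ref{lem:pseudTSRIL} and Proposition~\ref{prop:pseudchernoff}, and `lower-regular' with `regular' throughout. However, your bookkeeping at the end is off in a way that matters, and your concluding sentence is wrong.

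Two problems. First, the common neighbourhoods $X=\bigcap_{j\in\Lambda}N_\Gamma(w_j,V_i)$ to which the inheritance lemmas are applied have $|\Lambda|\le\Delta-2$ (one more $N_\Gamma(w_{\ell+1})$-factor is about to be intersected and the resulting index set must have size $<\Delta$), so the correct lower bound on $|X|$ is $\Omega(p^{\Delta-2}n)$, not $\Omega(p^{\Delta-1}n)$. Second, and more substantially, you bound only the \emph{number of bad vertices} against $|W'|\sim pn$ and deduce $\nu\lesssim p^{(\Delta+4)/2}n$, then conclude the hypothesis $\nu\le cp^{\Delta+1}n$ is `comfortably stronger than what we need.' But you never verify the \emph{size hypotheses} under which the pseudorandom inheritance lemmas can be invoked at all: Lemma~\ref{lem:pseudOSRIL} requires $|X|\ge Cp^{-3}\nu$ and $|Y|\ge Cp^{-2}\nu$, and Lemma~\ref{lem:pseudTSRIL} requires $|X|,|Y|\ge Cp^{-3}\nu$. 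With $|X|\gtrsim p^{\Delta-2}n$ these force $\nu\lesssim p^{\Delta+1}n$, which is the genuine binding constraint and precisely the lemma hypothesis --- there is no slack. (Your single bad-count estimate $p^{-\Delta-2}\nu^2/n$ also silently absorbs Lemma~\ref{lem:pseudTSRIL}'s $p^{-6}$ factor, which actually gives $\sim p^{-\Delta-4}\nu^2/n$; this is still dominated by the applicability requirement when $\Delta\ge 3$, and Lemma~\ref{lem:pseudTSRIL} is not used when $\Delta=2$.) The key step your sketch glosses over is checking these applicability conditions, and that is exactly where the exponent $\Delta+1$ comes from.
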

The main modifications we make to the proof of Lemma~\ref{lem:common} are to replace Lemmas~\ref{lem:OSRIL} and~\ref{lem:TSRIL} with Lemmas~\ref{lem:pseudOSRIL} and~\ref{lem:pseudTSRIL}, and Proposition~\ref{prop:chernoff} with Proposition~\ref{prop:pseudchernoff}, and to replace all occurrences of `lower-regular' with `regular'. We sketch the remaining modifications below.
\begin{proof}[Sketch proof of Lemma~\ref{lem:pseudcommon}]
We begin the proof by setting constants as in the proof of Lemma~\ref{lem:common}, but appealing to Lemmas~\ref{lem:pseudOSRIL} and~\ref{lem:pseudTSRIL}, and Proposition~\ref{prop:pseudchernoff}, rather than their random graph equivalents. 

We set $c=10^{-20}2^{-2\Delta}\eps^5(Ct_1kr)^{-4}$. Suppose $\nu\le cp^{\Delta+2}n$, and that $\Gamma$ is an $n$-vertex $(p,\nu)$-bijumbled graph rather than a random graph.

In order to apply Lemma~\ref{lem:SRLb} to $G$, we need to observe that its condition is satisfied by Proposition~\ref{prop:pseudchernoff} and because $\eps^{-1}p^{-1}\nu<10^{-10}\tfrac{\eps^4pn}{k^4r^4}$ by choice of $c$. The same inequality justifies further use of Proposition~\ref{prop:pseudchernoff} to find the desired $W'$. Estimating the size of $W'$, we replace~\eqref{eq:sizeW} with
\begin{equation}\label{eq:psizeW}
 |W'|\ge 10^{-11}\frac{\eps^4pn}{t_1k^4r^4}\ge 10^5 Cp^{-2}\nu\,,
\end{equation}
where the final inequality is by choice of $c$.

We only need to change the statement of Claim~\ref{claim:common} by replacing `lower-regular' with `regular' in~\ref{cnl:cl:Wreg} and~\ref{cnl:cl:Vreg}. However we need to make rather more changes to its inductive proof. The base case remains trivial. In the induction step, we need to replace~\eqref{eq:common:sizeNGa} with
\[\big|\bigcap_{j\in\Lambda}N_\Gamma(w_j,V'_i)\big|\ge (1-\eps_0)^{\Delta-2}p^{\Delta-2}\frac{n}{8tr}\ge 10^5Cp^{-4}\nu\,,\]
where the final inequality is by choice of $c$. This, together with $|W'|\ge 10^5Cp^{-2}\nu$ from~\eqref{eq:psizeW}, justifies that we can apply Lemma~\ref{lem:pseudOSRIL}. We obtain that at most $2^\Delta k^2 Cp^{-3}\nu^2\tfrac{8krt_1}{n}$ vertices $w$ in $W$ violate~\ref{cnl:cl:Wreg}.

The estimate on the number of vertices violating~\ref{cnl:cl:NGVp} does not change.

For~\ref{cnl:cl:NGaVp}, we need to observe that $\big|\bigcup_{j\in\Lambda}N_\Gamma(w_j,V'_i)\big|=(1\pm\eps_0)^{|\Lambda|}p^{|\Lambda|}|V'_i|$, and in particular by choice of $\eps_0$ and $c$ this quantity is at least $Cp^{-1}\nu$. Then Proposition~\ref{prop:pseudchernoff} then gives that at most $2^{\Delta+1}kCp^{-2}\nu^2\tfrac{8krt_1}{n}$ vertices destroy~\ref{cnl:cl:NGaVp}, and the same calculation gives the same bound for the number of vertices violating~\ref{cnl:cl:NGaV} and~\ref{cnl:cl:NGa}.

Finally, for~\ref{cnl:cl:Vreg}, we need to use the inequality $(1-\eps_0)^{\Delta-1}p^{\Delta-1}\tfrac{n}{4kr}\ge Cp^{-2}\nu$, which holds by choice of $c$, to justify that Lemmas~\ref{lem:pseudOSRIL} and~\ref{lem:pseudTSRIL} can be applied as the corresponding random graph versions are in Lemma~\ref{lem:common}. We obtain quite different bounds from these lemmas, however. If $\Delta=2$, then we only use Lemma~\ref{lem:pseudOSRIL}, with an input regular pair having both sets of size at least $\tfrac{n}{4kr}$, so that the number of vertices violating~\ref{cnl:cl:Vreg} in this case is at most $2^{2\Delta}k^2Cp^{-3}\nu^2\tfrac{4kr}{n}$. If $\Delta\ge 3$, we use both Lemma~\ref{lem:pseudOSRIL} and~\ref{lem:pseudTSRIL}. The set playing the r\^ole of $X$ in Lemma~\ref{lem:pseudOSRIL} has size at least $(1-\eps_0)^{\Delta-2}p^{\Delta-2}\tfrac{n}{4kr}$, while we apply Lemma~\ref{lem:pseudTSRIL} with both sets of the regular pair having at least this size. As a consequence, the number of vertices violating~\ref{cnl:cl:Vreg} is at most $2^{2\Delta+1}k^2Cp^{-6}\nu^2 (1-\eps_0)^{2-\Delta}p^{2-\Delta}\tfrac{4kr}{n}$ for the case $\Delta\ge 3$.

Putting this together, for the case $\Delta=2$ we replace~\eqref{eq:common:bad2} with the following upper bound for the number of vertices $w\in W'$ which cannot be chosen as $w_{\ell+1}$.
\[2^\Delta k^2Cp^{-3}\nu^2\frac{8krt_1}{n}+2^\Delta k\epsaa_\Delta|W'|+3\cdot 2^{\Delta+1} kCp^{-2}\nu^2\frac{8krt_1}{n}+2^{2\Delta}k^2Cp^{-3}\nu^2\frac{4kr}{n}\]
By choice of $c$ and $\epsaa_\Delta$, this quantity is at most $\tfrac12|W'|$, completing the induction step for $\Delta=2$. For $\Delta\ge 3$, we replace the upper bound~\eqref{eq:common:bad3} with
\[2^\Delta k^2Cp^{-3}\nu^2\frac{8krt_1}{n}+2^\Delta k\epsaa_\Delta|W'|+3\cdot 2^{\Delta+1} kCp^{-2}\nu^2\frac{8krt_1}{n}+2^{2\Delta+1}k^2Cp^{-6}\nu^2(1-\eps_0)^{2-\Delta}p^{2-\Delta}\frac{4kr}{n}\]
which by choice of $c,\eps_0$ and $\epsaa_\Delta$ is at most $\tfrac12|W'|$, completing the induction step for $\Delta\ge 3$.

We conclude that the modified Claim~\ref{claim:common} continues to hold, and this implies the statement of Lemma~\ref{lem:pseudcommon} as in the proof of Lemma~\ref{lem:common}.
\end{proof}

The proof of Theorem~\ref{thm:jumbledk} is similar to that of Theorem~\ref{thm:maink}. Again, we sketch the modifications.

\begin{proof}[Sketch proof of Theorem~\ref{thm:jumbledk}]
 We begin as in the proof of Theorem~\ref{thm:maink}, setting up constants as there, but replacing Lemma~\ref{lem:G} with Lemma~\ref{lem:pseudG}, Lemma~\ref{lem:common} with Lemma~\ref{lem:pseudcommon}, Lemma~\ref{thm:blowup} with Lemma~\ref{thm:jblowup}, and Proposition~\ref{prop:chernoff} with Proposition~\ref{prop:pseudchernoff}. In addition to the constants defined in the proof of Theorem~\ref{thm:maink} we require $0<c\le 10^{-50}\eps^8\mu\rho\xi^2(\Delta k r_1C)^{-10}$ to be small enough for Lemmas~\ref{lem:pseudG} and~\ref{lem:pseudcommon}.
 
 Now, instead of assuming $\Gamma$ to be a typical random graph, suppose $\nu\le cp^{\max\{4,(3\Delta+1)/2\}}n$, and let $\Gamma$ be an $n$-vertex $(p,\nu)$-bijumbled graph. By Proposition~\ref{prop:bijn} we have
 \begin{equation}\label{eq:jsizep}
  p\ge \Ca\Big(\frac{\log n}{n}\Big)^{1/2}\,.
 \end{equation}
 
 We continue following the proof of Theorem~\ref{thm:maink}. We now assume the first $\beta n$ vertices of $\cL$ include $Cp^{-6}\nu^2n^{-1}$ vertices that are not contained in any triangles of $H$. We appeal to Lemma~\ref{lem:pseudG} rather than Lemma~\ref{lem:G} to obtain a partition of $V(G)$. This partition has $|V_0|\le \Ca p^{-6}\nu^2n^{-1}$ (which is different to the upper bound in the proof of Theorem~\ref{thm:maink}), but still satisfies~\ref{main:Gsize} and~\ref{main:Ggam}, and~\ref{main:Greg} and~\ref{main:Ginh} when `lower-regular' is replaced by `regular' in both statements.
 
 The application of Lemma~\ref{lem:H2} is identical. The application of Lemma~\ref{lem:hypgeo} is also identical, and the deduction of~\eqref{eq:intS} is still valid by~\eqref{eq:jsizep}. The pre-embedding is also identical, except that we replace each occurrence of $\Ca\max\{p^{-2},p^{-1}\log n\}$ with $\Ca p^{-6}\nu^2n^{-1}$, and that we replace the application of Proposition~\ref{prop:chernoff} justifying that at each visit to Line~\ref{line:choosev} we have at least $\tfrac14\mu p n$ choices with an application of Proposition~\ref{prop:pseudchernoff}. To verify the condition of the latter, and to see that this yields a contradiction we use the inequality $|Z|\ge\tfrac{1}{100(\Delta+1)}\mu p n\ge 2\Ca p^{-2}\nu^2\tfrac{8r}{\eps n}$, which  holds by choice of $c$.
 
 Moving on, we justify Claim~\ref{cl:chooseW} by observing that $\tfrac{\eps n}{4kr_1}\ge Cp^{-1}\nu$, which allows us to apply Proposition~\ref{prop:pseudchernoff} in place of Proposition~\ref{prop:chernoff}, and that $2kr\Ca p^{-2}\nu^2\tfrac{4kr_1}{\eps n}\le\tfrac{|Y|}{2}$, both inequalities following by choice of $c$.
 
 Now Lemma~\ref{lem:pseudcommon}, in place of Lemma~\ref{lem:common}, finds $w_1,\dots,w_\ell$. Our construction of $f^*$, and its properties, is identical, while Lemma~\ref{lem:pseudcommon} gives~\ref{main:Gsize}--\ref{main:GaI}, with `lower-regular' replaced by `regular' in~\ref{main:Greg},~\ref{main:Ginh} and~\ref{main:GpIreg}. The deduction of~\ref{Gp:sizeV}--\ref{Gp:GaI} is identical, except that we use the `regular' consequence of Proposition~\ref{prop:subpairs3}. To justify this, observe that each time we apply Proposition~\ref{prop:subpairs3}, we apply it to a regular pair with sets of size at least $(1-\epsa)p^{\Delta-1}\tfrac{n}{4kr}$ by~\ref{main:Gsize} and~\ref{main:GpGI}, and we change the set sizes by a factor $(1\pm 2\mu)$, so that Proposition~\ref{prop:pseudchernoff} gives the required condition. To check this in turn, we need to observe that $2\mu(1-\epsa)p^{\Delta-1}\tfrac{n}{4kr}\ge100\mu^{-1}p^{-1}\nu$, which follows by choice of $c$. We can thus replace `lower-regular' with `regular' in~\ref{Gp:Greg},~\ref{Gp:Ginh} and~\ref{Gp:Ireg}.
 
 Next, we still have $3\Delta^{r+10}|V_0|\le\tfrac{1}{10}\xi n$, so that $|V'_{i,j}|=|W'_{i,j}|\pm\xi n$ is still valid for each $i\in[r]$ and $j\in[k]$. This, together with~\eqref{eq:jsizep}, Proposition~\ref{prop:pseudchernoff}, and the inequality $\tfrac{1}{50000kr_1}\eps^2\xi pn\le 100\eps^{-2}\xi^{-1}p^{-1}\nu$, justifies that we can apply Lemma~\ref{lem:balancing} to obtain~\ref{Gpp:sizeV}--\ref{Gpp:sizeGa}, with `lower-regular' replaced by `regular' in~\ref{Gpp:Greg} and~\ref{Gpp:Ginh}.  Finally, to obtain~\ref{Gpp:Ireg} with `lower-regular' replaced by `regular', we use Proposition~\ref{prop:subpairs3}, with the condition to output regular pairs guaranteed by the inequality $10^{-20}\eps^4k^{-3}r_1^{-3}p^{\Delta-1}n\ge 10^{20}\eps^{-4}k^3r_1^3Cp^{-1}\nu$, which follows by choice of $c$, and Proposition~\ref{prop:pseudchernoff}.
 
 Finally, we verify the conditions for Lemma~\ref{thm:jblowup}. The only point where we have to be careful is with the number of image restricted vertices. The total number of image restricted vertices in $H'$ is at most $\Delta^2|V_0|\le\Delta^2\Ca p^{-6}\nu^2n^{-1}$, which by choice of $c$ and by~\ref{Gpp:sizeV} is smaller than $\rho p^\Delta|V_{i,j}|$ for any $i\in[r]$ and $j\in[k]$, justifying that $(\cI,\cJ)$ is indeed a $(\rho p^\Delta,\tfrac14\alpha,\Delta,\Delta)$-restriction pair. The remaining conditions of Lemma~\ref{thm:jblowup} are verified as in the proof of Theorem~\ref{thm:maink}, and applying it we obtain an embedding $\phi$ of $H'$ into $G\setminus\im(\phi_{t^*})$, so tha $\phi\cup\phi_{t^*}$ is the desired embedding of $H$ into $G$.
\end{proof}

Finally, the deduction of Theorem~\ref{thm:jumbled} from Theorem~\ref{thm:jumbledk} is essentially the same as that of Theorem~\ref{thm:main} from Theorem~\ref{thm:maink}, and we omit it.

\section{Concluding remarks}
\label{sec:remarks}
\subsection{General spanning subgraphs}
Our main theorems place restrictions on the graphs $H$ with respect to whose containment random or pseudorandom graphs have local resilience. As was shown by Huang, Lee and Sudakov~\cite{huang2012}, such restrictions are necessary. Given $\eps>0$, if $\Gamma$ is either a typical random graph $G(n,p)$ or a pseudorandom graph with density $p$, and $p$ is sufficiently small, then one can delete edges from $\Gamma$ in order to remove all triangles at a given vertex $v$, without deleting more than $\eps p n$ edges at any vertex. Thus if $H$ is any graph all of whose vertices are in triangles, if $p=o(1)$ the local resilience of $\Gamma$ with respect to containment of $H$ is $o(1)$.

This leads to the question: if we instead restrict $G$, requiring in addition to the conditions of Theorem~\ref{thm:main} that $G$ contains a positive proportion of the copies of $K_{\Delta+1}$ in $\Gamma$ at each vertex, is it true that $G$ will contain any $k$-colourable, bounded degree spanning subgraph $H$ with sublinear bandwidth without further restriction? We study this question in a forthcoming companion note to this paper, together with Schnitzer~\cite{ABEST}.

\subsection{Optimality of Theorem~\ref{thm:main}}
Recall that Huang, Lee and Sudakov~\cite{huang2012} proved that the restriction on $H$ that $\Ca p^{-2}$ vertices should not be in triangles is necessary for all $p$. For $p$ constant, they proved a version of Theorem~\ref{thm:main}, but the number of vertices in $H$ they require to have independent neighbourhood grows as a tower type function of $p^{-1}$, and they also require these vertices to be well-distributed in the bandwidth order, so that our result is strictly stronger than theirs.

\medskip

On the other hand, we do not believe that the lower bound on $p$ in Theorem~\ref{thm:main} is optimal. For $\Delta=2$, the statement is certainly false for $p\ll n^{-1/2}$, since then $G(n,p)$ has a.a.s.~local resilience $o(1)$ with respect to containing even one triangle. It seems likely that the statement is true down to this point, a log factor improvement on our result. For $\Delta=3$, the statement as written is false for $p\ll n^{-1/3}$. Briefly, the reason for this is that in expectation a vertex is in $O\big(p^6n^3\big)$ copies of $K_4$ in $G(n,p)$, and (with some work) this implies that there is a.a.s.\ a subgraph of $G(n,p)$ with minimum degree very close to $pn$ and $p^{-5}n^{-1}$ vertices not in copies of $K_4$. For $p\ll n^{-1/3}$, $p^{-5}n^{-1}\gg p^{-2}$, so that we would also have to insist on many vertices of $H$ not being in copies of $K_4$ to accommodate this. Generalising this, we obtain the following conjecture.

\begin{conjecture}
For each $\gamma >0$, $\Delta \geq 2$, and $k \geq 1$, there exist constants $\beta^\ast >0$ and $\Ca >0$ such that the following holds asymptotically almost surely for $\Gamma = G(n,p)$ if $p \geq \Ca n^{-2/(\Delta+2)}$. Let $G$ be a spanning subgraph of $\Gamma$ with $\delta(G) \geq\left(\frac{k-1}{k}+ \gamma\right)pn$ and let $H$ be a $k$-colourable graph on $n$ vertices with $\Delta(H) \leq \Delta$, bandwidth at most $\beta^\ast n$, there are at least $\Ca p^{-2}$ vertices in $V(H)$ that are not contained in any triangles of $H$, and at least $\Ca p^{-(\Delta+2)(\Delta-1)/2}n^{2-\Delta}$ vertices in $V(H)$ which are not in $K_{\Delta+1}$. Then $G$ contains a copy of $H$.  
\end{conjecture}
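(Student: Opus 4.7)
The plan is to follow the architecture of the proof of Theorem~\ref{thm:maink}, upgrading each of its four main lemmas to operate at the sparser regime $p \geq \Ca n^{-2/(\Delta+2)}$. The hypothesis on vertices of $H$ not in $K_{\Delta+1}$ precisely matches a resilience lower bound: at this~$p$, one can check that $\Gamma = G(n,p)$ a.a.s.\ contains order $p^{-(\Delta+2)(\Delta-1)/2}n^{2-\Delta}$ vertices lying in no copy of $K_{\Delta+1}$, and an adversary may then locally destroy any remaining $K_{\Delta+1}$ through those vertices while keeping $\delta(G) \geq \big(\tfrac{k-1}{k}+\gamma\big)pn$. Any vertex of~$H$ mapped to such a location must itself avoid $K_{\Delta+1}$, so the new hypothesis is necessary; the old $\Ca p^{-2}$ triangle-free condition is forced by the same reasoning applied to triangles.

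Concretely, I would first upgrade the lemma for~$G$ so that $V_0$ absorbs all vertices with poor regularity-inheritance behaviour together with the vertices of~$\Gamma$ lying in no $K_{\Delta+1}$; a first-moment plus concentration argument gives the required bound $|V_0| = O(p^{-(\Delta+2)(\Delta-1)/2}n^{2-\Delta})$. Next, the lemma for~$H$ (Lemma~\ref{lem:H2}) would be extended to reserve an equally large pool of $K_{\Delta+1}$-avoiding vertices at the start of the bandwidth labelling~$\cL$, and the pre-embedding (Algorithm~\ref{alg:pre}) would map these reserved vertices onto $V_0$ as before. The common-neighbourhood lemma (Lemma~\ref{lem:common}) and the balancing lemma (Lemma~\ref{lem:balancing}) would require substantial re-derivation under the sparser regime — and it is already here that significant new ideas are needed, since common $\Gamma$-neighbourhoods of $\Delta$ vertices have size $\Theta(p^\Delta n) = \Theta(n^{(2-\Delta)/(\Delta+2)})$, which for $\Delta \geq 3$ is sublinear or even smaller than~$1$. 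One therefore cannot hope to generate restriction pairs of positive density as in Definition~\ref{def:restrict}; a fundamentally new pre-embedding scheme is needed that avoids creating image restrictions indexed by more than one common neighbour of the pre-embedded vertices.

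The principal obstacle, however, is the final step: one requires a sparse blow-up lemma valid down to $p = \Theta(n^{-2/(\Delta+2)})$, whereas Lemma~\ref{thm:blowup} needs $p \gg (\log n/n)^{1/\Delta}$, which is far larger than $n^{-2/(\Delta+2)}$ for $\Delta \geq 3$. Even the underlying universality statement (Theorem~\ref{thm:universal}) is only known at $p \geq C(\log n/n)^{1/\Delta}$, and the almost-spanning state-of-the-art~\cite{CFNS} is also well above $n^{-2/(\Delta+2)}$. A route to the full conjecture would seem to require fusing the Johansson--Kahn--Vu-style entropy/absorber machinery behind the $K_{\Delta+1}$-factor threshold with the restriction-pair framework of the sparse blow-up lemma — a genuinely new ingredient beyond present techniques. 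I therefore expect the conjecture will only be resolved after the corresponding universality question at $p \gg n^{-2/(\Delta+2)}$ is settled; the four-lemma transfer outlined above would then port the result across, though not without significant further technical work.
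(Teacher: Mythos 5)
The statement is the \emph{Conjecture} of Section~9.2, not a theorem, and the paper offers no proof of it: the authors write explicitly that ``this conjecture seems to be hopelessly out of reach with our current state of knowledge,'' observing that even universality of $G(n,p)$ for bounded-degree graphs on $\tfrac n2$ vertices is open at this density. Your proposal correctly recognises that no proof is available and does not claim one. The obstructions you list --- the gap between the sparse blow-up lemma threshold $(\log n/n)^{1/\Delta}$ and the conjectured $n^{-2/(\Delta+2)}$; the fact that, for $\Delta\ge 3$, common $\Gamma$-neighbourhoods of $\Delta$ vertices have size $\Theta(p^\Delta n)=\Theta(n^{(2-\Delta)/(\Delta+2)})$, which is $o(1)$, so that the restriction-pair machinery collapses; and the need for Johansson--Kahn--Vu-type absorption ideas near the $K_{\Delta+1}$-factor threshold --- mirror the paper's own concluding remarks. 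So at the level of ``what would a proof require,'' you and the authors are in full agreement that this is an open problem that needs genuinely new ideas, and that the four-lemma transfer scheme would only become applicable once the underlying universality question is resolved.

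One factual slip in your necessity discussion should be corrected. You assert that at $p\ge \Ca n^{-2/(\Delta+2)}$ the random graph $\Gamma=G(n,p)$ itself a.a.s.\ has order $p^{-(\Delta+2)(\Delta-1)/2}n^{2-\Delta}$ vertices lying in no copy of $K_{\Delta+1}$. This is false: the expected number of copies of $K_{\Delta+1}$ through a fixed vertex is
\[
\Theta\big(n^{\Delta}p^{\binom{\Delta+1}{2}}\big)=\Theta\big(n^{\Delta/(\Delta+2)}\big)\to\infty,
\]
and by Janson-type concentration a.a.s.\ \emph{every} vertex of $\Gamma$ lies in polynomially many copies of $K_{\Delta+1}$. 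The heuristic justifying the $\Ca p^{-(\Delta+2)(\Delta-1)/2}n^{2-\Delta}$ hypothesis is, as the paper states for $\Delta=3$, that an \emph{adversary} can delete edges from $\Gamma$ (within the neighbourhoods of a chosen set of that many vertices, spreading the deletions thinly enough to preserve $\delta(G)\ge(\tfrac{k-1}{k}+\gamma)pn$) so as to produce a spanning subgraph $G\subseteq\Gamma$ in which those vertices lie in no $K_{\Delta+1}$. It is this constructive lower bound on resilience, not a first-moment property of $\Gamma$ itself, that forces the corresponding hypothesis on $H$; your phrase ``an adversary may then locally destroy any remaining $K_{\Delta+1}$'' has the right idea but the wrong starting picture.
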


This conjecture seems to be hopelessly out of reach with our current state of knowledge. We cannot even prove that $G(n,p)$ itself is universal for graphs on $\tfrac{n}{2}$ vertices with maximum degree~$\Delta$. The best current result in this direction is due to Conlon, Ferber, Nenadov and \v{S}kori\'c~\cite{CFNS}, who show that for $\Delta\ge 3$, if $p\gg n^{-1/(\Delta-1)}\log^5 n$ then $G(n,p)$ is a.a.s.\ universal for graphs on $\big(1-o(1)\big)n$ vertices of maximum degree $\Delta$, finally breaking the $n^{-1/\Delta}$ barrier which is reached by several papers, but still far from the conjectured truth. It is possible that their methods could be used to prove a version of Theorem~\ref{thm:main} for almost-spanning $H$ in sparser random graphs, but this does not appear to be straightforward.

\subsection{Optimality of Theorem~\ref{thm:degenerate}}
The `extra' restriction we place in Theorem~\ref{thm:degenerate}, of having many vertices of $H$ which are neither in triangles nor four-cycles, is an artifact of our proof. It would be possible to remove the stipulation regarding four-cycles---one can prove a version of Lemma~\ref{lem:common} capable of embedding vertices in a degeneracy order. However this comes at the cost of a worse lower bound on $p$. It seems likely that one would be able to obtain a result for $p\gg\big(\tfrac{\log n}{n}\big)^{1/(2D+2)}$, but we did not check the details.

As with Theorem~\ref{thm:main}, we expect that the bound $p\ge\big(\tfrac{\log n}{n}\big)^{1/(2D+1)}$ in Theorem~\ref{thm:degenerate} is far from the truth: again the exponent is most likely a factor of roughly $2$ too small. Again, however, proving such a statement in general seems hopeless. Nevertheless, in one interesting case we can substantially improve on Theorem~\ref{thm:degenerate}. Specifically, if $H$ is an $F$-factor for some fixed $F$, then we can follow the proof of Theorem~\ref{thm:degenerate}, but set $\tD=D+3$. We can do this because we choose a degeneracy order on $H$ in which the copies of $F$ are segments. We obtain a version of Theorem~\ref{thm:degenerate} in which $H$ is required to be an $F$-factor, where $F$ is $D$-degenerate, but the lower bound on $p$ improves to $p\ge\Ca\big(\tfrac{\log n}{n}\big)^{1/(D+3)}$. This is still not optimal, but at least the exponent is asymptotically optimal as $D$ grows, rather than being off by a factor of two in the limit. For some specific $F$ one can improve this bound further; moreover for $F$-factors one can slightly improve on Lemma~\ref{thm:dblow} (see the concluding remarks of~\cite{blowup}).

\subsection{Optimality of Theorem~\ref{thm:jumbled}}
The requirement of $\Ca p^{-6}\nu^2 n^{-1}$ vertices of $H$ not in triangles
comes from Lemma~\ref{lem:pTSRIL}. This lemma is proved in~\cite{ABSS}, where it
is conjectured that the bijumbledness requirement is not optimal. What exactly
the optimal result should be is not clear. When $|X|=|Y|=|Z|=\tfrac{n}{3}$, a
construction of Alon~\cite{AlonConstr} shows that
$\big(p,cp^2n\big)$-bijumbledness is necessary for some $c>0$, but in our
application we are interested in $Y$ and $Z$ being of order $n$, and $X$ much
smaller.

We also do not believe that the bijumbledness requirement of
Theorem~\ref{thm:jumbled} is optimal. This requirement comes from
Lemma~\ref{thm:jblowup}, and it is suggested there that the statement could
still hold given only $\big(p,cp^{\Delta+C}\big)$-bijumbledness for some $C$.
Such an improvement there would immediately improve the results here
correspondingly. It is generally conjectured that substantial further
improvement is not possible, in the strong form that it is likely that for some
$C>0$ and all $\Delta$ there exists $c>0$ such that for all large $n$ an
$n$-vertex $\big(p,cp^{\Delta-C}\big)$-bijumbled graph exists which does not
contain $K_{\Delta+1}$ at all.

%%%%%%%%% APPENDIX %%%%%%%%%%%%%%%%%%%%%%%%%%%%%%%%%%%%%%%%%%%

%%% AUTHOR: optional appendix here
\appendix %% you may comment this out if no Appendix
\section*{Appendix}
\section{Tools}
\label{app:tools}

We collect in this appendix proofs of results which are more or less standard but which we could not find in the form we require in the literature. We begin by showing that small alterations to regular pairs give us regular pairs.

\begin{proof}[Proof of Proposition~\ref{prop:subpairs3}]
Let $A \subseteq \hat X$ and $B \subseteq \hat Y$ such that $|A| \geq \hat \eps |\hat X|$ and $|B| \geq \hat \eps |\hat Y|$ be given. Define $A' := A \cap X$ and $B':= B \cap Y$ and note that 
\begin{equation*}
|A'| \geq |A| - \mu |X| \geq \hat \eps |\hat X| - \mu |X| \geq \hat{\eps}(1-\mu) |X| - \mu |X| \geq \big(\hat \eps - 2 \sqrt{\mu}\big) |X| \geq \eps |X|
\end{equation*}
by the definition of $\hat\eps$. Analogously, one can show that $|B'| \geq \eps |Y|$. Since $(X,Y)$ is an $(\eps, d,p)$-regular pair, we know that $d_p(A',B') \geq d- \eps$. Furthermore, we have
\[|A'| \geq |A| - \mu |X| \geq |A| - \mu \frac{|A|}{\hat\eps} \geq \big(1- \sqrt{\mu}\big)|A|\]
and by an analogous calculation we get $|B'| \geq \big(1- \sqrt{\nu}\big)|B|$.
For the number of edges between $A$ and $B$ we get
\begin{align*}
e(A,B) &\geq e(A',B') \geq (d- \eps) p|A'| |B'| \geq (d-\eps)p \big(1-\sqrt{\mu}\big)\big(1-\sqrt{\nu}\big) |A| |B|\\
& \geq \big(d- \eps - 2 \sqrt{\mu} - 2\sqrt{\nu}\big) p |A| |B| \geq (d-\hat\eps) p |A| |B|.
\end{align*}
Therefore we have
\[d_p(A,B) \geq d-\hat\eps,\]
which finishes the proof. 

Now suppose that $(X,Y)$ is $(\eps,d,p)$-fully-regular. Let $d'$ be such that $d_p(A',B')=d'\pm\eps$ for any $A'\subset X$ and $B'\subset Y$ with $|A'|\ge\eps|X|$ and $|B'|\ge\eps |Y|$. Let $A\subset\hat{X}$ and $B\subset \hat{Y}$ with $|A| \geq \hat \eps |\hat X|$ and $|B| \geq \hat \eps |\hat Y|$ be given. As above, we obtain $e(A,B)\ge (d'-\hat\eps) p |A| |B|$. We also have
\begin{align*}
 e(A,B)&\le e(A',B')+e(A',B\setminus B')+e(A\setminus A', B)\\
 &\le (d'+\eps)p|A'||B'|+(1+\mu+\nu)p|A'|\nu|B|+(1+\mu+\nu)p\mu|A||B|\\
 &\le (d'+\hat{\eps})|A||B|\,,
\end{align*}
so that $(\hat{X},\hat{Y})$ is $(\eps,d,p)$-fully-regular, as desired.
\end{proof}

Next, we prove the Sparse Regularity Lemma variant Lemma~\ref{lem:SRLb}, whose proof follows~\cite{Scott}.

\begin{proof}[Proof of Lemma~\ref{lem:SRLb}]
Given $\eps>0$ and $s$, let $L=100s^2\eps^{-1}$. Let $n_1=1$, and for each $j\ge 2$ let $n_j=10000\eps^{-1}n_j2^{sn_j}$. Let $t_1=n_{1000\eps^{-5}(L^2+16Ls^2)+1}$.
 
 We define the energy of a pair of disjoint sets $P,P'$ contained in respectively $V_i$ and $V_{i'}$ to be
 \[\cE(P,P'):=\frac{|P||P'|\min\big(d_p(P,P')^2,2Ld_p(P.P')-L^2\big)}{|V_i||V_{i'}|}\,.\]
 Note that this quantity is convex in $d_p(P,P')$.
 Now given a partition $\cP$ refining $\{V_i\}_{i\in[s]}$, we define the energy of $\cP$ to be
 \[\cE(\cP):=\sum_{\{P,P'\}\subset\cP}\cE(P,P')\,.\]
 We now construct a succession of partitions $\cP_{j+1}$ for each $j\ge 1$, refining $\cP_1:=\{V_i\}_{i\in[s]}$. We claim that for each $j$, the following hold.
 \begin{enumerate}[label=\itmarab{R}]
  \item\label{srl:r1} $\cP_j$ partitions $V_i$ into between $n_j$ and $\big(1+\tfrac{1}{100}\eps\big)n_j$ sets, of which the largest $n_j$ are equally sized.
  \item\label{srl:r2} $\cE(\cP)\ge \tfrac{1}{1000}\eps^5j$.
 \end{enumerate}
 We stop if $\cP_j$ is $\big(\tfrac12\eps,p\big)$-regular. If not, we apply the following procedure. 
 
 For each pair of $\cP_j$ which is not $\big(\tfrac12\eps,0,p\big)$-regular, we take a witness of its irregularity, consisting of a subset of each side of the pair. We let $\cP'_j$ be the union of the Venn diagrams of all witness sets in each part of $\cP_j$. Since $\cP_j$ is not $\big(\tfrac12\eps,p\big)$-regular, there are at least $\tfrac12\eps s^2 n_j^2$ pairs which are not $\big(\tfrac12\eps,0,p\big)$-regular. By choice of $L$ and by~\ref{srl:r1}, at least $\tfrac14\eps s^2 n_j^2$ of these pairs have density not more than $\tfrac12L$. By the defect Cauchy-Schwarz inequality, just from refining these pairs we conclude that $\cE(\cP'_j)\ge\cE(\cP_j)+\tfrac{1}{1000}\eps^5$. Note that, by convexity of $\cE(P,P')$ in $d_p(P,P)$, refining the other pairs does not affect $\cE(P'_j)$ negatively.
 
 We now let $\cP_{j+1}$ be obtained by splitting each set of $\cP'_j$ within each $V_i$ into sets of size $\tfrac{1000-\eps}{1000n_{j+1}}|V_i|$ plus at most one smaller set. Again by Jensen's inequality,  we have $\cE(\cP_{j+1})\ge\cE(\cP'_j)$, giving~\ref{srl:r2}. Since $\cP'_j$ partitions each $V_i$ into at most $n_j2^{sn_j}=\tfrac{1}{10000}\eps n_{j+1}$, the total number of smaller sets is at most $\tfrac{1}{10000}\eps n_{j+1}$. This gives~\ref{srl:r1}. 
 
 Now observe that for any partition $\cP$ refining $\cP_1$, we have $\cE(\cP)\le L^2+16Ls^2$. It follows that this procedure must terminate with $j\le 1000\eps^{-5}(L^2+16Ls^2)+1$. The final $\cP_j$ is thus $\big(\tfrac12\eps,p\big)$-regular. For each $i\in[s]$, let $V_{i,0}$ consist of the union of all but the largest $n_j$ parts of $\cP_j$. Let $\cP$ be the partition of $\bigcup_{i\in[s]}V_i\setminus V_{i,0}$ given by $\cP_j$. This is the desired equitable $(\eps,p)$-regular refinement of $\{V_i\setminus V_{i,0}\}_{i\in[s]}$.
\end{proof}

Using Lemma~\ref{lem:SRLb} (purely in the interests of self-containment, as we could also use the results of~\cite{kohayakawa1997}), we now prove Lemma~\ref{lem:regularitylemma}.

\begin{proof}[Proof of Lemma~\ref{lem:regularitylemma}]
 Given $\eps>0$ and $r_0$, without loss of generality we assume $\eps\le\tfrac1{10}$. Let $t_1$ be returned by Lemma~\ref{lem:SRLb} for input $\tfrac{1}{1000}\eps^2s^{-1}$ and $s=100r_0\eps^{-1}$. Let $r_1=st_1$.
 
 Given $\alpha>0$, let $G$ be an $n$-vertex graph with minimum degree $\alpha p n$. Let $\{V_i\}_{i\in[s]}$ be an arbitrary partition of $V(G)$ into sets of as equal as possible size. By assumption, we have $e(V_i,V_{i'})\le 2p|V_i||V_{i'}|$ for each $i\neq i'$. Furthermore, if $V_i$ is a part with $e(V_i)\ge 3p|V_i|^2$, then taking a maximum cut $A,A'$ of $V_i$ we have $e(A,A')\ge\tfrac32 p|V_i|^2$. Enlarging the smaller of $A$ and $A'$ if necessary, we have a pair of sets both of size at most $|V_i|$ between which there are at least $\tfrac32p|V_i|^2$ edges, again contradicting the assumption of Lemma~\ref{lem:regularitylemma}. Thus $G$ satisfies the conditions of Lemma~\ref{lem:SRLb} with input $\tfrac1{1000}\eps^2s^{-1}$ and $s$. Applying that lemma, we obtain a collection $\{V_{i,0}\}_{i\in[s]}$ of sets, and an $(\eps,p)$-regular partition $\cP$ of $\bigcup_{i\in[s]}V_i\setminus V_{i,0}$ which partitions each $V_i\setminus V_0$ into $t\le t_1$ sets. Note that $s\le |\cP|\le r_1$ by construction.
 
Now let $V'_0$ be the union of the $V_{i,0}$ for $i\in[s]$, any sets $W\in\cP$ that lie in more than $\tfrac14\eps s t$ pairs which are not $(\tfrac{1}{1000}\eps,p)$-regular, and at most two vertices from each set $W\in\cP$ in order that the partition of $V(G)\setminus V'_0$ induced by $\cP$ is an equipartition. Because the total number of pairs which are not $(\tfrac1{1000}\eps,p)$-regular is at most $\tfrac{1}{1000}\eps^2s^{-1}(r_0 t)^2$, the number of such sets in any given $V_i$ is at most $\tfrac{1}{100}\eps t$, so $|V'_{i,0}|$ has size at most $\tfrac{1}{50}\eps |V_i|$, and the number of parts of $\cP$ in $V_i\setminus V'_{i,0}$ is larger than $\tfrac{t}{2}$. Thus the partition $\cP'$ of $V(G)\setminus V'_0$ induced by $\cP$ is an $(\eps,p)$-regular equipartition of $V(G)\setminus V'_0$, and we have $|V'_0|\le\eps n$.
 
 We claim that this partition $\cP'$ has all the properties we require. It remains only to check that for each $d\in[0,1]$, the $d$-reduced graph of $\cP'$ has minimum degree at least $(\alpha-d-\eps)t'$. Suppose that $P$ is a part of $\cP'$. Now we have $e(P)\le 3p|P|^2$, since otherwise, as before, a maximum cut $A,A'$ of $P$ has at least $\tfrac32p|P|^2<\tfrac{1}{20}\eps p|P|n$ edges, yielding a contradiction to the assumption on the maximum density of pairs of $G$. By construction, $P$ lies in at most $\tfrac12\eps t'$ pairs which are not $(\eps,p)$-regular, and these contain at most $(1+\tfrac{1}{10}\eps)p|P|\big(\tfrac12\eps t'|P|\big)<\tfrac{3}{4}\eps p |P|n$ edges of $G$. We conclude that at least $\alpha p |P|n-\tfrac{7}{8}\eps p |P|n$ edges of $G$ leaving $P$ lie in $(\eps,p)$-regular pairs of $\cP'$. Of these, at most $dp|P|n$ can lie in pairs of density less than $p$, so that the remaining at least $\big(\alpha-d-\tfrac{7}{8}\eps\big)p|P|n$ edges lie in $(\eps,d,p)$-regular pairs. If so many edges were in less than $(\alpha-d-\eps)t'$ pairs leaving $P$, this would contradict our assumption on the maximum density of $G$, so that we conclude $P$ lies in at least $(\alpha-d-\eps)t'$ pairs which are $(\eps,d,p)$-regular, as desired.
\end{proof}

\begin{proof}[Proof of Proposition~\ref{prop:pseudchernoff}]
 Given $\eps>0$, set $C'=100\eps^{-2}$ and $C=200C'\eps^{-1}$. Suppose that $\Gamma$ is $(p,\nu)$-bijumbled.
 
 First, given disjoint $X,Y\subset V(\Gamma)$ with $|X|,|Y|\ge \eps^{-1}p^{-1}\nu$, $(p,\nu)$-bijumbledness of $\Gamma$ we have
 $e(X,Y)=p|X|||Y|\pm\nu\sqrt{|X||Y|}$, and we need only verify that
 $\nu\sqrt{|X||Y|}\le\eps p|X||Y|$, which follows from the lower bound on $|X|,|Y|$.
 
 For the second property, let $(A,B)$ be a maximum cut of $X$. We have $e(A,B)\ge\tfrac12e(X)$, and $|A||B|\le\tfrac14|X|^2$. By $(p,\nu)$-bijumbledness of $\Gamma$, we conclude
 \[e(X)\le 2e(A,B)\le 2p|A||B|+2\nu\sqrt{|A||B|}\le \frac12p|X|^2+\nu|X|\]
 so that it is enough to verify $\nu|X|\le p|X|^2$, which duly follows from the lower bound on $|X|$.
 
  Now let $Y\subset V(\Gamma)$ have size at least $Cp^{-1}\nu$. We first show that there are at most $C'p^{-2}\nu^2|Y|^{-1}$ vertices in $\Gamma$ which have less than $(1-\eps)p|Y|$ neighbours in $Y$. If this were false, then we could choose a set $X$ of $C'p^{-2}\nu^2|Y|^{-1}$ vertices in $\Gamma$ which have less than $(1-\eps)p|Y|$ neighbours in $Y$. Since by choice of $C$ we have $(1-\eps)p|Y|\le \big(1-\tfrac\eps2\big)p|Y\setminus X|$, we see that $e(X,Y\setminus X)<\big(1-\tfrac{\eps}{2}\big)p|X||Y\setminus X|$. Since
  \[\nu\sqrt{|X||Y|}=\nu\sqrt{C'p^{-2}\nu^2}=\sqrt{C'}\nu^2p^{-1}<\frac{\eps}{2}p|X||Y\setminus X|\]
  this is a contradiction to $(p,\nu)$-bijumbleness of $\Gamma$.
  
Next we show that there are at most $2C'p^{-2}\nu^2|Y|^{-1}$ vertices of $\Gamma$ which have more than $(1+\eps)p|Y|$ neighbours in $Y$. Again, if this is not the case we can let $X$ be a set of $2C'p^{-2}\nu^2|Y|^{-1}$ vertices of $\Gamma$ with more than $(1+\eps)p|Y|$ neighbours in $Y$.

If there are more than $\tfrac12|X|$ vertices of $X$ with more than $\tfrac12\eps p|Y|$ neighbours in $X$, then we have $e(X)\ge\tfrac18\eps p|X||Y|$. Taking a maximum cut $A,B$ of $X$, we have $e(A,B)\ge\tfrac{1}{16}\eps p|X||Y|$, and by $(p,\nu)$-bijumbledness of $\Gamma$ we therefore have
\[\frac{1}{16}\eps p|X||Y|\le p|A||B|+\nu\sqrt{|A||B|}\le\frac14p|X|^2+\frac12\nu|X|\,,\]
and since $|X|\le\tfrac{1}{100}\eps |Y|$, we conclude $|Y|\le 100\eps^{-1}p^{-1}\nu$, a contradiction to the choice of $C$.

We conclude that there are $\tfrac12|X|$ vertices $X'$ of $X$ have at most $\tfrac12\eps p|Y|$ neighbours in $X$, and hence at least $\big(1+\tfrac12\eps\big)p|Y|$ neighbours in $Y\setminus X$. By $(p,\nu)$-bijumbledness of $\Gamma$ we have
\[\frac12|X|\Big(1+\frac12\eps\Big)p|Y|\le e(X',Y\setminus X)\le \frac12 p|X||Y|+\nu\sqrt{\frac{1}{2}p|X||Y|}\,,\]
from which we have $\eps C'p^{-1}\nu^2\le 2\sqrt{C'}\nu^2p^{-1}$, a contradiction to the choice of $C'$.
\end{proof}

%%%%%%% ACKNOWLEDGE %%%%%%%%%%%%%%%%%%%%%%%%%%%%%%%%%%%%

%%% AUTHOR: optional acknowledgments here
\section*{Acknowledgements} %%  you may comment this out if no Ackno

We thank two anonymous referees for valuable comments that improved our exposition.

%%%%%%% BIBLIOGRAPHY %%%%%%%%%%%%%%%%%%%%%%%%%%%%%%%%%%%%

%%% AUTHOR:
%%% Bibliography goes here. Note that the arXiv cannot process bibtex
%%% or biber bibliographies.  Example of acceptable bibliograpy format:
\bibliographystyle{amsplain}
\bibliography{references}

\providecommand{\bysame}{\leavevmode\hbox to3em{\hrulefill}\thinspace}
\providecommand{\MR}{\relax\ifhmode\unskip\space\fi MR }
% \MRhref is called by the amsart/book/proc definition of \MR.
\providecommand{\MRhref}[2]{%
  \href{http://www.ams.org/mathscinet-getitem?mr=#1}{#2}
}
\providecommand{\href}[2]{#2}
\begin{thebibliography}{10}

\bibitem{ABEST}
Peter Allen, Julia B{\"o}ttcher, Julia Ehrenm{\"u}ller, Jakob Schnitzer, and
  Anusch Taraz, \emph{A spanning bandwidth theorem in random graphs},
  Submitted, arXiv:1911.03958.

\bibitem{blowup}
Peter Allen, Julia B{\"o}ttcher, Hi{\^e}p H{\`a}n, Yoshiharu Kohayakawa, and
  Yury Person, \emph{Blow-up lemmas for sparse graphs}, Submitted,
  arXiv:1612.00622.

\bibitem{ABSS}
Peter Allen, Julia B{\"o}ttcher, Jozef Skokan, and Maya Stein, \emph{Regularity
  inheritance in pseudorandom graphs}, Random Structures Algorithms \textbf{56}
  (2020), no.~2, 306--338.

\bibitem{AlonConstr}
Noga Alon, \emph{Explicit {R}amsey graphs and orthonormal labelings}, Electron.
  J. Combin. \textbf{1} (1994), R12.

\bibitem{ACKRRS}
Noga Alon, Michael Capalbo, Yoshiharu Kohayakawa, Vojt{\v{e}}ch R{\"o}dl,
  Andrzej Ruci{\'n}ski, and Endre Szemer{\'e}di, \emph{Universality and
  tolerance (extended abstract)}, 41st {A}nnual {S}ymposium on {F}oundations of
  {C}omputer {S}cience ({R}edondo {B}each, {CA}, 2000), IEEE Comput. Soc.
  Press, Los Alamitos, CA, 2000, pp.~14--21.

\bibitem{balogh2011}
J{\'o}zsef Balogh, B{\'e}la Csaba, and Wojciech Samotij, \emph{Local resilience
  of almost spanning trees in random graphs}, Random Structures Algorithms
  \textbf{38} (2011), no.~1-2, 121--139.

\bibitem{balogh2012corradi}
J{\'o}zsef Balogh, Choongbum Lee, and Wojciech Samotij, \emph{Corr{\'a}di and
  {H}ajnal's theorem for sparse random graphs}, Combin. Probab. Comput.
  \textbf{21} (2012), no.~1-2, 23--55.

\bibitem{bollobas1998modern}
B\'{e}la Bollob\'{a}s, \emph{Modern graph theory}, Graduate Texts in
  Mathematics, vol. 184, Springer-Verlag, New York, 1998.

\bibitem{BoeHeiTar}
Julia B{\"o}ttcher, Peter Heinig, and Anusch Taraz, \emph{Embedding into
  bipartite graphs}, SIAM J. Discrete Math. \textbf{24} (2010), no.~4,
  1215--1233.

\bibitem{bottcher2013almost}
Julia B\"{o}ttcher, Yoshiharu Kohayakawa, and Anusch Taraz, \emph{Almost
  spanning subgraphs of random graphs after adversarial edge removal}, Combin.
  Probab. Comput. \textbf{22} (2013), no.~5, 639--683.

\bibitem{bottcher2010bandwidth}
Julia B\"{o}ttcher, Klaas~P. Pruessmann, Anusch Taraz, and Andreas W\"{u}rfl,
  \emph{Bandwidth, expansion, treewidth, separators and universality for
  bounded-degree graphs}, European J. Combin. \textbf{31} (2010), no.~5,
  1217--1227.

\bibitem{bottcher2009proof}
Julia B\"{o}ttcher, Mathias Schacht, and Anusch Taraz, \emph{Proof of the
  bandwidth conjecture of {B}ollob\'{a}s and {K}oml\'{o}s}, Math. Ann.
  \textbf{343} (2009), no.~1, 175--205.

\bibitem{BoeTarWue}
Julia B{\"o}ttcher, Anusch Taraz, and Andreas W{\"u}rfl, \emph{Spanning
  embeddings of arrangeable graphs with sublinear bandwidth}, Random Structures
  Algorithms \textbf{48} (2016), no.~2, 270--289.

\bibitem{ConGow}
D.~Conlon and W.~T. Gowers, \emph{Combinatorial theorems in sparse random
  sets}, Ann. of Math. (2) \textbf{184} (2016), no.~2, 367--454.

\bibitem{CGSS}
D.~Conlon, W.~T. Gowers, W.~Samotij, and M.~Schacht, \emph{On the {K}{{\L}}{R}
  conjecture in random graphs}, Israel J. Math. \textbf{203} (2014), no.~1,
  535--580.

\bibitem{ConNen}
D.~Conlon and R.~Nenadov, \emph{Size {R}amsey numbers of triangle-free graphs
  with bounded degree}, Manuscript.

\bibitem{CFNS}
David Conlon, Asaf Ferber, Rajko Nenadov, and Nemanja \v{S}kori\'{c},
  \emph{Almost-spanning universality in random graphs}, Random Structures
  Algorithms \textbf{50} (2017), no.~3, 380--393.

\bibitem{CFZ}
David Conlon, Jacob Fox, and Yufei Zhao, \emph{Extremal results in sparse
  pseudorandom graphs}, Adv. Math. \textbf{256} (2014), 206--290.

\bibitem{dellamonica2008}
Domingos Dellamonica, Yoshiharu Kohayakawa, Martin Marciniszyn, and Angelika
  Steger, \emph{On the resilience of long cycles in random graphs}, Electron.
  J. Combin. \textbf{15} (2008), R32.

\bibitem{dellamonica2014}
Domingos Dellamonica, Jr., Yoshiharu Kohayakawa, Vojt\v{e}ch R\"{o}dl, and
  Andrzej Ruci\'{n}ski, \emph{An improved upper bound on the density of
  universal random graphs}, Random Structures Algorithms \textbf{46} (2015),
  no.~2, 274--299.

\bibitem{dirac1952}
G.~A. Dirac, \emph{Some theorems on abstract graphs}, Proc. London Math. Soc.
  \textbf{2} (1952), 69--81.

\bibitem{JuliaEsDiss}
Julia Ehrenmüller, \emph{Existence and enumeration of spanning structures in
  sparse graphs and hypergraphs}, {PhD} thesis, Technische Universität
  Hamburg, 2016.

\bibitem{FerLuhNgu}
Asaf Ferber, Kyle Luh, and Oanh Nguyen, \emph{Embedding large graphs into a
  random graph}, Bull. Lond. Math. Soc. \textbf{49} (2017), no.~5, 784--797.

\bibitem{huang2012}
Hao Huang, Choongbum Lee, and Benny Sudakov, \emph{Bandwidth theorem for random
  graphs}, J. Combin. Theory Ser. B \textbf{102} (2012), no.~1, 14--37.

\bibitem{janson2011random}
Svante Janson, Tomasz {\L}uczak, and Andrzej Ruci{\'n}ski, \emph{Random
  graphs}, Wiley-Interscience, 2011.

\bibitem{JohKahVu}
Anders Johansson, Jeff Kahn, and Van Vu, \emph{Factors in random graphs},
  Random Structures Algorithms \textbf{33} (2008), no.~1, 1--28.

\bibitem{KnoTre}
Fiachra Knox and Andrew Treglown, \emph{Embedding spanning bipartite graphs of
  small bandwidth}, Combin. Probab. Comput. \textbf{22} (2013), no.~1, 71--96.

\bibitem{kohayakawa1997}
Yoshiharu Kohayakawa, \emph{Szemer{\'e}di's regularity lemma for sparse
  graphs}, Foundations of computational mathematics, Springer, 1997,
  pp.~216--230.

\bibitem{kohayakawa2003}
Yoshiharu Kohayakawa and Vojtech R{\"o}dl, \emph{Szemer{\'e}di's regularity
  lemma and quasi-randomness}, Recent advances in algorithms and combinatorics,
  Springer, 2003, pp.~289--351.

\bibitem{komlos1997blow}
J{\'a}nos Koml{\'o}s, G{\'a}bor~N S{\'a}rk{\"o}zy, and Endre Szemer{\'e}di,
  \emph{Blow-up lemma}, Combinatorica \textbf{17} (1997), no.~1, 109--123.

\bibitem{KriLeeSud}
Michael Krivelevich, Choongbum Lee, and Benny Sudakov, \emph{Resilient
  pancyclicity of random and pseudorandom graphs}, SIAM J. Discrete Math.
  \textbf{24} (2010), no.~1, 1--16.

\bibitem{kuhnsurvey}
Daniela K\"{u}hn and Deryk Osthus, \emph{Embedding large subgraphs into dense
  graphs}, Surveys in combinatorics 2009, London Math. Soc. Lecture Note Ser.,
  vol. 365, Cambridge Univ. Press, Cambridge, 2009, pp.~137--167.

\bibitem{Lee:degenerateBandwidth}
C.~Lee, \emph{Embedding degenerate graphs of small bandwidth}, Submitted,
  arXiv:1501.05350.

\bibitem{lee2012}
Choongbum Lee and Benny Sudakov, \emph{Dirac's theorem for random graphs},
  Random Structures Algorithms \textbf{41} (2012), no.~3, 293--305.

\bibitem{McDiarmid}
Colin McDiarmid, \emph{On the method of bounded differences}, Surveys in
  combinatorics 1989, London Math. Soc. Lecture Note Ser., vol. 141, Cambridge
  Univ. Press, Cambridge, 1989, pp.~148--188.

\bibitem{NenSko}
Rajko Nenadov and Nemanja \v{S}kori\'c, \emph{On {K}oml\'os' tiling theorem in
  random graphs}, Submitted, arXiv:1611.09466.

\bibitem{NoeSte}
Andreas Noever and Angelika Steger, \emph{Local resilience for squares of
  almost spanning cycles in sparse random graphs}, Electron. J. Combin.
  \textbf{24} (2017), no.~4, P4.8.

\bibitem{Riordan}
Oliver Riordan, \emph{Spanning subgraphs of random graphs}, Combin. Probab.
  Comput. \textbf{9} (2000), no.~2, 125--148.

\bibitem{Schacht}
Mathias Schacht, \emph{Extremal results for random discrete structures}, Ann.
  of Math. (2) \textbf{184} (2016), no.~2, 333--365.

\bibitem{Scott}
Alexander Scott, \emph{Szemer\'edi's regularity lemma for matrices and sparse
  graphs}, Combin. Probab. Comput. \textbf{20} (2011), no.~3, 455--466.

\bibitem{SudVu}
Benny Sudakov and V.~H. Vu, \emph{Local resilience of graphs}, Random
  Structures Algorithms \textbf{33} (2008), no.~4, 409--433.

\bibitem{Tho87}
Andrew Thomason, \emph{Pseudorandom graphs}, Random graphs '85 ({P}ozna\'{n},
  1985), North-Holland Math. Stud., vol. 144, North-Holland, Amsterdam, 1987,
  pp.~307--331.

\end{thebibliography}

%%%%%%% ADDRESSES %%%%%%%%%%%%%%%%%%%%%%%%%%%%%%%%%%%%

%%% AUTHOR: Include a short description of each author following the
%%% structure below. Use the same short tags used previously.  
%%% Use \imageat{} and \imagedot{} instead of "@" and "." in
%%% email addresses-this replaces the symbols with graphics to avoid 
%%% e-mail address harvesting from the .pdf file
\begin{aicauthors}
\begin{authorinfo}[PA]
  Peter Allen\\
  London School of Economics, \\
  Department of Mathematics, \\
  Houghton Street, \\
  London WC2A 2AE, UK \\
  p\imagedot{}d\imagedot{}allen\imageat{}lse\imagedot{}ac\imagedot{}uk
\end{authorinfo}
\begin{authorinfo}[JB]
  Julia B\"ottcher\\
  London School of Economics, \\
  Department of Mathematics, \\
  Houghton Street, \\
  London WC2A 2AE, UK \\
  j\imagedot{}boettcher\imageat{}lse\imagedot{}ac\imagedot{}uk
\end{authorinfo}
\begin{authorinfo}[JE]
  Julia Ehrenm\"uller\\
  Technische Universit\"at Hamburg, \\
  Institut f\"ur Mathematik, \\
  Am Schwarzenberg-Campus 3, \\
  21073 Hamburg, Germany \\
  julia\imagedot{}ehrenmueller\imageat{}gmail\imagedot{}com
\end{authorinfo}
\begin{authorinfo}[AT]
  Anusch Taraz\\
  Technische Universit\"at Hamburg, \\
  Institut f\"ur Mathematik, \\
  Am Schwarzenberg-Campus 3, \\
  21073 Hamburg, Germany \\
  taraz\imageat{}tuhh\imagedot{}de
\end{authorinfo}
\end{aicauthors}

\end{document}